\theoremstyle{plain}
\newtheorem{thm}{Theorem}[section]
\newtheorem{prop}[thm]{Proposition}
\newtheorem{lem}[thm]{Lemma}
\newtheorem{cor}[thm]{Corollary}
\theoremstyle{definition}
\newtheorem{defi}[thm]{Definition}
\newtheorem{rem}[thm]{Remark}
\newcommand{\ZZ}{\mathbb{Z}}
\newcommand{\calA}{\mathcal{A}}
\newcommand{\QQ}{\mathbb{Q}}
\newcommand{\RR}{\mathbb{R}}
\newcommand{\CC}{\mathbb{C}}
\newcommand{\PP}{\mathbb{P}}
\newcommand{\KK}{\mathbb{K}}
\newcommand{\CS}{\mathbb{C}^{*}}
\newcommand{\GR}{\mathrm{gr}}
\newcommand{\wt}[1]{\widetilde{#1}}
\newcommand{\ov}[1]{\overline{#1}}
\newcommand{\DR}{\mathrm{R}}
\newcommand{\simto}{\overset{\sim}{\to}}
\newcommand{\DBkai}{\mathrm{{D}^{b}}}
\newcommand{\calS}{\mathcal{S}}
\newcommand{\calM}{\mathcal{M}}
\newcommand{\calO}{\mathcal{O}}
\newcommand{\calE}{\mathscr{E}}
\newcommand{\calV}{\mathcal{V}}
\newcommand{\calL}{\mathcal{L}}
\newcommand{\calN}{\mathcal{N}}
\newcommand{\calP}{\mathcal{P}}
\newcommand{\an}{\mathrm{an}}
\newcommand{\dt}{\partial_{t}}
\newcommand{\dpush}{{{}_{\scalebox{0.5}{$\mathrm{D}$}}}}
\newcommand{\tunagi}{\quad\mbox{and}}
\newcommand{\semi}{\mathrm{se}}
\newcommand{\nilp}{\mathrm{ni}}
\newcommand{\kaa}{\mathrm{Ker}}
\newcommand{\cokaa}{\mathrm{Coker}}
\newcommand{\skaa}{\mathrm{ker}}
\newcommand{\scokaa}{\mathrm{coker}}
\newcommand{\FOU}[1]{{}^F\hspace{-1mm}{#1}}
\newcommand{\vardbtilde}[1]{\widetilde{\raisebox{0pt}[0.85\height]{$\widetilde{#1}$}}}
\newcommand{\subjclass}[2][2010]{%
  \let\@oldtitle\@title%
  \gdef\@title{\@oldtitle\footnotetext{#1 \emph{Mathematics subject classification.} #2}}%
}
\newcommand{\keywords}[1]{%
  \let\@@oldtitle\@title%
  \gdef\@title{\@@oldtitle\footnotetext{\emph{Key words and phrases.} #1.}}%
}
\newcommand{\Addresses}{{
  \bigskip
  \footnotesize

  Takahiro Saito, \textsc{Research Institute for Mathematical Sciences, Kyoto University, Kyoto 606-8502, Japan}\par\nopagebreak
  \textit{E-mail address} : \texttt{takahiro@kurims.kyoto-u.ac.jp}

}}
\title{A description of monodromic mixed Hodge modules}
\date{}
\author{Takahiro Saito}
\subjclass{14F10, 32S35, 32S40}
\keywords{Mixed Hodge modules, D-modules, Perverse sheaves, Irregular Hodge filtrations}
\begin{document}
\maketitle

\begin{abstract}
For a smooth algebraic variety $X$, a monodromic $D$-module on $X\times \CC$ is decomposed into a direct sum of some $D$-modules on $X$.
We show that the Hodge filtration of a mixed Hodge module on $X\times \CC$ whose underlying $D$-module is monodromic is also decomposed.
Moreover, we show that there is an equivalence of categories between the category of monodromic mixed Hodge modules on $X\times \CC$ and the category of ``gluing data''.
As an application, we endow the Fourier-Laplace transformation of the underlying $D$-module of a monodromic mixed Hodge module with a mixed Hodge module structure.
\end{abstract}
\tableofcontents

\section{{I}ntroduction}
Let $X$ be a smooth algebraic variety of finite type over $\CC$
and $\CC_{t}$ the affine line with the coordinate $t$.
We consider a graded polarizable mixed Hodge module $\calM=(M,F_{\bullet}M,K,W_{\bullet}K)$ on $X\times \CC_{t}$,
where $M$ is the underlying $D$-module,
$F_{\bullet}M$ is the Hodge filtration, $K$ is the underlying $\QQ$-perverse sheaf 
and $W_{\bullet}K$ is the weight filtration.
Here, in this paper, ``$D$-module" means ``left $D$-module".
Since the underlying $D$-module of a mixed Hodge module is always regular holonomic, so is $M$.
Brylinski~\cite{BryFou} defined the Fourier-Laplace transformation $\FOU{M}$ of $M$, whose underlying constructible sheaf is the Fourier-Sato transformation of $K$.
$\FOU{M}$ is a $D$-module on $X\times \CC_{\tau}$, where $\CC_{\tau}$ is the dual vector space of $\CC$.
While $\FOU{M}$ is holonomic, it may not be regular in general.
If $\FOU{M}$ is not regular, it can not be the underlying $D$-module of a mixed Hodge module.
Brylinski showed that if $M$ is monodromic (see Definition~\ref{mondef}), $\FOU{M}$ is regular (see loc. cit.).
Therefore, 
it is natural to ask whether there is a natural mixed Hodge module structure on the Fourier-Laplace transformation $\FOU{M}$ of a monodromic $D$-module $M$.
Reichelt~\cite{ReiLau} gave a definition of a mixed Hodge module structures on the homogeneous $A$-hypergeometric $D$-modules (GKZ-systems), which are expressed as the Fourier-Laplace transforms of certain $D$-modules.
Moreover, Reichelt-Walther~\cite{RWWGKZ} defined a mixed Hodge module structures of the Fourier-Laplace transforms of mixed Hodge modules whose underlying $D$-modules are monodromic.
To treat the Hodge filtration on $\FOU{M}$ more clearly,
we take a different approach from their one.

In order to answer the question above, we should observe the structures of mixed Hodge modules whose underlying $D$-modules are monodromic (we call them monodromic mixed Hodge modules) in detail.
As explained in Lemma~\ref{peq},
we will identify quasi-coherent $D_{X\times \CC}$-modules (resp. $O_{X\times \CC}$-modules) with quasi-coherent $D_{X}[t]\langle\partial_{t}\rangle(:=D_{X}\otimes_{\CC}\CC[t]\langle\partial_{t}\rangle)$-modules (resp. $O_{X}[t](:=O_{X}\otimes_{\CC} \CC[t])$-modules) by using the pushforward and the pullback by the projection $p\colon X\times \CC\to X$, where $\CC[t]\langle\partial_{t}\rangle$ is the ring of differential operators on $\CC_{t}$.
Assume that $M$ is monodromic.
For $\beta\in \QQ$, we define a $D_{X}$-submodule $M^{\beta}$ of $M$ (we regard $M$ as a $D_{X}[t]\langle\partial_{t}\rangle$-module) as
\[M^{\beta}:=\bigcup_{l\geq 0}\mathrm{Ker}((t\partial_{t}-\beta)^{l}\colon M\to M).\]
Since $M$ is monodromic,
we have a decomposition
\begin{align}\label{meron}
M=\bigoplus_{\beta\in \QQ}M^{\beta},
\end{align}
where we endow the right hand side with a $D_{X}[t]\langle\partial_{t}\rangle$-structure by using the morphisms $t\colon M^{\beta}\to M^{\beta+1}$ and $\partial_{t}\colon M^{\beta}\to M^{\beta-1}$.
Then, our first result is the following:

\begin{thm}[see Theorem~\ref{main1}]\label{hakujitu}
For $p\in \ZZ$, $F_{p}M\subset M$ is decomposed as
\[F_{p}M=\bigoplus_{\beta\in \QQ}F_{p}M^{\beta},\]
where we set $F_{p}M^{\beta}:=F_{p}M\cap M^{\beta}$
and the $O_{X}[t]$-module structure of the right hand side is defined by the morphism $t\colon F_{p}M^{\beta}\to F_{p}M^{\beta+1}$.
\end{thm}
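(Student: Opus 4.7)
The plan is to reduce the theorem to a statement about the Kashiwara--Malgrange $V$-filtration of $M$ along $X \times \{0\}$, and then to invoke the compatibility of the Hodge filtration with the $V$-filtration built into the definition of a mixed Hodge module. First, since $M$ is monodromic, the uniqueness of the $V$-filtration forces
\[ V^{\alpha}M \;=\; \bigoplus_{\beta \geq \alpha} M^{\beta}, \qquad V^{>\alpha}M \;=\; \bigoplus_{\beta > \alpha} M^{\beta}, \]
because the right-hand sides satisfy all the defining properties ($t$- and $\dt$-stability, nilpotence of $t\dt - \alpha$ on the $\alpha$-th graded piece, rational discreteness, exhaustion, and separation). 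In particular the projection $V^{\alpha}M \twoheadrightarrow \gr_{V}^{\alpha}M$ yields canonical isomorphisms $\gr_{V}^{\alpha}M \simeq M^{\alpha}$ together with a canonical splitting of the short exact sequence $0 \to V^{>\alpha}M \to V^{\alpha}M \to M^{\alpha} \to 0$.

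Setting $F_{p}V^{\alpha}M := F_{p}M \cap V^{\alpha}M$, the theorem becomes equivalent to the splitting statement
\[ F_{p}V^{\alpha}M \;=\; (F_{p}M \cap M^{\alpha}) \;\oplus\; F_{p}V^{>\alpha}M \qquad \text{for every } \alpha \in \QQ. \]
Indeed, assuming this, any $x \in F_{p}M$ with (finite) decomposition $x = x_{\beta_{1}} + \cdots + x_{\beta_{k}}$, $\beta_{1} < \cdots < \beta_{k}$, belongs to $F_{p}V^{\beta_{1}}M$, so the splitting at $\alpha = \beta_{1}$ gives $x_{\beta_{1}} \in F_{p}M \cap M^{\beta_{1}}$; then $x - x_{\beta_{1}} \in F_{p}V^{>\beta_{1}}M$ has one fewer nonzero component and one iterates.

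To prove the splitting, I would draw on the standard Hodge-filtered compatibilities with the $V$-filtration: the identification $F_{p}V^{\alpha}M / F_{p}V^{>\alpha}M \simeq F_{p}\gr_{V}^{\alpha}M$, the strict surjectivity $t \cdot F_{p}V^{\alpha}M = F_{p}V^{\alpha+1}M$ for $\alpha > -1$, and the strict surjectivity of $\dt : F_{p}\gr_{V}^{\alpha}M \to F_{p+1}\gr_{V}^{\alpha-1}M$ for $\alpha < 0$. The actions of $t$ and $\dt$ let one transport the splitting between different values of $\alpha$, so it suffices to establish it on a fundamental range, say $\alpha \in (-1, 0]$. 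The hard part is the $\subseteq$ inclusion: for $x \in F_{p}M \cap V^{\alpha}M$ written as $x = x_{\alpha} + y$ with $x_{\alpha} \in M^{\alpha}$ and $y \in V^{>\alpha}M$, one must show that $x_{\alpha}$ itself lies in $F_{p}M$. Equivalently, the canonical projection $V^{\alpha}M \twoheadrightarrow M^{\alpha}$ must carry $F_{p}V^{\alpha}M$ exactly onto $F_{p}M \cap M^{\alpha}$, rather than onto some strictly larger subspace of $M^{\alpha}$; this is the genuinely Hodge-theoretic content of the theorem, as it cannot be extracted from the $D$-module decomposition alone. Once this is in hand, the asserted $O_{X}[t]$-module structure on the direct sum is automatic, since $t$ preserves $F_{p}M$ and sends $M^{\beta}$ into $M^{\beta+1}$, hence $F_{p}M \cap M^{\beta}$ into $F_{p}M \cap M^{\beta+1}$.
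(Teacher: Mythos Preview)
Your proposal correctly identifies the $V$-filtration description $V^{\alpha}M=\bigoplus_{\beta\geq\alpha}M^{\beta}$ (this is Proposition~\ref{Vprop} in the paper) and correctly reduces the theorem to the splitting
\[
F_{p}V^{\alpha}M=(F_{p}M\cap M^{\alpha})\oplus F_{p}V^{>\alpha}M.
\]
However, you then explicitly flag the key inclusion as ``the hard part'' and write ``Once this is in hand\dots'' without ever establishing it. What you have written is therefore a reformulation of the theorem, not a proof of it. In particular, the ``standard Hodge-filtered compatibility'' $F_{p}V^{\alpha}M/F_{p}V^{>\alpha}M\simeq F_{p}\gr_{V}^{\alpha}M$ you invoke is a tautology (it is the definition of the right-hand side), and strict specializability only tells you that $t$ and $\partial_{t}$ induce surjections between successive $F_{p}\gr_{V}^{\alpha}M$; neither statement says anything about whether the canonical splitting $M^{\alpha}\hookrightarrow V^{\alpha}M$ respects $F_{p}$. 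A priori the image of $F_{p}V^{\alpha}M$ in $M^{\alpha}$ could be strictly larger than $F_{p}M\cap M^{\alpha}$, and nothing in your argument rules this out.

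The paper's proof shows that this splitting is genuinely deep: already in the pure case it requires Deligne's semisimplicity theorem for polarizable variations of Hodge structure on quasi-projective varieties (to show that, on $X\times\CS$, the underlying $\CC$-Hodge module decomposes as an external product with rank-one factors on $\CS$), followed by a passage through a compactification and the strict support decomposition (Proposition~\ref{titanic}). The mixed case then needs a further rigidity argument for admissible variations (Lemma~\ref{kabutomusi}) together with an induction on the dimension of the support, using the vanishing cycle along an auxiliary function $g$ to handle the non-smooth locus. None of this machinery appears in your outline, and there is no shortcut: the splitting you isolate is exactly the content of the theorem.
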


In other words, the Hodge filtration $F_{\bullet}M$ is decomposed with respect to the decomposition (\ref{meron}).
Let us explain what this theorem means in terms of the nearby and vanishing cycles.
Note that we have (see Proposition~\ref{isomprop})
\[
\left\{\begin{array}{l}
\mbox{$t\colon M^{\beta}\to M^{\beta+1}$ is an isomorphism if $\beta\neq -1$.} \\
\mbox{$\partial_{t} \colon M^{\beta}\to M^{\beta-1}$ is an isomorphism if $\beta\neq 0$.}
\end{array}\right.
\]
Therefore, we can reconstruct the $D$-module $M$ from the data:
\[
\left\{
\begin{array}{ll}
\mbox{(i)}&\mbox{The $D$-modules $M^{\alpha}$ on $X$ for $\alpha\in [-1,0]\cap \QQ$.}\\
\mbox{(ii)}&\mbox{The nilpotent endomorphisms $t\partial_{t}-\alpha\colon M^{\alpha}\to M^{\alpha}$ for $\alpha\in (-1,0)\cap \QQ$.}
\\
\mbox{(iii)}&\mbox{The morphisms $\partial_{t}\colon M^{0}\to M^{-1}$ and $t\colon M^{-1}\to M^{0}$}\\
&\mbox{such that $t\partial_{t}$ and $\partial_{t}t$ are nilpotent.}
\end{array}
\right.
\]
On the other hand,
by the definitions of the nearby cycle (resp. the $e(\alpha)(:=\exp(-2\pi\sqrt{-1}\alpha))$-nearby cycle, the unipotent vanishing cycle) $\psi_{t}M$ (resp. $\psi_{t,e(\alpha)}M$, $\phi_{t,1}M$) of the $D$-module $M$ along $t=0$ (note that they are $D$-module on $X$),
we have (see Corollary~\ref{nevancor})
\begin{align}\label{orange}
\left\{
\begin{array}{l}
\psi_{t}M=\bigoplus_{\alpha\in (-1,0]\cap \QQ}M^{\alpha},\\
\psi_{t,e(\alpha)}M=M^{\alpha},\tunagi\\
\phi_{t,1}M=M^{-1}.
\end{array}
\right.
\end{align}
Moreover, through the identifications,
$\frac{-1}{2\pi\sqrt{-1}}$ times
the logarithm of the unipotent part of the monodromy automorphism of the nearby and vanishing cycles are corresponding to $t\partial_{t}-\alpha$ on $M^{\alpha}$ for $\alpha\in [-1,0]$,
the can morphism $\mathrm{can}\colon \psi_{t,1}M\to \phi_{t,1}M$ (resp. the var morphism $\mathrm{var}\colon \phi_{t,1}M\to \psi_{t,1}M$) corresponds to $-\partial_{t}\colon M^{0}\to M^{-1}$ (resp. $t\colon M^{-1}\to M^{0}$) (see Corollary~\ref{nevancor}).
Therefore,
we can say that the monodromic $D$-module is determined by
the nearby and vanishing cycles of it along $t=0$ with some morphisms between them.
Moreover, under the identifications (\ref{orange}) the Hodge filtrations of $\psi_{t}M$, $\psi_{t,e(\alpha)}M$ and $\phi_{t,1}M$ are
\begin{align*}
\left\{
\begin{array}{l}
F_{\bullet}\psi_{t}M=\bigoplus_{\alpha\in (-1,0]}F_{\bullet}M^{\alpha},\\
F_{\bullet}\psi_{t,e(\alpha)}M=F_{\bullet}M^{\alpha},\tunagi\\
F_{\bullet}\phi_{t,1}M=F_{\bullet+1}M^{-1},
\end{array}
\right.
\end{align*}
where $F_{\bullet}M^{\alpha}$ is $F_{\bullet}M\cap M^{\alpha}$ (see Remark~\ref{nao}).
By the strict specializability (see Definition~\ref{stsp}),
we have
\[
\left\{\begin{array}{l}
\mbox{$t\colon F_{p}M^{\beta}\to F_{p}M^{\beta+1}$ is an isomorphism if $\beta> -1$.} \\
\mbox{$\partial_{t} \colon F_{p}M^{\beta}\to F_{p+1}M^{\beta-1}$ is an isomorphism if $\beta< 0$.}
\end{array}\right.
\] 
Therefore, Theorem~\ref{hakujitu} means that the Hodge filtration of the monodromic mixed Hodge module is determined from the ones of the nearby and vanishing cycles of it (up to shift).
The natural question arises whether it is possible to reconstruct a monodromic mixed Hodge module on $X\times \CC$ from its nearby and vanishing cycles.
Our second main result gives an affirmative answer to this question.

We consider a tuple $((\calM_{(-1,0]},T_{s},N),\calM_{-1},c,v)$,
where $\calM_{(-1,0]}$ and $\calM_{-1}$ are mixed Hodge modules on $X$ and
$T_{s}\colon \calM_{(-1,0]}\simto \calM_{(-1,0]}$,
$N \colon \calM_{(-1,0]}\to \calM_{(-1,0]}(-1)$,
$c\colon \calM_{0}(:=\mathrm{Ker}(T_{s}-1)\subset \calM_{(-1,0]})\to \calM_{-1}$
and 
$v\colon \calM_{-1}\to \calM_{0}(-1)$ are morphisms between mixed Hodge modules with the following properties:
\begin{enumerate}
\item[($\star$-i)] $T_{s}$ commutes with $N$.
\item[($\star$-ii)] The underlying $D$-module $M_{(-1,0]}$ of $\calM_{(-1,0]}$ is decomposed as
\[M_{(-1,0]}=\bigoplus_{\alpha\in (-1,0]\cap \QQ}M_{\alpha},\]
where $M_{\alpha}:=\mathrm{Ker}(T_{s}-e(\alpha))\subset M_{(-1,0]}$.
\item[($\star$-iii)] $vc\colon \calM_{0}\to \calM_{0}(-1)$ is $-N$.
\end{enumerate}
Since the weight filtration of $\calM_{(-1,0]}$ is a finite filtration, $N$ is a nilpotent operator.
Moreover, by ($\star$-ii), $F_{p}M_{(-1,0]}$ ($p\in \ZZ$) is decomposed as
\[F_{p}M_{(-1,0]}=\bigoplus_{\alpha\in (-1,0]\cap \QQ}F_{p}M_{\alpha},\]
where $F_{p}M_{(-1,0],\alpha}=F_{p}M_{(-1,0]}\cap M_{\alpha}$.
Let $\mathscr{G}(X)$ be the category of such tuples.
For a monodromic mixed Hodge module $\calM$ on $X\times \CC$,
we obtain an object in $\mathscr{G}(X)$ by setting $\calM_{(-1,0]}=\psi_{t}\calM$ and $\calM_{-1}=\phi_{t,1}\calM$.
In this way, we can define a functor from the category $\mathrm{MHM}^{p}_{\mathrm{mon}}(X\times \CC)$ of monodromic graded polarizable mixed Hodge modules on $X\times \CC$ to the category $\mathscr{G}(X)$.
Our second main result is the following.
\begin{thm}[see Theorem~\ref{main2kai}]\label{carot}
There is an equivalence of categories
\[\mathrm{MHM}^{p}_{\mathrm{mon}}(X\times \CC)\simeq \mathscr{G}(X).\]
\end{thm}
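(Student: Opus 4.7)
I would start by defining the forward functor $\calF\colon \mathrm{MHM}^p_{\mathrm{mon}}(X\times\CC)\to \mathscr{G}(X)$ sending $\calM$ to the tuple $(\psi_t\calM, T_s, N, \phi_{t,1}\calM, \mathrm{can}, -\mathrm{var})$, with monodromy data read off from the discussion around (\ref{orange}); this direction is essentially formal. The real work is to construct a quasi-inverse $G\colon \mathscr{G}(X)\to \mathrm{MHM}^p_{\mathrm{mon}}(X\times\CC)$.

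Given data $((\calM_{(-1,0]}, T_s, N), \calM_{-1}, c, v)$, I would first set $M^\alpha := \mathrm{Ker}(T_s - e(\alpha))\subset M_{(-1,0]}$ for $\alpha\in (-1,0]\cap\QQ$, and take $M^{-1}$ to be the underlying $D_X$-module of $\calM_{-1}$. For $\beta\in \QQ\setminus [-1,0]$, let $M^\beta$ be a copy of $M^\alpha$, where $\alpha\in [-1,0]$ satisfies $\alpha\equiv \beta \pmod \ZZ$. I would then assemble $M = \bigoplus_\beta M^\beta$ as a $D_X[t]\langle \partial_t\rangle$-module, with $D_X$ acting componentwise, the maps $t\colon M^\beta\to M^{\beta+1}$ and $\partial_t\colon M^\beta\to M^{\beta-1}$ being identities under these identifications whenever $\beta\neq -1$ and $\beta\neq 0$ respectively, $t|_{M^{-1}} := v$ and $\partial_t|_{M^0} := c$, and $t\partial_t - \beta$ acting on each $M^\beta$ as the nilpotent operator induced by $N$. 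Condition ($\star$-iii) is exactly what is needed for the commutator $[\partial_t,t]=\mathrm{id}$ to hold on the boundary pieces $M^0$ and $M^{-1}$. The Hodge filtration is then defined by $F_pM := \bigoplus_\beta F_pM^\beta$, where $F_\bullet M^\alpha$ for $\alpha\in (-1,0]$ is extracted from the Hodge filtration of $\calM_{(-1,0]}$ using the decomposition in ($\star$-ii), $F_\bullet M^{-1}$ comes from $\calM_{-1}$ shifted by $-1$, and the pieces for $\beta\notin[-1,0]$ are forced by the strict-specializability formulas recalled just before the theorem. The rational structure $K$ and the weight filtration $W_\bullet K$ are built analogously from the corresponding data on $\calM_{(-1,0]}$ and $\calM_{-1}$, the latter using an "undoing" of the relative monodromy filtration with respect to $N$.

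Once $G$ is shown to take values in $\mathrm{MHM}^p_{\mathrm{mon}}(X\times\CC)$, the natural isomorphisms $\calF\circ G\simeq \mathrm{id}$ and $G\circ \calF\simeq \mathrm{id}$ follow quickly: the first is immediate from the construction, since $\psi_tG(\calM_*) = \calM_{(-1,0]}$ and $\phi_{t,1}G(\calM_*) = \calM_{-1}$; the second, together with full faithfulness on morphisms, is essentially the content of Theorem~\ref{hakujitu} and the identifications in (\ref{orange}), which already express any monodromic mixed Hodge module canonically in terms of the data attached to its nearby and vanishing cycles. The principal obstacle is therefore to verify that $G(\calM_*)$ genuinely underlies a mixed Hodge module. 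Strict specializability along $t=0$ is automatic by construction. For an arbitrary holomorphic function $g$ on $X\times\CC$, I would split into two cases: (i) $g$ is a unit somewhere on $X\times\{0\}$, where locally, away from $t=0$, the module $M$ is a twist of a $D_X$-module pulled back by the projection $p$, reducing strict specializability to that of $\calM_{(-1,0]}$ and $\calM_{-1}$; (ii) $g$ vanishes on $X\times\{0\}$, where after factoring out a power of $t$ from $g$ one reduces to case (i), possibly after a blow-up. Polarizability and the condition that $W_\bullet$ induces the relative monodromy filtration for $N$ on the nearby/vanishing cycles should transfer from the input data via ($\star$-i) and ($\star$-iii), combined with the decomposition $M=\bigoplus_\beta M^\beta$ as $D_X$-modules.
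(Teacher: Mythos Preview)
Your forward functor and the explicit description of the underlying $D$-module, Hodge filtration, and weight filtration of $G(\calM_*)$ are correct and agree with what the paper ultimately obtains. However, there is a genuine gap in your construction of the $\QQ$-perverse sheaf $K$, and your strategy for verifying that the resulting tuple is a mixed Hodge module is not workable.

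The $\QQ$-structure cannot be ``built analogously'' by decomposing into eigenspaces of $T_s$: the decomposition $M_{(-1,0]}=\bigoplus_\alpha M_\alpha$ is a $\CC$-phenomenon, and the eigenvalues $e(\alpha)$ are typically irrational. One must produce a $\QQ$-perverse sheaf on $X\times\CC$ whose complexification has the prescribed monodromy, and this requires a different device. More seriously, verifying directly that a candidate $(M,F_\bullet M,K,W_\bullet K)$ is a mixed Hodge module means checking the full inductive definition: strict specializability along \emph{every} locally defined function $g$, that each resulting $\psi_g$, $\phi_{g,1}$ is again a mixed Hodge module, existence of relative monodromy filtrations, and graded polarizability. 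Your case split on $g$ does not cover this; for instance, functions like $g(x,t)=t-h(x)$ are neither units on $X\times\{0\}$ nor divisible by $t$, and even in the cases you list the reduction is not clear.

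The paper avoids both problems by never writing down $G(\calM_*)$ as raw data and then checking axioms. Instead it manufactures auxiliary mixed Hodge modules $\calS^H_{m,1}[1]$ and $\calL^H_r[1]$ on $\CS$ (the first being $(t\mapsto t^m)_*\QQ^H_{\CS}[1]$, which carries a genuine $\QQ$-structure with the right semisimple monodromy; the second a standard unipotent variation), forms $\calM_{(-1,0]}\boxtimes(\calS^H_{m,1}\otimes\calL^H_r)[1]$, and takes suitable kernels and cokernels of the morphisms $N\boxtimes 1-1\boxtimes N$ and $T_s\otimes 1-1\otimes T$ inside the abelian category of mixed Hodge modules. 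This yields $\wt{\calM_{(-1,0]}}\in\mathrm{MHM}^p(X\times\CS)$ automatically, and a Beilinson-type gluing with $(\calM_{-1},c,v)$ then produces $\vardbtilde{\calM_{(-1,0]}}\in\mathrm{MHM}^p(X\times\CC)$. Only afterwards does one check, by an easy computation, that the underlying filtered $D$-module agrees with your explicit description. The trade-off is clear: your approach gives the answer immediately but cannot certify it; the paper's approach is more roundabout but stays inside the category of mixed Hodge modules at every step, so no axioms need to be verified.
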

In fact, we can naturally construct a monodromic mixed Hodge module on $X\times \CC$ from an object in $\mathscr{G}(X)$.

We go back to the original question.
Let $\calM$ be a monodromic mixed Hodge module on $X\times \CC$.
The Fourier-Laplace transformation $\FOU{M}$ of the underlying $D$-module $M$ (for the definition, see Subsection~\ref{bell}) is again monodromic.
Therefore, as explained above, its $D$-module structure is determined by $(\FOU{M})^{\alpha}(:=\bigcup_{l\geq 0}\mathrm{Ker}(t\partial_{t}-\alpha)^l\subset \FOU{M})$ for $\alpha\in [-1,0]\cap \QQ$ with some morphisms.
By the definition of the Fourier-Laplace transformation,
we have
\begin{align*}
(\FOU{M})^{\beta}\simeq M^{-\beta-1}
\end{align*}
as $D$-modules on $X$ and we thus obtain
\[\FOU{M}(=\bigoplus_{\beta\in \QQ}(\FOU{M})^{\beta})=\bigoplus_{\beta\in \QQ}M^{-\beta-1}.\]
By setting $\calM_{(-1,0]}:=\phi_{t,1}\calM\oplus \psi_{t,\neq 1}\calM$ and $\calM_{-1}:=\psi_{t,1}\calM(-1)$,
we can naturally define an object in $\mathscr{G}(X)$.
By Theorem~\ref{carot}, we can construct a monodromic mixed Hodge module $\FOU{\calM}$ on $X\times \CC_{\tau}$ whose underlying $D$-module is $\FOU{M}$. 
Through the identification $\FOU{M}^{\alpha}= M^{-\alpha-1}$,
we have
\[
F_{p}\FOU{M}^{\alpha}:=
\left\{
\begin{array}{l}
F_{p+1}M^{-1}\quad (\alpha=0)\\
F_{p}M^{-\alpha-1}\quad (\alpha\in [-1,0)),
\end{array}
\right.
\]
for $\alpha\in [-1,0]$ and $p\in \ZZ$ (see Lemma~\ref{gegege}).
Note that 
the Hodge filtration $F_{\bullet}\FOU{M}$ is decomposed by Theorem~\ref{hakujitu} and determined only by 
$F_{\bullet}\FOU{M}^{\alpha}$ for $\alpha\in [-1,0]$.

The author expects these results to be linked with the irregular Hodge theory.
More precisely, he hopes there are some relationships between the filtrations on $\FOU{M}$: the Hodge filtration defined above, the irregular Hodge filtration defined in Esnault-Sabbah-Yu~\cite{ESY}, Sabbah-Yu~\cite{SabYuIrrFil} and Sabbah~\cite{IrrHodge}, and the one defined in \cite{RWWGKZ}.
This is left for future work.

\subsection*{Acknowledgments}
A part of this study was done while the author was visiting CMLS, \'Ecole polytechnique from April to October 2018, whose hospitality is gratefully acknowledged.
Another part was fruitfully conducted while he went to University of Tsukuba until March 2020.
The author would like to express his sincere gratitude to Professor Claude Sabbah for suggesting the problem treated in this paper, having stimulating discussions and answering many questions.
Moreover, the author would like to thank him for a careful reading of the manuscript and valuable suggestions. 
He would like to express appreciation to Professor Takuro Mochizuki for inspiring discussions and helpful advice.
He wishes to thank Tatsuki Kuwagaki for fruitful conversations.
His thanks go also to Yuichi Ike for answering his questions.
He would like to thank Professor Kiyoshi Takeuchi for his constant encouragement.
He also thanks the referee for useful comments.
This work is supported by JSPS KAKENHI Grant Number 20J00922.

\section{{M}onodromic $D$-modules}
\subsection{{M}onodromic $D$-modules}
We first recall some definitions and facts about monodromic $D$-modules.
Almost all the results in this section are basic, but we will provide some proofs for the convenience.
We refer to \cite{BryFou} for the details.
We also refer to Section~10 of \cite{Ginsburg} for some results on monodromic $D$-modules, especially, an index formula for monodromic $D$-modules.

Throughout this paper, we use left $D$-modules.
Accordingly, there are some differences in the conventions compared to \cite{HM88} and \cite{MHM}.
For example, the Kashiwara-Malgrange filtrations (see Subsection~\ref{kasmalsec}) are decreasing filtrations in this paper, while in \cite{HM88} and \cite{MHM} they are increasing filtrations.

Let $X$ be a smooth algebraic variety of finite type over $\CC$, $O_{X}$ the structure sheaf on $X$ 
and $D_{X}$ the sheaf of differential operators on $X$.
In this paper, we will basically consider only algebraic (quasi-coherent) $D$-modules (nevertheless, we use analytic $D$-modules in the proof).
Moreover, all the $D$-modules and $O$-modules are quasi-coherent.
Let $t$ be the coordinate of $\CC$.
We sometimes write $\CC_{t}$ to emphasize it.

\begin{defi}\label{mondef}
Let $M$ be a $D$-module on $X\times \CC_{t}$ (resp. $X\times \CS_{t}$).
If for any affine open subset $U\subset X$ and any section $m$ of $M$ on $U\times \CC_{t}$ (resp. $U\times \CC_{t}^*$)
there exists a non-zero polynomial $b(s)\in \CC[s]$ such that
$b(t\partial_{t})m=0$,
we say that $M$ is a monodromic $D$-module.
Moreover, if we can take such $b(s)$ whose roots are contained in $\KK$,
we say that $M$ is $\KK$-monodromic, where $\KK$ is $\RR$ or $\QQ$.
\end{defi}

\begin{lem}
For a $D$-module $M$ on $X\times\CC_{t}$,
$M$ is monodromic if and only if $M|_{X\times \CS_{t}}$ is monodromic.
\end{lem}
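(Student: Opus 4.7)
The plan is to prove both directions by exploiting the single commutation relation $[t\partial_t, t] = t$ in the Weyl algebra, which iterates to $(t\partial_t) t^k = t^k (t\partial_t + k)$ and hence $p(t\partial_t)\, t^k = t^k\, p(t\partial_t + k)$ for every $k \in \ZZ$ and every $p \in \CC[s]$, together with the identity $\partial_t^N t^N = \prod_{i=1}^N (t\partial_t + i)$ (an easy induction on $N$). Since monodromicity is a germ-wise condition, I may work locally on an affine open $U \subset X$ and with sections over $U \times \CC_t$ or $U \times \CS_t$.

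For the direction $(\Rightarrow)$, I assume $M$ is monodromic. Any local section of $M|_{X\times \CS_t}$ has the form $\widetilde{m} = t^{-k} m$ for some section $m$ of $M$ over $U \times \CC_t$ and some $k \geq 0$. Choosing $b \in \CC[s]\setminus\{0\}$ with $b(t\partial_t) m = 0$ and setting $b'(s) := b(s + k)$, the commutation rule gives
\[
b'(t\partial_t)\, \widetilde{m} \;=\; b'(t\partial_t)\, t^{-k} m \;=\; t^{-k}\, b'(t\partial_t - k)\, m \;=\; t^{-k}\, b(t\partial_t)\, m \;=\; 0,
\]
so $M|_{X\times\CS_t}$ is monodromic.

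For the converse $(\Leftarrow)$, I take a local section $m$ of $M$ over $U \times \CC_t$. Its image in $M|_{X\times\CS_t}$ is annihilated by some $b_0(t\partial_t)$ with $b_0 \in \CC[s]\setminus\{0\}$. Because the kernel of $M \to M|_{X\times\CS_t}$ consists of $t$-torsion, there exists $N \geq 0$ with $t^N b_0(t\partial_t) m = 0$ in $M$. Applying $\partial_t^N$ on the left and using the identity above yields
\[
\Bigl(\prod_{i=1}^N (t\partial_t + i)\Bigr)\, b_0(t\partial_t)\, m \;=\; \partial_t^N t^N\, b_0(t\partial_t)\, m \;=\; 0,
\]
so $b(s) := \bigl(\prod_{i=1}^N (s + i)\bigr)\cdot b_0(s) \in \CC[s]\setminus\{0\}$ satisfies $b(t\partial_t) m = 0$, proving that $M$ is monodromic.

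The only real obstacle is the direction $(\Leftarrow)$, and its heart is the commutation identity $\partial_t^N t^N = \prod_{i=1}^N (t\partial_t + i)$: it is precisely what promotes annihilation \emph{up to a power of $t$} into honest annihilation by a polynomial in $t\partial_t$, and conceptually explains why being monodromic is insensitive to the behaviour of $M$ along $\{t = 0\}$.
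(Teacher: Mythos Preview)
Your proof is correct and follows essentially the same approach as the paper's: both arguments reduce the converse direction to the fact that a section supported on $X\times\{0\}$ is $t$-torsion, and then promote $t^N b_0(t\partial_t)m=0$ to an annihilation by a polynomial in $t\partial_t$. The paper is terser, simply asserting ``this implies that there is a polynomial $b'(s)$ such that $b'(t\partial_t)m=0$'' without writing down the identity $\partial_t^N t^N=\prod_{i=1}^N(t\partial_t+i)$ that you make explicit; and the paper omits the forward direction entirely, which you supply.
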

\begin{proof}
Let $m$ be a section (on an affine open subset of the form $U\times \CC_{t}$) of $M$ and
assume that $M|_{X\times \CS_{t}}$ is monodromic.
Then, there exists a polynomial $b(s)\in \CC[s]$ such that
$b(t\partial_{t})m|_{X\times \CS_{t}}=0$.
Therefore, the support of $b(t\partial_{t})m$ is contained in $X\times \{0\}$,
and hence for sufficiently large $l\in \ZZ_{\geq 0}$, $t^{l}b(t\partial_{t})m$ is zero.
This implies that there is a polynomial $b'(s)\in \CC[s]$ such that
$b'(t\partial_{t})m=0$. 
\end{proof}

\begin{rem}\label{charmon}
When $M$ is regular holonomic,
we can define ``monodromic'' as a property of the corresponding perverse sheaf as follows.
Let $K$ be a perverse sheaf on $X\times \CC_{t}$ such that we have $\mathrm{DR}_{X\times \CC}(M)\simeq K$, where $\mathrm{DR}_{X\times \CC}$ is the de Rham functor.
Then, 
by Proposition~7.12 of \cite{BryFou},
$M$ is monodromic if and only if
all the cohomology sheaves $H^{j}(K)$ ($j\in \ZZ$) of the complex of sheaves $K$
are locally constant on any $\CC^*$-orbit of $X\times \CC_{t}$, i.e. $\{x\}\times \CS$ and $\{(x,0)\}$ for $x\in X$. 

We can write this condition in terms of the micro support (the singular support) $\mathrm{SS}(K)$ of $K$.
Let $p\colon X\times \CS\to X$ be the projection and ${}^tp'\colon (X\times \CS)\times_{X}T^{*}X\to T^{*}(X\times \CS)$ the transposed map of the differential map of $p$.
Then, by Proposition~5.4.5 of \cite{KS}, the the above condition is equivalent to the condition:
\begin{align*}
\mathrm{SS}(K|_{X\times \CS})\subset {}^tp'((X\times \CS)\times_{X}T^{*}X)=T^{*}X\times T^{*}_{\CS}\CS,
\end{align*}
where $T^{*}_{\CS}\CS$ is the bundle of zero sections of the cotangent bundle $T^{*}\CS$.
Assume that $M$ is regular.
Then, since the micro support of $K$ coincides the characteristic variety $\mathrm{Char}(M)$ of $M$,
we can say that
$M$ is monodromic if and only if it satisfies that
\begin{align*}
\mathrm{Char}(M|_{X\times \CS})\subset T^{*}X\times T^{*}_{\CS}\CS.
\end{align*}

\end{rem}

\begin{rem}\label{rsprmon}
Assume that $M$ is coherent.
In the situation of Definition~\ref{mondef},
for a section $m\in M$ the set of all polynomials satisfying $b(t\partial_{t})m=0$ forms an non-zero ideal in $\CC[s]$.
Take a monic generator $b_{0}(s)$ of it.
Then, by Proposition~\ref{Vprop} below, we can see that
any root $\beta\in \CC$ of $b_{0}(s)$ 
is a root of the Bernstein polynomial of the Kashiwara-Malgrange filtration of $M$ along $t=0$ up to $\ZZ$.
Therefore, 
$M$ is $\KK$($=\QQ$ or $\RR$)-monodromic
if and only if
$M$ is monodromic 
and $\KK$-specializable along $t=0$ (see subsection~\ref{kasmalsec}).
\end{rem}

\begin{rem}\label{qmon}
Since we only deal with $\QQ$-monodromic $D$-modules in this paper,
we will simply say ``monodromic'' as ``$\QQ$-monodromic''.
\end{rem}

We denote by $p\colon X\times \CC\to X$ the projection of $X\times \CC$ to $X$.
Then, we get a $p_{*}D_{X\times \CC}$-module $p_{*}M$ and we can also endow it with a $D_{X}$-module structure by the adjunction $D_{X}\to p_{*}p^{-1}D_{X}\to p_{*}D_{X\times \CC}$.
Moreover, 
$t$ and $\partial_{t}$ define $D_{X}$-module endomorphisms of $p_{*}M$,
and hence $p_{*}M$ is a $D_{X}[t]\langle{\partial_{t}}\rangle(:=D_{X}\otimes_{\CC}\CC[t]\langle\partial_{t}\rangle)$-module,
where $\CC[t]\langle\partial_{t}\rangle$ is the ring of differential operators on $\CC_{t}$.
Conversely,
for a $D_{X}[t]\langle{\partial_{t}}\rangle$-module $N$,
we define a $D_{X\times \CC_{t}}$-module $p^{*}N$ as
\[p^{*}N:=D_{X\times \CC_{t}} \otimes_{p^{-1}D_{X}[t]\langle{\partial_{t}}\rangle} p^{-1}N.\]
Similarly, 
for an $O_{X\times \CC_{t}}$-module $M$, $p_{*}M$ is an $O_{X}[t](:=O_{X}\otimes_{\CC}\CC[t])$-module
and for an $O_{X}[t]$-module $N$, we can define an $O_{X\times \CC_{t}}$-module $p^{*}N$ as
\[p^{*}N:=O_{X\times \CC_{t}}\otimes_{p^{-1}O_{X}[t]}p^{-1}N.\]
Since $\CC_{t}$ is affine and our $D$-modules and $O$-modules are quasi-coherent, we can easily check the following.

\begin{lem}\label{peq}
The operators $p_{*}$ and $p^{*}$ define an equivalence of categories
between the category of quasi-coherent $D_{X\times \CC_{t}}$-modules (resp. $O_{X\times \CC_{t}}$-modules) and the one of quasi-coherent $D_{X}[t]\langle\partial_{t}\rangle$-modules (resp. $O_{X}[t]$-modules).
The same statements hold also for the categories of $D_{X\times \CS_{t}}$,
$D_{X}[t^{\pm}]\langle\partial_{t}\rangle$,
$O_{X\times \CS_{t}}$ and
$O_{X}[t^{\pm}]$-modules.
\end{lem}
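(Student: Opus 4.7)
The plan is to reduce the statement to the standard equivalence for affine morphisms, then lift it from $O$-modules to $D$-modules by computing $p_{*}D_{X\times \CC_{t}}$ explicitly as a sheaf of rings on $X$.

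First I would handle the $O$-module case. The projection $p\colon X\times \CC_{t}\to X$ is an affine morphism, with $p_{*}O_{X\times \CC_{t}}=O_{X}\otimes_{\CC}\CC[t]=O_{X}[t]$ as a sheaf of $O_{X}$-algebras. By the standard equivalence for affine morphisms (cf.\ Hartshorne II.5, Ex.~5.17 or EGA~II.1.4), the functor $p_{*}$ gives an equivalence between the category of quasi-coherent $O_{X\times \CC_{t}}$-modules and the category of quasi-coherent $O_{X}[t]$-modules, with quasi-inverse precisely the functor $N\mapsto p^{-1}N\otimes_{p^{-1}O_{X}[t]}O_{X\times \CC_{t}}$ from the statement. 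This already settles the second half of the lemma.

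For the $D$-module case, I would next compute the pushforward of $D_{X\times \CC_{t}}$ itself. As a quasi-coherent $O_{X\times \CC_{t}}$-module, $D_{X\times \CC_{t}}$ is generated by $p^{-1}D_{X}$ and the operators $\partial_{t}^{k}$ ($k\geq 0$) with the usual commutation relations; applying the $O$-module equivalence already proven gives an isomorphism of sheaves of $\CC$-algebras
\[
p_{*}D_{X\times \CC_{t}}\;\cong\;D_{X}\otimes_{\CC}\CC[t]\langle\partial_{t}\rangle\;=\;D_{X}[t]\langle\partial_{t}\rangle
\]
on $X$. Consequently, for any quasi-coherent $D_{X\times \CC_{t}}$-module $M$, its pushforward $p_{*}M$ automatically acquires a $D_{X}[t]\langle\partial_{t}\rangle$-module structure, and conversely $p^{*}N$ is a $D_{X\times \CC_{t}}$-module via the right tensor factor. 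Since the underlying $O$-module-level functors agree with the equivalence of the first step, full faithfulness and essential surjectivity transfer directly from the $O$-module case to the $D$-module case.

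The only subtle point — which I expect to be the main technical obstacle, though a minor one — is checking that the unit and counit morphisms $N\to p_{*}p^{*}N$ and $p^{*}p_{*}M\to M$ for $D$-modules really coincide with the corresponding morphisms at the $O$-module level; this is automatic because $p$ is affine, hence $p_{*}$ is exact on quasi-coherent sheaves and commutes with the tensor product along $p^{-1}O_{X}[t]\to O_{X\times \CC_{t}}$, so no higher direct images intervene. The $\CS_{t}$-case is completely parallel, with $\CC[t]$ replaced by $\CC[t^{\pm}]$ throughout.
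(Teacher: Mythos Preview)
Your proof is correct and follows exactly the approach the paper has in mind: the paper does not actually give a proof but simply remarks that since $\CC_{t}$ is affine and all modules are quasi-coherent, the statement is easily checked. Your argument via the standard equivalence for affine morphisms, together with the identification $p_{*}D_{X\times\CC_{t}}\cong D_{X}[t]\langle\partial_{t}\rangle$, is precisely the routine verification the paper is alluding to.
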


In the following, we identify $D_{X\times \CC_{t}}$-modules (resp. $D_{X\times \CS_{t}}$-modules, $O_{X\times \CC_{t}}$-modules, $O_{X\times \CS_{t}}$-modules) with $D_{X}[t]\langle\partial_{t}\rangle$-modules (resp. $D_{X}[t^{\pm}]\langle\partial_{t}\rangle$-modules, $O_{X}[t]$-modules, $O_{X}[t^{\pm}]$-modules) through the functors $p_{*}$ and $p^{*}$.

For a $D_{X\times \CC_{t}}$ or $D_{X\times \CS}$-module $M$ and $\beta\in \QQ$,
we define a $D_{X}$-submodule $M^{\beta}$ of $M$ by
\begin{align*}
M^{\beta}:=\bigcup_{l\geq 0}\mathrm{Ker}((t\partial_{t}-\beta)^l\colon M\to M).
\end{align*}

\begin{prop}\label{bunkailem}
A $D_{X\times \CC_{t}}$ or $D_{X\times \CS_{t}}$-module $M$ is monodromic
if and only if $M$ is a direct sum of the family of $D_{X}$-modules
$\{M^{\beta}\}_{\beta\in \QQ}$, i.e. we have a decomposition: 
\begin{align}\label{jhiphop}
M=\bigoplus_{\beta\in \QQ}M^{\beta}.
\end{align}

\end{prop}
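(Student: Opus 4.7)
This is essentially the generalized eigenspace decomposition for the operator $t\partial_t$ acting on $M$, restricted to the case where the eigenvalues are rational. I will prove the two implications separately.

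For the ``if'' direction, assume $M=\bigoplus_{\beta\in\QQ}M^{\beta}$. Any (germ of a) section $m\in M$ is a finite sum $m=m_{\beta_{1}}+\cdots+m_{\beta_{k}}$ with $m_{\beta_{i}}\in M^{\beta_{i}}$, so by definition of $M^{\beta_{i}}$ there exist $l_{i}\geq 0$ with $(t\partial_{t}-\beta_{i})^{l_{i}}m_{\beta_{i}}=0$. Setting $b(s):=\prod_{i=1}^{k}(s-\beta_{i})^{l_{i}}\in\CC[s]$, which has only rational roots, we obtain $b(t\partial_{t})m=0$. Hence $M$ is ($\QQ$-)monodromic, as desired by our convention in Remark~\ref{qmon}.

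For the ``only if'' direction, I first verify that the sum $\sum_{\beta\in\QQ}M^{\beta}\subset M$ is a direct sum. Suppose $m_{\beta_{1}}+\cdots+m_{\beta_{k}}=0$ with $m_{\beta_{i}}\in M^{\beta_{i}}$ and the $\beta_{i}$ pairwise distinct. Fix an index $j$ and choose $l_{i}\geq 0$ so that $(t\partial_{t}-\beta_{i})^{l_{i}}m_{\beta_{i}}=0$. The polynomials $(s-\beta_{j})^{l_{j}}$ and $\prod_{i\neq j}(s-\beta_{i})^{l_{i}}$ are coprime in $\CC[s]$; applying an operator of the form $P(t\partial_{t})$ annihilating every $m_{\beta_{i}}$ for $i\neq j$ but equal to the identity modulo $(t\partial_{t}-\beta_{j})^{l_{j}}$-action on $m_{\beta_{j}}$ (constructed via B\'ezout) forces $m_{\beta_{j}}=0$. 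So the sum is direct.

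Next, I show every section lies in this direct sum. Let $m\in M$; by monodromicity there is a nonzero $b(s)\in\CC[s]$ with $b(t\partial_{t})m=0$, and by the $\QQ$-monodromicity convention we may take $b(s)=\prod_{i=1}^{k}(s-\beta_{i})^{l_{i}}$ with $\beta_{i}\in\QQ$ pairwise distinct. By the Chinese Remainder Theorem applied to $\CC[s]/(b(s))\simeq\prod_{i}\CC[s]/(s-\beta_{i})^{l_{i}}$, there exist $e_{i}(s)\in\CC[s]$ with $\sum_{i}e_{i}(s)\equiv 1\pmod{b(s)}$ and $(s-\beta_{i})^{l_{i}}e_{i}(s)\equiv 0\pmod{b(s)}$. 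Setting $m_{i}:=e_{i}(t\partial_{t})m$, we get $m=\sum_{i}m_{i}$ and $(t\partial_{t}-\beta_{i})^{l_{i}}m_{i}=0$, so $m_{i}\in M^{\beta_{i}}$. This shows $M=\sum_{\beta}M^{\beta}$, and combined with the previous step, $M=\bigoplus_{\beta\in\QQ}M^{\beta}$.

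There is no real obstacle: the argument is purely algebraic and local on $M$, and works identically on $X\times\CC_{t}$ and $X\times\CS_{t}$ since only the operators $t\partial_{t}$, $t$, $\partial_{t}$ are used. The only point that requires the $\QQ$-monodromicity convention (rather than merely a polynomial relation with complex roots) is the indexing of the decomposition by $\QQ$; without that convention one would instead obtain a decomposition indexed by $\CC$.
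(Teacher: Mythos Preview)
Your proof is correct and follows essentially the same approach as the paper: both arguments rest on the coprimality of the polynomials $(s-\beta_i)^{l_i}$ and a B\'ezout/Chinese Remainder step to decompose a section into its generalized $t\partial_t$-eigencomponents. The paper uses induction on the number of distinct roots where you invoke the Chinese Remainder Theorem in one stroke, and the paper omits the (easy) converse direction that you spell out explicitly.
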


\begin{proof}
For any $\beta_{1}\neq \beta_{2}$,
take a germ of a local section $m\in M^{\beta_{1}}\cap M^{\beta_{2}}$.
By the definition of $M^{\beta}$,
for sufficiently large $l\geq 0$ we have
$(t\partial_{t}-\beta_{1})^lm=0$ and $(t\partial_{t}-\beta_{2})^lm=0$.
Since polynomials $(s-\beta_{1})^l$ and $(s-\beta_{2})^l$ are coprime,
there exist polynomials $u(s), v(s)\in \CC[s]$ such that
$u(s)(s-\beta_{1})^l+v(s)(s-\beta_{2})^l=1$.
Therefore, 
we have $m=0$.
This implies that $\bigoplus_{\beta\in \QQ}M^{\beta}$ is an $O_{X}$-submodule of $M$.

We assume that $M$ is monodromic.
Let $m$ be a germ of a local section of $M$.
Then, there exists a monic polynomial $b(s)\in \CC[s]$ such that $b(t\dt)m=0$.
Let $b(s)=\prod_{i=1}^{r}(s-\beta_{i})^{k_{i}}$ be its factorization.
To show $m$ is in $\bigoplus_{\beta\in \QQ} M^{\beta}$, we use induction on $r$.
If $r=1$, $m$ is in $M^{\beta_{1}}(\subset \bigoplus_{\beta\in \QQ}M^{\beta})$.
If $r\geq 2$,
since $(s-\beta_{1})^{l_{1}}$ and $\prod_{i=2}^{r}(s-\beta_{i})^{l_{i}}$ are coprime,
there exist polynomials $u(s), v(s)\in \CC[s]$ such that
$u(s)(s-\beta_{1})^{l_{1}}+v(s)\prod_{i=2}^{r}(s-\beta_{i})^{l_{i}}=1$.
Then, we have
\[m=u(t\dt)(t\dt-\beta_{1})^{l_{1}}m+(v(t\dt)\prod_{i=2}^{r}(t\dt-\beta_{i})^{l_{i}})m.\]
It is clear that $(v(t\dt)\prod_{i=2}^{r}(t\dt-\beta_{i})^{l_{i}})m$ is in $M^{\beta_{1}}$.
Moreover, 
$u(t\dt)(t\dt-\beta_{1})^{l_{1}}m$ is killed by the operation $\prod_{i=2}^{r}(t\dt-\beta_{i})^{l_{i}}$.
By the induction hypothesis, $u(t\dt)(t\dt-\beta_{1})^{l_{1}}m$ is in $\bigoplus_{\beta\in \QQ}M^{\beta}$, and hence $m$ is also in $\bigoplus_{\beta\in \QQ}M^{\beta}$.
This completes the proof.
\end{proof}

\begin{rem}\label{watanuki}
The category of monodromic $D_{X\times \CC_{t}}$-modules is closed under taking submodules, quotient modules and extensions in the category of $D_{X\times \CC_{t}}$-modules. 
Consequently, the category of monodromic $D_{X\times \CC_{t}}$-modules is abelian.
Moreover, let $M'$ be a submodule of a monodromic $D_{X\times \CC_{t}}$-module $M$.
Then, we have 
\[(M')^{\beta}=M'\cap M^{\beta}{\quad (\beta\in \QQ)},\]
and by Lemma~\ref{bunkailem} we obtain a decomposition 
\[M'=\bigoplus_{\beta\in \QQ}M'\cap M^{\beta}.\] 
Similar statements hold for quotient modules of $M$ and extensions of monodromic $D$-modules.
\end{rem}

\begin{rem}\label{affinerem}
We identify a quasi-coherent $O_{Z}$-module $F$ on an affine variety $Z$ with the $O(Z)$-module $\Gamma(Z,F)$ : the global sections of $F$.
Then,
if $X$ is affine, 
$M$ is monodromic
if and only if 
we have a decomposition
\begin{align}\label{Mdecomp}
M=\bigoplus_{\beta\in \QQ}M^{\beta},
\end{align}
as a $\Gamma(X\times \CC,D_{X\times \CC_{t}})=\Gamma(X,D_{X})\otimes\CC[t]\langle{\partial_{t}}\rangle$-module,
where $M^{\beta}=\{m\in M\ |\ (t\partial_{t}-\beta)^{l}m=0 \ \mbox{for some $l\geq 0$}\}$.
\end{rem}

\begin{prop}\label{isomprop}
Let $M$ be a monodromic $D_{X\times \CC_{t}}$-module.
Then, for $\beta\in \QQ$ we have the morphisms:
\begin{align*}
t&\colon M^{\beta}\to M^{\beta+1}\mbox{\qquad and}\\
\partial_{t}&\colon  M^{\beta}\to M^{\beta-1}.
\end{align*}
Moreover, the above morphisms except for
\begin{align*}
t&\colon M^{-1}\to M^{0}\mbox{\qquad and}\\
\partial_{t}&\colon  M^{0}\to M^{-1}
\end{align*}
are isomorphism.
\end{prop}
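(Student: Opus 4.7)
The plan is to verify the two maps are well-defined using the Heisenberg-type commutation relation $[\partial_t, t] = 1$, and then establish the isomorphism claim by exhibiting explicit inverses built from section-wise convergent Neumann series.

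First I would record the key identities
\begin{align*}
(t\partial_t - (\beta+1))\,t &= t\,(t\partial_t - \beta),\\
(t\partial_t - (\beta-1))\,\partial_t &= \partial_t\,(t\partial_t - \beta),
\end{align*}
which follow immediately from $\partial_t t = t\partial_t + 1$. Iterating gives $(t\partial_t - (\beta+1))^l t = t(t\partial_t - \beta)^l$ and $(t\partial_t - (\beta-1))^l\partial_t = \partial_t(t\partial_t - \beta)^l$, so if $m \in M^\beta$ is killed by some power of $t\partial_t - \beta$, then $tm$ (resp.\ $\partial_t m$) is killed by the same power of $t\partial_t - (\beta+1)$ (resp.\ $t\partial_t - (\beta-1)$). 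Hence $t$ and $\partial_t$ restrict to well-defined $D_X$-linear maps between the $M^\gamma$'s as stated.

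For the isomorphism claim, the key observation is that on $M^\beta$ the operator $N_\beta := t\partial_t - \beta$ is locally nilpotent in the sense that every section is annihilated by some power. Consequently, if $\lambda \in \CC^\times$, the operator $\lambda + N_\beta$ is invertible on $M^\beta$: its inverse is defined section by section by the terminating Neumann series $\lambda^{-1}\sum_{k\geq 0}(-N_\beta/\lambda)^k$. Applying this with $\lambda = \beta+1$ shows that $t\partial_t = (\beta+1) + N_{\beta+1}$ is invertible on $M^{\beta+1}$ whenever $\beta \neq -1$, while $\partial_t t = (t\partial_t + 1) = (\beta+1) + N_\beta$ is invertible on $M^\beta$ under the same condition. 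Thus both compositions
\[M^\beta \xrightarrow{t} M^{\beta+1} \xrightarrow{\partial_t} M^\beta \tunagi M^{\beta+1} \xrightarrow{\partial_t} M^\beta \xrightarrow{t} M^{\beta+1}\]
are isomorphisms when $\beta \neq -1$, forcing $t\colon M^\beta \to M^{\beta+1}$ to be an isomorphism (with inverse a scalar multiple of $\partial_t$ times the inverse of the nilpotent perturbation).

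The symmetric argument for $\partial_t\colon M^\beta \to M^{\beta-1}$ uses $t\partial_t = \beta + N_\beta$ on $M^\beta$ and $\partial_t t = \beta + N_{\beta-1}$ on $M^{\beta-1}$, both invertible precisely when $\beta \neq 0$. There is no real obstacle here; the only point requiring slight care is that the Neumann series inverting $\lambda + N_\beta$ is defined only section-wise, not by a single bounded order of nilpotency on all of $M^\beta$, but this suffices since $D$-module morphisms are tested section by section.
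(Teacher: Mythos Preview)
Your proof is correct and follows essentially the same strategy as the paper: both arguments hinge on the fact that $\partial_t t$ and $t\partial_t$ act on $M^{\gamma}$ as a nonzero scalar plus a locally nilpotent operator, and are therefore invertible. The paper packages this by writing the Weyl-algebra identity $(t\partial_t-\beta)^l = Pt + (-\beta-1)^l$ (and its companion with $t$ on the left) to exhibit section-wise one-sided inverses of $t$, whereas you obtain the same inverses via the terminating Neumann series for $((\beta+1)+N_\beta)^{-1}$; these are the same operators in different notation.
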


\begin{proof}
The first statement is clear by the definition of $M^{\beta}$.
We show the second statement.
Since the problem is local, we may assume that $X$ is affine.
So we use the notation of Remark~\ref{affinerem}.
For $\beta\in \QQ$,
we have
\[(t\partial_{t}-\beta)^l=Pt+(-\beta-1)^l,\]
for some operator $P$ of the form $\sum_{i-j=-1}a_{i,j}t^{i}\partial^{j}_{t}\ (a_{i,j}\in \CC)$.
Therefore, on the set $\mathrm{Ker}((t\partial_{t}-\beta)^l\colon M^{\beta}\to M^{\beta})$,
we have 
\[Pt=-(-\beta-1)^l.\]
Hence, if $\beta\neq -1$, the morphism $t\colon M^{\beta}\to M^{\beta+1}$ is injective.
Similarly,
on the set $\mathrm{Ker}((t\partial_{t}-\beta-1)^l\colon M^{\beta}\to M^{\beta})$,
we have
\[\partial_{t}Q=-(-\beta-1)^l\]
for some operator $Q$ of the form $\sum_{i-j=-1}a_{i,j}t^{i}\partial^{j}_{t}\ (a_{i,j}\in \CC)$.
Since the operator $Q$ sends $M^{\beta+1}$ to $M^{\beta}$,
this implies that the morphism $t\colon M^{\beta}\to M^{\beta+1}$ is surjective if $\beta\neq -1$.
We can check a similar statement for $\partial_{t}\colon M^{\beta}\to M^{\beta-1}$ in the same way.  
\end{proof}

\begin{rem}
By Proposition~\ref{isomprop}, we can reconstruct the monodromic $D$-module $M$ on $X\times \CC$ from the data:
\[
\left\{
\begin{array}{ll}
\mbox{(i)}&\mbox{The $D$-modules $M^{\alpha}$ on $X$ for $\alpha\in [-1,0]\cap \QQ$.}\\
\mbox{(ii)}&\mbox{The nilpotent endomorphisms $t\partial_{t}-\alpha\colon M^{\alpha}\to M^{\alpha}$ for $\alpha\in (-1,0)\cap \QQ$.}
\\
\mbox{(iii)}&\mbox{The morphisms $\partial_{t}\colon M^{0}\to M^{-1}$ and $t\colon M^{-1}\to M^{0}$}\\
&\mbox{such that $t\partial_{t}$ and $\partial_{t}t$ are nilpotent.}
\end{array}
\right.
\]
\end{rem}
\begin{prop}\label{cohprop}
Let $M$ be a monodromic $D$-module on ${X\times \CC_{t}}$.
Then, 
$M$ is a coherent $D_{X\times \CC_{t}}$-module
if and only if the following two properties hold:
\[\left\{
\begin{array}{ll}
\mbox{\rm(i)}&\mbox{there exists a finite subset $A\subset [-1,0]\cap \QQ$ s.t. $M^{\beta}=0\ (\beta\in [-1,0]\setminus A)$,}\\
\mbox{\rm(ii)}&\mbox{all $M^{\alpha}\ (\alpha\in A)$ are coherent $D_{X}$-modules
}.\end{array}
\right.\]
\end{prop}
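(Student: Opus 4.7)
The plan is to use the decomposition $M = \bigoplus_{\beta \in \QQ} M^\beta$ of Proposition~\ref{bunkailem} together with the shift isomorphisms $t : M^\beta \simto M^{\beta+1}$ ($\beta \neq -1$) and $\partial_t : M^\beta \simto M^{\beta-1}$ ($\beta \neq 0$) from Proposition~\ref{isomprop}. Since coherence as a $D_{X \times \CC_t}$-module is local on $X$, I work on an affine open and identify $M$ with its module of global sections over $\Gamma(X, D_X) \otimes \CC[t]\langle \partial_t\rangle$.

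For the ($\Leftarrow$) direction, assume (i) and (ii). For each $\alpha \in A$, choose finitely many $D_X$-generators of $M^\alpha$; their union over all $\alpha \in A$ is finite. I claim these generate $M$ as a $D_{X \times \CC_t}$-module. For $\alpha \in A \cap (-1, 0)$, applying powers of $t$ and $\partial_t$ (each of which is an iso between the relevant $M^{\alpha + n}$ since no intermediate index equals $-1$ or $0$) shows the $D_{X \times \CC_t}$-submodule generated by generators of $M^\alpha$ contains every $M^{\alpha + n}$ ($n \in \ZZ$). For the integer coset, $t^n : M^0 \simto M^n$ ($n \geq 0$) and $\partial_t^n : M^{-1} \simto M^{-n-1}$ ($n \geq 0$) are isomorphisms, so generators of $M^0$ and $M^{-1}$ (whichever lie in $A$) reach the remaining $M^n$. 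Observe moreover that if, say, $0 \notin A$, then $M^0 = 0$ forces $M^n = 0$ for $n \geq 1$ (and analogously on the negative side), so the generators attached to $A$ alone suffice.

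For the ($\Rightarrow$) direction, assume $M$ is coherent. On an affine open, pick finitely many generators $m_1, \ldots, m_k \in M$ and decompose each $m_i = \sum_\gamma m_{i,\gamma}$ with $m_{i,\gamma} \in M^\gamma$; only finitely many $\gamma$'s occur in total. Since $t^a \partial_t^b$ shifts the $\beta$-grading by $a - b \in \ZZ$, we get $M^\beta = 0$ unless $\beta$ is congruent modulo $\ZZ$ to some such $\gamma$, yielding (i) with $A$ the finite set of representatives in $[-1, 0]$. For (ii), fix $\alpha \in A$; I need to check each $M^\alpha$ is finitely generated over $D_X$. For each $m_{i,\gamma}$ there is an $l \geq 0$ with $(t\partial_t - \gamma)^l m_{i,\gamma} = 0$, so $D_X[t\partial_t] m_{i,\gamma}$ is finitely generated over $D_X$ (by $(t\partial_t)^j m_{i,\gamma}$, $0 \leq j < l$). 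Using the standard identities
\[t^b \partial_t^b = (t\partial_t)(t\partial_t - 1) \cdots (t\partial_t - b + 1), \qquad \partial_t^a t^a = (t\partial_t + 1)(t\partial_t + 2) \cdots (t\partial_t + a),\]
every $t^a \partial_t^b m_{i,\gamma}$ can be rewritten, for $a \geq b$ as $t^{a-b}(t\partial_t)(t\partial_t - 1) \cdots (t\partial_t - b + 1) m_{i,\gamma}$, and for $b > a$ as $(t\partial_t)(t\partial_t - 1) \cdots (t\partial_t - a + 1) \partial_t^{b-a} m_{i,\gamma}$; in either case the output lies in a finitely generated $D_X$-submodule of $M^{\gamma + a - b}$, where for the second case we use $[t\partial_t, \partial_t^n] = -n \partial_t^n$ to see that $(t\partial_t - (\gamma - n))^l$ annihilates $\partial_t^n m_{i,\gamma}$. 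Summing over the finitely many pairs $(i, \gamma)$ with $\gamma \equiv \alpha \pmod{\ZZ}$ gives finite generation of $M^\alpha$. The main obstacle is precisely this absorption step: the family $\{t^a \partial_t^b m_{i,\gamma} : a - b = c\}$ is a priori infinite, and the Bernstein-type identities above combined with the local nilpotence of $t\partial_t - \gamma$ on $M^\gamma$ are what reduce it to a finitely generated $D_X$-submodule.
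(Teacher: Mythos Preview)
Your proof is correct and follows essentially the same approach as the paper: reduce to the affine case, replace generators by their homogeneous components, and exploit the local nilpotence of $t\partial_t-\beta$ on $M^{\beta}$ to bound the $D_X$-generators of each $M^{\alpha}$. The paper is terser (it simply writes down the finite generating set $\{\partial_t^{\beta_i-\beta}(t\partial_t)^l m_i\}\cup\{t^{\beta-\beta_i}(t\partial_t)^l m_i\}$ without spelling out the Bernstein identities), while you make the absorption step explicit and treat the integer coset in the $(\Leftarrow)$ direction more carefully; but the underlying argument is the same.
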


\begin{proof}
Since the problem is local,
we may assume that $X$ is affine.

Assume that $M$ is a coherent $D_{X\times \CC_{t}}$-module.
We take generators $m_{1},\dots,m_{r}$ of $M$ as a $D_{X\times \CC_{t}}$-module.
We may assume that $m_{i}$ are homogeneous, i.e. $m_{i}\in M^{\beta_{i}}$ for some $\beta_{i}\in \QQ$.
Then, the submodule of $M$ generated by $m_{i}$ is contained in $\bigoplus_{l\in \ZZ}M^{\beta_{i}+l}$.
Therefore, the condition (i) above must be satisfied.
For any $\beta\in \QQ$, take an element $m\in M^{\beta}$.
Then, consider the union of the two subsets of $M^{\beta}$: 
\begin{align*}
\{\partial^{\beta_{i}-\beta}_{t}(t\partial_{t})^lm_{i}\in M\ |\ l\geq 0,\ \beta_{i}\in \beta+\ZZ_{\geq 0}\},\mbox{\ and}\\
\{t^{\beta-\beta_{i}}(t\partial_{t})^lm_{i}\in M\ |\ l\geq 0,\ \beta_{i}\in \beta+\ZZ_{\leq 0}\}.
\end{align*}
Since $(t\partial_{t}-\beta_{i})^lm_{i}=0$ for some $l\geq 0$,
the subset of $M^{\beta}$ is finite.
We can see that $M^{\beta}$ is generated by this finite subset as $D_{X}$-module, i.e. the condition (ii) is satisfied.

Conversely, assume that we have the condition (i) and (ii).
We consider the union of finite generators of all $M^{\beta}$ for $\beta\in [-1,0]\cap \QQ$ as $D_{X}$-module.
By the condition (i), this set is finite.
Moreover, by Proposition~\ref{isomprop}, this set generates $M=\bigoplus_{\beta\in \QQ}M^{\beta}$ as $D_{X\times \CC_{t}}$-module.
\end{proof}

\begin{rem}\label{yugenkrem}
Assume that $X$ is affine and $M^{\beta}$ is coherent.
Let $m_{1},\dots,m_{r}$ be generators of $M^{\beta}$.
Since each $m_{i}$ is killed by $(t\partial_{t}-\beta)^{l_{i}}$ for some $l_{i}\geq 0$,
for a sufficiently large $l\geq 0$
we have
\[M^{\beta}=\mathrm{Ker}((t\partial_{t}-\beta)^l\colon M\to M).\]

Since our $X$ is of finite type over $\CC$, this formula is valid for a sufficiently large $l\geq 0$ even if $X$ is not affine.
\end{rem}

\begin{prop}\label{intlemkai}
Let $M$ be a coherent and monodromic $D_{X\times \CC_{t}}$-module.
Then, $M|_{X\times \CC^*}$ is a locally free $O_{X\times \CC^*}$-module of finite rank if and only if all the coherent $D_{X}$-modules $M^{\alpha}$ for $\alpha\in (-1,0]\cap \QQ$ are locally free $O_{X}$-modules of finite rank.
\end{prop}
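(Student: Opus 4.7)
The key idea is that on $X \times \CC^*$ the monodromic structure folds the $\QQ$-grading of $M$ onto $\QQ/\ZZ$, so that $M|_{X\times\CC^*}$ becomes isomorphic as an $O_{X\times\CC^*}$-module to the pullback $p^*\bigl(\bigoplus_{\alpha \in (-1,0]\cap \QQ} M^\alpha\bigr)$ along the projection $p : X\times\CC^* \to X$. Local freeness of finite rank then transfers between the two sides immediately, and the proposition follows.

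Concretely, I would first regroup the summands of (\ref{jhiphop}) according to cosets modulo $\ZZ$: set $N_\alpha := \bigoplus_{n \in \ZZ} M^{\alpha+n}$ for $\alpha \in (-1, 0]\cap \QQ$. By Proposition~\ref{isomprop}, each $N_\alpha$ is stable under $t$ and $\partial_t$, so $M = \bigoplus_\alpha N_\alpha$ is a decomposition of $D_X[t]\langle\partial_t\rangle$-modules, and by Proposition~\ref{cohprop} only finitely many of the $N_\alpha$ are nonzero.

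Next, I would establish, for each $\alpha \in (-1, 0]\cap \QQ$, an $O_X[t^\pm]$-module isomorphism
\[f_\alpha : M^\alpha \otimes_{O_X} O_X[t^\pm] \simto N_\alpha[t^{-1}], \qquad m \otimes t^n \mapsto t^n m.\]
For $\alpha \in (-1, 0)\cap \QQ$ this is direct from Proposition~\ref{isomprop}: each $t : M^{\alpha+n} \to M^{\alpha+n+1}$ ($n \in \ZZ$) is already an isomorphism, since $\alpha + n$ is never equal to $-1$. The substantive case is $\alpha = 0$, where $t : M^{-1} \to M^0$ is not assumed to be an isomorphism. Surjectivity of $f_0$ onto $N_0[t^{-1}]$ would be checked by writing any $m \in M^n$ with $n < 0$ as $t^n (t^{-n} m)$, with $t^{-n} m \in M^0$; injectivity would be checked by clearing denominators, multiplying a would-be kernel relation by a sufficiently large positive power of $t$ to land it inside $\bigoplus_{k \geq 0} M^k$, on which each $t^N : M^0 \to M^N$ is again an isomorphism by Proposition~\ref{isomprop}.

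Summing over $\alpha$ and identifying $M^\alpha \otimes_{O_X} O_X[t^\pm]$ with $p^* M^\alpha$ through Lemma~\ref{peq}, one obtains the desired isomorphism $M|_{X\times\CC^*} \simeq \bigoplus_{\alpha \in (-1,0]\cap \QQ} p^* M^\alpha$ of $O_{X\times\CC^*}$-modules. Because the sum is finite, this is locally free of finite rank iff each summand $p^* M^\alpha$ is; and since $p$ is faithfully flat and admits the global section $x \mapsto (x,1)$, the latter holds iff each $M^\alpha$ is locally free of finite rank over $O_X$. The main obstacle is the $\alpha = 0$ case of $f_0$: although $M^{-1}$ does not itself appear among the summands $M^\alpha$ indexed by $(-1,0]\cap \QQ$, it must be shown to sit inside $N_0[t^{-1}]$ precisely as $t^{-1} M^0$, which genuinely requires the inverse of $t$ produced by the localization.
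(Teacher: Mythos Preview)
Your proof is correct and shares the paper's central observation: the $O_{X\times\CC^*}$-module isomorphism $M|_{X\times\CC^*}\simeq\bigoplus_{\alpha\in(-1,0]\cap\QQ}M^\alpha\otimes_{O_X}O_X[t^\pm]$, which the paper states tersely (invoking only Proposition~\ref{isomprop}) and you justify more carefully, including the $\alpha=0$ localization step. Where you diverge is in the converse direction. The paper argues that a finite set of local $O_{X\times\CC^*}$-generators of $M|_{X\times\CC^*}$ may be taken homogeneous in the $M^{\alpha_i}$, deduces that each $M^\alpha$ is $O_X$-finitely generated, and then appeals to the theorem that a $D$-module coherent over $O$ is automatically locally free (Theorem~1.4.10 of \cite{HTT}). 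Your route---each $p^*M^\alpha$ is a direct summand of a finite locally free module, hence itself locally free of finite rank, and then restrict along the section $x\mapsto(x,1)$---is more self-contained and sidesteps that external input; the paper's argument, on the other hand, makes the $D$-module structure of $M^\alpha$ do the work. One small imprecision in your closing remark: the image of $M^{-1}$ in $N_0[t^{-1}]$ is only $t^{-1}\cdot t(M^{-1})$, not all of $t^{-1}M^0$, since $t\colon M^{-1}\to M^0$ need not be surjective; but your actual surjectivity argument for $f_0$ (writing $m\in M^n$ with $n<0$ as $t^{n}(t^{-n}m)$ in the localization) does not depend on this and goes through as written.
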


\begin{proof}
Assume that $M^{\alpha}$ are locally free of finite rank for every $\alpha\in (-1,0]\cap \QQ$.
Since there are finitely many non-zero $M^{\alpha}$ ($\alpha\in (-1,0]\cap \QQ$)
by Proposition~\ref{cohprop},
we can take a neighborhood $U$ of a point in $X$ such that
all $M^{\alpha}|_{U}$ are locally free of finite rank.
Since we have $M|_{U\times \CS}=(\bigoplus_{\alpha\in (-1,0]\cap \QQ}M^{\alpha}|_{U})\otimes \CC[t^{\pm}]$ by Proposition~\ref{isomprop},
this is also a locally free $O_{X\times \CS}$-module of finite rank.

Conversely, assume that $M|_{X\times \CC^*}$ is a locally free $O_{X\times \CS}$-module of finite rank.
On a sufficiently small neighborhood of a point in $X\times \CC^*$,
we take finite generators (as an $O_{X\times \CS}$-module) $\sigma_{1},\dots,\sigma_{l}$ of $M|_{X\times \CC^*}$.
By Proposition~\ref{isomprop},
we may assume that $\sigma_{i}$ is in $M^{\alpha_{i}}$ for some $\alpha_{i}\in (-1,0]\cap \QQ$ for $i=1,\dots,l$.
Then, we can see that for any $\alpha\in (-1,0]\cap \QQ$, $M^{\alpha}$ is generated (as an $O_{X}$-module) by $\{\sigma_{i}\ |\ \alpha_{i}=\alpha\}\subset M^{\alpha}$.
Since any $D$-module which is coherent over $O$ is a locally free $O$-module of finite rank (see Theorem~1.4.10 of \cite{HTT}),
$M^{\alpha}$ is a locally free $O_{X}$-module of finite rank.
\end{proof}

\subsection{The Kashiwara-Malgrange filtrations of monodromic $D$-modules}\label{kasmalsec}
We consider the Kashiwara-Malgrange filtration of a monodromic $D$-module.
We refer to \cite{MHM}, \cite{MHMProj}, \cite{LeiNear}, \cite{MaiMeb}, \cite{HoSiD}, \cite{Schnell}, \cite{Shannon}, \cite{Kashivan} for the definition and results for the $V$-filtration and Kashiwara-Malgrange filtration. 
For a coherent $D_{X\times \CC}$-module $M$, we denote by $\{V^{\beta}_{t}M\}_{\beta\in \RR}$ the Kashiwara-Malgrange filtration of $M$ along $t=0$ when it exists.
Recall that $V^{\beta}_{t}M$ is a coherent $V^{0}_{t}D_{X\times \CC}(=D_{X}[t,t\partial_{t}])$-module and $\GR^{\beta}_{V}M=V_{t}^{\beta}M/V_{t}^{>\beta}M$ is killed by $(t\partial_{t}-\beta)^l$ for some $l\in \ZZ_{\geq 0}$.
We can express the Kashiwara-Malgrange filtration of a monodromic $D$-module as follows.
\begin{prop}\label{Vprop}
Let $M$ be a monodromic coherent $D_{X\times \CC_{t}}$-module.
Then, 
the Kashiwara-Malgrange filtration of $M$ along $t=0$ exists and we have
\begin{align*}
V^{\gamma}_{t}M=\bigoplus_{\beta\geq \gamma}M^{\beta}.
\end{align*}
Therefore,
\begin{align*}
\GR^{\gamma}_{V}M(:=V^{\gamma}_{t}M/V^{>\gamma}_{t}M)\simeq M^{\gamma}.
\end{align*}
\end{prop}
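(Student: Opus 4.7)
The natural plan is to write down the candidate filtration $V^{\gamma}_{t}M := \bigoplus_{\beta\geq \gamma}M^{\beta}$ and verify that it satisfies the axiomatic characterization of the Kashiwara--Malgrange filtration, thereby proving both its existence and the explicit formula in one stroke.

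The preparation is to pin down the shape of the index set $S := \{\beta\in\QQ : M^{\beta}\neq 0\}$. By Proposition~\ref{cohprop} there is a finite $A\subset[-1,0]\cap\QQ$ with $M^{\beta}=0$ for $\beta\in[-1,0]\setminus A$, and Proposition~\ref{isomprop} propagates the nonvanishing by $\ZZ$-shifts via the isomorphisms given by $t$ and $\partial_{t}$; hence $S\subset A+\ZZ$ is discrete and each interval $[\gamma,\gamma+1)$ meets $S$ in only finitely many points. This already makes $V^{\gamma}_{t}M$ a well-defined decreasing, exhaustive, discretely indexed filtration.

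Next I would check the remaining axioms of the Kashiwara--Malgrange filtration one by one. (a) Coherence of $V^{\gamma}_{t}M$ over $V^{0}_{t}D_{X\times\CC}=D_{X}[t,t\partial_{t}]$: locally, choose finitely many $D_{X}$-generators of each (finitely many) $M^{\beta_{i}}$ with $\beta_{i}\in S\cap[\gamma,\gamma+1)$; by Proposition~\ref{isomprop} the operator $t$ surjects $M^{\beta}\twoheadrightarrow M^{\beta+1}$ for $\beta\neq -1$, so these finitely many sections generate $V^{\gamma}_{t}M$ over $D_{X}[t]$. (b) $tV^{\gamma}_{t}M\subset V^{\gamma+1}_{t}M$ is immediate, and equality for $\gamma>-1$ follows again from Proposition~\ref{isomprop}. (c) $\partial_{t}V^{\gamma}_{t}M\subset V^{\gamma-1}_{t}M$ is immediate. (d) Discreteness of $S$ gives $V^{>\gamma}_{t}M=\bigcup_{\gamma'>\gamma}V^{\gamma'}_{t}M=\bigoplus_{\beta>\gamma}M^{\beta}$, whence $\GR^{\gamma}_{V}M\simeq M^{\gamma}$ as claimed. (e) $t\partial_{t}-\gamma$ acts nilpotently on $\GR^{\gamma}_{V}M\simeq M^{\gamma}$ by the very definition of $M^{\gamma}$, with a uniform exponent by Remark~\ref{yugenkrem}. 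Uniqueness of the Kashiwara--Malgrange filtration then identifies our $V^{\gamma}_{t}M$ with the genuine $V$-filtration of $M$.

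The main obstacle I anticipate is axiom (a), coherence over $V^{0}_{t}D_{X\times\CC}$. The definition of monodromicity only guarantees a \emph{sectionwise} nilpotence polynomial for $t\partial_{t}$, so naively $M^{\gamma}$ could require infinitely many $D_{X}$-generators. The resolution is to invoke Remark~\ref{yugenkrem}, which tells us that on any affine open $M^{\gamma}=\Ker(t\partial_{t}-\gamma)^{l}$ for some single $l$, giving the needed uniform nilpotence exponent and thus the finite generation step in (a). With that in hand, the remaining axioms are essentially formal consequences of Propositions~\ref{bunkailem}, \ref{isomprop} and \ref{cohprop}.
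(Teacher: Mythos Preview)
Your proof is correct and follows the paper's own strategy exactly: define the candidate filtration $U^{\gamma}M=\bigoplus_{\beta\geq\gamma}M^{\beta}$ and verify the axioms characterizing the Kashiwara--Malgrange filtration via Propositions~\ref{cohprop} and~\ref{isomprop}. One minor quibble on your final paragraph: the $D_{X}$-coherence of each $M^{\gamma}$ is supplied directly by Proposition~\ref{cohprop}(ii), not by Remark~\ref{yugenkrem} (which \emph{assumes} coherence and only yields the uniform nilpotence exponent you need for axiom~(e)).
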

\begin{proof}
By the uniqueness of the Kashiwara-Malgrange filtration,
the problem is local,
and hence we may assume that $X$ is affine.
We set $U^{\gamma}M:=\bigoplus_{\beta\geq \gamma}M^{\beta}$ for $\gamma\in \RR$.
By Proposition~\ref{cohprop},
all $M^{\beta}$ are coherent.
Since $M^{\beta}$ is killed by $(t\partial_{t}-\beta)^l$ for sufficiently large $l\geq 0$,
by Proposition~\ref{isomprop} 
we can see that $U^{\gamma}M$ is a coherent $D_{X}[t]\langle t\partial_{t}\rangle$-module.
Moreover, $U^{\gamma}M/U^{>\gamma}M$ is isomorphic to $M^{\gamma}$ and killed by $(t\partial_{t}-\gamma)^l$ for some $l\geq 0$.
Furthermore, it is clear that 
$\bigcup_{\gamma\in \RR}U^{\gamma}M=M$,
$tU^{\gamma}M\subset U^{\gamma}M$ and
$\partial_{t}U^{\gamma}M\subset U^{\gamma-1}M$.
By Proposition~\ref{isomprop},
$tU^{\gamma}M=U^{\gamma+1}M$ for $\gamma>-1$.
Because the Kahiwara-Malgrange filtration is characterized by these properties,
$\{U^{\gamma}M\}_{\gamma\in \RR}$ is the Kashiwara-Malgrange filtration.
\end{proof}

\begin{rem}
Since our monodromic $D$-module $M$ is $\QQ$-monodromic (Remark~\ref{qmon}),
the filtration $V_{t}^{\bullet}M$ jumps only at rational numbers.
Therefore, we can say that $M$ is $\QQ$-specializable along $t=0$.  
\end{rem}

For $\alpha\in (-1,0]\cap \QQ$,
we set $e(\alpha):=\exp(-2\pi\sqrt{-1}\alpha)$.
Then,
we define the $e(\alpha)$-nearby cycle $\psi_{t,e(\alpha)}(M)$ of $M$ as $V^{\alpha}_{t}M/V^{>\alpha}_{t}M$ and
the unipotent vanishing cycle $\phi_{t,1}(M)$ of $M$ as $V^{-1}_{t}M/V^{>-1}_{t}M$ ($\psi_{t,e(\alpha)}(M)$ and $\phi_{t,1}(M)$ are $D$-modules on $X$).
Moreover, we have a natural morphism 
$\psi_{t,1}(M)\to \phi_{t,1}(M)$ defined by $-\partial_{t}$
and
$\phi_{t,1}(M)\to \psi_{t,1}(M)$ defined by $t$.
The former (resp. latter) one is written by $\mathrm{can}$ (resp. $\mathrm{var}$).
We also define the automorphisms of $\psi_{t,e(\alpha)}(M)$ and $\phi_{t,1}(M)$
by $T:=\exp(-2\pi\sqrt{-1}t\partial_{t})$.
When $M$ is regular holonomic, 
these notions correspond to 
the ones in the category of perverse sheaves respectively.
For example, we have
\begin{align*}
\mathrm{DR_{X}}(\psi_{t,e(\alpha)}(M))&\simeq {}^p\psi_{t,\mathrm{e(\alpha)}}(\mathrm{DR}_{X\times \CC}(M)),\quad\mbox{and}\\
\mathrm{DR_{X}}(\phi_{t,1}(M))&\simeq {}^p\phi_{t,1}(\mathrm{DR}_{X\times \CC}(M)),
\end{align*}
where ${}^p\psi_{t,\mathrm{e(\alpha)}}$ (resp. $ {}^p\phi_{t,1}$) is the nearby (resp. vanishing) cycle functor between the category of perverse sheaves and $\mathrm{DR}_{X}$ is the de Rham functor.

As a corollary of Proposition~\ref{Vprop}, we can express the nearby and vanishing cycles as follows.

\begin{cor}\label{nevancor}
Let $M$ be a monodromic coherent $D_{X\times \CC_{t}}$-module.
Then, for $\alpha\in (-1,0]\cap \QQ$ we have
\begin{align*}
\psi_{t,e(\alpha)}(M)&=M^{\alpha},\quad \mbox{and}\\
\phi_{t,1}(M)&=M^{-1}.
\end{align*}
Moreover, 
$\mathrm{can}\colon \psi_{t,1}(M)\to \phi_{t,1}(M)$ 
is $-\partial_{t}\colon M^{0}\to M^{-1}$,
$\mathrm{var}\colon \phi_{t,1}(M)\to \psi_{t,1}(M)$
is $t\colon M^{-1}\to M^{0}$,
and 
$\frac{-1}{2\pi\sqrt{-1}}$ times the logarithm of the unipotent part of the monodromy automorphism $T=\exp(-2\pi\sqrt{-1}t\partial_{t})$ on $\psi_{t,e(\alpha)}(M)=M^{\alpha}$ (resp. $\phi_{t,1}M=M^{-1}$) is $t\partial_{t}-\alpha$ (resp. $t\partial_{t}+1$).
\end{cor}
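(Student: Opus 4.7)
The plan is to reduce everything to Proposition~\ref{Vprop}; once that identification of the $V$-filtration is in hand, every assertion in the corollary is a direct unwinding of definitions.

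First I would handle the two identifications of cycles. By the definitions given immediately before the statement, $\psi_{t,e(\alpha)}(M) = V^{\alpha}_{t}M/V^{>\alpha}_{t}M = \GR^{\alpha}_{V}M$ for $\alpha \in (-1,0] \cap \QQ$ and $\phi_{t,1}(M) = \GR^{-1}_{V}M$. Proposition~\ref{Vprop} identifies $\GR^{\gamma}_{V}M$ with $M^{\gamma}$, yielding the first two equalities. Moreover, under the decomposition $V^{\gamma}_{t}M = \bigoplus_{\beta \geq \gamma} M^{\beta}$ established there, the canonical quotient $V^{\gamma}_{t}M \twoheadrightarrow \GR^{\gamma}_{V}M = M^{\gamma}$ is simply the projection onto the $\gamma$-summand, which will be used in the next step.

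Next I would identify can and var. By definition, $\mathrm{can}\colon \psi_{t,1}(M) \to \phi_{t,1}(M)$ and $\mathrm{var}\colon \phi_{t,1}(M) \to \psi_{t,1}(M)$ are the maps induced on $\GR^{0}_{V}M$ and $\GR^{-1}_{V}M$ by $-\partial_{t}$ and by $t$ respectively. By Proposition~\ref{isomprop}, $\partial_{t}$ sends $M^{\beta}$ to $M^{\beta-1}$ and $t$ sends $M^{\beta}$ to $M^{\beta+1}$, so both operators are homogeneous with respect to the decomposition $M = \bigoplus_{\beta} M^{\beta}$. Passing to the graded pieces through the projection described above, the induced maps on $M^{0}$ and $M^{-1}$ are exactly $-\partial_{t}\colon M^{0} \to M^{-1}$ and $t\colon M^{-1} \to M^{0}$, as claimed.

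Finally I would deduce the monodromy formula by Jordan decomposition. On $M^{\alpha}$, the operator $N := t\partial_{t} - \alpha$ is nilpotent by the very definition of $M^{\alpha}$, so $t\partial_{t} = \alpha\cdot\mathrm{id} + N$ is the additive Jordan decomposition. Since $\alpha\cdot\mathrm{id}$ and $N$ commute, exponentiating gives
\[
T = \exp(-2\pi\sqrt{-1}\, t\partial_{t}) = e(\alpha)\cdot \exp(-2\pi\sqrt{-1}\, N),
\]
which is the multiplicative Jordan decomposition of $T$ on $M^{\alpha}$: the semisimple part is $e(\alpha)\cdot\mathrm{id}$ and the unipotent part is $\exp(-2\pi\sqrt{-1}\, N)$. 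Taking the logarithm of the latter yields $-2\pi\sqrt{-1}\, N$, and multiplying by $\tfrac{-1}{2\pi\sqrt{-1}}$ recovers $N = t\partial_{t} - \alpha$. The corresponding statement on $\phi_{t,1}(M) = M^{-1}$ is the special case $\alpha = -1$, giving $t\partial_{t} + 1$. I do not expect a genuine obstacle; the closest to a subtlety is keeping the sign conventions (in particular the minus sign in $\mathrm{can} = -\partial_{t}$ and in $T = \exp(-2\pi\sqrt{-1}\, t\partial_{t})$) consistent with those fixed just before the statement, but these are already spelled out in the excerpt.
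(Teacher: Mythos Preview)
Your proposal is correct and matches the paper's approach: the paper states this as an immediate corollary of Proposition~\ref{Vprop} with no written proof, and your argument spells out precisely the unwinding of definitions (graded pieces of $V^{\bullet}_{t}M$, the induced maps $-\partial_{t}$ and $t$, and the Jordan decomposition of $t\partial_{t}$ on $M^{\alpha}$) that the paper leaves implicit.
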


We also use the nearby cycle $\psi_{t}(M):=V_{t}^{>-1}M/V_{t}^{>0}M=
\bigoplus_{\alpha\in (-1,0]\cap \QQ}\psi_{t,e(\alpha)}(M)$ of $M$ along $t=0$.
Then, we have $\psi_{t}(M)= \bigoplus_{\alpha\in (-1,0]\cap \QQ}M^{\alpha}$ and
$\frac{-1}{2\pi\sqrt{-1}}$ times the logarithm of the unipotent part of the monodromy 
automorphism $T$ can be expressed as
$\bigoplus_{\alpha\in (-1,0]\cap \QQ}(t\partial_{t}-\alpha)$.

\subsection{{D}uality of monodromic $D$-modules}
We consider the dual of a monodromic $D$-module.
Our proof in this section is inspired by \cite{MaiMeb}.

\begin{lem}\label{duares}
Let $M$ be a monodromic coherent $D$-module on $X\times \CC_{t}$.
Then, locally, there exists a resolution of $M$:
\[\dots \to\calE_{2}\to \calE_{1}\to \calE_{0}\to M\to 0,\]
where $\calE_{i}$ is a direct sum of finitely many monodromic $D$-modules of the form:
$D_{X\times \CC_{t}}/D_{X\times \CC_{t}}(t\partial_{t}-\alpha)^{l}$
for some $\alpha\in [-1,0]\cap \QQ$ and $l\in \ZZ_{\geq 1}$.
\end{lem}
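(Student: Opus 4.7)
The strategy is to produce a surjection onto $M$ from such a direct sum and then iterate on the kernel, using that the category of coherent monodromic $D_{X\times \CC_t}$-modules is closed under taking kernels. Since the question is local I may assume $X$ is affine. By Proposition~\ref{cohprop} there is a finite subset $A \subset [-1,0]\cap \QQ$ such that $M^\beta = 0$ for $\beta \in [-1,0]\setminus A$ and each $M^\alpha$ ($\alpha \in A$) is coherent over $D_X$; by Remark~\ref{yugenkrem} I may choose a single $l \geq 1$ large enough that $(t\partial_t - \alpha)^l$ annihilates $M^\alpha$ for every $\alpha \in A$.

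For each $\alpha \in A$, choose finite $D_X$-generators $m^\alpha_1,\ldots,m^\alpha_{r_\alpha}$ of $M^\alpha$. Since $(t\partial_t - \alpha)^l m^\alpha_i = 0$, the $D_{X\times \CC_t}$-linear map sending $1 \mapsto m^\alpha_i$ descends to a well-defined morphism $D_{X\times \CC_t}/D_{X\times \CC_t}(t\partial_t - \alpha)^l \to M$. Summing these over $(\alpha, i)$ produces
\[
\calE_0 \;:=\; \bigoplus_{\alpha \in A}\bigoplus_{i=1}^{r_\alpha} D_{X\times \CC_t}/D_{X\times \CC_t}(t\partial_t - \alpha)^l \;\longrightarrow\; M.
\]
I expect the verification that this map is surjective to be the decisive point. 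Its image is a $D_{X\times \CC_t}$-submodule of $M$ containing $D_X \cdot m^\alpha_i$, hence containing every $M^\alpha$ with $\alpha \in A$; applying powers of $t$ and $\partial_t$ and invoking the isomorphisms of Proposition~\ref{isomprop}, the image contains every $M^\beta$ for $\beta \in \QQ$, and by Proposition~\ref{bunkailem} it equals all of $M$. The only nontrivial point here is that because $t\colon M^{-1}\to M^0$ and $\partial_t\colon M^0\to M^{-1}$ need not be isomorphisms, one really does need generators of both $M^{-1}$ and $M^0$ (whenever they lie in $A$) in order to reach the remaining $M^\beta$ with $\beta\in\ZZ$.

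To iterate, I set $K_0 := \mathrm{Ker}(\calE_0 \to M)$. The commutation identity $(t\partial_t)\, t^a \partial_t^b = t^a \partial_t^b (t\partial_t + a - b)$ shows that each cyclic summand $D_{X\times \CC_t}/D_{X\times \CC_t}(t\partial_t - \alpha)^l$ is monodromic, with $\beta$-component vanishing unless $\beta \equiv \alpha \pmod{\ZZ}$. Hence $\calE_0$ is coherent and monodromic, and by Remark~\ref{watanuki} the same holds for $K_0$. Applying the construction of $\calE_0$ to $K_0$ yields $\calE_1 \to K_0$ of the same shape, and proceeding inductively produces the desired (generally infinite) resolution. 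Apart from the surjectivity verification above, the remaining steps are formal applications of the abelian-categorical closure properties of coherent monodromic $D$-modules.
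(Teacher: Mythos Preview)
Your proof is correct and follows essentially the same strategy as the paper's: produce a surjection from a finite direct sum of the cyclic modules onto $M$ using homogeneous generators with $\alpha\in[-1,0]\cap\QQ$, then iterate on the (coherent, monodromic) kernel. The only cosmetic differences are that you take $D_X$-generators of each $M^\alpha$ and a uniform exponent $l$, whereas the paper starts directly with homogeneous $D_{X\times\CC_t}$-generators $m_i\in M^{\alpha_i}$ and allows individual exponents $l_i$; your more explicit surjectivity check via Proposition~\ref{isomprop} is fine but not strictly needed once the $m_i$ are already $D_{X\times\CC_t}$-generators.
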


\begin{proof}
We may assume that $X$ is affine.
Take generators $m_{1},\dots,m_{r}$ of $M$ as $D$-module such that $m_{i}$ is in $M^{\alpha_{i}}$ for some $\alpha_{i}\in [-1,0]\cap \QQ$.
Then, for a sufficiently large $l_{i}\in \ZZ_{\geq 1}$,
we have $(t\partial_{t}-\alpha_{i})^{l_{i}}m_{i}=0$.
We thus obtain a surjection
\begin{align}\label{GOGO}
\calE_{0}:=\bigoplus_{i=1}^{r}D_{X\times \CC_{t}}/D_{X\times \CC_{t}}(t\partial_{t}-\alpha_{i})^{l_{i}}
\twoheadrightarrow M\to 0.
\end{align}
By the definition, $\calE_{0}$ is coherent and monodromic and so is the kernel of (\ref{GOGO}).
Therefore, we can repeat this procedure and obtain a desired resolution.   
\end{proof}

We remark that $\calE_{i}$ in Lemma~\ref{duares} is not holonomic (unless $X$ is one point).
Moreover, the $D$-module $D_{X\times \CC_{t}}/D_{X\times \CC_{t}}(t\partial_{t}-\alpha)^{l}$ has a resolution
\[0\to D_{X\times \CC_{t}}\to D_{X\times \CC_{t}}\to D_{X\times \CC_{t}}/D_{X\times \CC_{t}}(t\partial_{t}-\alpha)^{l}\to 0.\]
Recall that for a $D_{X\times \CC}$-module $M$ its dual is defined as 
\[\mathbb{D}M=\DR \mathcal{H} om_{D_{X\times \CC}}(M,D_{X\times \CC})\otimes_{O_{X\times \CC}}\omega_{X\times \CC}^{\otimes -1}[\dim{X}+1].\]
\begin{lem}\label{actaa}
Each cohomology of the dual $\mathbb{D}M$ of a monodromic coherent $D$-module $M$ is also monodromic.
\end{lem}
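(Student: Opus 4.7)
My plan is to combine the resolution of Lemma~\ref{duares} with an explicit computation of the dual of the basic building block $D_{X\times\CC_t}/D_{X\times\CC_t}(t\partial_t-\alpha)^l$, and then use the stability properties from Remark~\ref{watanuki}.

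Since monodromicity is a local property, I would first work locally and invoke Lemma~\ref{duares} to obtain a resolution $\calE_\bullet\to M$ in which each $\calE_i$ is a finite direct sum of modules of the form $\calN_{\alpha,l}:=D_{X\times\CC_t}/D_{X\times\CC_t}(t\partial_t-\alpha)^l$ with $\alpha\in[-1,0]\cap\QQ$. Then $\mathbb{D}M$ is computed as the (shifted, $\omega$-twisted) total complex associated to $\mathrm{R}\mathcal{H}om_{D_{X\times\CC}}(\calE_\bullet,D_{X\times\CC})$. The associated hypercohomology spectral sequence has $E_1$-page
\[
E_1^{p,q}=\mathcal{H}^q(\mathbb{D}\calE_{-p})\Rightarrow \mathcal{H}^{p+q}\mathbb{D}M,
\]
and only finitely many terms contribute to any fixed cohomology degree of $\mathbb{D}M$. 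Because Remark~\ref{watanuki} ensures that the full subcategory of monodromic $D_{X\times\CC_t}$-modules is abelian and closed under extensions, it will suffice to prove that every $\mathcal{H}^q\mathbb{D}\calN_{\alpha,l}$ is monodromic.

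For the basic module $\calN_{\alpha,l}$, I would use the length-two locally free resolution
\[
0\to D_{X\times\CC_t}\xrightarrow{\cdot (t\partial_t-\alpha)^l} D_{X\times\CC_t}\to \calN_{\alpha,l}\to 0
\]
noted just after Lemma~\ref{duares}. Applying $\mathcal{H}om_{D_{X\times\CC}}(-,D_{X\times\CC})$ turns this into the two-term complex $[D_{X\times\CC_t}\xrightarrow{(t\partial_t-\alpha)^l\cdot} D_{X\times\CC_t}]$ of right $D$-modules, whose degree-$0$ cohomology vanishes (since $D_{X\times\CC}$ is locally a domain on a smooth variety, so left multiplication by the non-zero element $(t\partial_t-\alpha)^l$ is injective). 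The degree-$1$ cohomology is $D_{X\times\CC_t}/(t\partial_t-\alpha)^lD_{X\times\CC_t}$. Converting this right $D$-module into a left $D$-module via $\omega^{-1}_{X\times\CC}$ and applying the shift $[\dim X+1]$ from the definition of $\mathbb{D}$, one checks by the standard adjoint-formula computation that $\mathbb{D}\calN_{\alpha,l}$ is concentrated in cohomological degree $-\dim X$ and is locally isomorphic to $D_{X\times\CC_t}/D_{X\times\CC_t}(t\partial_t+\alpha+1)^l$ (using $(t\partial_t)^*=-\partial_t t=-t\partial_t-1$). This quotient is clearly monodromic: its canonical generator is annihilated by a polynomial in $t\partial_t$, and the same then holds for every local section.

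The main obstacle is the bookkeeping in Step~2: one must correctly execute the left/right conversion through $\omega^{-1}_{X\times\CC}$ and verify that the formal adjoint of $(t\partial_t-\alpha)^l$ is, up to sign, $(t\partial_t+\alpha+1)^l$, so that the single nonvanishing cohomology of $\mathbb{D}\calN_{\alpha,l}$ is again of the monodromic type treated earlier in the section. Once this local calculation is in place, the spectral-sequence / extension argument, together with Remark~\ref{watanuki}, transports monodromicity from the $\mathbb{D}\calE_i$ to each $\mathcal{H}^i\mathbb{D}M$, completing the proof.
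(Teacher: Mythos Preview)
Your proof is correct and follows essentially the same approach as the paper: both use the resolution of Lemma~\ref{duares}, replace each $\calE_i$ by its two-term free resolution $0\to D^{l_i}\to D^{l_i}\to \calE_i\to 0$, and observe that the resulting double complex has vertical differentials given by powers of $(t\partial_t-\alpha_j)$, so its total cohomology is monodromic. Your version spells out the spectral-sequence argument and the explicit identification $\mathbb{D}\calN_{\alpha,l}\simeq D_{X\times\CC_t}/D_{X\times\CC_t}(t\partial_t+\alpha+1)^l$ in degree $-\dim X$, whereas the paper simply writes down the dual double complex and asserts that its cohomology is ``clearly monodromic''; the underlying reasoning is the same.
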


\begin{proof}
We may assume that $X$ is affine.
We write $D$ for $D_{X\times \CC}$ in this proof.
Take a resolution $\calE_{\bullet}\to M\to 0$ of $M$ as in Lemma~\ref{duares}.
Then, we have $\DR \mathcal{H} om_{D}(M,D)\simeq \DR \mathcal{H} om_{D}(\calE_{\bullet},D)$ in $\mathrm{D}^{-}(D_{X\times \CC})$.
For each $\calE_{i}$, we can take a resolution
\[0\to D^{l_{i}}\to D^{l_{i}}\to\calE_{i}\to 0.\]
Then, we can consider the following diagram (we can define horizontal arrows):
\[\xymatrix{
&0&0&0&&\\
\dots \ar[r]&\calE_{2}\ar[u]\ar[r] &\calE_{1}\ar[u]\ar[r]& \ar[u]\calE_{0}\ar[r]& M\ar[r]& 0\\
\dots \ar[r]&D^{l_{2}}\ar[u]\ar[r]&D^{l_{1}}\ar[u]\ar[r]&\ar[u]D^{l_{0}}&&\\
\dots \ar[r]&D^{l_{2}}\ar[u]\ar[r]&D^{l_{1}}\ar[u]\ar[r]&\ar[u]D^{l_{0}}&&\\
&0\ar[u]&0\ar[u]&0\ar[u]&&.
}\]
Since the vertical sequences are exact and $\mathcal{H} om_{D}(D,D)$ is isomorphic to $D$,
$\DR \mathcal{H} om_{D}(\calE_{\bullet},D)$ is 
the single complex of a double complex of the form:
\[\xymatrix{
\dots &D^{l_{2}}\ar[d]\ar[l]&D^{l_{1}}\ar[d]\ar[l]&\ar[l]D^{l_{0}}\ar[d]\\
\dots &D^{l_{2}}\ar[l]&D^{l_{1}}\ar[l]&\ar[l]D^{l_{0}},
}\]
where the degree of 
$D^{l_{0}}$ at the top right
is $0$ and the vertical arrows $D^{l_{i}}\to D^{l_{i}}$ are the multiplications by $\bigoplus_{j=1}^{l_{i}}(t\partial_{t}-\alpha_{j})^{s_{j}}$ for some $\alpha_{j}\in [-1,0]\cap\QQ$ and $s_{j}\in \ZZ_{\geq 1}$.
The cohomology of its single complex is clearly monodromic.
\end{proof}

\begin{rem}
The characteristic variety of $\mathbb{D}M$ is the same as the one of $M$.
Therefore, if $M$ is regular holonomic, Lemma~\ref{actaa} is a direct consequence of Remark~\ref{charmon}.
\end{rem}

\begin{rem}\label{dualem}
For a monodromic coherent $D$-module on $X\times \CC_{t}$ and $j\in \ZZ$
we have a decomposition of the $j$-th cohomology of its dual:
\[H^{j}\mathbb{D}M=\bigoplus_{\beta\in \QQ}(H^{j}\mathbb{D}M)^{\beta}.\]
By Proposition~4.6-2 of \cite{MaiMeb},
we have
\[
(H^{j}\mathbb{D}M)^{\alpha}\simeq \left\{
\begin{array}{l}
H^{j}\mathbb{D}(M^{-1-\alpha})\quad (\alpha\in (-1,0)),\\
H^{j}\mathbb{D}(M^{0})\quad (\alpha=0),\\
H^{j}\mathbb{D}(M^{-1})\quad (\alpha=-1).
\end{array}\right.
\]
Moreover, for $\alpha\in (-1,0)$ the nilpotent endomorphism $t\partial_{t}-\alpha$ on $(H^{j}\mathbb{D}M)^{\alpha}$ is 
the transpose of $t\partial_{t}+1+\alpha$ on $M^{-1-\alpha}$
and $t\colon (H^{j}\mathbb{D}M)^{-1}\to (H^{j}\mathbb{D}M)^{0}$ (resp. $\partial_{t}\colon (H^{j}\mathbb{D}M)^{0}\to (H^{j}\mathbb{D}M)^{-1}$) is the transpose of
$-\partial_{t}\colon M^{0}\to M^{-1}$ (resp. $t\colon M^{-1}\to M^{0}$) (see loc. cit.).
In this way, we can describe the $D$-module structure on $H^{j}\mathbb{D}M$ concretely in terms of $M^{\alpha}$ ($\alpha\in [-1,0]$) and the morphisms between them.
\end{rem}

\subsection{{R}egular holonomic monodromic $D$-modules}

We consider the holonomicity and the regularity of monodromic $D$-modules.

\begin{prop}\label{holprop}
Let $M$ be a coherent monodromic $D_{X\times \CC_{t}}$-module.
Then, $M$ is holonomic if and only if
all the coherent $D_{X}$-modules $M^{\beta}$ ($\beta\in \QQ$) are holonomic.
\end{prop}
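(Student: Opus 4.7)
Assume $M$ is holonomic. First, I would apply Proposition~\ref{Vprop} to identify $M^\beta \simeq \GR^\beta_V M$. By Corollary~\ref{nevancor}, each $M^\alpha$ with $\alpha\in[-1,0]\cap\QQ$ identifies with the $e(\alpha)$-nearby cycle (respectively the unipotent vanishing cycle for $\alpha=-1$) of $M$ along $t=0$. Since these functors preserve holonomicity as a standard fact, $M^\alpha$ is a holonomic $D_X$-module. For other $\beta\in\QQ$, Proposition~\ref{isomprop} furnishes a $D_X$-isomorphism $M^\beta\simeq M^\alpha$ (for some $\alpha\in[-1,0]\cap\QQ$) via iterated $t$ or $\partial_t$, so $M^\beta$ is also holonomic.

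\textbf{Reverse direction: setup and non-integer summands.} Assume each $M^\beta$ is a holonomic $D_X$-module. Proposition~\ref{cohprop} leaves only finitely many $M^\alpha$ ($\alpha\in[-1,0]\cap\QQ$) nonzero. I would split the problem along the residue classes modulo $\ZZ$: as $D_{X\times\CC}$-modules one has a refinement of the original eigenspace decomposition,
\[M=\bigoplus_{\alpha\in(-1,0]\cap\QQ}N_\alpha,\qquad N_\alpha:=\bigoplus_{n\in\ZZ}M^{\alpha+n},\]
(the summand $N_0$ containing both $M^0$ and $M^{-1}$), and it suffices to show each $N_\alpha$ is holonomic. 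For $\alpha\in(-1,0)\cap\QQ$, Proposition~\ref{isomprop} shows every $t\colon M^{\alpha+n}\to M^{\alpha+n+1}$ is an isomorphism, so $t$ acts invertibly on $N_\alpha$, whence $N_\alpha\simeq j_+j^*N_\alpha$ for $j\colon X\times\CC^*\hookrightarrow X\times\CC$ the open embedding. The restriction $j^*N_\alpha$ is a rank-one twist of $p^*M^\alpha$ (by the connection generated by $t^\alpha$), hence holonomic when $M^\alpha$ is; since $j_+$ preserves holonomicity for an affine open embedding, $N_\alpha$ is holonomic.

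\textbf{The main obstacle: the unipotent summand $N_0$.} The hard part will be the case $\alpha=0$, where $t\colon M^{-1}\to M^0$ and $\partial_t\colon M^0\to M^{-1}$ need not be isomorphisms; $t$ is not invertible on $N_0$ and the previous localization argument fails. My plan is to bound $\mathrm{Char}(N_0)\subset T^*(X\times\CC)$ directly by analyzing it on $\{t\neq 0\}$ and on $\{t=0\}$ separately. On $X\times\CC^*$, after localizing the map $t\colon M^{-1}\to M^0$ becomes an isomorphism, so $N_0|_{X\times\CC^*}\simeq p^*M^0$ is holonomic and contributes a piece of $\mathrm{Char}$ of dimension $\dim X+1$. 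At $\{t=0\}$, monodromicity and $\QQ$-specializability together with Proposition~\ref{Vprop} (giving $\GR^\beta_V N_0\simeq M^\beta$) confine $\mathrm{Char}(N_0)\cap\{t=0\}$ inside $(\mathrm{Char}(M^0)\cup\mathrm{Char}(M^{-1}))\times T^*_0\CC$, again of dimension $\dim X+1$. Combining the two bounds yields $\dim\mathrm{Char}(N_0)=\dim X+1$, hence $N_0$ is holonomic. The technical heart of this step is the standard principle that for a coherent $\QQ$-specializable $D_{X\times\CC}$-module, holonomicity is controlled by the holonomicity of its $V$-graded pieces; any concrete implementation (e.g. constructing a good bi-filtration generated by good filtrations on $M^0$ and $M^{-1}$) will suffice.
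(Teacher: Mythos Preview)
Your approach is correct in outline and genuinely different from the paper's. The paper's proof is a two-liner via duality: a coherent $D$-module is holonomic iff $H^j(\mathbb{D}M)=0$ for all $j\neq 0$; since $H^j\mathbb{D}M$ is again monodromic (Lemma~\ref{actaa}) and Remark~\ref{dualem} identifies each piece $(H^j\mathbb{D}M)^\alpha$ with $H^j\mathbb{D}(M^{\alpha'})$ for suitable $\alpha'\in[-1,0]$, the vanishing for $M$ is equivalent to the vanishing for every $M^\alpha$. Your route through characteristic varieties and the $V$-filtration is more elementary in that it avoids the duality results of \cite{MaiMeb}, but it is considerably longer and leaves the ``standard principle'' for the unipotent block $N_0$ as a black box that the duality argument bypasses entirely.

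One imprecision worth flagging: your claim that $j^*N_\alpha$ is literally a rank-one twist of $p^*M^\alpha$ (and likewise $N_0|_{X\times\CC^*}\simeq p^*M^0$) fails whenever $(t\partial_t-\alpha)|_{M^\alpha}$ is nilpotent but nonzero, which is allowed (cf.\ Remark~\ref{yugenkrem}); the connection in the $t$-direction then carries this nilpotent piece and the module is not an external product. Filtering $M^\alpha$ by the $D_X$-submodules $\mathrm{Ker}(t\partial_t-\alpha)^j$ reduces to the semisimple case and repairs the argument, so the conclusion stands. An even quicker fix for the whole reverse direction, in the spirit of the proof of Proposition~\ref{regprop}, is to use the surjection $\bigoplus_i M^{\alpha_i}\boxtimes D_\CC/D_\CC(t\partial_t-\alpha_i)^k\twoheadrightarrow M$: each source term is holonomic (exterior product of holonomic modules), and quotients of holonomic modules are holonomic.
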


\begin{rem}\label{nenndoro}
By Proposition~\ref{isomprop}, we can also state that
$M$ is holonomic if and only if 
all the coherent $D_{X}$-modules $M^{\alpha}\ (\alpha\in [-1,0]\cap \QQ)$ are holonomic. 
\end{rem}

\begin{proof}[Proof of Proposition~\ref{holprop}]

Recall that a coherent $D$-module is holonomic if and only if the cohomologies of its dual vanish except for the $0$-th one (see Corollary~2.6.8 of \cite{HTT}).
Therefore, the assertion follows from Remark~\ref{dualem}.
\end{proof}

\begin{prop}\label{regprop}
Let $M$ be a holonomic and monodromic $D_{X\times \CC_{t}}$-module.
Then, $M$ is regular if and only if all the holonomic $D_{X}$-modules
$M^{\alpha}\ (\alpha\in [-1,0]\cap \QQ)$ are regular.
\end{prop}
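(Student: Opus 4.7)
The plan is to reduce regularity of $M$ to regularity of the pieces $M^{\alpha}$ via the identification $M^{\alpha}\simeq \GR^{\alpha}_{V}M$ from Proposition~\ref{Vprop} together with the standard criterion: a holonomic $D_{X\times \CC_{t}}$-module which is specializable along $t=0$ is regular if and only if (i) its restriction to $X\times \CS_{t}$ is regular and (ii) every graded piece $\GR^{\alpha}_{V}M$ of its Kashiwara--Malgrange filtration is regular as a $D_{X}$-module. The forward direction is then immediate: if $M$ is regular, the criterion forces regularity of $\GR^{\alpha}_{V}M\simeq M^{\alpha}$ for every $\alpha\in [-1,0]\cap \QQ$.

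For the reverse direction, assume all $M^{\alpha}$ with $\alpha\in [-1,0]\cap \QQ$ are regular. By Proposition~\ref{isomprop} every $M^{\beta}$ ($\beta\in \QQ$) is isomorphic as a $D_{X}$-module to some such $M^{\alpha}$, hence regular, which settles condition (ii). For condition (i) I would use Proposition~\ref{isomprop} to decompose
\[
M|_{X\times \CS_{t}}\simeq \bigoplus_{\alpha\in (-1,0]\cap \QQ}\wt{M}^{\alpha},\qquad \wt{M}^{\alpha}:=\bigoplus_{k\in \ZZ}M^{\alpha+k},
\]
and prove each $\wt{M}^{\alpha}$ is regular. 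To this end, filter $M^{\alpha}$ by the kernels $V_{j}M^{\alpha}:=\mathrm{Ker}((t\partial_{t}-\alpha)^{j})$ of the nilpotent endomorphism $\mathcal{N}_{\alpha}=t\partial_{t}-\alpha$; extending this to $\wt{M}^{\alpha}$ via the isomorphisms $t\colon M^{\alpha+k}\simto M^{\alpha+k+1}$ (under which $\mathcal{N}_{\alpha+k}$ corresponds to $\mathcal{N}_{\alpha}$) produces a finite filtration of $\wt{M}^{\alpha}$ by $D_{X\times \CS_{t}}$-submodules. The graded pieces are $D_{X\times \CS_{t}}$-modules on which $\mathcal{N}_{\alpha}$ acts by zero and are therefore isomorphic to external products $(V_{j}M^{\alpha}/V_{j-1}M^{\alpha})\boxtimes \calL_{\alpha}$, where $\calL_{\alpha}:=D_{\CS_{t}}/D_{\CS_{t}}(t\partial_{t}-\alpha)\simeq O_{\CS_{t}}\cdot t^{\alpha}$ is the regular rank-one $D_{\CS_{t}}$-module with monodromy $e(\alpha)$. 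Both factors are regular holonomic, so the external product is regular, and $\wt{M}^{\alpha}$ is then regular by closure of regular holonomicity under extensions.

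The main obstacle I anticipate is the regularity criterion stated in the first paragraph; beyond citing it cleanly, the only substantive work is the description of the graded pieces of $\wt{M}^{\alpha}$ as external products with $\calL_{\alpha}$. This requires verifying that the kernel filtrations of the various $M^{\alpha+k}$ are identified under the $t$-isomorphisms of Proposition~\ref{isomprop} and that on each $\mathcal{N}_{\alpha}$-killed quotient the induced $D_{\CS_{t}}$-structure is exactly that of $\calL_{\alpha}$. Both checks reduce to the commutation relation $t\partial_{t}\cdot t^{k}=t^{k}(t\partial_{t}+k)$, which shows that the nilpotent parts transport correctly along the shifts by $t$.
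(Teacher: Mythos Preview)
Your proposal is correct, but it takes a different route from the paper, particularly in the converse direction.

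For the forward implication, you and the paper use essentially the same input: nearby and vanishing cycles preserve regularity. The paper cites Mebkhout directly for $\psi_{t,e(\alpha)}$ and $\phi_{t,1}$; you phrase this as one half of the two-part criterion.

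For the converse, the paper avoids the regularity criterion entirely. It fixes $k$ large enough that $(t\partial_t-\alpha)^k=0$ on each $M^{\alpha}$ and observes that the maps $m\boxtimes[P]\mapsto Pm$ assemble into a surjection
\[
\bigoplus_{i}\bigl(M^{\alpha_i}\boxtimes D_{\CC}/D_{\CC}(t\partial_t-\alpha_i)^k\bigr)\twoheadrightarrow M,
\]
the $\alpha_i$ running over the finitely many $\alpha\in[-1,0]\cap\QQ$ with $M^{\alpha}\neq 0$. Each $D_{\CC}/D_{\CC}(t\partial_t-\alpha_i)^k$ is regular on $\CC$, so each summand is regular, and $M$ is then regular as a quotient. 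This is a one-step argument using only closure of regularity under exterior products and quotients.

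Your approach is more structural: you first prove regularity of $M|_{X\times\CS_t}$ by filtering each $\wt{M}^{\alpha}$ via the nilpotent $(t\partial_t-\alpha)$ and identifying the graded pieces as exterior products with the rank-one module $\calL_{\alpha}$, then invoke the criterion to pass from $M|_{X\times\CS_t}$ and the $\GR^{\alpha}_{V}M$ back to $M$. The identification you describe is correct (the relation $t(t\partial_t-\alpha)=(t\partial_t-\alpha-1)t$ does transport the kernel filtrations as claimed). The cost is the dependence on the criterion, which you rightly flag as the main black box; the paper's argument is shorter and self-contained. One small care point: in your decomposition of $M|_{X\times\CS_t}$ you should write $(M|_{X\times\CS_t})^{\alpha+k}$ rather than $M^{\alpha+k}$, since for $\alpha+k\le -1$ these can differ (on $\CS_t$ the map $t$ is invertible, so $(M|_{X\times\CS_t})^{-1}=t^{-1}M^{0}$, which need not coincide with the image of $M^{-1}$). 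This does not affect the argument.
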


\begin{proof}
Assume first that $M$ is regular.
Since the functors $\psi_{t,e(\alpha)}$ and $\phi_{t,1}$ preserves the regularity (see Th\'eor\`eme~4.3.2 of \cite{MebCom}),
$M^{\alpha}=\psi_{t,e(\alpha)}(M)$ for $\alpha\in (-1,0]\cap \QQ$ and $M^{-1}=\phi_{t,1}(M)$ (see Corollary~\ref{nevancor}) are also regular. 

Conversely, assume that all $M^{\alpha}$ are regular.
Take sufficiently large $k\in \ZZ_{\geq 1}$
such that $(t\partial_{t}-\alpha)^k$ is zero on $M^{\alpha}$ for all $\alpha\in [-1,0]\cap \QQ$. 
Then, we have a morphism of $D_{X\times \CC}$-modules
\[M^{\alpha}\boxtimes D_{\CC}/D_{\CC}(t\partial_{t}-\alpha)^k\to M,\]
which sends $m\boxtimes [P]$ to $Pm$ for $m\in M^{\alpha}$ and $P\in D_{\CC}$.
Let $\alpha_{1},\dots,\alpha_{l}$ be rational numbers in $[-1,0]$
such that $M^{\alpha}$ ($\alpha\in [-1,0]\cap \QQ$) is zero if $\alpha$ is not $\alpha_{1},\dots,\alpha_{l}$.
Then, by Proposition~\ref{isomprop}, the direct sum of the above morphisms
\[\bigoplus_{i=1}^{l}(M^{\alpha_{i}}\boxtimes D_{\CC}/D_{\CC}(t\partial_{t}-\alpha_{i})^k)\to M\]
is a surjection.
Since any $D_{\CC}/D_{\CC}(t\partial_{t}-\alpha_{i})^k$ is regular and
any quotient object of a regular $D$-module is also regular,
$M$ is regular.
\end{proof}

\section{{T}he Hodge filtrations of monodromic mixed Hodge modules}
\subsection{The decompositions of Hodge filtrations}
In this section, we consider algebraic mixed Hodge modules whose underlying $D$-modules are monodromic.
For the definition and the properties of mixed Hodge module,
see \cite{MHM}, \cite{HM88}, \cite{MHMProj}, \cite{MorihikoYoung}, \cite{Schnell}, \cite{HoSiD}.
In this paper, mixed Hodge modules are always graded polarizable.
Let $X$ be a smooth algebraic variety over $\CC$.
Let $\calM=(M,F_{\bullet}M, K, W_{\bullet}K)$ be an algebraic mixed Hodge module on $X\times \CC_{t}$,
where $M$ is its underlying $D_{X\times \CC_{t}}$-module,
$F_{\bullet}M$ is its Hodge filtration (note that each $F_{p}M$ ($p\in \ZZ$) is a coherent $O_{X\times \CC}$-module),
$K$ is a $\QQ$-perverse sheaf on $X\times \CC_{t}$ 
with an isomorphisms
$\mathrm{DR}_{X\times \CC}(M)\simeq K_{\CC}(:=K\otimes_{\QQ} \CC)$,
and its filtration $W_{\bullet}K$.

\begin{defi}
We say that $\calM$ is a monodromic mixed Hodge module
if the underlying $D$-module $M$ is monodromic. 
\end{defi}

In the same way, we also define monodromic mixed Hodge module on $X\times \CS$. 
If $(M,F_{\bullet}M, K, W_{\bullet}K)$ is monodromic,
by Proposition~\ref{bunkailem} the $D$-module $M$ is of the form:
\[M=\bigoplus_{\beta\in \QQ}M^{\beta}.\]
Since $M$ is the underlying $D$-module of a mixed Hodge module,
$M$ is a regular holonomic $D$-module on $X\times \CC$.
Therefore, by Propositions~\ref{cohprop}, \ref{holprop}, \ref{regprop} and \ref{isomprop},
each $M^{\beta}$ $(\beta\in \QQ)$ is a regular holonomic $D$-module on $X$.
We remark that each $W_{k}M$ ($k\in \ZZ$) is itself $D_{X\times \CC}$-module.
Since any $D_{X\times \CC_{t}}$-submodule of a monodromic $D_{X\times \CC_{t}}$-module is monodromic,
$W_{k}M$ is also monodromic and has a decomposition
\[W_{k}M=\bigoplus_{\beta\in \QQ}W_{k}M^{\beta},\]
where we set $W_{k}M^{\beta}=W_{k}M\cap M^{\beta}$.
Similarly, $\GR^{W}_{k}M$ is also monodromic and we have
\[\GR^{W}_{k}M(=W_{k}M/W_{k-1}M)=\bigoplus_{\beta\in \QQ}\GR^{W}_{k}M^{\beta}.\]

In this section, we will prove the following.
\begin{thm}\label{main1}
Let $\calM=(M,F_{\bullet}M, K,W_{\bullet}K)$ be a monodromic mixed Hodge module on $X\times \CC_{t}$.
Then, we have
\begin{align}\label{wakamono}
F_{p}M=\bigoplus_{\beta\in \QQ}F_{p}M^{\beta},
\end{align}
where we set $F_{p}M^{\beta}:=F_{p}M\cap M^{\beta}$ for $\beta\in \QQ$.
\end{thm}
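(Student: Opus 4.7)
The plan is to compare the Hodge filtration $F_{\bullet}M$ with the candidate filtration defined by
\[
F'_p M:=\bigoplus_{\beta\in \QQ}\bigl(F_p M\cap M^{\beta}\bigr),
\]
for which the inclusion $F'_p M\subset F_p M$ is tautological, and aim for the reverse inclusion. First I would verify that $F'_{\bullet}M$ is itself a good $D_{X\times \CC_t}$-filtration on $M$. This is routine: $t\in F_0 D_{X\times \CC_t}$ with $tM^{\beta}\subset M^{\beta+1}$, so $t(F_p M\cap M^{\beta})\subset F_p M\cap M^{\beta+1}$; similarly $\partial_t(F_p M\cap M^{\beta})\subset F_{p+1}M\cap M^{\beta-1}$; and $O_X$- and $D_X$-operators preserve each $M^{\beta}$ because they commute with $t\partial_t$.

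For the nontrivial inclusion I would argue locally and by induction on the cardinality of $T:=\{\beta\in \QQ\mid m_{\beta}\neq 0\}$, where $m=\sum_{\beta\in T}m_{\beta}$ is the finite decomposition, given by Proposition~\ref{bunkailem}, of a local section $m\in F_p M$. The cases $|T|\le 1$ are immediate. For the inductive step, set $\alpha:=\min T$; by Proposition~\ref{Vprop} we have $m\in V^{\alpha}_t M$, so by strict $\RR$-specializability of the Hodge filtration (a defining property of mixed Hodge modules along a smooth divisor) $m\in F_p V^{\alpha}_t M$. The canonical projection $V^{\alpha}_t M\twoheadrightarrow \GR^{\alpha}_V M\simeq M^{\alpha}$ sends $m$ to $m_{\alpha}$ by virtue of the splitting $V^{\alpha}_t M=M^{\alpha}\oplus V^{>\alpha}_t M$ from Proposition~\ref{Vprop}, and therefore $m_{\alpha}\in F_p\GR^{\alpha}_V M$.

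The main obstacle is then the identification $F_p\GR^{\alpha}_V M=F_p M\cap M^{\alpha}$ inside $M^{\alpha}$. The inclusion $F_p M\cap M^{\alpha}\subset F_p\GR^{\alpha}_V M$ is immediate by passing to the quotient; the reverse is the essential point, because once it is established, $m_{\alpha}\in F_p M$, whence $m-m_{\alpha}\in F_p M$ has $V$-support $T\setminus\{\alpha\}$ of strictly smaller cardinality and the induction closes. To settle this identification I would chain the strict-specializability isomorphisms $t\colon F_p V^{\alpha}_t M\simto F_p V^{\alpha+1}_t M$ for $\alpha>-1$ and $\partial_t\colon F_p\GR^{\alpha}_V M\simto F_{p+1}\GR^{\alpha-1}_V M$ for $\alpha<0$ (highlighted already in the introduction), together with the monodromic direct-sum structure of Proposition~\ref{Vprop}, in order to produce, for an arbitrary element of $F_p\GR^{\alpha}_V M$, a lift inside $F_p V^{\alpha}_t M$ whose $V^{>\alpha}_t M$-component itself lies in $F_p M$; the direct-sum splitting then places the element in $M^{\alpha}\cap F_p M$. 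Should the direct route become tangled, a secondary plan is first to establish the theorem for polarized pure monodromic Hodge modules, where the monodromy weight filtration and polarization provide additional rigidity, and then to propagate to the mixed case by induction on the length of $W_{\bullet}$, exploiting strictness of morphisms in $\mathrm{MHM}$.
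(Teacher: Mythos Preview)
Your reduction to the identification $F_p\GR^{\alpha}_{V}M=F_pM\cap M^{\alpha}$ is correct, but that identification \emph{is} the theorem, and your proposed ``chaining'' argument does not break the circularity. Strict specializability controls the interaction of $F_{\bullet}$ with the $V$-filtration: it says maps like $t\colon F_p\GR^{\beta}_V M\to F_p\GR^{\beta+1}_V M$ are onto (for $\beta>-1$), and hence that any class in $F_p\GR^{\alpha}_V M$ lifts to some $n\in F_pV^{\alpha}_t M$. What it does \emph{not} say is that this lift can be chosen compatibly with the eigenspace splitting $V^{\alpha}_t M=M^{\alpha}\oplus V^{>\alpha}_t M$, i.e.\ that the $V^{>\alpha}_t$-part of $n$ lies in $F_pM$. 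Iterating $t$ or $\partial_t$ just reproduces the same difficulty one step over; nothing in the axioms of strict specializability forces $F_{\bullet}$ to respect the generalized-eigenspace decomposition of $t\partial_t$. This is precisely why the paper's proof is long.

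Your secondary plan is in fact the paper's strategy, but both halves are substantially harder than you indicate. For the pure case the paper invokes Deligne's semisimplicity theorem for polarizable VHS over quasi-projective varieties to split $\calM|_{X\times\CC^*}$ (locally on the support) as a sum of exterior products $\calV_1\boxtimes\calV_2$ with $\calV_2$ of rank one on $\CC^*$, together with rigidity of polarized VHS; only then does strict specializability propagate the decomposition across $t=0$, and a further induction via graph embeddings handles nonsmooth supports. For the mixed case, naive strictness on $W_{\bullet}$ is not enough: knowing each $[m_{\beta}]\in F_p\GR^{W}_k M$ gives you lifts $y_{\beta}\in F_pM$, but there is no reason $y_{\beta}$ lies in $M^{\beta}$, and the resulting cross-terms do not obviously cancel. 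The paper instead builds an auxiliary decomposed filtration $F'_{\bullet}$, checks (using the pure case) that it induces the correct filtration on each $\GR^{W}_k$, and then appeals to rigidity of graded-polarizable admissible VMHS to conclude $F'_{\bullet}=F_{\bullet}$ on $X\times\CC^*$; one more strict-specializability argument extends this over $t=0$. In short, the missing ingredients in your proposal are Deligne's decomposition theorem and rigidity of (mixed) variations of Hodge structure; without them the inductive step at the heart of your argument does not close.
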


Before we prove this theorem, we recall some facts for mixed Hodge modules.
One of the most important properties (or constraints) of mixed Hodge modules is the strict specializability.
We set $F_{p}\GR^{\beta}_{V}M=F_{p}V^{\beta}_{t}M/F_{p}V^{>\beta}M$.

\begin{defi}(loc. cit.)\label{stsp}
Let $(M,F_{\bullet}M)$ be a holonomic $D$-module $M$ with its good filtration on $X\times \CC_{t}$.
Then, we say that $(M,F_{\bullet}M)$ is strictly $\KK$($=\QQ$ or $\RR$)-specializable along $t=0$
if $M$ is $\KK$-specializable and
for $p\in \ZZ$
\begin{enumerate}
\item[(i)] for any $\beta>-1$, $t\colon F_{p}\GR^{\beta}_{V}M\to F_{p}\GR^{\beta+1}_{V}M$
\mbox{\ is onto, and} 
\item[(ii)] for any $\beta<0$, $\partial_{t}\colon F_{p}\GR^{\beta}_{V}M\to F_{p+1}\GR^{\beta-1}_{V}M$\mbox{\ is onto}.
\end{enumerate}
\end{defi}

For the ``meaning'' of this condition, see
Section~3.2 of \cite{HM88} and Section~11 of \cite{Schnell}.
Any mixed Hodge module has such a property.
Therefore, if $F_{\bullet}M$ is decomposed as (\ref{wakamono}),
the filtrations $F_{\bullet}M^{\beta}$ for $\beta\notin [-1,0]\cap \QQ$ are determined by
$F_{\bullet}M^{\alpha}$ for $\alpha\in [-1,0]\cap \QQ$ as follows.

\begin{lem}\label{decomplem}
Assume that Theorem~\ref{main1} holds, i.e. we have a decomposition (\ref{wakamono}).
Then, for any $l\in \ZZ_{\geq 0}$ and $p\in \ZZ$ we have
\begin{align}\label{park1}
F_{p}M^{\alpha+l}&=t^{l}F_{p}M^{\alpha}\quad (\alpha\in (-1,0]\cap \QQ), \quad \mbox{and}\\\label{park2}
F_{p}M^{\alpha-l}&=\partial_{t}^{l}F_{p-l}M^{\alpha} \quad (\alpha\in [-1,0)\cap \QQ).
\end{align}
\end{lem}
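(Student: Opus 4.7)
The plan is to use the assumed decomposition (\ref{wakamono}) to match up the Hodge filtration on each $V$-graded piece $\GR^\beta_V M$ with $F_p M^\beta$, and then to read off the formulas from the strict specializability condition built into any mixed Hodge module.

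First, by Proposition~\ref{Vprop} the Kashiwara-Malgrange filtration is $V^\beta_t M = \bigoplus_{\gamma \geq \beta} M^\gamma$. Intersecting with $F_p M = \bigoplus_\gamma F_p M^\gamma$, which is a direct sum by hypothesis, yields
\[
F_p V^\beta_t M \;=\; F_p M \cap V^\beta_t M \;=\; \bigoplus_{\gamma \geq \beta} F_p M^\gamma,
\]
and taking the quotient by $F_p V^{>\beta}_t M = \bigoplus_{\gamma > \beta} F_p M^\gamma$ produces a canonical identification $F_p \GR^\beta_V M \simeq F_p M^\beta$. Under this identification, the maps $t$ and $\partial_t$ on the $V$-graded pieces are precisely the restrictions of $t \colon F_p M^\beta \to F_p M^{\beta+1}$ and $\partial_t \colon F_p M^\beta \to F_{p+1} M^{\beta-1}$ (both inclusions are automatic, since $t$ and $\partial_t$ preserve or shift $F_\bullet M$ and shift the $M^\gamma$-grading by $\pm 1$).

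Next, I invoke the strict specializability of $\calM$ along $t=0$, which is a defining property of mixed Hodge modules. By Definition~\ref{stsp}(i), for each $\beta > -1$ the map $t \colon F_p M^\beta \to F_p M^{\beta+1}$ is surjective, and by (ii), for each $\beta < 0$ the map $\partial_t \colon F_p M^\beta \to F_{p+1} M^{\beta-1}$ is surjective. These give the two key identities
\[
F_p M^{\beta+1} = t\, F_p M^\beta \quad (\beta > -1), \qquad F_{p+1} M^{\beta-1} = \partial_t\, F_p M^\beta \quad (\beta < 0).
\]

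Finally, (\ref{park1}) and (\ref{park2}) follow by straightforward induction on $l$. Starting from $\alpha \in (-1,0]$, each intermediate value $\alpha + k$ with $0 \leq k < l$ remains strictly greater than $-1$, so iterating the first identity gives $F_p M^{\alpha+l} = t^l F_p M^\alpha$. Starting from $\alpha \in [-1,0)$, each $\alpha - k$ with $0 \leq k < l$ remains strictly less than $0$, so iterating the second identity (which lowers the Hodge index by one at each step) gives $F_p M^{\alpha-l} = \partial_t^l F_{p-l} M^\alpha$. There is no real obstacle beyond this bookkeeping; the whole content is that once $F_p M$ is known to decompose, the ``graded'' surjectivity of strict specializability becomes honest surjectivity on the direct summands $F_p M^\beta$.
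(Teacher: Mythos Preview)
Your proof is correct and follows essentially the same approach as the paper: use the assumed decomposition together with Proposition~\ref{Vprop} to identify $F_{p}\GR^{\gamma}_{V}M$ with $F_{p}M^{\gamma}$, and then apply the surjectivity in Definition~\ref{stsp}. Your write-up is somewhat more explicit about the identification and the induction on $l$, but the argument is the same.
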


\begin{proof}
Since the problem is local, 
we may assume that $X$ is affine.

Assume that we have the decomposition (\ref{wakamono}).
By Proposition~\ref{Vprop},
for $\gamma\in \QQ$ we have
\[F_{p}V^{\gamma}_{t}M=\bigoplus_{\beta\geq \gamma}F_{p}M^{\beta}.\]
Therefore, we have
\[F_{p}\GR^{\gamma}_{V}M=F_{p}M^{\gamma}.\]
The desired assertion follows from the conditions (i) and (ii) in the definition of the strict specializability (Definition~\ref{stsp}).
\end{proof}

\begin{rem}\label{nao}
Let us consider the nearby (resp. vanishing) cycle $\psi_{t}(\calM)$ (resp. $\phi_{t,1}(\calM)$) of a mixed Hodge module $\calM$, whose underlying perverse sheaf is ${}^p\psi_{t}K$ (resp. ${}^p\phi_{t,1}K$) (see loc. cit.).
The Hodge filtration of $\psi_{t}M$ (resp. $\phi_{t,1}M$) is
\begin{align*}
F_{\bullet}\psi_{t}M=\bigoplus_{\alpha\in (-1,0]\cap \QQ}F_{\bullet}&\psi_{t,e(\alpha)}M=\bigoplus_{\alpha\in (-1,0]\cap \QQ}F_{\bullet}\GR_{V}^{\alpha}M\\
(\mbox{resp.\ } F_{\bullet}\phi_{t,1}M&=F_{\bullet+1}\GR^{-1}_{V}M).
\end{align*}
The weight filtration of ${}^p\psi_{t}K$ (resp. ${}^p\phi_{t,1}K$) is the monodromy weight filtration relative to ${}^p\psi_{t}W_{\bullet+1}K$ (resp. ${}^p\phi_{t,1}W_{\bullet}K$).
See Subsection~\ref{gohho} for more details on these filtrations.
Assume that $X$ is affine and $\calM$ is monodromic. 
Then, for $\alpha\in (-1,0]\cap \QQ$, since we have $\psi_{t,e(\alpha)}M\simeq M^{\alpha}$,
we can regard $F_{\bullet}M^{\alpha}$ as the Hodge filtration of the nearby cycle.
Similarly, $F_{\bullet+1}M^{-1}$ is the Hodge filtration of the vanishing cycle.
In general, these data do not recover the original filtration of $M$.
However, Theorem~\ref{main1} implies that the Hodge filtration of $M$ is determined by the Hodge filtration of nearby and vanishing cycle of $M$ if $M$ is monodromic.
\end{rem}

\subsection{The pure case}

We show Theorem~\ref{main1} in the pure case.
First, we give a summary of the proof.
We will show that an analytic variation of pure Hodge structure on $X\times \CS$ has a simple form by using Deligne~\cite{DelUnTh} (Lemma~\ref{ganon}).
Taking a compactification of the base space, we will deduce a similar fact for an algebraic variation of pure Hodge structure (Lemma~\ref{rettugo}).
By using it and the strict specializability, we show Theorem~\ref{main1} for a smooth monodromic pure Hodge module (Corollary~\ref{kurumi} and Corollary~\ref{mouikkai}).
After that, by using the strict support decomposition of a pure Hodge module,
we will prove Theorem~\ref{main1} for a monodromic pure Hodge module (Proposition~\ref{titanic}).

\begin{rem}\label{CHodge}
In this section, we only focus on the underlying $D$-module with its filtrations, not consider its $\QQ$-structure.
Therefore, we can obtain similar results in this section for $\CC$-Hodge modules. 
Here,
in this paper,
a pure $\CC$-Hodge module means a pair $(M,F_{\bullet}M)$ of a $D$-module $M$ and its good filtration $F_{\bullet}M$ which is a direct summand of the underlying $D$-module with the Hodge filtration of a pure Hodge module.
For a pure Hodge module $\calM=(M,F_{\bullet}M, K)$,
we call the pair $\calM^{\CC}:=(M,F_{\bullet}M)$ the underlying $\CC$-Hodge module of it.
Note that in \cite{MHMProj} (unpublished),
``$\CC$-Hodge module'' is defined in more sophisticated way without $\QQ$-structures.
But, for our purposes, the above definition is enough.
\end{rem}

\begin{rem}\label{smHodge}
Let $(\calV, F_{\bullet}\calV, \calL)$ be a polarizable variation of pure Hodge structure on a smooth algebraic variety $Z$, where $\calV$ is the underlying integrable connection, $F_{\bullet}\calV$ is its Hodge filtration and $\calL$ is the underlying local system.
Recall that $(\calV, F_{\bullet}\calV, \calL[\dim{Z}])$ is a pure Hodge module.
The converse is true;
if a pure Hodge module $\calM=(M,F_{\bullet}M,K)$ on $Z$ is smooth, i.e. $M$ is a locally free $O_{Z}$-module of finite rank, $(M,F_{\bullet}M, H^{-\dim{Z}}K)$ is a polarizable variation of pure Hodge structure.
In this way, we will identify smooth pure Hodge modules with polarizable variations of pure Hodge structure. 
\end{rem}

Let $\calM=(M,F_{\bullet}M,K)$ be a monodromic pure Hodge module on $X\times \CC$.
To show Theorem~\ref{main1}, we may assume that $X$ is affine.
First, we assume the following condition:
\begin{align}\label{doredake}
\mathrm{SS}(M)\subset (T^{*}X\times T^{*}_{0}\CC)\cup (T^{*}_{X}X\times T^{*}_{\CC}\CC).
\end{align}
In particular, $M|_{X\times \CS}$ is smooth.
We denote by $p_{1}$ (resp. $p_{2}$) the first (resp. second) projection of $X\times \CS$.
Moreover, for a Hodge module $\calM$, we denote by $\calM^{\an}$ the analytification of it.


\begin{lem}\label{ganon}
Assume that $\calM$ is pure and satisfies (\ref{doredake}).
Then,
$(\calM|_{X\times \CS}^{\an})^\CC$ is a direct sum of some polarizable variations of pure $\CC$-Hodge structure of the form: $\calV_{1}\boxtimes \calV_{2}(=p_{1}^{*}\calV_{1}\otimes p_{2}^{*}\calV_{2})$,
where $\calV_{1}$ is a polarizable variation of pure $\CC$-Hodge structure on $X$ and $\calV_{2}$ is a polarizable variation of pure $\CC$-Hodge structure on $\CS$ of rank $1$.
\end{lem}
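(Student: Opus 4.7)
The plan is to exploit the semisimplicity of polarizable variations of pure Hodge structure together with Clifford theory for the fundamental group of $X\times \CS$. By the singular support condition~(\ref{doredake}), $M|_{X\times \CS}$ is $O_{X\times \CS}$-coherent, so via Remarks~\ref{CHodge} and~\ref{smHodge} the underlying $\CC$-Hodge module $(\calM^{\an}|_{X\times \CS})^{\CC}$ corresponds to a polarizable variation of pure $\CC$-Hodge structure $(\calV, F_{\bullet}\calV, \calL)$ on $(X\times \CS)^{\an}$. The first step is to invoke Deligne's theorem~\cite{DelUnTh} that $\calL$ is semisimple as a representation of $\pi_{1}((X\times \CS)^{\an}) = \pi_{1}(X^{\an})\times \ZZ$. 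Since the $\ZZ$-factor is central and is generated by a small loop around $0$ in $\CS$, whose monodromy on $\calL$ is $T := \exp(-2\pi\sqrt{-1}\,t\partial_{t})$, Clifford theory applied to each simple sub-representation (via Schur's lemma) forces $T$ to act as a scalar on every simple sub-variation, hence semisimply on all of $\calL$.

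Next I would decompose $\calV = \bigoplus_{\lambda}\calV_{\lambda}$ into $T$-eigenspaces, obtaining polarizable sub-variations with $T = \lambda\cdot\iden$ on each $\calV_{\lambda}$. Writing $\lambda = e(\alpha)$ for $\alpha \in (-1,0]\cap \QQ$, the identity $T = e(\alpha)\cdot\iden$ on $\calV_{\lambda}$ forces the unipotent factor $\exp(-2\pi\sqrt{-1}(t\partial_{t}-\alpha))$ to be trivial; since $t\partial_{t}-\alpha$ is nilpotent on $M^{\alpha}$ by Proposition~\ref{bunkailem}, this gives $t\partial_{t}-\alpha = 0$ on $M^{\alpha}$. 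Combining with Proposition~\ref{isomprop} identifies the underlying $D_{X\times \CS}$-module of $\calV_{\lambda}$ canonically with $p_{1}^{*}M^{\alpha}\otimes_{O_{X\times \CS}}p_{2}^{*}\calE^{\alpha}$, where $\calE^{\alpha}$ denotes the rank-one Kummer $D_{\CS}$-module generated by $e_{\alpha}$ with $t\partial_{t}e_{\alpha} = \alpha e_{\alpha}$.

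Finally, to upgrade this $D$-module factorization to a factorization as polarizable variations of pure $\CC$-Hodge structure, I would equip $\calE^{\alpha}$ with its canonical rank-one polarizable VHS structure and twist $\calV_{\lambda}$ by $p_{2}^{*}(\calE^{\alpha})^{\vee}$, producing a polarizable VHS on $X\times \CS$ with trivial $\CS$-monodromy. The main obstacle is to show that such a VHS descends along $p_{1}$ to a polarizable VHS on $X$. The key rigidity input is that a polarizable VHS on $\CS$ with trivial monodromy is constant: its limiting mixed Hodge structures at $0$ and $\infty$ are pure (the nilpotent monodromy vanishes), so the VHS extends to a polarizable VHS on $\PP^{1}$ and is therefore constant by Deligne's theorem of the fixed part. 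Applying this rigidity fiberwise over $X$ establishes the descent, and untwisting yields $\calV_{\lambda} \simeq \calV_{1}^{(\lambda)}\boxtimes \calE^{\alpha}$ with $\calV_{1}^{(\lambda)}$ a polarizable VHS on $X$, giving the desired decomposition of $(\calM^{\an}|_{X\times \CS})^{\CC}$.
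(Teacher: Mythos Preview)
Your argument is correct and rests on the same two inputs as the paper---Deligne's semisimplicity and rigidity of polarizable variations---but is organized differently. The paper invokes Deligne's decomposition (Proposition~1.13 of~\cite{DelUnTh}) $\calV \simeq \bigoplus_l \calN_l \otimes H_l$ with each $\calN_l$ having \emph{simple} underlying local system; simplicity over $\pi_1(X)\times\ZZ$ forces the local system of $\calN_l$ to split as $\calL_1\boxtimes\calL_2$ with $\calL_2$ of rank~$1$, and then rigidity is applied \emph{once, globally on $X\times\CS$}: the variations $\calN_l$ and $i^*\calN_l\boxtimes\calA_l$ (where $i\colon x\mapsto(x,1)$ and $\calA_l$ is the rank-one variation on $\CS$ with local system $\calL_2$) have the same local system and the same fiber at $(x,1)$, hence coincide. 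You instead group by $T$-eigenvalue, twist away the $\CS$-monodromy, and apply rigidity \emph{fiberwise} on each $\{x\}\times\CS$ (trivial monodromy $\Rightarrow$ extends to $\PP^1$ $\Rightarrow$ constant) before descending to $X$. This is sound, but the descent step---from ``constant along each $\CS$-fiber'' to ``pulled back along $p_1$''---is stated without justification; it holds because the Hodge subbundles are holomorphic and fiberwise constant, so the classifying map to the period domain factors through $p_1$. The paper's single global rigidity step sidesteps this extra argument.
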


\begin{proof}
By Proposition~1.13 of Deligne~\cite{DelUnTh},
a polarizable pure variation of $\CC$-Hodge structure on smooth quasi-projective variety is of the form $\bigoplus_{l}\calN_{l}\otimes H_{l}$,
where $\calN_{l}$ is a polarizable variation of $\CC$-Hodge structure whose underlying local system is simple,
and $H_{l}$ is a pure $\CC$-Hodge structure (we regard it as a constant variation of Hodge structure).
We consider such a decomposition of $(\calM^{\an}|_{X\times \CS})^\CC$.
Recall that a local system is a finite dimensional representation of the fundamental group.
Therefore, since we have $\pi_{1}(X\times \CS)=\pi_{1}(X)\times \pi_{1}(\CS)$,
the underlying local system of $\calN_{l}$ is of the form:
$\calL_{1}\boxtimes \calL_{2}$, where $\calL_{1}$ (resp. $\calL_{2}$) is a local system on $X$ (resp. $\CS$).
Since the underlying local system of $\calN_{l}$ is simple, $\calL_{2}$ is of rank $1$.
Let $\calA_{l}$ be the rank $1$ pure variation of $\CC$-Hodge structure of the weight $0$ whose underlying local system is $\calL_{2}$.
Let $i\colon X\hookrightarrow X\times \CS\ (x\mapsto (x,1))$ be the inclusion map.
Then, the underlying local system of the polarizable variation of pure Hodge structure $i^{*}\calN_{l}\boxtimes \calA_{l}$ is $\calL_{1}\boxtimes \calL_{2}$.
Moreover, its restriction to a point $(x,1)\in X\times \CS$ is the same as the restriction of $\calN_{l}$ to the point $(x,1)$ (as a polarizable pure Hodge structure).
By the rigidity property of variation of Hodge structure (see Proposition~7.12 on p.124 of \cite{CompMFD}),
$\calN_{l}$ is isomorphic to $i^{*}\calN_{l}\boxtimes \calA_{l}$.
Moreover, it is clear that $(i^{*}\calN_{l}\boxtimes \calA_{l})\otimes H_{l}=(i^{*}\calN_{l}\otimes H_{l})\boxtimes \calA_{l}$.
This completes the proof.
\end{proof}

Lemma~\ref{ganon} holds for not only $(\calM^{\an}|_{X\times \CS})^{\CC}$ but also $(\calM|_{X\times \CS})^{\CC}$, as follows.

\begin{lem}\label{rettugo}
In the situation as above,
$(\calM|_{X\times \CS})^{\CC}$ is isomorphic (as an algebraic $\CC$-Hodge module) to a direct sum of some Hodge modules of the form $((j_{X})_{!*}\calV_{1})|_{X}\boxtimes ((j_{\CS})_{!*}\calV_{2})|_{\CS}$,
where $\calV_{1}$ is a polarizable variation of pure $\CC$-Hodge structure on $X$
and $\calV_{2}$ is a polarizable variation of pure $\CC$-Hodge structure on $\CS$ of rank $1$.
\end{lem}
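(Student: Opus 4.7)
The plan is to lift the analytic decomposition of Lemma~\ref{ganon} to the algebraic category by passing to a smooth projective compactification and invoking a GAGA-type algebrization for pure Hodge modules.

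First, I choose smooth projective compactifications $j_X\colon X\hookrightarrow \bar X$ (combining Nagata's theorem with Hironaka's resolution, since $X$ is smooth and of finite type over $\CC$) and $j_{\CS}\colon \CS\hookrightarrow \PP^1$, and set $j:=j_X\times j_{\CS}\colon X\times \CS\hookrightarrow \bar X\times \PP^1$. The target is projective, which is what makes GAGA available. Lemma~\ref{ganon} then provides an analytic isomorphism
\[
(\calM|_{X\times \CS}^{\an})^{\CC}\;\simeq\;\bigoplus_{i}\bigl(\calV_1^{(i)}\boxtimes \calV_2^{(i)}\bigr),
\]
in the category of analytic pure $\CC$-Hodge modules on $X\times \CS$, where each $\calV_2^{(i)}$ is of rank one.

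Next, I apply the intermediate extension along $j$ to both sides. Since intermediate extension commutes with external products, the right-hand side becomes $\bigoplus_i\bigl((j_X)_{!*}^{\an}\calV_1^{(i)}\bigr)\boxtimes \bigl((j_{\CS})_{!*}^{\an}\calV_2^{(i)}\bigr)$, an analytic pure Hodge module on the projective variety $\bar X\times \PP^1$. The left-hand side becomes $j_{!*}(\calM|_{X\times \CS}^{\an})^{\CC}$; because $\calM|_{X\times \CS}$ is itself algebraic, this is the analytification of an algebraic pure Hodge module on $\bar X\times \PP^1$.

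The main step is algebrization. Because $\bar X\times \PP^1$ is projective, Saito's algebraicity theorem for pure Hodge modules, applied in the $\CC$-Hodge setting of Remark~\ref{CHodge}, gives an equivalence between the algebraic and analytic categories (both on objects and on morphisms). Hence the analytic isomorphism above is the analytification of an algebraic one on $\bar X\times \PP^1$; in particular, each $(j_X)_{!*}^{\an}\calV_1^{(i)}$ is the analytification of an algebraic pure Hodge module $(j_X)_{!*}\calV_1^{(i)}$ on $\bar X$, and similarly for $\calV_2^{(i)}$ on $\PP^1$. Restricting the algebraic isomorphism from $\bar X\times \PP^1$ to $X\times \CS$ yields the claimed decomposition with algebraic polarizable variations $\calV_1^{(i)}$ on $X$ and $\calV_2^{(i)}$ on $\CS$.

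I expect the algebrization step to be the main obstacle, namely checking carefully that Saito's GAGA for pure Hodge modules applies in the pure $\CC$-Hodge framework to all the analytic objects and morphisms at hand on $\bar X\times \PP^1$. Once this is granted, the remaining ingredients — the commutation of intermediate extension with external products, and the final restriction to the open subset $X\times \CS$ — are essentially formal.
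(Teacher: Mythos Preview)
Your proposal is correct and follows essentially the same route as the paper: compactify to a projective variety $\bar X\times\PP^1$, apply intermediate extension to both sides of the analytic decomposition from Lemma~\ref{ganon}, and use that algebraic and analytic pure Hodge modules coincide on a projective variety to algebrize the isomorphism before restricting back. The paper differs only in that it spells out the identity $(j_X\times j_{\CS})_{!*}(\calV_1\boxtimes\calV_2)\simeq (j_X)_{!*}\calV_1\boxtimes(j_{\CS})_{!*}\calV_2$ explicitly, factoring $j$ as $(1_{\bar X}\times j_{\CS})\circ(j_X\times 1_{\CS})$ and using exactness of $\boxtimes$, whereas you invoke this commutation as a known fact.
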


\begin{proof}
Let $\calV_{1}$ (resp. $\calV_{2}$) be a polarizable variation of pure $\CC$-Hodge structure on $X$ (resp. $\CS$).
We regard them as pure Hodge modules.
Let $V_{i}$ be the underlying $D$-module of $\calV_{i}$.
Take a smooth compactification $\ov{X}$ of $X$ such that $\ov{X}\setminus X$ is a normal crossing divisor.
We denote by $j_{X}$ (resp. $j_{\CS}$) the inclusion $X\hookrightarrow \ov{X}$ (resp. $\CS\hookrightarrow \PP$).
We consider 
the minimal extensions $(j_{X})_{!*}\calV_{1}$ and $(j_{\CS})_{!*}\calV_{2}$
and the exterior product of them:
$(j_{X})_{!*}\calV_{1}\boxtimes (j_{\CS})_{!*}\calV_{2}$.
Since $\ov{X}\times \PP$ is a projective variety,
we can regard them as algebraic Hodge modules.
Then, the analytification of $((j_{X})_{!*}\calV_{i})|_{X}$ is $\calV_{i}$ for $i=1,2$.
By the exactness of $\boxtimes$ in the category of Hodge modules,
the image of the morphism
\[
(j_{X}\times 1_{\CC})_{!}(\calV_{1}\boxtimes \calV_{2})(\simeq (j_{X})_{!}\calV_{1}\boxtimes \calV_{2})\to 
(j_{X}\times 1_{\CC})_{*}(\calV_{1}\boxtimes \calV_{2})(\simeq (j_{X})_{*}\calV_{1}\boxtimes \calV_{2})
\]
is $(j_{X})_{!*}\calV_{1}\boxtimes \calV_{2}$.
Therefore, we have
\[(j_{X}\times 1_{\CC})_{!*}(\calV_{1}\boxtimes \calV_{2})\simeq (j_{X})_{!*}\calV_{1}\boxtimes \calV_{2}.\]
In the same way, we obtain
\[(1_{\ov{X}}\times j_{\CS})_{!*}((j_{X})_{!*}\calV_{1}\boxtimes \calV_{2})\simeq (j_{X})_{!*}\calV_{1}\boxtimes (j_{\CS})_{!*}\calV_{2}.\]
Since $(j_{X}\times j_{\CS})_{!*}\simeq (1_{\ov{X}}\times j_{\CS})_{!*}\circ (j_{X}\times 1_{\CS})_{!*}$,
we have a natural isomorphism 
\[(j_{X}\times j_{\CS})_{!*}(\calV_{1}\boxtimes \calV_{2})\simeq (j_{X})_{!*}\calV_{1}\boxtimes (j_{\CS})_{!*}\calV_{2}.\]
Note that the analytification of an algebraic Hodge module
$(j_{X}\times j_{\CS})_{!*}(\calM|_{X\times \CS})$ is
$(j_{X}\times j_{\CS})_{!*}(\calM|_{X\times \CS}^{\an})$.
Therefore, by Lemma~\ref{ganon},
$(j_{X}\times j_{\CS})_{!*}(\calM|_{X\times \CS})^{\CC}$
is isomorphic to
a direct sum of
Hodge modules of the form:
$(j_{X})_{!*}\calV_{1}\boxtimes (j_{\CS})_{!*}\calV_{2}$,
where $\calV_{1}$ (resp. $\calV_{2}$) is a polarizable variation of $\CC$-Hodge structure on $X$ (resp. $\CS$)
and $\calV_{2}$ is of rank $1$.
This implies the desired result.
\end{proof}

\begin{cor}\label{kurumi}
For a monodromic pure Hodge module $\calM=(M,F_{\bullet}M,K)$ with the condition (\ref{doredake}),
for each $\alpha\in (-1,0]\cap \QQ$
there exists a filtration $F_{\bullet}M^{\alpha}$ (in fact, $F_{p}M^{\alpha}=F_{p}M\cap M^{\alpha}$) of $M^{\alpha}$
such that we have
\[F_{\bullet}M|_{X\times \CS}=\bigoplus_{\alpha\in (-1,0]\cap \QQ}F_{\bullet}M^{\alpha}\otimes \CC[t^{\pm}].\]
Moreover, 
$M^{\alpha}$ ($\alpha\in (-1,0]\cap \QQ$) is killed by $t\partial_{t}-\alpha$, i.e. we have
\[M^{\alpha}=\mathrm{Ker}(t\partial_{t}-\alpha)\subset M.\]
\end{cor}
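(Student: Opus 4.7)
The plan is to reduce the statement to the special structure provided by Lemma~\ref{rettugo}. Applying that lemma, the $\CC$-Hodge module $(\calM|_{X\times \CS})^{\CC}$ is a finite direct sum
\[(\calM|_{X\times \CS})^{\CC}\simeq \bigoplus_{k}\calV_{1,k}\boxtimes \calV_{2,k},\]
where each $\calV_{1,k}$ is a polarizable variation of pure $\CC$-Hodge structure on $X$ and each $\calV_{2,k}$ is a polarizable variation of pure $\CC$-Hodge structure on $\CS$ of rank one. The entire corollary is driven by the rank-one condition on the second factor, which makes both the $t\partial_{t}$-action and the Hodge filtration on $\calV_{2,k}$ extremely rigid.

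For each $k$, choose a generator $e_{k}$ of the rank-one underlying $D$-module $V_{2,k}$ of $\calV_{2,k}$. Then $t\partial_{t}$ acts on $e_{k}$ by some scalar $\gamma_{k}\in \CC$, and $V_{2,k}=\bigoplus_{l\in \ZZ}\CC\cdot t^{l}e_{k}$ as a $\CC$-vector space, with $t\partial_{t}$ acting by $\gamma_{k}+l$ on the $l$-th summand. Since $M$ is $\QQ$-monodromic and every $V_{1,k}\otimes t^{l}e_{k}$ sits inside some $M^{\beta}$, each such $\gamma_{k}+l$ must lie in $\QQ$, hence $\gamma_{k}\in \QQ$; write $\gamma_{k}=\alpha_{k}+l_{k}$ with $\alpha_{k}\in (-1,0]\cap \QQ$ and $l_{k}\in \ZZ$. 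Summing over $k$ and grouping by $\gamma_{k}+l\bmod \ZZ$, for each $\alpha\in (-1,0]\cap \QQ$ the generalized $\alpha$-eigenspace
\[M^{\alpha}=\bigoplus_{\{k\,:\,\alpha_{k}=\alpha\}}V_{1,k}\otimes t^{-l_{k}}e_{k}\subset M|_{X\times \CS}\]
is in fact a genuine $\alpha$-eigenspace, since $t\partial_{t}$ acts on each summand as the scalar $\alpha$. This gives the second assertion $M^{\alpha}=\mathrm{Ker}(t\partial_{t}-\alpha)$.

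For the Hodge filtration, rank-one-ness of $\calV_{2,k}$ forces its Hodge filtration to jump at a single index: there exists $n_{k}\in \ZZ$ with $F_{p}V_{2,k}=0$ for $p<n_{k}$ and $F_{p}V_{2,k}=V_{2,k}$ for $p\geq n_{k}$. The exterior-product formula therefore simplifies to $F_{p}(V_{1,k}\boxtimes V_{2,k})=F_{p-n_{k}}V_{1,k}\boxtimes V_{2,k}$. Intersecting this with the eigenspace $M^{\alpha}$ computed above yields
\[F_{p}M\cap M^{\alpha}=\bigoplus_{\{k\,:\,\alpha_{k}=\alpha\}}F_{p-n_{k}}V_{1,k}\otimes t^{-l_{k}}e_{k},\]
which I take as $F_{p}M^{\alpha}$; summing over $\alpha$ and redistributing the $\CC[t^{\pm}]$-action coming from the decomposition $V_{2,k}=\bigoplus_{l}\CC\cdot t^{l}e_{k}$ recovers $F_{p}M|_{X\times \CS}=\bigoplus_{\alpha}F_{p}M^{\alpha}\otimes \CC[t^{\pm}]$. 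The main obstacle is purely bookkeeping: one must track three independent integer shifts in parallel---the Hodge jump $n_{k}$, the eigenvalue translate $l_{k}$, and the $\CC[t^{\pm}]$-grading---and verify that they line up consistently. Beyond that, no further input is required beyond Lemma~\ref{rettugo} and the rigidity of the rank-one data.
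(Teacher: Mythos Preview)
Your argument is correct and follows the same route as the paper: both invoke Lemma~\ref{rettugo} and then exploit that a rank-one factor on $\CS$ has a single Hodge jump and a semisimple $t\partial_t$-action, yielding the claimed decomposition and the equality $M^{\alpha}=\mathrm{Ker}(t\partial_t-\alpha)$. The only point to tighten is the sentence ``then $t\partial_t$ acts on $e_k$ by some scalar'': this is not automatic for an arbitrary $O_{\CS}$-generator of a rank-one connection, so you should either invoke regularity of $V_{2,k}$ (it underlies a polarizable variation) or, as the paper does, first note that $V_{2,k}$ is monodromic and pick $e_k$ inside the one-dimensional eigenspace $V_{2,k}^{\alpha_k}$.
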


\begin{proof}
We remark again that the analytification of the algebraic $\CC$-Hodge module
$((j_{X})_{!*}\calV_{1})|_{X}$ (resp. $((j_{\CS})_{!*}\calV_{2})|_{\CS}$) is $\calV_{1}$ (resp. $\calV_{2}$).
Since $\calV_{2}$ is of rank $1$,
the Hodge filtration of $(j_{\CS})_{!*}\calV_{2}$ has only one jump, say $r\in \ZZ$.
Therefore, the Hodge filtration of $(j_{X})_{!*}\calV_{1}\boxtimes (j_{\CS})_{!*}\calV_{2}$ is of the form:
$(F_{\bullet-r}(j_{X})_{!*}\calV_{1})\boxtimes (j_{\CS})_{!*}\calV_{2}$.
Moreover, 
let $\alpha\in (-1,0]\cap \QQ$ be the integer such that the monodromy of the underlying local system of $\calV_{2}$ is $\exp(-2\pi\sqrt{-1}\alpha)$.
Then,
we have a decomposition
$((j_{\CS})_{!*}\calV_{2})|_{\CS}=\bigoplus_{l\in \ZZ}(((j_{\CS})_{!*}\calV_{2})|_{\CS})^{\alpha+l}$,
where $(((j_{\CS})_{!*}\calV_{2})|_{\CS})^{\alpha+l}$ is a vector space defined as
$(((j_{\CS})_{!*}\calV_{2})|_{\CS})^{\alpha+l}:=\bigcup_{s\geq 0}\mathrm{Ker}((t\partial_{t}-\alpha-l)^s\colon ((j_{\CS})_{!*}\calV_{2})|_{\CS}\to ((j_{\CS})_{!*}\calV_{2})|_{\CS})$.
Note that since $\calV_{2}$ is of rank $1$, we have
\begin{align}\label{buiki}
(((j_{\CS})_{!*}\calV_{2})|_{\CS})^{\alpha+l}=\mathrm{Ker}((t\partial_{t}-\alpha-l)\colon ((j_{\CS})_{!*}\calV_{2})|_{\CS}\to ((j_{\CS})_{!*}\calV_{2})|_{\CS}).
\end{align}
We define $(((j_{X})_{!*}\calV_{1})|_{X}\boxtimes ((j_{\CS})_{!*}\calV_{2})|_{\CS})^{\alpha+l}\subset ((j_{X})_{!*}\calV_{1})|_{X}\boxtimes ((j_{\CS})_{!*}\calV_{2})|_{\CS}$ similarly.
Then, we have
\[
(((j_{X})_{!*}\calV_{1})|_{X}\boxtimes ((j_{\CS})_{!*}\calV_{2})|_{\CS})^{\alpha+l}
=p_{1}^{*}(((j_{X})_{!*}\calV_{1})|_{X})\otimes p_{2}^{*}(((j_{\CS})_{!*}\calV_{2})|_{\CS})^{\alpha+l}.
\]
Then, we obtain
\[
((j_{X})_{!*}\calV_{1})|_{X}\boxtimes ((j_{\CS})_{!*}\calV_{2})|_{\CS}
=
\bigoplus_{l\in \ZZ} p_{1}^{*}(((j_{X})_{!*}\calV_{1})|_{X})\otimes_{\CC}(((j_{\CS})_{!*}\calV_{2})|_{\CS})^{\alpha+l}.
\]
In this expression, we have
\begin{align*}
F_{\bullet}(((j_{X})_{!*}\calV_{1})|_{X}\boxtimes ((j_{\CS})_{!*}\calV_{2})|_{\CS})
=&(F_{\bullet+r}((j_{X})_{!*}\calV_{1})|_{X})\boxtimes ((j_{\CS})_{!*}\calV_{2})|_{\CS}\\
=&\bigoplus_{l\in \ZZ} (F_{\bullet+r}((j_{X})_{!*}\calV_{1})|_{X})\otimes_{\CC} (((j_{\CS})_{!*}\calV_{2})|_{\CS})^{\alpha+l}.
\end{align*}
This implies the first statement.
The second assertion follows from (\ref{buiki})).
\end{proof}

Furthermore, we can show the following.
\begin{cor}\label{mouikkai}
For an monodromic algebraic pure Hodge module on $X\times \CC$ with the condition (\ref{doredake}),
we have a decomposition
\begin{align}\label{dedede}
F_{\bullet}M=\bigoplus_{\beta\in \QQ}F_{\bullet }M^{\beta},
\end{align}
where $F_{\bullet}M^{\beta}=F_{\bullet}M\cap M^{\beta}$.
Moreover, 
any $M^{\beta}$ ($\beta\in \QQ$) is killed by $t\partial_{t}-\beta$, i.e. we have
\[M^{\beta}=\mathrm{Ker}(t\partial_{t}-\beta)\subset M.\]
\end{cor}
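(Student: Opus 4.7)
The plan is to reduce the corollary to two easier sub-cases via the strict support decomposition of pure Hodge modules, and then to use an explicit description in each case. Under hypothesis~(\ref{doredake}), the singular support of $M$ lies in the union of the zero section and the conormal bundle of $X\times\{0\}$, so the strict support decomposition of $\calM$ takes the form $\calM=\calM_{1}\oplus\calM_{2}$, where $\calM_{1}$ has strict support $X\times\CC$ and $\calM_{2}$ is supported on $X\times\{0\}$. Both summands remain monodromic and satisfy~(\ref{doredake}), and the two assertions of the corollary are additive under $\oplus$, so it suffices to treat $\calM_{1}$ and $\calM_{2}$ separately.

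For $\calM_{2}$, Kashiwara's equivalence for Hodge modules provides a pure Hodge module $\calN$ on $X$ with $\calM_{2}=i_{+}\calN$, where $i\colon X\hookrightarrow X\times\CC$ is the inclusion at $t=0$. The standard formulas for the closed pushforward give, at the level of the underlying $D$-module $N$ of $\calN$, a decomposition $M_{2}=\bigoplus_{k\geq 0}N\otimes \partial_{t}^{k}$ with $t\cdot(n\otimes \partial_{t}^{k})=-k\,n\otimes \partial_{t}^{k-1}$, so that $(t\partial_{t})(n\otimes \partial_{t}^{k})=-(k+1)(n\otimes \partial_{t}^{k})$. Hence $M_{2}^{-k-1}=N\otimes \partial_{t}^{k}$ is genuinely (not merely nilpotently) annihilated by $t\partial_{t}+k+1$, and the Hodge-filtration formula $F_{p}(i_{+}\calN)=\bigoplus_{k\geq 0}F_{p-k-1}N\otimes \partial_{t}^{k}$ immediately gives the direct-sum decomposition of $F_{p}M_{2}$ across these eigenspaces.

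For $\calM_{1}$, strict support $X\times\CC$ means that $\calM_{1}$ is the minimal extension along $1_{X}\times j_{0}\colon X\times\CS\hookrightarrow X\times\CC$ of its restriction to $X\times\CS$, which coincides with $\calM|_{X\times\CS}$ (since $\calM_{2}|_{X\times\CS}=0$). By Lemma~\ref{rettugo}, $\calM|_{X\times\CS}$ is a direct sum of exterior products $\calV_{1}^{(i)}\boxtimes\calV_{2}^{(i)}$ where each $\calV_{2}^{(i)}$ is a rank-$1$ polarizable variation of $\CC$-Hodge structure on $\CS$. Since the minimal extension commutes with direct sums and with the exterior product by a smooth factor coming from $X$, one obtains $\calM_{1}=\bigoplus_{i}\calV_{1}^{(i)}\boxtimes(j_{0})_{!*}\calV_{2}^{(i)}$. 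For each $i$, the rank-$1$ module $(j_{0})_{!*}\calV_{2}^{(i)}$ on $\CC$ has one-dimensional $t\partial_{t}$-eigenspaces, and exactly as in equation~(\ref{buiki}) each such eigenspace is genuinely annihilated by $t\partial_{t}-\beta$; moreover, the Hodge filtration of the rank-$1$ module $(j_{0})_{!*}\calV_{2}^{(i)}$ is a direct sum along these eigenspaces (the $O_{\CC}$-submodules of $(j_{0})_{!*}\calV_{2}^{(i)}$ are visibly $\CC$-spanned by suitable monomials $t^{k}e^{(i)}$, and strict specializability pins down exactly which monomials appear). Taking $\boxtimes$ with $\calV_{1}^{(i)}$ preserves the eigenspace decomposition and converts the one-jump Hodge filtration of $(j_{0})_{!*}\calV_{2}^{(i)}$ into an eigenspace-wise shift of $F_{\bullet}\calV_{1}^{(i)}$. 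Summing over $i$ delivers the two assertions for $\calM_{1}$.

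The main obstacle is justifying that the minimal extension commutes with $\boxtimes$ in the category of pure Hodge modules--not merely as underlying $D$-modules but with Hodge filtrations and polarizations intact--so that the decomposition on $X\times\CS$ furnished by Lemma~\ref{rettugo} genuinely lifts to a decomposition of $\calM_{1}$ on $X\times\CC$. Granted this compatibility (a standard feature of Saito's theory), together with the explicit structure of $(j_{0})_{!*}\calV_{2}^{(i)}$ outlined above and the $\calM_{2}$ calculation, the corollary follows.
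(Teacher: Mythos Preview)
Your approach is correct and takes a genuinely different route from the paper's own proof.

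The paper does not invoke the strict support decomposition at this stage. Instead, it works with the whole module $M$ and applies the strict specializability formulas directly: from Corollary~\ref{kurumi} one knows that $F_{p}M|_{X\times\CS}$ is decomposed, and the identity
\[
F_{p}M\cap V_{t}^{>-1}M \;=\; j_{*}\bigl(F_{p}M|_{X\times\CS}\bigr)\cap V_{t}^{>-1}M
\]
transports this to a decomposition of $F_{p}V_{t}^{>-1}M$. The paper then extends across $V_{t}^{\geq -1}M$ by a short argument using that $t\colon F_{p}V_{t}^{>-1}M\to F_{p}V_{t}^{>0}M$ is an isomorphism, and continues to all of $F_{p}M$ by downward induction on the $V$-index, using the surjectivity of $\partial_{t}\colon F_{p-1}\GR_{V}^{-\beta_{0}+1}M\to F_{p}\GR_{V}^{-\beta_{0}}M$. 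The semisimplicity at $\beta=-1$ is obtained by the trick $(t\partial_{t}+1)m=\partial_{t}tm$ together with $t\partial_{t}|_{M^{0}}=0$.

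Your argument replaces this uniform induction by a case split: the supported-at-zero summand $\calM_{2}$ is dispatched by the explicit pushforward formula, while for $\calM_{1}$ you extend the $\boxtimes$-decomposition of Lemma~\ref{rettugo} across $t=0$ via the minimal extension. What you flag as the ``main obstacle'' --- compatibility of $(1_{X}\times j_{0})_{!*}$ with $\boxtimes$ and with the $\CC$-Hodge module direct sum --- is exactly the content of the filtration formula $F_{p}M_{1}=\sum_{l\geq 0}\partial_{t}^{\,l}\bigl(j_{*}F_{p-l}\cap V_{t}^{>-1}\bigr)$; once you unpack it, you are essentially using the same strict specializability input as the paper, just reorganized. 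One small point: if $X$ is not irreducible, ``$\calM_{1}$ has strict support $X\times\CC$'' should be read as ``$\calM_{1}$ is the sum of all summands whose strict support is not contained in $X\times\{0\}$''; under (\ref{doredake}) this is still the minimal extension of $\calM|_{X\times\CS}$, so nothing changes.

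Either route works. The paper's is more economical (no splitting, no appeal to $\boxtimes$-compatibility of $(\cdot)_{!*}$); yours is more structural and makes the semisimplicity of $t\partial_{t}$ on each $M^{\beta}$ transparent in both cases from the outset.
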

\begin{proof}
We denote by $j$ the inclusion $X\times \CS\hookrightarrow X\times \CC$.
Then, by the strict specializability of $M$ (see Proposition~3.2.2 of \cite{HM88} or Exercise~11.1 of \cite{Schnell}), 
we have
\begin{align*}
F_{\bullet }M\cap V^{>-1}_{t}M = j_{*}(F_{\bullet}M|_{X\times \CS})\cap V^{>-1}_{t}M,
\end{align*}
where $V^{\bullet}_{t}M$ is the Kashiwara-Malgrange filtration along $X\times \{0\}$ of $M$.
Therefore, by Lemma~\ref{kurumi}:
$F_{\bullet}M|_{X\times \CS}=\bigoplus_{\alpha\in (-1,0]\cap \QQ}F_{\bullet}M^{\alpha}\otimes \CC[t^{\pm}]$,
we have
\begin{align}\label{nee}
F_{\bullet }M\cap V^{>-1}_{t}M= 
 \bigoplus_{l\in \ZZ_{\geq 0}}\bigoplus_{\alpha\in (-1,0]\cap \QQ}F_{\bullet}M^{\alpha+l}.
\end{align}
For a section $m\in F_{p}M$ on $X\times \CC$ let $m=\sum_{\beta\in \QQ}m^{\beta}\ (m^{\beta}\in M^{\beta})$ be its decomposition with respect to the decomposition $M=\bigoplus_{\beta\in \QQ}M^{\beta}$.

Assume that $m$ is in $F_{p}M\cap V^{\geq -1}_{t}M$.
Then, $tm=\sum_{\beta}tm^{\beta}$ is in $F_{p}M\cap V^{>-1}_{t}M$.
By (\ref{nee}), each component $tm^{\beta}$ is also in $F_{p}M$.
Since $t\colon F_{p}M\cap V^{>-1}_{t}M\to F_{p}M\cap V^{>0}_{t}M$ is isomorphism by the strict specializability,
for $\beta>-1$ we have $m^{\beta}\in F_{p}M$.
Hence, $m^{-1}=m-\sum_{\beta\in \QQ}m^{\beta}$ is also in $F_{p}M$.

To show in the general case, we use induction.
We remark that for any $\beta\in \RR$ and sufficiently small $\epsilon>0$ we have
$V^{\geq \beta}M=V^{>\beta-\epsilon}M$.
Assume that for fixed $\beta_{0}\in \RR_{>1}$, for a section in $F_{p}M\cap V^{>-\beta_{0}}M$ each component of it is also in $F_{p}M$.
Let $m$ be a section in $F_{p}M\cap V^{\geq -\beta_{0}}M$.
By the strict specializability, $\partial_{t}\colon F_{p-1}\GR^{-\beta_{0}+1}_{V}M\to F_{p}\GR^{-\beta_{0}}_{V}M$ is surjective.
Therefore, there exists a section $m'\in F_{p-1}M\cap V^{\geq -\beta_{0}+1}_{t}M$ such that $m-\partial_{t} m'$ is in $F_{p}M\cap V^{>-\beta_{0}}_{t}M$.
By the inductive assumption, all the components of $m'$ and $m-\partial_{t}m'$ are also in $F_{p}M$.
Hence, each component of $m$ is also in $F_{p}M$.
This completes the proof of the decomposition (\ref{dedede}).

It remains to show the second assertion for $\beta=-1$.
For a section $m\in M^{-1}$,
we have $(t\partial_{t}+1)m=\partial_{t}tm$.
Since $tm$ is in $M^{0}$,
$t\partial_{t}(tm)=0$ by Corollary~\ref{kurumi}.
Because $t\colon M^{0}\to M^{1}$ is an isomorphism,
$\partial_{t}tm$ is also zero.
This implies the second assertion.
\end{proof}

Next, we consider a monodromic pure Hodge module without the condition (\ref{doredake}).
We will show the following.

\begin{prop}\label{titanic}
For a monodromic pure Hodge module $\calM=(M,F_{\bullet}M,K)$ on $X\times \CC$,
we have a decomposition
\[F_{\bullet}M=\bigoplus_{\beta\in \QQ}F_{\bullet}M^{\beta},\]
where we set $F_{\bullet}M^{\beta}:=F_{\bullet}M\cap M^{\beta}$.
Moreover, any section in $M^{\beta}$ is killed by $t\partial_{t}-\beta$,
i.e. we have
\[M^{\beta}=\mathrm{Ker}(t\partial_{t}-\beta)\subset M.\]
\end{prop}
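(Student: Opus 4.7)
The strategy is to use the strict support decomposition of pure Hodge modules to reduce Proposition~\ref{titanic} to Corollary~\ref{mouikkai}, which already handles the case when condition (\ref{doredake}) holds.

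\textbf{Reduction via strict support decomposition.}
By the strict support decomposition, $\calM=\bigoplus_{Z}\calM_{Z}$ where each $\calM_{Z}$ has strict support on an irreducible closed subvariety $Z\subset X\times \CC$. This is also a decomposition of the underlying $D$-module, so Remark~\ref{watanuki} implies that each $\calM_{Z}$ is monodromic. Because the support of a monodromic $D$-module is $\CS$-invariant, $Z$ must have the form $Z=Z'\times\{0\}$ or $Z=Z'\times \CC$ for an irreducible closed subvariety $Z'\subset X$. It therefore suffices to prove the proposition separately for each such $\calM_{Z}$.

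\textbf{The case $Z=Z'\times\{0\}$.} By Kashiwara's equivalence for Hodge modules, $\calM_{Z}\simeq i_{*}\calN$ where $\calN$ is a pure Hodge module on $X$ with support $Z'$ and $i\colon X\hookrightarrow X\times \CC$ is $x\mapsto(x,0)$. The underlying $D$-module is $M=\bigoplus_{k\geq 0}N\cdot \partial_{t}^{k}$ with $tn=0$ for $n\in N$. A direct computation using $[t,\partial_{t}^{k}]=-k\partial_{t}^{k-1}$ gives $(t\partial_{t}+k)(n\cdot \partial_{t}^{k})=0$, so $M^{-k}=N\cdot \partial_{t}^{k}=\Ker(t\partial_{t}+k)$ and $M^{\beta}=0$ otherwise. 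The Hodge filtration $F_{p}M=\bigoplus_{k\geq 0}F_{p-k-1}N\otimes \partial_{t}^{k}$ coming from the Hodge module pushforward along $i$ immediately yields the desired decomposition.

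\textbf{The case $Z=Z'\times \CC$.} Here the plan is to reduce to Corollary~\ref{mouikkai} on a dense open and then to propagate to all of $X\times \CC$. Since $\calM_{Z}$ is generically a polarizable variation of Hodge structure on $Z'\times \CC$ and is monodromic, the smooth locus of $M_{Z}$ inside $X\times \CS$ is $\CS$-invariant and contains a subset of the form $U\times \CS$ for some Zariski dense open $U\subset Z'_{\mathrm{sm}}$. Via Kashiwara's equivalence along $Z'_{\mathrm{sm}}\hookrightarrow X$ (and a resolution of singularities of $Z'$ if needed), one may assume $Z'=X$. Then $\calM_{Z}|_{U\times \CC}$ satisfies (\ref{doredake}), and Corollary~\ref{mouikkai} produces $F_{p}M|_{U\times \CC}=\bigoplus_{\beta}F_{p}M^{\beta}|_{U\times \CC}$ together with $M^{\beta}|_{U\times \CC}=\Ker(t\partial_{t}-\beta)$. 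To propagate the decomposition from $U\times \CC$ to $X\times \CC$, one uses that $\calM_{Z}$ is the intermediate extension of $\calM_{Z}|_{U\times \CC}$, combined with strict specializability of $F_{\bullet}M$ along $t=0$: by Lemma~\ref{decomplem} it suffices to handle $\alpha\in[-1,0]\cap \QQ$, and for such $\alpha$ the candidate filtration $F_{p}M\cap M^{\alpha}$ is matched on $U$ via Corollary~\ref{nevancor} and Remark~\ref{nao} with the Hodge filtration of the pure Hodge module $\psi_{t,e(\alpha)}\calM_{Z}$ (resp.\ $\phi_{t,1}\calM_{Z}$) on $X$, which is already pinned down by its restriction to $U$.

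\textbf{Main obstacle.}
The principal technical point is the last step above: extending the decomposition from the smooth open $U\times \CC$ to all of $X\times \CC$. Although Corollary~\ref{mouikkai} yields the decomposition on $U\times \CC$ and rigidity of pure Hodge modules governs the extension in principle, verifying that the natural inclusion $\bigoplus_{\beta}(F_{p}M\cap M^{\beta})\hookrightarrow F_{p}M$ is an equality on the full space $X\times \CC$ will require a careful combination of the strict specializability of $F_{\bullet}M$ along $t=0$ with the purity of the nearby and vanishing cycles, since the summands $F_{p}M\cap M^{\beta}$ are not controlled a priori outside the smooth locus.
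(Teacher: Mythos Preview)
Your overall architecture matches the paper: reduce via the strict support decomposition, dispose of the $Z'\times\{0\}$ case (your direct Kashiwara computation is fine; the paper simply observes that (\ref{doredake}) holds there and invokes Corollary~\ref{mouikkai}), and then in the main case $Z'\times\CC$ establish the decomposition on a dense open where $M$ is smooth and try to extend.

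The genuine gap is the extension step, and the mechanism you propose points in the wrong direction. You invoke Lemma~\ref{decomplem} and the claim that the Hodge filtrations of $\psi_{t,e(\alpha)}\calM_{Z}$ and $\phi_{t,1}\calM_{Z}$ are ``pinned down by their restriction to $U$''. Two problems. First, Lemma~\ref{decomplem} \emph{assumes} the decomposition (\ref{wakamono}) already holds, so appealing to it here is circular. Second, the rigidity you want---that the Hodge filtration on $\psi_{t,e(\alpha)}M$ is determined by its restriction to $U$---would follow if $\psi_{t,e(\alpha)}\calM_{Z}$ were pure with strict support, but a priori the nearby cycle of a pure Hodge module is only \emph{mixed}; its purity in the monodromic situation is exactly a consequence of the second assertion of Proposition~\ref{titanic} (semisimplicity of $t\partial_{t}-\alpha$), which is what you are proving. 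So neither ingredient is available.

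The paper extends in the \emph{$g$-direction}, not the $t$-direction. Choose a local function $g$ on $X$ cutting out $Z\setminus Z'$, set $X'=X\setminus g^{-1}(0)$, and first use Lemma~\ref{pupupu} to transport the decomposition from $Z'\times\CC$ to $X'\times\CC$. Then form the graph embedding $\wt{\iota}_{g}\colon X\times\CC_{t}\hookrightarrow X\times\CC_{s}\times\CC_{t}$ and set $N=\dpush(\wt{\iota}_{g})_{*}M$. Strict specializability along $s=0$ gives
\[
F_{p}V_{s}^{>-1}N=\wt{j}_{*}\bigl(F_{p}N|_{X\times\CS_{s}\times\CC_{t}}\bigr)\cap V_{s}^{>-1}N,
\]
and the right-hand side is decomposed because the decomposition is already known over $\{s\neq 0\}\cong X'\times\CC_{t}$. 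The decisive point, which your sketch is missing, is that $N$ has \emph{strict support} on $\wt{\iota}_{g}(Z)$, so the entire Hodge filtration is generated from $V_{s}^{>-1}$ via
\[
F_{p}N=\sum_{l\geq 0}\partial_{s}^{\,l}\,F_{p-l}V_{s}^{>-1}N.
\]
Hence $F_{\bullet}N$ is decomposed, and Lemma~\ref{pupupu} transfers this back to $F_{\bullet}M$. The second assertion then follows because $(t\partial_{t}-\beta)M^{\beta}$ vanishes on $X'\times\CS$ by Corollary~\ref{mouikkai} and $M$ has strict support $Z\times\CC$. The ``resolution of singularities of $Z'$'' you mention is neither needed nor helpful here; the passage from $Z'$ to $X'$ and then across $g=0$ is handled entirely by Lemma~\ref{pupupu} and the strict-support formula along $s$.
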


We need the following lemma.
\begin{lem}\label{pupupu}
Let $Y$ be a smooth closed subvariety of $X$ and $\mathcal{N}=(N,F_{\bullet}N,K_{N},W_{\bullet}K_{N})$ a monodromic mixed Hodge module on $Y\times \CC$.
We denote by $i\colon Y\hookrightarrow X$ (resp. $\wt{i}\colon Y\times \CC\hookrightarrow X\times \CC$) the inclusion of $Y$ (resp. $Y\times \CC$).
Then, the pushforward $\dpush \wt{i}_{*}N$ of $N$ as $D$-module by $\wt{i}$ is mondoromic and we obtain a decomposition
\begin{align}\label{serori}
\dpush \wt{i}_{*}N=\bigoplus_{\beta\in \QQ}(\dpush \wt{i}_{*}N)^{\beta}.
\end{align}
Moreover, for $\beta\in \QQ$ we have
\[(\dpush \wt{i}_{*}N)^{\beta}\simto \dpush {i}_{*}(N^{\beta}).\]
If the Hodge filtration of $N$ is decomposed with respect to the decomposition $N=\bigoplus_{\beta\in \QQ}N^{\beta}$, then 
so is the filtration of $\dpush \wt{i}_{*}N$.
Conversely, 
if the Hodge filtration of $\dpush \wt{i}_{*}N$ is decomposed,
so is the filtration of $N$.
\end{lem}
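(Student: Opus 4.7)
The plan is to exploit the product structure $\wt{i}=i\times \iden_{\CC}$, so that $t\partial_{t}$ is transversal to the embedding direction and commutes with $D$-module pushforward. Under the identification of Lemma~\ref{peq}, the module $\dpush\wt{i}_{*}N$ is the $D_{X}$-module pushforward $\dpush i_{*}N$ equipped with the $t$- and $\partial_{t}$-actions inherited from $N$. Because $i$ is a closed embedding, $\dpush i_{*}$ is exact and preserves direct sums, and hence commutes with the formation of $\mathrm{Ker}((t\partial_{t}-\beta)^{l})$. Taking the union over $l\geq 0$ and then the direct sum over $\beta\in \QQ$ yields both the decomposition (\ref{serori}) and the isomorphism $(\dpush\wt{i}_{*}N)^{\beta}\simto \dpush i_{*}N^{\beta}$; in particular $\dpush\wt{i}_{*}N$ is monodromic, with local finiteness of the sum guaranteed by Proposition~\ref{cohprop}.

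For the Hodge filtrations I would argue locally, choosing coordinates $y_{1},\ldots,y_{d}$ on $X$ vanishing on $Y$ (where $d=\mathrm{codim}_{X}Y$). Saito's formula for the filtered $D$-module pushforward along a closed embedding gives
\[\dpush\wt{i}_{*}N = \bigoplus_{\blal\in \NN^{d}}N\cdot \partial_{y}^{\blal},\qquad F_{p}\dpush\wt{i}_{*}N=\bigoplus_{\blal\in \NN^{d}}F_{p-|\blal|-d}N\cdot \partial_{y}^{\blal}.\]
Since each $\partial_{y_{i}}$ commutes with $t\partial_{t}$, the identification of the first paragraph refines to $(\dpush\wt{i}_{*}N)^{\beta}=\bigoplus_{\blal}N^{\beta}\cdot \partial_{y}^{\blal}$. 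Assuming $F_{\bullet}N=\bigoplus_{\beta}F_{\bullet}N^{\beta}$, substituting into Saito's formula and reordering the double direct sum produces $F_{p}\dpush\wt{i}_{*}N=\bigoplus_{\beta}F_{p}(\dpush\wt{i}_{*}N)^{\beta}$, which is the forward implication.

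For the converse I rely on two consequences of Saito's formula: the restriction identity $F_{p}\dpush\wt{i}_{*}N \cap N = F_{p-d}N$ (coming from the $\blal=0$ summand, equivalently Kashiwara's equivalence $N=\mathrm{Ker}(y_{1},\ldots,y_{d})$), and $(\dpush\wt{i}_{*}N)^{\beta}\cap N = N^{\beta}$ (already visible from paragraph one). Assuming $F_{p}\dpush\wt{i}_{*}N=\bigoplus_{\beta}F_{p}(\dpush\wt{i}_{*}N)^{\beta}$, I intersect both sides with $N$; since $N=\bigoplus_{\beta}N^{\beta}$ is compatible with the $\beta$-decomposition of $\dpush\wt{i}_{*}N$, this gives
\[F_{p-d}N = F_{p}\dpush\wt{i}_{*}N\cap N=\bigoplus_{\beta}\bigl(F_{p}(\dpush\wt{i}_{*}N)^{\beta}\cap N^{\beta}\bigr)=\bigoplus_{\beta}F_{p-d}N^{\beta},\]
so $F_{\bullet}N$ decomposes as claimed. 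The main thing to get right is the codimension shift in Saito's pushforward formula and the compatibility of the Kashiwara-type recovery $N\hookrightarrow \dpush\wt{i}_{*}N$ with the Hodge filtration; once these conventions are pinned down, the argument reduces to reindexing direct sums.
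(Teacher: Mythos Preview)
Your argument is correct. The first two parts (monodromicity, $(\dpush\wt{i}_{*}N)^{\beta}\simeq\dpush i_{*}N^{\beta}$, and the forward implication for the Hodge filtration) match the paper's proof essentially verbatim: both work in local coordinates, write $\dpush\wt{i}_{*}N$ as a polynomial module in the normal derivatives, observe that $t\partial_{t}$ acts only on the $N$-factor, and plug the decomposition of $F_{\bullet}N$ into Saito's pushforward formula.

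For the converse the two proofs diverge. The paper recovers $F_{\bullet}N$ via the Kashiwara--Malgrange filtration of $\dpush\wt{i}_{*}N$ along $Y$: it first argues that each $(\dpush\wt{i}_{*}N)^{\beta}$ is strictly specializable along $Y$ (using that for $\beta\in[-1,0]$ these are nearby or vanishing cycles of a mixed Hodge module), shows $V_{Y}^{\bullet}$ and hence $F_{p}V_{Y}^{\bullet}$ split over $\beta$, and then applies the recovery formula $F_{p}N=(F_{p}\GR_{V_{Y}}\dpush\wt{i}_{*}N)|_{Y}$. Your route is more elementary: you exploit that Saito's formula is already a direct sum over the multi-index $\blal$, so both $F_{p}\dpush\wt{i}_{*}N$ and the $\beta$-grading are graded for this second grading, and intersecting with the $\blal=0$ summand $N$ (equivalently, projecting) immediately yields $F_{p-d}N=\bigoplus_{\beta}F_{p-d}N^{\beta}$. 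This avoids any appeal to specializability along $Y$ or to the mixed Hodge module axioms, and in fact works for any filtered $D$-module pushed forward along a closed embedding. The paper's approach, by contrast, is the one that generalizes naturally when $Y$ is replaced by a non-smooth hypersurface, which is exactly what is needed later in the proof of Proposition~\ref{titanic} and Theorem~\ref{main1}.
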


\begin{proof}
Set $l:=\mathrm{codim}_{X}Y$.
Take a local (\'etale) chart $(x_{1},\dots, x_{n})$ of $X$ such that $Y$ is defined by $x_{1}=\dots =x_{l}=0$.
We write $\partial_{i}:=\partial_{x_{i}}$.
Then, $\dpush \wt{i}_{*}N$ is expressed as 
\[\dpush \wt{i}_{*}N=\wt{i}_{*}N\otimes_{\CC}\CC[\partial_{1},\dots ,\partial_{l}],\]
and its section is expressed as a sum of 
some sections of the form $s\otimes \partial_{1}^{k_{1}}\dots \partial_{l}^{k_{l}}\delta$ ($s\in N$ and $k_{1},\dots,k_{l}\in \ZZ_{\geq 0}$).
Since $t\partial_{t}(s\otimes \partial_{1}^{k_{1}}\dots \partial_{l}^{k_{l}}\delta)=(t\partial_{t}s)\otimes \partial_{1}^{k_{1}}\dots \partial_{l}^{k_{l}}\delta$,
$\dpush \wt{i}_{*}N$ is monodromic and
we have
\[(\dpush \wt{i}_{*}N)^{\beta}=\dpush i_{*}N^{\beta}.\]

We assume that $F_{\bullet}N$ is decomposed as
$F_{\bullet}N=\bigoplus_{\beta\in \QQ}F_{\bullet}N^{\beta}$.
By the definition of the Hodge filtration of $\dpush \wt{i}_{*}N$,
we have
\begin{align*}
F_{p}(\dpush \wt{i}_{*}N)&=\sum_{k_{1}+\dots+k_{l}+q=p}
i_{*}(F_{q-l}N)\otimes \partial_{1}^{k_{1}}\dots \partial_{l}^{k_{l}}\delta\\
&=\bigoplus_{\beta\in \QQ}\sum_{k_{1}+\dots+k_{l}+q=p}
i_{*}(F_{q-l}N^{\beta})\otimes \partial_{1}^{k_{1}}\dots \partial_{l}^{k_{l}}\delta \quad \mbox{(by the assumption)}.
\end{align*}
Moreover, for $\beta\in \QQ$ we have
\[F_{p}(\dpush {i}_{*}N^{\beta})=\sum_{k_{1}+\dots+k_{l}+q=p}i_{*}(F_{q-l}N^{\beta})\otimes \partial_{t}^{k_{1}}\dots\partial_{t}^{k_{l}}\delta.\]
Therefore, we have
\begin{align*}
F_{p}(\dpush \wt{i}_{*}N)
=\bigoplus_{\beta\in \QQ}F_{p}(\dpush i_{*}N^{\beta}).
\end{align*}
This implies that if we set $F_{p}(\dpush \wt{i}_{*}N)^{\beta}:=
F_{p}(\dpush \wt{i}_{*}N)\cap (\dpush \wt{i}_{*}N)^{\beta}$,
we have
\begin{align}\label{tume}
F_{p}(\dpush \wt{i}_{*}N)=\bigoplus_{\beta\in \QQ}F_{p}(\dpush \wt{i}_{*}N)^{\beta}.
\end{align}

Conversely, assume the decomposition (\ref{tume}).
For simplicity, we assume $\mathrm{codim}_{X}(Y)=1$.
We can show it similarly in the general case.
Then we have $\dpush \wt{i}_{*}N= \wt{i}_{*}N[\partial_{1}]$.
We denote by $V_{Y}^{\bullet}(\dpush \wt{i}_{*}N)$ the Kashiwara-Malgrange filtration of $\dpush \wt{i}_{*}N$ along $Y$.
For $\alpha\in (-1,0]\cap \QQ$, since $(\dpush \wt{i}_{*}N)^{\alpha}\simeq \psi_{t,e(\alpha)}(\dpush \wt{i}_{*}N)$ and $(\dpush \wt{i}_{*}N)^{-1}\simeq \phi_{t,1}(\dpush \wt{i}_{*}N)$ are again strictly $\RR$-specializable by the definition of mixed Hodge module, any $(\dpush \wt{i}_{*}N)^{\beta}$ is also strictly $\RR$-specializable.
So, we can also consider the Kashiwara-Malgrange filtration $V^{\bullet}_{Y}((\dpush \wt{i}_{*}N)^{\beta})$ of $(\dpush \wt{i}_{*}N)^{\beta}$ along $Y$.
In this situation, we have
\[V_{Y}^{\bullet}(\dpush \wt{i}_{*}N)=\bigoplus_{\beta\in \QQ}
V_{Y}^{\bullet}(\dpush \wt{i}_{*}N)^{\beta}.
\]
By the assumption, we have
\begin{align}
F_{p}V_{Y}^{\bullet}(\dpush \wt{i}_{*}N)=\bigoplus_{\beta\in \QQ}
F_{p}V_{Y}^{\bullet}(\dpush \wt{i}_{*}N)^{\beta},
\end{align}
where we set
$F_{p}V_{Y}^{\bullet}(\dpush \wt{i}_{*}N)^{\beta}:=
F_{p}(\dpush \wt{i}_{*}N)^{\beta}\cap V_{Y}^{\bullet}(\dpush \wt{i}_{*}N)^{\beta}
$.
We can recover the filtration of $F_{\bullet}N$ by the formula:
\[F_{p}N=(F_{p}\GR^{0}_{V_{Y}}\dpush \wt{i}_{*}N)|_{Y}.\]
Therefore, we have
\begin{align*}
F_{p}N&=\bigoplus_{\beta \in \QQ}(
F_{p}V_{Y}^{0}(\dpush \wt{i}_{*}N)^{\beta}/
F_{p}V_{Y}^{>0}(\dpush \wt{i}_{*}N)^{\beta})|_{Y}.
\end{align*}
This implies that $F_{p}N$ is decomposed with respect to the decomposition $N=\bigoplus_{\beta\in \QQ}N^{\beta}$.
\end{proof}

\begin{proof}[Proof of Proposition~\ref{titanic}]
We consider a monodromic pure Hodge module $\calM:=(M,F_{\bullet}M,K)$ on $X\times \CC$ (without any other condition).
Since a pure Hodge module is a direct sum of some pure Hodge modules with strict supports,
we may assume that $\calM$ has strict support.
Then, since $M$ is monodromic, the strict support of $\calM$ is of the form that $Z\times \{0\}$ or $Z\times \CC$ for some irreducible closed subvariety $Z\subset X$.
In the former case, $M$ satisfies (\ref{doredake}), and hence the assertion has been proved in Corollary~\ref{mouikkai}. 
So, we assume that the support of $\calM$ is $Z\times \CC$.
By Proposition~\ref{intlemkai}, there is a smooth open (in $Z$) subvariety $Z'\subset Z$ of $Z$ such that $M|_{Z'\times \CS}$ is a integrable connection.
By Corollary~\ref{mouikkai}, we have 
\begin{align}\label{kabi}
F_{\bullet}M|_{Z'\times \CC}=\bigoplus_{\beta\in \QQ}F_{\bullet}M^{\beta}|_{Z'\times \CC}.
\end{align}
Take a (local) regular function $g$ such that $Z\setminus Z\cap g^{-1}(0)=Z'$.
Set $X':=X\setminus g^{-1}(0)$.
We denote by $i\colon Z'\times \CC\hookrightarrow X' \times \CC$ the closed embedding of $Z'\times \CC$ to $X' \times \CC$.
Note that we have $\calM|_{X'\times \CC}=i_{*}(\calM|_{Z'\times \CC})$ as a Hodge module.
By (\ref{kabi}) and Lemma~\ref{pupupu},
the Hodge filtration of $M_{X'\times \CC}$ is decomposed, i.e. we have
\begin{align}\label{naniwa}
F_{p}M|_{X'\times \CC}= \bigoplus_{\beta\in \QQ}F_{p}M^{\beta}|_{X'\times \CC}.
\end{align}
We will extend it to the equality on $X\times \CC$.
Let 
$\iota_{g}$ (resp. $\wt{\iota}_{g}$) be the graph embedding $X\hookrightarrow X\times \CC_{s}\ (x\mapsto (x,g(x)))$ (resp. $X\times \CC_{t}\hookrightarrow X\times \CC_{s}\times \CC_{t}\ ((x,t)\mapsto (x, g(x),t))$),
where we denote by $s$ the coordinate of the first $\CC$ in $X\times \CC\times \CC$.
We consider the pushforward ${(\wt{\iota}_{g}})_{*}\calM$ of $\calM$ by $\wt{\iota_{g}}$ (as a Hodge module),
whose underlying $D$-module is $\dpush(\wt{\iota}_{g})_{*}M(=(\wt{\iota}_{g})_{*}M{[\partial_{s}]})$. 
By Lemma~\ref{pupupu},
${(\wt{\iota}_{g})}_{*}\calM$ is a monodromic pure Hodge module on $(X\times \CC_{s})\times \CC_{t}$ and we have (as $D_{X\times \CC_{s}}$-modules)
\[({\dpush(\wt{\iota}_{g}})_{*}M)^{\beta}={\dpush({\iota}_{g}})_{*}(M^{\beta}),\]
i.e. we can describe $\dpush({\wt{\iota}_{g}})_{*}M$ as
\begin{align}\label{tomoare}
\dpush({\wt{\iota}_{g}})_{*}M=\bigoplus_{\beta\in \QQ}{\dpush({\iota}_{g}})_{*}(M^{\beta}).
\end{align}
Recall that we have
\[F_{p}({\dpush(\wt{\iota}_{g}})_{*}M)=\sum_{q+l=p}(\wt{\iota}_{g})_{*}F_{q-1}M\otimes \partial_{s}^{l}\delta.\]
Therefore, by (\ref{naniwa}),
we have 
\begin{align*}
F_{p}({\dpush(\wt{\iota}_{g})}_{*}M)|_{X\times \CS_{s}\times \CC_{t}}
=\bigoplus_{\beta\in \QQ} \sum_{q+l=p}(\iota'_{g})_{*}(F_{q-1}M^{\beta}|_{X'\times \CS_{s}})\otimes \partial_{s}^{l}\delta,
\end{align*}
under the identification (\ref{tomoare}).
This means that the Hodge filtration of ${\dpush(\wt{\iota}_{g})}_{*}M$ is decomposed as
\[F_{p}({\dpush(\wt{\iota}_{g})}_{*}M)|_{X\times \CS_{s}\times \CC_{t}}=\bigoplus_{\beta\in \QQ}F_{p}({\dpush({\iota}_{g})}_{*}M^{\beta})|_{X\times \CS_{s}},\]
where we set $F_{p}{(\dpush({\iota}_{g})}_{*}M^{\beta})=F_{p}({\dpush(\wt{\iota}_{g})}_{*}M)\cap ({\dpush({\iota}_{g})}_{*}M^{\beta})$.
Since ${\dpush(\wt{\iota}_{g})}_{*}M$ (resp. ${\dpush({\iota}_{g})}_{*}M^{\beta}$) is strictly $\RR$-specializable,
we can consider the Kashiwara-Malgrange filtration $V_{s}^{\bullet}({\dpush(\wt{\iota}_{g})}_{*}M)$ (resp. $V^{\bullet}_{s}({\dpush({\iota}_{g})}_{*}M^{\beta})$) of ${\dpush(\wt{\iota}_{g})}_{*}M$ (resp. ${\dpush({\iota}_{g})}_{*}M^{\beta}$) along $s=0$.
Then, we have
\[V_{s}^{\bullet}({\dpush(\wt{\iota}_{g})}_{*}M)=\bigoplus_{\beta\in \QQ}V_{s}^{\bullet}({\dpush({\iota}_{g})}_{*}M^{\beta}).\]
We denote by $\wt{j}$ (resp. $j$) the inclusion $X\times \CS_{s}\times\CC_{t} \hookrightarrow X\times \CC_{s}\times \CC_{t}$ (resp. $X\times \CS_{s} \hookrightarrow X\times \CC_{s}$).
By the strict specializability,
we have
\[F_{p}V_{s}^{>-1}({\dpush(\wt{\iota}_{g})}_{*}M)= \wt{j}_{*}(F_{p}({\dpush(\wt{\iota}_{g})}_{*}M)|_{X\times \CS_{s}\times \CC_{t}})\cap V_{s}^{>-1}({\dpush(\wt{\iota}_{g})}_{*}M).\]
Therefore, we have
\[
F_{p}V_{s}^{>-1}({\dpush(\wt{\iota}_{g})}_{*}M)= 
\bigoplus_{\beta\in \QQ}j_{*}(F_{p}({\dpush({\iota}_{g})}_{*}M^{\beta})|_{X\times \CS_{s}})\cap V^{>-1}_{s}({\dpush({\iota}_{g})}_{*}M^{\beta}).\]
Since ${\dpush(\wt{\iota}_{g})}_{*}M$ has strict support $\wt{\iota}_{g}(Z)$,
we have
\begin{align*}
F_{p}({\dpush(\wt{\iota}_{g})}_{*}M)&= \sum_{q+l=p}\partial_{s}^{l}F_{q}V_{s}^{>-1}{\dpush(\wt{\iota}_{g})}_{*}M\\
&=
\bigoplus_{\beta\in \QQ}
\sum_{q+l=p}\partial_{s}^{l}(
j_{*}(F_{q}({\dpush({\iota}_{g})}_{*}M^{\beta})|_{X\times \CS_{s}})\cap V^{>-1}_{s}({\dpush({\iota}_{g})}_{*}M^{\beta})).
\end{align*}
This means that the Hodge filtration of ${\dpush(\wt{\iota}_{g})}_{*}M$ is decomposed with respect to the decomposition (\ref{tomoare}).
By Lemma~\ref{pupupu}, we conclude that so is the Hodge filtration of $M$.

Since the section of $M^{\beta}|_{X'\times \CS}$ is killed by $t\partial_{t}-\beta$ by Corollary~\ref{mouikkai} and $M^{\beta}$ has strict support $Z\times \CC$,
we have $M^{\beta}=\mathrm{Ker}(t\partial_{t}-\beta)$.
This completes the proof.

\end{proof}

\begin{rem}
Assume that $\calM$ is pure of weight $w$.
Then, the weight filtration of $\calM$ is defined as 
\[
W_{k}K=\left\{
\begin{array}{ll}
K& (k\geq w)\\
0& (k<w).
\end{array}
\right.
\]
Recall that the weight filtration of ${}^p\psi_{t}K$ is the monodromy weight filtration relative to ${}^p\psi_{t}W_{\bullet+1}K$.
The second assertion of Proposition~\ref{titanic} implies that
the monodromy automorphism of ${}^p\psi_{t}K$ is semisimple.
Hence, the weight filtration of ${}^p\psi_{t}K$ is just ${}^p\psi_{t}W_{\bullet+1}K$,
i.e. $\psi_{t}\calM$ is pure of weight $w-1$ (see Lemma~\ref{arashi}).
\end{rem}

\subsection{The mixed case}
Next, we consider mixed Hodge modules and show Theorem~\ref{main1}.
We give a summary of the proof.
First, for a smooth monodromic mixed Hodge module, we define a variation of mixed Hodge structure on $X\times \CS$ so that its underlying $D$-module is the original one and the Hodge filtration is decomposed like the formula in Theorem~\ref{main1} (Lemma~\ref{deaeta}).
Here, to define it, we will use the result in the previous section, i.e. Proposition~\ref{titanic}.
Then, by the rigidity property of a graded polarizable admissible variation of mixed Hodge structure, we can see the new one is equal to the original one (Lemma~\ref{kabutomusi}).
This leads to Theorem~\ref{main1} for a monodromic mixed Hodge module satisfying the condition~(\ref{doredake}) (Lemma~\ref{natumaturi}).
Finally, we will prove Theorem~\ref{main1} by induction on the dimension of the support and the strict specializability of a mixed Hodge module.

\begin{rem}\label{smmixedHodge}
Let $(\calV, F_{\bullet}\calV, \calL, W_{\bullet}\calL)$ be an admissible graded polarizable variation of mixed Hodge structure on a smooth algebraic variety $Z$.
Then, $(\calV, F_{\bullet}\calV, \calL[\dim{Z}],W_{\bullet-\dim{Z}}\calL[\dim{Z}])$ is a mixed Hodge module.
If a mixed Hodge module $\calM=(M,F_{\bullet}M,K, W_{\bullet}K)$ on $Z$ is smooth, i.e. $M$ is a locally free $O_{Z}$-module of finite rank, $(M,F_{\bullet}M, H^{-\dim{Z}}K, W_{\bullet+\dim{Z}}H^{-\dim{Z}}K)$ is an admissible graded polarizable variation of mixed Hodge structure.
In this way, we will identify smooth mixed Hodge modules with admissible graded polarizable variations of mixed Hodge structure. 
\end{rem}

Let $\calM=(M,F_{\bullet}M,K,W_{\bullet}K)$ be a monodromic mixed Hodge module on $X\times \CC$.
We remark that a sub mixed Hodge module $W_{k}\calM=(W_{k}M,F_{\bullet}W_{k}M,W_{k}K,W_{\bullet}K\cap W_{k}K)$ is again monodromic and
$\GR^{W}_{k}\calM$ is also.

We will prove the following.
\begin{lem}\label{natumaturi}
If $\calM$ satisfies the condition~(\ref{doredake}),
$F_{\bullet}M$ is decomposed with respect to the decomposition $M=\bigoplus_{\beta\in \QQ}M^{\beta}$.
\end{lem}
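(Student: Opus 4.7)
The plan is to follow the strategy already sketched just before the lemma: restrict to $X\times \CS$ where, by (\ref{doredake}), $M$ is a locally free $O_{X\times \CS}$-module of finite rank (so $\calM|_{X\times\CS}$ is an admissible graded polarizable variation of mixed Hodge structure via Remark~\ref{smmixedHodge}); build a second variation of mixed Hodge structure on $X\times \CS$ whose Hodge filtration is \emph{by construction} decomposed along $M=\bigoplus_{\beta}M^{\beta}$; identify it with $\calM|_{X\times \CS}$ by rigidity; and finally extend the decomposition to $X\times \CC$ by strict specializability along $t=0$.

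For the construction, note that each $\GR^{W}_{k}\calM$ is a monodromic pure Hodge module still satisfying (\ref{doredake}), so Proposition~\ref{titanic} gives
\[F_{p}\GR^{W}_{k}M=\bigoplus_{\beta\in \QQ}F_{p}\GR^{W}_{k}M^{\beta}, \qquad \GR^{W}_{k}M^{\beta}=\Ker(t\dt-\beta)\cap \GR^{W}_{k}M.\]
Working on $X\times \CS$, where by Proposition~\ref{isomprop} the maps $t^{\pm}\colon M^{\beta}\simto M^{\beta\pm 1}$ are isomorphisms, I would define $F'_{p}M^{\beta}\subset M^{\beta}|_{X\times \CS}$ as any lift (compatible with $W_{\bullet}$) of the graded decomposed filtrations, and then set
\[F'_{p}M|_{X\times \CS}:=\bigoplus_{\alpha\in (-1,0]\cap \QQ}F'_{p}M^{\alpha}\otimes \CC[t^{\pm}],\]
endowing $M|_{X\times \CS}$ with $F'_{\bullet}$, the original weight filtration $W_{\bullet}$, and the original local system. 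Call the resulting candidate variation $\calM'$. By construction, $\GR^{W}_{k}\calM'=\GR^{W}_{k}\calM|_{X\times \CS}$ as pure Hodge modules (this is precisely what Proposition~\ref{titanic} provides on each graded piece), and, by Lemma~\ref{decomplem} applied to the pure graded pieces, $F'_{\bullet}$ automatically satisfies Griffiths transversality and admissibility. This is Lemma~\ref{deaeta}.

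Next, by the rigidity property of graded polarizable admissible variations of mixed Hodge structure on the smooth variety $X\times \CS$ (Lemma~\ref{kabutomusi}), two such variations sharing the same underlying local system, the same weight filtration, the same associated pure variations on the weight graded pieces, and agreeing at one point must be isomorphic. Since $\calM'$ and $\calM|_{X\times \CS}$ have the same underlying $D$-module $M|_{X\times \CS}$, the same $W_{\bullet}$, and identical $\GR^{W}_{\bullet}$, the rigidity statement yields $F'_{\bullet}=F_{\bullet}M|_{X\times \CS}$; this proves the decomposition on $X\times \CS$.

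To extend to $X\times \CC$, I would repeat the inductive argument used in the proof of Corollary~\ref{mouikkai}: using the strict specializability of $(M,F_{\bullet}M)$ along $t=0$ (Definition~\ref{stsp}), and the formula
\[F_{p}M\cap V^{>-1}_{t}M=j_{*}(F_{p}M|_{X\times \CS})\cap V^{>-1}_{t}M\]
together with the already-established decomposition of $F_{\bullet}M|_{X\times \CS}$, one decomposes a section in $F_{p}M\cap V^{>-1}_{t}M$ componentwise; the general case is reduced by descending induction on $\beta$ using the surjectivity $\partial_{t}\colon F_{p-1}\GR^{\gamma+1}_{V}M\twoheadrightarrow F_{p}\GR^{\gamma}_{V}M$. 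The main obstacle is the middle step: verifying that the candidate $\calM'$ really defines an admissible graded polarizable variation of mixed Hodge structure (Griffiths transversality of $F'_{\bullet}$ and the correct limiting behaviour at infinity) so that the rigidity theorem applies; once this is in hand, the rest is a direct consequence of the pure case and the general strict-specializability machinery.
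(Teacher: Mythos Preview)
Your overall strategy is the same as the paper's: build a second filtration $F'_{\bullet}$ on $M|_{X\times\CS}$ that is decomposed by construction, show it defines an admissible graded polarizable variation of mixed Hodge structure, apply rigidity, and then extend to $X\times\CC$ by strict specializability. The last step is exactly as you say, and matches the paper. The gap is in the middle two steps.

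You define $F'_{p}M^{\beta}$ as ``any lift (compatible with $W_{\bullet}$) of the graded decomposed filtrations'' coming from Proposition~\ref{titanic}. This is where the argument breaks down. Rigidity requires the two variations to agree \emph{at a point} as mixed Hodge structures; having the same local system, the same $W_{\bullet}$, and the same $\GR^{W}_{\bullet}$ is not enough (nontrivial extensions of variations exist). You state the ``agree at one point'' hypothesis in your formulation of rigidity but then never verify it for $\calM'$ versus $\calM|_{X\times\CS}$, and for an arbitrary lift there is simply no reason it should hold. The appeal to Lemma~\ref{decomplem} does not fill this gap: that lemma concerns how the pieces $F_{p}M^{\beta}$ are related by powers of $t$ and $\partial_{t}$ once a decomposition is already known, not how to manufacture a Hodge filtration on the total space from lifts of the graded ones.

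The paper resolves this by choosing $F'$ very specifically. It defines a ``shift-to-$(-1,0]$'' map $\sigma\colon M\to \bigoplus_{\alpha\in(-1,0]}M^{\alpha}$ (sending $m\in M^{\alpha+l}$ to $t^{-l}m$) and sets $F'_{p}M^{\alpha}:=\sigma(F_{p}M)\cap M^{\alpha}$, then $F'_{\bullet}M|_{X\times\CS}:=\bigoplus_{k}\bigoplus_{\alpha}t^{k}F'_{p}M^{\alpha}$. The virtue of this choice is that the restriction $M|_{X\times\{1\}}\simeq V^{>-1}_{t}M/(t-1)V^{>-1}_{t}M\simeq\bigoplus_{\alpha}M^{\alpha}$ identifies $F_{p}M|_{X\times\{1\}}$ with $\bigoplus_{\alpha}F'_{p}M^{\alpha}$ \emph{by construction}, so $F$ and $F'$ agree at every point of $X\times\{1\}$ and rigidity applies immediately (Lemma~\ref{kabutomusi}). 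The price is that one must now prove that \emph{this particular} $F'$ induces the correct (decomposed) Hodge filtration on each $\GR^{W}_{k}M|_{X\times\CS}$; this is Lemma~\ref{deaeta}, and it requires the technical Lemma~\ref{kyuu} to control how $\sigma$ interacts with $W_{\bullet}$ and $F_{\bullet}$ simultaneously. In short, the hard work is not the rigidity step but showing that the filtration forced on you by ``agreement at a point'' is already a variation of mixed Hodge structure; your proposal inverts the difficulty and leaves the essential point unproved.
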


For a section $m\in M^{\alpha+l}$ ($l\in \ZZ, \alpha\in (-1,0]\cap \QQ$),
we set $\sigma(m):=t^{-l}m\in M^{\alpha}$.
We thus obtain a map $\sigma\colon M\to \bigoplus_{\alpha\in (-1,0]\cap \QQ}M^{\alpha}$.
We define a filtration $F'_{\bullet}M$ on $M^{\alpha}$ as
\[F'_{p}M^{\alpha}:=\sigma(F_{p}M)\cap M^{\alpha}.\]
Moreover, we define a filtration $F_{\bullet}'M|_{X\times \CS}$ on $M|_{X\times \CS}=\bigoplus_{\alpha\in (-1,0]\cap \QQ}M^{\alpha}\otimes \CC[t^{\pm}]$
as
\[F'_{\bullet}M|_{X\times \CS}:= \bigoplus_{k\in \ZZ}\bigoplus_{\alpha\in (-1,0]\cap\QQ}t^{k}F'_{\bullet}M^{\alpha}.\]

\begin{lem}\label{deaeta}
In this setting,
the tuple 
\[(M|_{X\times \CS}, F'_{\bullet}M|_{X\times \CS}, H^{-\dim{X}-1}K|_{X\times \CS}, W_{\bullet+\dim{X}+1}H^{-\dim{X}-1}K|_{X\times \CS})\] defines a graded polarizable admissible variation of mixed Hodge structure on $X\times \CS$.
\end{lem}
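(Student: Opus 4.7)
I would prove Lemma~\ref{deaeta} by verifying the three defining properties of a graded polarizable admissible variation of mixed Hodge structure on $X\times \CS$: each weight-graded piece should give a polarizable variation of pure Hodge structure; $F'_\bullet M|_{X\times\CS}$ should satisfy Griffiths transversality; and admissibility should hold at the boundary $X\times \{0\}\hookrightarrow X\times \CC$. The first step is to reduce the graded-piece claim to Proposition~\ref{titanic}. Since each $W_k M$ is a $D_{X\times \CC_t}$-submodule of the monodromic module $M$, it is itself monodromic by Remark~\ref{watanuki}, and hence so is $\GR^W_k\calM$. The condition~(\ref{doredake}) on singular supports evidently passes to $\GR^W_k\calM$, so Proposition~\ref{titanic} applies and yields
\[F_p \GR^W_k M=\bigoplus_{\beta\in \QQ} F_p(\GR^W_k M)^\beta,\qquad (\GR^W_k M)^\beta=\mathrm{Ker}(t\partial_t-\beta).\]
In particular, $\GR^W_k\calM|_{X\times \CS}$ is (the $\CC$-part of) a polarizable variation of pure Hodge structure.

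The technical core is to verify that $F'$ induces the correct Hodge filtration on each weight-graded piece, namely that
\[\GR^W_k F'_p M^\alpha = F_p (\GR^W_k M)^\alpha \quad\text{for every }\alpha\in(-1,0]\cap\QQ.\]
Because $W_k M=\bigoplus_\beta W_k M\cap M^\beta$ is monodromic, the map $\sigma\colon M\to\bigoplus_{\alpha\in (-1,0]}M^\alpha$ preserves $W_\bullet$ and therefore descends to $\GR^W_k M$. On this pure piece, Proposition~\ref{titanic} gives $\sigma(F_p\GR^W_k M)\cap(\GR^W_k M)^\alpha = F_p(\GR^W_k M)^\alpha$. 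Combined with the strictness of the bi-filtration $(F_\bullet M, W_\bullet M)$ of the mixed Hodge module $\calM$---which ensures that $F_p M\cap W_k M\twoheadrightarrow F_p \GR^W_k M$ is surjective---I would deduce the displayed equality. Consequently $\GR^W_k F'_\bullet M|_{X\times \CS}$ coincides termwise with the Hodge filtration of the pure variation $\GR^W_k\calM|_{X\times \CS}$, so every weight-graded piece is a polarizable variation of pure Hodge structure.

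Griffiths transversality for $F'$ on $X\times \CS$ follows from Griffiths for $F$: the $X$-direction connection commutes with $\sigma$ (which is just multiplication by powers of $t$ on each monodromic component), and $t\partial_t$ preserves each $M^\alpha$ while sending $F_p$ into $F_{p+1}$. Admissibility across $X\times \{0\}$ is the last thing to check: the local monodromy $T=\exp(-2\pi\sqrt{-1}\,t\partial_t)$ is quasi-unipotent because it decomposes as $\bigoplus_\beta e(\beta)\exp(-2\pi\sqrt{-1}\,N_\beta)$ with $N_\beta$ nilpotent and $\beta\in\QQ$; the relative monodromy weight filtration exists because it does on every graded piece (coming from a polarizable pure VHS); and the Deligne canonical extension of $F'_\bullet$ across the boundary can be taken termwise as the coherent $O_X$-modules $F'_p M^\alpha$ for $\alpha\in (-1,0]$, which are by construction independent of $t$. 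The main obstacle is the technical core in the second paragraph: since $F'_p M^\alpha$ is defined indirectly as $\sigma(F_p M)\cap M^\alpha$, matching it to $F_p(\GR^W_k M)^\alpha$ after passing to the weight-graded requires a careful interplay between the strictness of the MHM bi-filtration and the pure-case decomposition of Proposition~\ref{titanic}.
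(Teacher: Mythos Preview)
Your overall strategy---reducing to the equality of the induced Hodge filtrations $F$ and $F'$ on each $\GR^W_k M$---is exactly what the paper does. However, the step you label the ``technical core'' does not follow from strictness alone, and this is where your argument has a genuine gap.

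The problematic inclusion is $\GR^W_k F'_p M^\alpha \subset F_p(\GR^W_k M)^\alpha$. Take $y\in F'_p M^\alpha\cap W_k M^\alpha$, so $y=\sigma(x)$ for some $x\in F_p M$. If $x$ happened to lie in $W_k M$, then your argument via the descended $\sigma$ and Proposition~\ref{titanic} would indeed go through. But nothing forces $x\in W_k M$: all you know is that $\sigma(x)\in W_k M$, which only says that the various $\beta$-components of $x$ conspire, after multiplication by suitable powers of $t$, to land in $W_k$. Strictness of the pair $(F_\bullet,W_\bullet)$ lets you lift elements of $F_p\GR^W_k M$ to $F_p M\cap W_k M$, but it does not let you push an arbitrary $x\in F_p M$ with $\sigma(x)\in W_k M$ down into $F_p M\cap W_k M$ while controlling $\sigma$. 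The same obstruction appears in the reverse inclusion: a strict lift $z\in F_pM\cap W_kM$ of a class in $F_p(\GR^W_kM)^\alpha$ will generally have $\sigma(z)\notin M^\alpha$, and projecting to the $\alpha$-component amounts to taking $\sum_{\beta\equiv\alpha}z^\beta$, which you cannot assert lies in $F_pM$ without already knowing the decomposition.

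The paper fills this gap with a separate lemma (Lemma~\ref{kyuu}): by downward induction on the weight, using the decomposition from Proposition~\ref{titanic} on each pure graded piece together with the strict specializability of $t$, one shows that any $x\in F_pM$ with $\sigma(x)\in W_kM$ can be replaced by some $y\in F_p M\cap W_k M$ with $[\sigma(y)]=[\sigma(x)]$ in $\bigoplus_\alpha\GR^W_k M^\alpha$ and with all components of $y$ outside $(-1,0]$ already in $W_{k-1}M$. The last condition gives $[\sigma(y)]=[y]$, whence $[y]\in F_p\GR^W_k M$. This inductive replacement is the real content of the proof; it does not come for free from strictness.

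A smaller point: your Griffiths-transversality check in the $t$-direction is also incomplete. The map $\sigma$ does not commute with $t\partial_t$ (on $M^{\alpha+l}$ one has $t\partial_t\circ\sigma=\sigma\circ(t\partial_t-l)$), so showing $t\partial_t(F'_pM^\alpha)\subset F'_{p+1}M^\alpha$ again requires control of the individual $\beta$-components of a section of $F_pM$, which is precisely what is not yet available in the mixed case.
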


To show it, 
since $(M|_{X\times \CS}, F_{\bullet}M|_{X\times \CS}, H^{-\dim{X}-1}K|_{X\times \CS}, W_{\bullet+\dim{X}+1}H^{-\dim{X}-1}K|_{X\times \CS})$ is a graded polarizable admissible variation of mixed Hodge structure on $X\times \CS$,
it suffices to show 
\begin{align}\label{itunomanika}
F_{p}\GR^{W}_{k}M|_{X\times \CS}=F_{p}'\GR^{W}_{k}M|_{X\times \CS}.
\end{align}
Since $\GR^{W}_{k}M$ is a monodromic pure Hodge module on $X\times \CC$,
by Proposition~\ref{titanic} the filtration $F_{\bullet}\GR^{W}_{k}M$ is decomposed as
\[F_{p}\GR^{W}_{k}M=\bigoplus_{\beta\in \QQ}F_{p}\GR^{W}_{k}M\cap \GR^{W}_{k}M^{\beta}.\]
Therefore, to show (\ref{itunomanika}),
it suffices to show 
\begin{align}\label{ashitagaaru}
F_{p}\GR^{W}_{k}M|_{X\times \CS}\cap (\bigoplus_{\alpha\in (-1,0]\cap \QQ}\GR^{W}_{k}M^{\alpha})=F'_{p}\GR^{W}_{k}M|_{X\times \CS}\cap (\bigoplus_{\alpha\in (-1,0]\cap \QQ}\GR^{W}_{k}M^{\alpha}),
\end{align}
in $\bigoplus_{\alpha\in (-1,0]\cap \QQ}\GR^{W}_{k}M^{\alpha}$.

\begin{lem}\label{kyuu}
In the setting of Lemma~\ref{natumaturi}, for $x\in F_{p}M$,
if $\sigma(x)\in W_{k}M$,
there exists $y\in F_{p}M\cap W_{k}M$ such that
$[\sigma(x)]=[\sigma(y)]$ in $\bigoplus_{\alpha\in (-1,0]\cap \QQ}\GR^{W}_{k}M^{\alpha}$
and any $\beta(\in \QQ\setminus  (-1,0])$-component of $y$ is in $W_{k-1}M$.
\end{lem}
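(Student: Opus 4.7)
The plan is to apply Proposition~\ref{titanic} (the pure case) to the graded piece $\GR^{W}_{k}M$ and to use the strictness of the Hodge filtration with respect to the weight filtration, after first reducing to the case $x \in W_{k}M$ by induction on the smallest weight step containing $x$.

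The core construction assumes $x \in W_{k}M$. Then $\bar{x} \in F_{p}\GR^{W}_{k}M$, and Proposition~\ref{titanic} applied to the monodromic pure Hodge module $\GR^{W}_{k}M$ gives the decomposition $F_{p}\GR^{W}_{k}M = \bigoplus_{\beta}F_{p}\GR^{W}_{k}M^{\beta}$ together with $\GR^{W}_{k}M^{\beta} = \Ker(t\partial_{t}-\beta)$. Writing $\bar{x} = \sum_{\beta}\bar{x}^{\beta}$ with $\bar{x}^{\beta} \in F_{p}\GR^{W}_{k}M^{\beta}$, I set
\[
\bar{y} := \sum_{\alpha \in (-1,0]\cap\QQ}\sum_{l \in \ZZ} t^{-l}\bar{x}^{\alpha+l} \ \in \ \bigoplus_{\alpha \in (-1,0]\cap\QQ}\GR^{W}_{k}M^{\alpha}.
\]
Each summand $t^{-l}\bar{x}^{\alpha+l}$ lies in $F_{p}\GR^{W}_{k}M^{\alpha}$: for $l \geq 0$ by the isomorphism $t^{l}\colon F_{p}\GR^{W}_{k}M^{\alpha} \simto F_{p}\GR^{W}_{k}M^{\alpha+l}$ from the strict specializability on the pure piece (cf.\ Lemma~\ref{decomplem}), and for $l < 0$ simply because $t^{-l} = t^{|l|}$ is a power of multiplication by $t$, which preserves $F_{p}$. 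Hence $\bar{y} \in F_{p}\GR^{W}_{k}M$, and by strictness of $F_{\bullet}$ with respect to $W_{\bullet}$, I lift $\bar{y}$ to some $y \in F_{p}W_{k}M$. Since the image of $y$ in $\GR^{W}_{k}M^{\beta}$ is zero for $\beta \notin (-1,0]$, we have $y^{\beta} \in W_{k-1}M$ for such $\beta$, giving property (b). Property (a) follows because for $l \neq 0$ the component $y^{\alpha+l}$ lies in $W_{k-1}M$ (by (b)), so $\sigma(y)^{\alpha} = y^{\alpha} + \sum_{l\neq 0}t^{-l}y^{\alpha+l} \equiv y^{\alpha} \pmod{W_{k-1}M^{\alpha}}$, while $y^{\alpha} \equiv \sigma(x)^{\alpha} \pmod{W_{k-1}M^{\alpha}}$ by construction of $\bar{y}$.

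For the general case I would induct on the smallest $k_{0} \geq k$ with $x \in W_{k_{0}}M$. If $k_{0} > k$, the hypothesis $\sigma(x) \in W_{k}M \subset W_{k_{0}-1}M$ forces $\sigma(\bar{x}) = 0$ in $\GR^{W}_{k_{0}}M$, which, after decomposing $\bar{x}$ by Proposition~\ref{titanic}, becomes the relations $\sum_{l}t^{-l}\bar{x}^{\alpha+l} = 0$. Using these relations together with the $F$-decomposition on $\GR^{W}_{k_{0}}M$, one selects a lift $\tilde{x} \in F_{p}W_{k_{0}}M$ of $\bar{x}$ for which $\sigma(\tilde{x}) \in W_{k}M$; then $x - \tilde{x} \in F_{p}M \cap W_{k_{0}-1}M$ satisfies the original hypotheses with strictly smaller $k_{0}$, and the induction closes.

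The main obstacle is precisely this choice of lift in the inductive step: a naive lift only guarantees $\sigma(\tilde{x}) \in W_{k_{0}-1}M$, and sharpening this to $W_{k}M$ requires combining the vanishing $\sigma(\bar{x}) = 0$ with the fact that $\sigma$ preserves $F_{p}$ on the pure graded piece (a consequence of Proposition~\ref{titanic} applied to $\GR^{W}_{k_{0}}M$, via which the ``fundamental'' and ``non-fundamental'' parts of any $F_{p}$-element of $\GR^{W}_{k_{0}}M$ must balance under $\sigma$).
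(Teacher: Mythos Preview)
Your overall strategy coincides with the paper's: both argue by descending induction on the smallest weight $k'=k_0$ containing $x$, and in the base case $k'=k$ both apply Proposition~\ref{titanic} to decompose $\bar{x}\in F_p\GR^{W}_{k}M$ into its $\beta$-components, translate each into the fundamental domain $(-1,0]$ using strict specializability on the pure piece, and then lift back to $F_pW_kM$ by strictness of $F_\bullet$ with respect to $W_\bullet$. Your treatment of this base case is correct and essentially identical to the paper's.

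The difference is in the inductive step $k'>k$, where you correctly isolate the obstacle but do not overcome it. The paper's argument is organized differently and asserts more than your sketch achieves: after first reducing to $x\in V^{>-1}_tM$ (so only $l\geq 0$ occurs), it produces an explicit element $y'+z\in F_pM\cap W_{k'-1}M$ with $\sigma(y'+z)=\sigma(x)$ \emph{exactly}, not merely modulo $W_{k'-1}$, so that the inductive hypothesis with the same target weight $k$ applies directly to $y'+z$. The construction is: write $x=\sum_{\beta>-1}x^{\beta}$; for each $\beta$ lift $[x^{\beta}]\in F_p\GR^{W}_{k'}M$ to some $y_{\beta}\in F_pW_{k'}M$, then use strict specializability to find $y'_{\beta}\in F_pW_{k'}M$ with $[t^{l}y'_{\beta}]=[y_{\beta}]$ in $\GR^{W}_{k'}M$ (where $\beta=\alpha+l$, $\alpha\in(-1,0]$); set $y=\sum_{\beta}y_{\beta}$, $y'=\sum_{\beta}y'_{\beta}$, $z=x-y$. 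One checks that every $\gamma\notin(-1,0]$ component of $y'$ lies in $W_{k'-1}$, deduces from $\sigma(y')\in W_{k'-1}$ that in fact $y'\in W_{k'-1}$, and concludes $\sigma(y'+z)=\sigma(x)$. Your attempt to select a single special lift $\tilde{x}$ with $\sigma(\tilde{x})\in W_kM$ directly does not produce this exact identity, and the heuristic in your final paragraph, while in the right spirit, is not carried through; this is why your induction stalls where the paper's closes.
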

\begin{proof}
Let $x$ be a section in $F_{p}M$ such that $\sigma(x)\in W_{k}M$.
We may assume that $x$ is in $V^{>-1}_{t}M$ since we have $\sigma(t^{l}x)=\sigma(x)$ for any $l\in \ZZ$.
Assume that $x$ is in $W_{k'}M$ for some $k'\geq k$.
Let $x=\sum_{\beta>-1}x^{\beta}$ ($x^{\beta}\in M^{\beta}$) be the decomposition.
Since any component of any element in $F_{p}\GR^{W}_{k'}M$ is also in $F_{p}\GR^{W}_{k'}M$ by Proposition~\ref{titanic},
there exists $y_{\beta}\in F_{p}M\cap W_{k'}M$ such that
$x^{\beta}-y_{\beta}\in W_{k'-1}M$ (we remark that $y_{\beta}$ may not be in $M^{\beta}$ in general).
We take a positive integer $l$ such that $\beta=\alpha+l$ for some $\alpha\in (-1,0]\cap \QQ$.
Note that $[y_{\beta}](\in \GR^{W}_{k'}M)$ is in $V^{l-1}_{t}\GR^{W}_{k'}M$.
Then, by the strict specializability,
there exist $y'_{\beta}\in F_{p}M\cap W_{k'}M$ such that $[t^{l}y'_{\beta}]=[y_{\beta}]$ in $\GR^{W}_{k'}M$.
We set $y:=\sum_{\beta}y_{\beta}$ and $y':=\sum_{\beta}y'_{\beta}$.
Then, $x-y(:=z)$ is in $F_{p}M\cap W_{k'-1}M$ and $y'$ is in $F_{p}M\cap W_{k'}M$.
Note that since $x^{\beta}-y_{\beta}$ is in $W_{k'-1}M$,
any $\gamma(\neq \beta)$-component of $y_{\beta}$ is in $W_{k'-1}M$.
Therefore, any $\gamma(\notin (-1,0])$-component of $y'_{\beta}$ is in $W_{k'-1}M$ and hence any $\gamma(\notin (-1,0])$-component of $y'$ is in $W_{k'-1}M$.
Then, if $k'=k$, we obtain the desired assertion since we have $[\sigma(x)]=[\sigma(y')]$ in $\GR^{W}_{k}M$.
If $k'>k$, we have $\sigma(x)\in W_{k}M\subset W_{k'-1}M$ and hence
$\sigma(y')(=\sigma(y)=\sigma(x)-\sigma(z))$ is in $W_{k'-1}M$.
Therefore, since any $\beta(\in \QQ\setminus (-1,0])$-component of $y'$ is in $W_{k'-1}M$,
each $\alpha(\in (-1,0]\cap \QQ)$-component of $y'$ is also in $W_{k'-1}M$.
Hence, $y'$ is in $W_{k'-1}M$ and in particular $y'+z$ is in $W_{k'-1}M$ such that $\sigma(y'+z)=\sigma(x)$.
Thus, we can use induction on $k'$ and we get the desired assertion.

\end{proof}

\begin{proof}[Proof of Lemma~\ref{deaeta}]
We first consider a section $[x]$ in the left hand side of (\ref{ashitagaaru}),
where $x$ is a section of $F_{p}M\cap W_{k}M|_{X\times \CS}$.
Let $x=\sum_{\beta}x^{\beta}$ be its decomposition with respect to the decomposition
$M|_{X\times \CS}=\bigoplus_{\beta\in \QQ}(M|_{X\times \CS})^{\beta}(=\bigoplus_{\alpha\in (-1,0]\cap \QQ}M^{\alpha}\otimes\CC[t^{\pm}])$.
Since $[x]$ is in $\bigoplus_{\alpha\in (-1,0]\cap \QQ}\GR^{W}_{k}M^{\alpha}$,
the section $x^{\beta}$ is in $W_{k-1}M|_{X\times \CS}$ for any $\beta \in \QQ\setminus (-1,0]$.
Since we have $(M|_{X\times \CS})^{\beta}=M^{\beta}$ for $\beta >-1$,
$t^{l}x$ is in $F_{p}M\cap W_{k}M$ (not only in $F_{p}M\cap W_{k}M|_{X\times \CS}$) for a sufficiently large integer $l\geq 0$.
Note that $t^{l}x^{\beta}$ is in $W_{k-1}M$ for $\beta \in \QQ\setminus (-1,0]$.
Therefore, by the definition of $\sigma$,
we have $[\sigma(t^{l}x)]=[x^{\beta}]= [x]$ in $\GR^{W}_{k}M$.
Hence, $[x]$ is in the right hand side of (\ref{ashitagaaru}).

Next, we consider a section $[x]$ of the right hand side of (\ref{ashitagaaru}),
where $x$ is a section of $F_{p}'M\cap W_{k}M|_{X\times \CS}$.
By the definition of $F_{p}'M$,
there are sections $x_{1},\dots,x_{j}\in F_{p}M$ and integers $l_{1},\dots, l_{j}\in \ZZ$
such that $\sigma(x_{i})\in M^{\alpha_{i}}$ for some $\alpha_{i}\in (-1,0]\cap \QQ$ and $x=\sum_{i=1}^{j}t^{l_{i}}\sigma(x_{i})$.
Since $[x]$ is in $\bigoplus_{\alpha\in (-1,0]\cap \QQ}\GR^{W}_{k}M^{\alpha}$,
by putting $y=\sum_{l_{i}=0}x_{i}$ we have
$[x]=[\sigma(y)]$ in $\bigoplus_{\alpha\in (-1,0]\cap \QQ}\GR^{W}_{k}M^{\alpha}$.
Note that $y$ is in $F_{p}M$ and $\sigma(y)$ is in $W_{k}M$ (the problem is that we do not know if $y$ is in $W_{k}M$ or not).
Then, by Lemma~\ref{kyuu},
there exists $z\in F_{p}M\cap W_{k}M$ such that
$[\sigma(y)]=[\sigma(z)]$ in $\bigoplus_{\alpha\in (-1,0]\cap \QQ}\GR^{W}_{k}M^{\alpha}$
and any $\beta(\in \QQ\setminus (-1,0])$-component of $z$ is in $W_{k-1}M$.
By using the latter property of $z$, we have $[\sigma(z)]=[z]$ in $\GR^{W}_{k}M$
and hence $[x]=[z]$.
Therefore, $[x]$ is in the left hand side of (\ref{ashitagaaru}).
\end{proof}


\begin{lem}\label{kabutomusi}
In the setting of \ref{natumaturi},
we have
\[F_{\bullet}M|_{X\times \CS}=F_{\bullet}'M|_{X\times \CS}.\]
\end{lem}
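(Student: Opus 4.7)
By Lemma~\ref{deaeta}, both $F_\bullet M|_{X\times \CS}$ and $F'_\bullet M|_{X\times \CS}$ endow $M|_{X\times \CS}$ with the structure of a graded polarizable admissible variation of mixed Hodge structure, sharing the same underlying local system and the same weight filtration. Moreover, equation (\ref{itunomanika}) established in the proof of Lemma~\ref{deaeta} shows that these two Hodge filtrations induce identical filtrations on every graded piece $\GR^{W}_{k}M|_{X\times \CS}$. The plan is to show that the identity morphism on $M|_{X\times \CS}$, viewed as a map between these two variations, is itself a morphism of variations of MHS, which immediately yields $F_{\bullet}M|_{X\times \CS}=F'_{\bullet}M|_{X\times \CS}$.

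First I would observe that the identity map is, on the underlying local system, compatible with the weight filtration and a morphism of variations of pure Hodge structure on every $\GR^{W}_{k}$ (by the agreement on graded pieces from Proposition~\ref{titanic} applied to $\GR^{W}_{k}\calM$). Next I would invoke the rigidity property of admissible graded polarizable variations of MHS, in the spirit of \cite[Prop.~7.12]{CompMFD} already cited in the proof of Lemma~\ref{ganon} for the pure case, and its extension to the admissible mixed setting (Kashiwara, Steenbrink--Zucker): for two such variations over a connected smooth quasi-projective base, a $W$-filtered morphism of the underlying local systems that induces morphisms of variations of pure Hodge structure on each $\GR^{W}_{k}$ is itself a morphism of variations of MHS as soon as it is a morphism of MHS at a single fiber. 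Applying this criterion in both directions to the identity morphism would yield $F=F'$.

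The main obstacle is the verification at a single fiber: agreement on graded pieces does not by itself imply agreement of mixed Hodge structures, since nontrivial $\mathrm{Ext}^{1}$ between variations of pure HS can occur. To overcome it, I would exploit the common origin of $F$ and $F'$ in the $D$-module $M$ on the larger space $X\times \CC$: the defining equality $F'_{p}M^{\alpha}=\sigma(F_{p}M)\cap M^{\alpha}$ together with the decomposition $M|_{X\times \CS}=\bigoplus_{\alpha\in (-1,0]\cap \QQ}M^{\alpha}\otimes \CC[t^{\pm}]$ allows one to compare the two filtrations on a given fiber explicitly in terms of sections of $F_{\bullet}M$ on $X\times \CC$. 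The strict specializability of $\calM$ along $t=0$ should then provide the bridge linking the admissibility data of both variations (their limit MHS as $t\to 0$) to the canonical nearby cycle mixed Hodge module $\psi_{t}\calM$, forcing both variations to share the same limit MHS and hence, by rigidity, to coincide globally.
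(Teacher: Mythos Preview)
Your overall strategy coincides with the paper's: both invoke rigidity of graded polarizable admissible variations of mixed Hodge structure to reduce the question to checking $F=F'$ at a single fibre. The difference lies in the execution of that fibre check, and this is where your proposal is incomplete.

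The paper takes the fibre over $t=1$. Under the identification
\[
M|_{X\times\{1\}}=V_t^{>-1}M/(t-1)V_t^{>-1}M\;\simeq\;\bigoplus_{\alpha\in(-1,0]\cap\QQ}M^{\alpha},
\]
the quotient map is precisely $\sigma$: a section $m\in M^{\alpha+l}$ is sent to $t^{-l}m=\sigma(m)$ modulo $(t-1)$. Hence the very definition $F'_{p}M^{\alpha}=\sigma(F_{p}M)\cap M^{\alpha}$ gives
\[
F_{p}M|_{X\times\{1\}}=\bigoplus_{\alpha\in(-1,0]\cap\QQ}F'_{p}M^{\alpha}=F'_{p}M|_{X\times\{1\}},
\]
and rigidity (the same Proposition~7.12 of \cite{CompMFD} already used in the pure case) finishes the argument in one line.

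You in fact mention the right ingredient---``the defining equality $F'_{p}M^{\alpha}=\sigma(F_{p}M)\cap M^{\alpha}$ \dots\ allows one to compare the two filtrations on a given fibre explicitly''---but you do not follow through on it. Instead you propose to compare limit mixed Hodge structures at $t\to 0$ via strict specializability. That route is not carried out (``should then provide the bridge'' is not a proof), and it is circuitous: the limit Hodge filtration of the original variation along $t=0$ is $\bigoplus_{\alpha}F_{p}\GR^{\alpha}_{V}M$, and showing this coincides with $\bigoplus_{\alpha}F'_{p}M^{\alpha}$ is essentially equivalent to what you are trying to prove. The fibre at $t=1$ is the correct choice precisely because $\sigma$ \emph{is} the restriction-to-$\{t=1\}$ map, so the comparison there is tautological.
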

\begin{proof}
We can express the restriction of $M$ to $X\times \{1\}$ as
\[M|_{X\times \{1\}}=V_{t}^{>-1}M/(t-1)V_{t}^{>-1}M\simeq \bigoplus_{\alpha\in (-1,0]\cap \QQ}M^{\alpha}.\]
Under this identification, by the definition of $F_{p}'M^{\alpha}$ we have
\[F_{p}M|_{X\times \{1\}}=\bigoplus_{\alpha\in (-1,0]\cap \QQ}F'_{p}M^{\alpha}.\]
Hence, we have
\[F_{p}M|_{X\times \{1\}}=F_{p}'M|_{X\times \{1\}}.\]
By the rigidity property of a graded polarizable admissible variation of mixed Hodge structure (see Proposition~7.12 on p.124 of \cite{CompMFD}),
we conclude that 
two variations: 
\begin{align*}
(M,F_{\bullet}M,H^{-\dim{X}-1}K,W_{\bullet+\dim{X}+1}H^{-\dim{X}-1}K)|_{X\times \CS},\tunagi\\ 
(M|_{X\times \CS}, F'_{\bullet}M|_{X\times \CS}, H^{-\dim{X}-1}K|_{X\times \CS}, W_{\bullet+\dim{X}+1}H^{-\dim{X}-1}K|_{X\times \CS})
\end{align*}
are the same.
In particular, the Hodge filtrations $F_{\bullet}M|_{X\times \CS}$ and $F_{\bullet}'M|_{X\times \CS}$ are the same.
\end{proof}

\begin{proof}[Proof of Lemma~\ref{natumaturi}]
Let $j\colon X\times \CS\hookrightarrow X\times \CC$ is the inclusion.
By the strict specializability,
we have
\[F_{p}V_{t}^{>-1}M=j_{*}(F_{p}M|_{X\times \CS})\cap V_{t}^{>-1}M.\]
Therefore, by Lemma~\ref{kabutomusi},
$F_{p}V_{t}^{>-1}M$ is decomposed with respect to the decomposition $V_{t}^{>-1}M=\bigoplus_{\beta>-1}M^{\beta}$.
Then, in exactly the same way as in the proof of Corollary~\ref{mouikkai} (use the strict specializability again),
we conclude that $F_{p}M$ is decomposed with respect to the decomposition of $M$.
\end{proof}

\begin{proof}[Proof of Theorem~\ref{main1}]
Let $\calM=(M,F_{\bullet}M,K,W_{\bullet}K)$ be a monodromic mixed Hodge module on $X\times \CC$ (without any other assumption).
Since $M$ is monodromic,
the support of $M|_{X\times \CS}$ is of the form:
$\mathrm{supp}(M|_{X\times \CS})=Z\times \CS$ for some closed subvariety $Z$ of $X$.
We use induction on the dimension of $Z$.

\noindent\textit{Step~1.}\ 
Assume that $\dim{Z}=0$.
Then, $F_{\bullet}M|_{Z\times \CS}$ are decomposed with respect to the decomposition of $M$ by Lemma~\ref{natumaturi},
and hence so are $F_{\bullet}M|_{X\times \CS}$.
We can show that $F_{\bullet}M$ are decomposed in the same way as in the proof of Corollary~\ref{mouikkai}.

\noindent \textit{Step~2.}\ 
For a fixed positive integer $l\in \ZZ_{\geq 1}$,
assume that Theorem~\ref{main1} holds if $\dim(Z)$ is less than $l$.
We take a smooth open subvariety $Z'\subset Z$ of $Z$ such that
$\mathrm{Ch}(M|_{Z'\times \CS})$ is contained in $T^{*}_{Z'\times \CS}(Z'\times \CS)$.
Let $g$ be a (local) regular function on $X$ which defines $Z\setminus Z'$.
Set $X':=X\setminus g^{-1}(0)$.
By Lemma~\ref{natumaturi}, $F_{\bullet}M|_{Z'\times \CC_{t}}$ are decomposed with respect to the decomposition $M|_{Z'\times \CC_{t}}=\bigoplus_{\beta\in \QQ}M^{\beta}|_{Z'}$ and hence so are $F_{\bullet}M|_{X'\times \CC_{t}}$.
We denote by $\iota_{g}$ (resp. $\wt{\iota_{g}}$) be a graph embedding $X\hookrightarrow X\times \CC_{s}$ ($x\mapsto (x,g(x))$) (resp. $X\times \CC_{t}\hookrightarrow X\times \CC_{s}\times \CC_{t}$ ($(x,t)\mapsto (x,g(x),t)$)) of $g$,
where we denote by $s$ the coordinate of the first $\CC$ of $X\times \CC_{s}\times \CC_{t}$.
We set $N:=\dpush (\wt{\iota_{g}})_{*}M$ and $N^{\beta}:=\dpush ({\iota_{g}})_{*}M^{\beta}$.
We denote by $V_{s}^{\bullet}N$ (resp. $V_{s}^{\bullet}N^{\beta}$) the Kashiwara-Malgrange filtration of $N$ (resp. $N^{\beta}$) along $s=0$.
Note that we have
\begin{align}\label{amai}
V_{s}^{\bullet}N=\bigoplus_{\beta\in \QQ}V_{s}^{\bullet}N^{\beta}.
\end{align}
Since $F_{\bullet}M|_{X'\times \CC_{t}}$ are decomposed,
$F_{\bullet}N|_{X\times \CS_{s}\times \CC_{t}}$ are also decomposed with respect to the decomposition 
$N=\bigoplus_{\beta\in \QQ}N^{\beta}$ by Lemma~\ref{pupupu}.

\noindent \textit{Step~3.}\ 
Let $j$ be the open embedding $X\times \CS_{s}\times \CC_{t}\hookrightarrow X\times \CC_{s}\times \CC_{t}$.
By the strict specializability,
we have
\[F_{\bullet}V^{>-1}_{s}N(=F_{\bullet}N\cap V_{s}^{>-1}N)
=j_{*}(F_{\bullet}N|_{X\times \CS_{s}\times \CC_{t}})\cap V^{>-1}_{s}N.\]
Therefore, by Step~2,
$F_{\bullet}V^{>-1}_{s}N$
are decomposed with respect to the decomposition (\ref{amai}).

\noindent \textit{Step~4.}\ 
We will show that $F_{\bullet}V^{\geq 1}_{s}N$ is decomposed (completed in Step~7).
Take a section $x\in F_{p}V^{\geq 1}_{s}N$.
Let $x=\sum_{\beta}x^{\beta}$ ($x^{\beta}\in N^{\beta}$) be the decomposition of $x$.
Then, since $sx$ is in $F_{p}V^{>-1}_{s}N$,
all the components $sx^{\beta}$ ($\beta\in \QQ$) of $sx$ are also in $F_{p}N$ by Step~3.

\noindent \textit{Step~5.}\ 
Consider the vanishing cycle $\phi_{s,1}N$ of $N$ along $s=0$.
Then, we have
\begin{align}\label{newhysummer}
\phi_{s,1}N(=\GR^{-1}_{V_{s}}N)=\bigoplus_{\beta\in \QQ}\GR^{-1}_{V_{s}}N^{\beta}.
\end{align}
Therefore,
$\phi_{s,1}N$ is also a monodromic mixed Hodge module.
Note that the support of $\phi_{s,1}N|_{X\times \CC_{s}\times \CC_{t}^*}$ is contained in $(Z\setminus Z')\times \{0\}\times \CS_{t}$.
Therefore, by the inductive assumption (in Step~2), the Hodge filtration of $\phi_{s,1}N$ is decomposed with respect to the decomposition (\ref{newhysummer}).

\noindent \textit{Step~6.}\ 
By Step~5, all the components $[x^{\beta}](\in \GR^{-1}_{V_{s}}N^{\beta})$ of $[x]\in \phi_{s,1}N$
are in $F_{p}\phi_{s,1}N$.
This implies that there exists $z_{\beta}\in F_{p}V_{s}^{\geq -1}N$ (remark that $z_{\beta}$ may not be in $N^{\beta}$) such that $[x^{\beta}]=[z_{\beta}]$ in $\phi_{s,1}N$, i.e.
\begin{align}\label{raprap}
x^{\beta}-z_{\beta}\in V^{>-1}_{s}N.
\end{align}
Let $z_{\beta}=\sum_{\gamma\in \QQ}z_{\beta}^{\gamma}$ ($z_{\beta}^{\gamma}\in N^{\gamma}$) be the decomposition of $z_{\beta}$.
Then, by (\ref{raprap}), we have $z_{\beta}^{\gamma}\in V^{>-1}_{s}N$ for any $\gamma\neq \beta$.
On the other hand, since $sx^{\beta}$ is in $F_{p}N$ by Step~4,
$sx^{\beta}-sz_{\beta}$ is in $F_{p}V^{>-1}_{s}N$.
By Step~3, all the components of $sx^{\beta}-sz_{\beta}$ are in $F_{p}N$.
Therefore, $sz_{\beta}^{\gamma}$ is in $F_{p}N$ for any $\gamma\neq \beta$.
Since $z_{\beta}^{\gamma}$ is in $V^{>-1}_{s}N$ and $sz_{\beta}^{\gamma}$ is in $F_{p}N$  for $\gamma\neq \beta$,
$z_{\beta}^{\gamma}$ is also in $F_{p}N$ by the strict specializability.
Hence, $z_{\beta}^{\beta}=z_{\beta}-\sum_{\gamma\neq \beta}z_{\beta}^{\gamma}$ is also in $F_{p}N$.

\noindent \textit{Step~7.}\ 
Note that $x-\sum_{\beta}z_{\beta}$ is in $F_{p}V^{>-1}_{s}N$ and hence
all the components of it is also in $F_{p}N$ by Step~3.
Therefore, by Step~6, all the components of $x$ is also in $F_{p}N$.
This means that $F_{p}V^{\geq -1}_{s}N$ is decomposed with respect to the decomposition (\ref{amai}).

\noindent \textit{Step~8.}\ 
Since we have $F_{p}N=\sum_{i\geq 0}\partial_{s}^{i}F_{p-i}V^{\geq -1}_{s}N$ by the strict specializability
and $F_{p-i}V^{\geq -1}_{s}N$ are decomposed by the previous step,
we conclude that $F_{p}N$ is decomposed.
By Lemma~\ref{pupupu}, so is $F_{p}M$.
This completes the proof.
\end{proof}

\section{A construction of monodromic mixed Hodge modules}

\subsection{{S}ome properties of monodromic mixed Hodge modules} \label{gohho}
In the previous section, we saw that the Hodge filtration of a monodromic mixed Hodge module is decomposed.
As we saw in Remark~\ref{nao}, the data of the Hodge filtration of $M$ is the same as the data of the Hodge filtrations of the nearby and vanishing cycles, i.e. $F_{\bullet}M^{\alpha}$ ($\alpha\in [-1,0]\cap \QQ$).
Based on this observation, 
we will construct a monodromic mixed Hodge module on $X\times \CC_{t}$
from a mixed Hodge module on $X$ with a ``gluing data''.
As a preparation, we show some properties of the nearby and vanishing cycles of a monodromic mixed Hodge module in this subsection.

Let us recall the nearby and vanishing cycle functors (we have already explained them briefly in Remark~\ref{nao}).
Let $\calM=(M,F_{\bullet}M,K,W_{\bullet}K)$ be a monodromic mixed Hodge module on $X\times \CC_{t}$.
Then, $\GR^{W}_{k}\calM(=(\GR^{W}_{k}M,F_{\bullet}\GR^{W}_{k}M,\GR^{W}_{k}K))$ be a monodromic pure Hodge module of weight $k$.
By Corollary~\ref{mouikkai},
for any $\beta\in \QQ$, $(\GR^{W}_{k}M)^{\beta}=\GR^{W}_{k}M^{\beta}$ is killed by $t\partial_{t}-\beta$, where 
we set $W_{k}M^{\beta}=W_{k}M\cap M^{\beta}$.
Consider the nearby (resp. unipotent vanishing) cycle $\psi_{t}(\calM)$ (resp. $\phi_{t,1}(\calM)$) of $\calM$.
Recall that we have $\psi_{t}(M)=\bigoplus_{\alpha\in (-1,0]\cap\QQ} \GR^{\alpha}_{V}M(=\bigoplus_{\alpha\in (-1,0]\cap\QQ}M^{\alpha})$ and $\phi_{t,1}(M)=\GR^{-1}_{V}M(=M^{-1})$.
For $p\in \ZZ$ and $\alpha\in (-1,0]\cap \QQ$, we set 
\begin{align*}
F_{p}\psi_{t,e(\alpha)}(M)&:=F_{p}\GR^{\alpha}_{V}M,\\
F_{p}\psi_{t}(M)&:=\bigoplus_{\alpha\in (-1,0]}F_{p}\GR^{\alpha}_{V}M, \quad\mbox{and}\\
F_{p}\phi_{t,1}(M)&:=F_{p+1}\GR^{-1}_{V}M.
\end{align*}
We remark that $F_{p}\psi_{t}(M)$ is not $F_{p}M\cap V^{>-1}_{t}M/F_{p}M\cap V^{>0}_{t}M$ in general.
$\frac{-1}{2\pi\sqrt{-1}}$ times the logarithm of the unipotent part of the monodromy automorphism $\exp(-2\pi\sqrt{-1}t\partial_{t})$ of $\psi_{t}(M)$ or $\phi_{t,1}(M)$ is denoted by $N$.
Then, the nilpotent morphism $N$ is $t\partial_{t}-\alpha$ (resp. $t\partial_{t}+1$) on $\psi_{t,e(\alpha)}(M)=M^{\alpha}$ (resp. $\phi_{t,1}(M)=M^{-1}$) (see Corollary~\ref{nevancor}).
Moreover, $N$ defines a morphism in the category of mixed Hodge modules:
\begin{align*}
\psi_{t}(\calM)\to& \psi_{t}(\calM)(-1), \tunagi\\
\phi_{t,1}(\calM)\to& \phi_{t,1}(\calM)(-1).
\end{align*}
We define a filtration $L_{\bullet}\psi_{t}(M)$\ (resp. $L_{\bullet}\phi_{t,1}(M)$) of $\psi_{t}(M)$\ (resp. $\phi_{t,1}(M)$) by
\begin{align*}
L_{k}\psi_{t}(M)&:=\psi_{t}(W_{k+1}M)\\
(\mbox{resp.\ } L_{k}\phi_{t,1}(M)&:=\phi_{t,1}(W_{k}M)),
\end{align*}
for $k\in \ZZ$. 
Then,
there exists the monodromy filtration $W_{\bullet}\psi_{t}(M)$ of $N$ relative to $L_{\bullet}\psi_{t}(M)$,
and the monodromy filtration $W_{\bullet}\phi_{t,1}(M)$ of $N$ relative to $L_{\bullet}\phi_{t,1}(M)$ 
(this is one of the conditions for $\calM$ to be a mixed Hodge module).
Recall that the monodromy filtration $W_{\bullet}A$ of $N$ relative to $L_{\bullet}A$ ($A=\psi_{t}(M)$ or $\phi_{t,1}(M)$) is caracterized by the two properties:
\begin{enumerate}
\item[(i)] $N(W_{k}A)$ is contained in $W_{k-2}A$ for $k\in \ZZ$.
\item[(ii)] $N^{l}\colon \GR^{W}_{k+l}\GR^{L}_{k}A\to \GR^{W}_{k-l}\GR^{L}_{k}A$ is isomorphism for $k\in \ZZ$ and $l\in \ZZ_{\geq 0}$.
\end{enumerate}
Corresponding to it,
we can also define a filtration $L_{\bullet}{}^p\psi_{t}(K)$ (resp. $L_{\bullet}{}^p\phi_{t,1}(K)$)
and the relative monodromy filtration $W_{\bullet}{}^p\psi_{t}(K)$ (resp. $W_{\bullet}{}^p\phi_{t,1}(K)$) of $N$.
Then, the 4-tuples
\begin{align}
(\psi_{t}(M),& F_{\bullet}\psi_{t}(M), {}^p\psi_{t}(K), W_{\bullet}{}^p\psi_{t}(K)),\quad \mbox{and}\\
(\phi_{t,1}(M),& F_{\bullet}\phi_{t,1}(M),{}^p\phi_{t}(K), W_{\bullet}{}^p\phi_{t}(K))
\end{align}
define mixed Hodge modules.
Note that this is also one of the conditions for $(M,F_{\bullet}M, K, W_{\bullet}K)$ to be a mixed Hodge module.

\begin{lem}\label{arashi}
For a monodromic mixed Hodge module $(M,F_{\bullet}M, K, W_{\bullet}K)$ on $X\times \CC_{t}$,
for $k\in \ZZ$
we have
\begin{align*}
W_{k}\psi_{t}(M)&=L_{k}\psi_{t}(M)(=\psi_{t}(W_{k+1}M)),\quad \mbox{and}\\
W_{k}\phi_{t,1}(M)&=L_{k}\phi_{t,1}(M)(=\phi_{t,1}(W_{k}M)).
\end{align*}
Similarly to them, we have
\begin{align*}
W_{k}{}^p\psi_{t}(K)&=L_{k}{}^p\psi_{t}(K)(={}^p\psi_{t}(W_{k+1}K)),\quad \mbox{and}\\
W_{k}{}^p\phi_{t,1}(K)&=L_{k}{}^p\phi_{t,1}(K)(={}^p\phi_{t,1}(W_{k}K)).
\end{align*}

\end{lem}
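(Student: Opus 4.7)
The plan is to derive $W_{\bullet}=L_{\bullet}$ by showing that the nilpotent operator $N$ acts as zero on every graded piece $\GR^{L}_{k}$ and then exploiting the defining property of the relative monodromy filtration. First I would use the exactness of $\psi_{t}$ and $\phi_{t,1}$ on the category of mixed Hodge modules, applied to the short exact sequence $0\to W_{k}M\to W_{k+1}M\to \GR^{W}_{k+1}M\to 0$, to obtain the natural identifications
\[
\GR^{L}_{k}\psi_{t}M\simeq \psi_{t}(\GR^{W}_{k+1}M),\qquad \GR^{L}_{k}\phi_{t,1}M\simeq \phi_{t,1}(\GR^{W}_{k}M).
\]
Each $\GR^{W}_{k+1}M$ (resp.\ $\GR^{W}_{k}M$) is a pure monodromic Hodge module, so Proposition~\ref{titanic} gives $(\GR^{W}_{k+1}M)^{\alpha}=\mathrm{Ker}(t\partial_{t}-\alpha)$ and $(\GR^{W}_{k}M)^{-1}=\mathrm{Ker}(t\partial_{t}+1)$. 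Consequently $N=t\partial_{t}-\alpha$ annihilates $\psi_{t,e(\alpha)}(\GR^{W}_{k+1}M)$ for every $\alpha\in (-1,0]\cap\QQ$, and $N=t\partial_{t}+1$ annihilates $\phi_{t,1}(\GR^{W}_{k}M)$. Hence $N$ acts as zero on every $\GR^{L}_{k}\psi_{t}M$ and every $\GR^{L}_{k}\phi_{t,1}M$.

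Next I would invoke the characterization of $W_{\bullet}\psi_{t}\calM$ (resp.\ $W_{\bullet}\phi_{t,1}\calM$) as the relative monodromy filtration of $N$ with respect to $L_{\bullet}$. Property (ii) asserts that $N^{l}\colon \GR^{W}_{k+l}\GR^{L}_{k}\to \GR^{W}_{k-l}\GR^{L}_{k}$ is an isomorphism for every $l\geq 0$. Combined with $N^{l}=0$ on $\GR^{L}_{k}$ for $l\geq 1$, this forces $\GR^{W}_{k+l}\GR^{L}_{k}=\GR^{W}_{k-l}\GR^{L}_{k}=0$ for all $l\geq 1$, so $\GR^{W}_{j}\GR^{L}_{k}=0$ whenever $j\neq k$. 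A standard filtration-theoretic lemma --- the vanishing of all off-diagonal pieces of the bigrading $\GR^{W}\GR^{L}$, together with the boundedness and exhaustion of $L_{\bullet}$, forces the two filtrations to coincide --- then yields $W_{k}=L_{k}$ for every $k$, both on $\psi_{t}M$ and on $\phi_{t,1}M$. The assertions for the perverse sheaves ${}^{p}\psi_{t}K$ and ${}^{p}\phi_{t,1}K$ follow by compatibility of the de Rham functor with the nearby and vanishing cycles and with the weight filtrations.

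The main obstacle is the step establishing $N=0$ on each $\GR^{L}_{k}$: it hinges on the strong rigidity property of pure monodromic Hodge modules from Proposition~\ref{titanic}, namely that each generalized eigenspace $M^{\alpha}$ is already a genuine eigenspace $\mathrm{Ker}(t\partial_{t}-\alpha)$. This rigidity, available only in the monodromic setting, is precisely what causes the relative monodromy filtration to collapse onto $L_{\bullet}$; once it is secured, the remaining filtration argument is purely formal.
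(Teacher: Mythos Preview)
Your proof is correct and follows essentially the same route as the paper: identify $\GR^{L}_{k}$ with the nearby/vanishing cycle of a pure monodromic Hodge module, invoke the semisimplicity statement (you cite Proposition~\ref{titanic}, which is indeed the correct general reference; the paper's own proof cites Corollary~\ref{mouikkai}) to get $N=0$ on each $\GR^{L}_{k}$, and then use the defining isomorphisms $N^{l}\colon \GR^{W}_{k+l}\GR^{L}_{k}\simto\GR^{W}_{k-l}\GR^{L}_{k}$ to collapse $W$ onto $L$. The only cosmetic difference is that the paper handles the perverse-sheaf assertion by repeating the same argument verbatim rather than by transport through $\mathrm{DR}$; your de~Rham compatibility argument is also fine once one notes that both $W_{\bullet}$ and $L_{\bullet}$ are defined over $\QQ$, so equality after $\otimes\,\CC$ forces equality over $\QQ$.
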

\begin{proof}
Since the problem is local, we may assume that $X$ is affine.
Then, we have
\begin{align*}
L_{k}\psi_{t}(M)&=\bigoplus_{\alpha\in (-1,0]}W_{k+1}M^{\alpha},\quad \mbox{and}\\
L_{k}\phi_{t}(M)&=W_{k}M^{-1}.
\end{align*}
Moreover, we have
\begin{align}\label{matsurino}
\GR^{L}_{k}\psi_{t}(M)&=\bigoplus_{\alpha\in (-1,0]}\GR^{W}_{k+1}M^{\alpha}, \quad\mbox{and}\\
\GR^{L}_{k}\phi_{t}(M)&=\GR^{W}_{k}M^{-1}.\notag
\end{align}
By the definition of the monodromy filtration, for $l\geq 0$ and $k\in \ZZ$
we have the isomorphism
\begin{align}\label{lalala}
N^{l}\colon \GR^{W}_{k+l}\GR^{L}_{k}\psi_{t}(M)\to \GR^{W}_{k-l}\GR^{L}_{k}\psi_{t}(M).
\end{align}
Note that by Corollary~\ref{mouikkai} and (\ref{matsurino}),
$N$ is zero on $\GR^{L}_{k}\psi_{t}(M)$.
Therefore, 
if $l>0$, the isomorphism (\ref{lalala}) is zero.
Hence, 
\begin{align*}
\GR^{W}_{s}\GR^{L}_{k}\psi_{t}(M)=0,
\end{align*}
for $s\neq k$.
Therefore, we have
\[W_{k}\psi_{t}(M)=L_{k}\psi_{t}(M).\]
By the same argument, 
we also have
\[W_{k}\phi_{t,1}(M)=L_{k}\phi_{t,1}(M).\]

In the exactly same way, we can also show the assertion for the perverse sheaves ${}^p\psi_{t}(K)$ and ${}^p\phi_{t}(K)$.
\end{proof}

\begin{cor}
In the situation of Lemma~\ref{arashi},
for $\alpha\in [-1,0]$ and $k\in \ZZ$ we have
\begin{align}
(t\partial_{t}-\alpha)(W_{k}M^{\alpha})\subset W_{k-2}M^{\alpha}.
\end{align}
\end{cor}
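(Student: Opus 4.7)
The plan is to deduce the statement directly from Lemma~\ref{arashi} combined with the defining property of the relative monodromy filtration, together with the explicit description of nearby and vanishing cycles given in Corollary~\ref{nevancor}. First, for $\alpha \in (-1, 0]$, I would use Corollary~\ref{nevancor} to identify $\psi_{t,e(\alpha)}(M) = M^{\alpha}$, under which the nilpotent operator $N$ on $\psi_t(M)$ restricted to the $e(\alpha)$-part corresponds precisely to $t\partial_t - \alpha$ on $M^{\alpha}$. Lemma~\ref{arashi} asserts that the weight filtration $W_{\bullet}\psi_t(M)$ coincides with $L_{\bullet}\psi_t(M) = \psi_t(W_{\bullet+1}M)$, and on the $\alpha$-summand this translates to $W_k \psi_{t,e(\alpha)}(M) = W_{k+1}M^{\alpha}$.

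Next, since $W_{\bullet}\psi_t(M)$ is by construction the monodromy filtration of $N$ relative to $L_{\bullet}\psi_t(M)$, it satisfies the defining property (i) recalled in Subsection~\ref{gohho}, namely $N(W_k) \subset W_{k-2}$. Transporting this through the identifications above gives $(t\partial_t - \alpha)(W_{k+1}M^{\alpha}) \subset W_{k-1}M^{\alpha}$, and after reindexing $k+1 \mapsto k$ this is exactly the desired inclusion for $\alpha \in (-1, 0]$.

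For the remaining boundary case $\alpha = -1$, I would run the same argument using $\phi_{t,1}(M) = M^{-1}$ with $N = t\partial_t + 1 = t\partial_t - (-1)$: Lemma~\ref{arashi} yields $W_k \phi_{t,1}(M) = W_k M^{-1}$ (this time with no index shift), and the relation $N(W_k) \subset W_{k-2}$ translates directly into the claimed inclusion.

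I do not anticipate any serious obstacle; the proof is formal once the identifications from Corollary~\ref{nevancor} and Lemma~\ref{arashi} are at hand. The only point demanding care is bookkeeping the index shift, which differs between the nearby-cycle case (shift by $+1$) and the vanishing-cycle case (no shift), so that both cases ultimately produce the same drop of weight by $2$.
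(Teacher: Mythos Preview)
Your proposal is correct and follows essentially the same approach as the paper: both invoke the defining property $N(W_{k})\subset W_{k-2}$ of the relative monodromy filtration together with the identifications $W_{\bullet}\psi_{t}(M)=L_{\bullet}\psi_{t}(M)$ and $W_{\bullet}\phi_{t,1}(M)=L_{\bullet}\phi_{t,1}(M)$ from Lemma~\ref{arashi}. Your explicit bookkeeping of the index shift in the nearby-cycle case versus the vanishing-cycle case is more detailed than the paper's proof, which simply says the assertion follows from Lemma~\ref{arashi}.
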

\begin{proof}
By the definition of the relative monodromy filtration,
we have
\begin{align*}
N(W_{k}\psi_{t}(M))&\subset W_{k-2}\psi_{t}(M),\quad\mbox{and}\\
N(W_{k}\phi_{t,1}(M))&\subset W_{k-2}\phi_{t,1}(M).
\end{align*}
Hence, the assertion follows from Lemma~\ref{arashi}.
\end{proof}

\begin{cor}\label{imamade}
In the situation of Lemma~\ref{arashi}, for $\alpha\in (-1,0]$
the nearby cycle (resp. unipotent vanishing cycle) of $\calM=(M,F_{\bullet}M,K, W_{\bullet}K)$ is the tuple
\begin{align*}
(\bigoplus_{\alpha\in (-1,0]}M^{\alpha},\bigoplus_{\alpha\in (-1,0]}F_{\bullet}M^{\alpha},{}^p\psi_{t}(K),{}^p\psi_{t}(W_{\bullet+1}K))\\
(resp. \quad
(M^{-1},F_{\bullet+1}M^{-1},{}^p\phi_{t,1}(K),{}^p\phi_{t,1}(W_{\bullet}K))).
\end{align*}
\end{cor}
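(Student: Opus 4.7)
The plan is to assemble the statement from results already at hand; there is essentially no new argument required, so the "main obstacle" is more bookkeeping than mathematics. Specifically, I would unpack the four pieces of data (the $D$-module, the Hodge filtration, the perverse sheaf, the weight filtration) one by one and invoke the appropriate prior result for each.

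For the underlying $D$-module, I would simply cite Corollary~\ref{nevancor}: this already gives $\psi_{t,e(\alpha)}(M) = M^{\alpha}$ for $\alpha \in (-1,0] \cap \QQ$ and $\phi_{t,1}(M) = M^{-1}$, so $\psi_t(M) = \bigoplus_{\alpha \in (-1,0] \cap \QQ} M^\alpha$. The underlying $\QQ$-perverse sheaves are ${}^p\psi_t(K)$ and ${}^p\phi_{t,1}(K)$ by the very definition of the nearby and vanishing cycle functors on mixed Hodge modules.

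For the Hodge filtration, I would combine the definition with Theorem~\ref{main1} and Proposition~\ref{Vprop}. Recall the definitions $F_p\psi_t(M) := \bigoplus_{\alpha \in (-1,0]} F_p \GR^\alpha_V M$ and $F_p\phi_{t,1}(M) := F_{p+1}\GR^{-1}_V M$ introduced before Lemma~\ref{arashi}. By Proposition~\ref{Vprop} we have $V^\gamma_t M = \bigoplus_{\beta \geq \gamma} M^\beta$, and by Theorem~\ref{main1} together with Lemma~\ref{decomplem} this passes to the Hodge filtration:
\[
F_p V^\gamma_t M = \bigoplus_{\beta \geq \gamma} F_p M^\beta.
\]
Taking the quotient $V^\gamma/V^{>\gamma}$ identifies $F_p \GR^\gamma_V M$ with $F_p M^\gamma = F_p M \cap M^\gamma$. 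Substituting into the definitions gives $F_p\psi_t(M) = \bigoplus_{\alpha \in (-1,0]} F_p M^\alpha$ and $F_p\phi_{t,1}(M) = F_{p+1} M^{-1}$, as claimed.

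For the weight filtration, I would apply Lemma~\ref{arashi} directly, which asserts $W_k {}^p\psi_t(K) = {}^p\psi_t(W_{k+1} K)$ and $W_k {}^p\phi_{t,1}(K) = {}^p\phi_{t,1}(W_k K)$; shifting the index $k \mapsto \bullet$ yields the filtrations ${}^p\psi_t(W_{\bullet+1} K)$ and ${}^p\phi_{t,1}(W_\bullet K)$ in the statement. Assembling the four pieces completes the proof. The only nontrivial input underneath the surface is Theorem~\ref{main1} (used to identify $F_p \GR^\alpha_V M$ with $F_p M \cap M^\alpha$) and Lemma~\ref{arashi} (which itself rests on the monodromy on $\GR^L \psi_t M$ being zero, a consequence of Corollary~\ref{mouikkai}); so the corollary is truly immediate once these are in place.
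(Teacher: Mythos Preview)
Your proposal is correct and matches the paper's implicit approach: the corollary is stated without proof in the paper, as it is an immediate assembly of Corollary~\ref{nevancor} (underlying $D$-module), the definitions of $F_{\bullet}\psi_t$ and $F_{\bullet}\phi_{t,1}$ combined with Theorem~\ref{main1} and Proposition~\ref{Vprop} (Hodge filtration), and Lemma~\ref{arashi} (weight filtration). The only minor remark is that Lemma~\ref{decomplem} is not strictly needed for the identification $F_p\GR^{\gamma}_{V}M = F_pM^{\gamma}$; Theorem~\ref{main1} together with Proposition~\ref{Vprop} already suffices, since $F_pV^{\gamma}_tM = F_pM \cap V^{\gamma}_tM = \bigoplus_{\beta \geq \gamma} F_pM^{\beta}$ follows directly from the two decompositions.
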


Finally, we observe a smooth monodromic mixed Hodge module, i.e. the monodromic mixed Hodge module whose underlying $D$-module is locally free on $X\times \CS$.
Note that this condition is equivalent to all $M^{\alpha}$ ($\alpha\in (-1,0]$)
being locally free $O_{X}$-module by Proposition~\ref{intlemkai}.
Under this assumption, as $D$-modules (resp. perverse sheaves), 
the nearby cycle $\psi_{t}(M)$ (resp. ${}^p\psi_{t}K$) and
the restriction of $M$ (resp. $K$) to $X\times \{1\}$ are isomorphic.
In this case, these are isomorphic in the category of mixed Hodge modules as follows.

\begin{cor}
We assume that $M|_{X\times \CS}$ is locally free $O_{X\times \CS}$-module.
Then, the restriction of $\calM|_{X\times \CS}$ to $X\times \{1\}$ (as a mixed Hodge module) is isomorphic to the nearby cycle $\psi_{t}(\calM)$
\end{cor}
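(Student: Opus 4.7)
The plan is to verify the claimed isomorphism separately on each of the four ingredients of a mixed Hodge module. For the underlying $D$-module: Proposition~\ref{intlemkai} shows each $M^{\alpha}$ with $\alpha \in (-1,0]\cap\QQ$ is $O_{X}$-locally free of finite rank, and Proposition~\ref{isomprop} gives the identification $M|_{X\times\CS} = \bigoplus_{\alpha\in(-1,0]\cap\QQ} M^{\alpha}\otimes_{\CC}\CC[t^{\pm}]$. Combining Theorem~\ref{main1} with Lemma~\ref{decomplem} yields the parallel decomposition for the Hodge filtration, $F_{p}M|_{X\times\CS} = \bigoplus_{\alpha} F_{p}M^{\alpha}\otimes_{\CC}\CC[t^{\pm}]$. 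Specializing $t \mapsto 1$ collapses the polynomial factor and yields the filtered $D$-module $(\bigoplus_{\alpha} M^{\alpha},\,\bigoplus_{\alpha} F_{\bullet}M^{\alpha})$, which by Corollary~\ref{imamade} is precisely the filtered $D$-module underlying $\psi_{t}(\calM)$.

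For the perverse sheaf side, the smoothness of $M|_{X\times\CS}$ means $K|_{X\times\CS}$ has the form $\calL[\dim X + 1]$ for a local system $\calL$ on $X\times\CS$, and the canonical specialization morphism supplies an isomorphism $K|_{X\times\{1\}} \simeq {}^{p}\psi_{t}K$: decomposing $\calL$ by the monodromy eigenvalues around $t=0$ into summands of the form $\calL_{X}^{\alpha}\boxtimes \calE^{\alpha}$ (where $\calE^{\alpha}$ is the rank-one local system on $\CS$ of monodromy $e(\alpha)$), the fiber at $t=1$ and the nearby cycle at $t=0$ of each summand are canonically identified via parallel transport. For the weight filtrations, Lemma~\ref{arashi} gives the weight filtration of $\psi_{t}(\calM)$ in degree $k$ as ${}^{p}\psi_{t}(W_{k+1}K)$; under the specialization isomorphism this corresponds to $(W_{k+1}K)|_{X\times\{1\}}$, which matches the weight filtration on the restriction (the shift by one being the standard contribution of the codimension-one embedding $X\times\{1\}\hookrightarrow X\times\CS$).

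The principal obstacle is compatibility: showing that the $D$-module identification and the perverse sheaf identification agree under the de Rham functor, so that the four pieces assemble into an honest morphism in $\mathrm{MHM}(X)$ rather than merely matching piecewise as quadruples of data. I would handle this through a direct sum decomposition of smooth monodromic mixed Hodge modules into pieces of the form $\calN_{\alpha}\boxtimes \calH_{\alpha}$, in the spirit of Lemma~\ref{rettugo} for the pure case, reducing the compatibility check to the elementary rank-one situation on $\CS$ where the fiber at $t=1$ and the nearby cycle at $t=0$ of a Kummer-type variation visibly yield the same one-dimensional mixed Hodge structure.
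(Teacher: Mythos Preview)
Your componentwise verification is correct as far as it goes, but the paper takes a much shorter route that dissolves the compatibility problem you identify. Rather than assembling the isomorphism piece by piece, the paper simply observes that the restriction of a smooth mixed Hodge module to $X\times\{1\}$ \emph{is} the nearby cycle $\psi_{t-1}(\calM|_{X\times\CS})$ --- an honest functor between categories of mixed Hodge modules. Since $\calM|_{X\times\CS}$ is smooth, this nearby cycle is immediately
\[
(M/(t-1)M,\ F_{\bullet}M/(t-1)F_{\bullet}M,\ i^{-1}K,\ i^{-1}W_{\bullet+1}K),
\]
and comparing this quadruple with the description of $\psi_{t}(\calM)$ from Corollary~\ref{imamade} is then straightforward. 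Because both sides arise from genuine MHM functors applied to $\calM$, the de~Rham comparison isomorphisms are carried along automatically; there is no separate compatibility to verify.

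Your proposed resolution of the compatibility issue --- decomposing the smooth monodromic mixed Hodge module as $\bigoplus \calN_{\alpha}\boxtimes\calH_{\alpha}$ in the spirit of Lemma~\ref{rettugo} --- does not work in the mixed case. Lemma~\ref{rettugo} (via Lemma~\ref{ganon}) rests on Deligne's semisimplicity theorem for polarizable \emph{pure} variations, which has no analogue for admissible mixed variations: a mixed monodromic variation on $X\times\CS$ need not split as an external product, even up to $\CC$-Hodge structure. Indeed, the paper's own treatment of the mixed case in Section~2.3 is considerably more delicate precisely because this decomposition is unavailable. So your final paragraph does not close the gap; the clean fix is the paper's: compute the restriction as $\psi_{t-1}$ from the outset.
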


\begin{proof}
Let $i\colon X\times \{1\}\hookrightarrow X\times \CS$ be the inclusion.
Since $M|_{X\times \CS}$ is locally free $O_{X\times \CS}$-module,
the pullback of $M|_{X\times \CS}$ by $i$ as a mixed Hodge module is
isomorphic to the nearby cycle $\psi_{t-1}(\calM|_{X\times \CS})$ of it along $t=1$.
By the definition,
$\psi_{t-1}(M|_{X\times \CS})$ is equal to
\[(M/(t-1)M, F_{\bullet}M/(t-1)F_{\bullet}M, i^{-1}K, i^{-1}W_{\bullet+1}K).\]
This is isomorphic to the nearby cycle $\psi_{t}(\calM)$ by Corollary~\ref{imamade}.
\end{proof}

\subsection{{T}he equivalence of categories}\label{jyupitor}

We state the main result of this section.
We consider a tuple 
\[((\calM_{(-1,0]}, T_{s}, N), \calM_{-1}, c,v),\]
where $\calM_{(-1,0]}=(M_{(-1,0]},F_{\bullet}M_{(-1,0]},K_{(-1,0]},W_{\bullet}K_{(-1,0]})$ and $\calM_{-1}$ are graded polarizable mixed Hodge modules on $X$ and
$T_{s}\colon \calM_{(-1,0]}\simto \calM_{(-1,0]}$,
$N\colon \calM_{(-1,0]}\to \calM_{(-1,0]}(-1)$,
$c\colon \calM_{0}(:=\mathrm{Ker}(T_{s}-1)\subset \calM_{(-1,0]})\to \calM_{-1}$ and
$v\colon \calM_{-1}\to \calM_{0}(-1)$ are morphisms between mixed Hodge modules.
Let $((\calM_{(-1,0]}', T_{s}', N'), \calM_{-1}', c',v')$ be another tuple.
The morphism between them is a pair $(\varphi_{(-1,0]},\varphi_{-1})$ of morphisms
$\varphi_{(-1,0]}\colon \calM_{(-1,0]}\to \calM_{(-1,0]}'$ and
$\varphi_{-1}\colon \calM_{-1}\to \calM_{-1}'$ which are compatible with the morphisms $T_{s}$, $T_{s}'$, $N$, $N'$, $c$, $c'$, $v$ and $v'$.
For a tuple $((\calM_{(-1,0]}, T_{s}, N), \calM_{-1}, c,v)$,
we consider the following three conditions:
\begin{enumerate}
\item[($\star$-i)] $T_{s}$ commutes with $N$.
\item[($\star$-ii)] The $D$-module $M_{(-1,0]}$ is decomposed as
\begin{align}\label{takarajimakai}
M_{(-1,0]}=\bigoplus_{\alpha\in (-1,0]\cap \QQ}M_{\alpha},
\end{align}
where $M_{\alpha}:=\mathrm{Ker}(T_{s}-e(\alpha))\subset M$ ($e(\alpha)=\exp(-2\pi\sqrt{-1}\alpha)$).
\item[($\star$-iii)] $vc\colon \calM_{0}\to \calM_{0}(-1)$ is (the restriction of) $-N$.
\end{enumerate}
We denote by $\mathscr{G}(X)$ the category of tuples $((\calM_{(-1,0]}, T_{s}, N), \calM_{-1}, c,v)$ with the conditions ($\star$-i), ($\star$-ii) and ($\star$-iii).
Let $\mathrm{MHM}^{p}_{\mathrm{mon}}(X\times \CC)$ be the category of monodromic graded polarizable mixed Hodge modules on $X\times \CC$.
For an object $\calM\in \mathrm{MHM}^{p}_{\mathrm{mon}}(X\times \CC)$,
we get a tuple 
\[((\psi_{t}\calM, T_{s}, N), \phi_{t,1}\calM, \mathrm{can},\mathrm{var}),\]
where $T_{s}$ (resp. $N$) is the semisimple part (resp. $\frac{-1}{2\pi\sqrt{-1}}$ times the logarithm of the unipotent part) of the monodromy automorphism of the nearby cycle functor and
$\mathrm{can}$ and $\mathrm{var}$ is the can morphism $\psi_{t,1}\calM\to \phi_{t,1}\calM$ and the var morphism $\phi_{t,1}\calM\to \psi_{t,1}\calM(-1)$.
Then, 
$((\psi_{t}\calM, T_{s}, N), \phi_{t,1}\calM, \mathrm{can},\mathrm{var})$
is an object in $\mathscr{G}(X)$.
In this way,
we obtain a functor
\[F\colon \mathrm{MHM}^{p}_{\mathrm{mon}}(X\times \CC)\to \mathscr{G}(X).\]
The following theorem is our second main result. 
\begin{thm}\label{main2kai}
There is a quasi-inverse functor $G\colon  \mathscr{G}(X)\to \mathrm{MHM}^{p}_{\mathrm{mon}}(X\times \CC)$ of $F$, i.e.
$F$ and $G$ induce an equivalence of categories:
\[\mathrm{MHM}^{p}_{\mathrm{mon}}(X\times \CC)\simeq \mathscr{G}(X).\]
\end{thm}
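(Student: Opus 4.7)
The plan is to construct an explicit quasi-inverse $G\colon\mathscr{G}(X)\to\mathrm{MHM}^{p}_{\mathrm{mon}}(X\times\CC)$ and then verify the two natural isomorphisms $F\circ G\cong\iden$ and $G\circ F\cong\iden$. Given an object $((\calM_{(-1,0]},T_{s},N),\calM_{-1},c,v)$ of $\mathscr{G}(X)$, I would first build the underlying regular holonomic $D_{X\times\CC_{t}}$-module by setting $M:=\bigoplus_{\beta\in\QQ}M^{\beta}$, where $M^{\alpha}:=M_{\alpha}$ for $\alpha\in(-1,0]\cap\QQ$ is the $T_{s}$-eigenpiece coming from $(\star\text{-ii})$, $M^{-1}:=M_{-1}$, and for every other $\beta\in\QQ$, $M^{\beta}$ is a copy of $M_{\alpha}$ with $\alpha\in(-1,0]$ the representative of $\beta\bmod\ZZ$, except that for integer $\beta\leq-2$ one takes a copy of $M_{-1}$ instead. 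The $D_{X}[t]\langle\partial_{t}\rangle$-structure is then forced by Proposition~\ref{isomprop}: on $M^{\alpha}$ ($\alpha\in(-1,0]$) set $t\partial_{t}=\alpha+N$; on $M^{-1}$ set $t\partial_{t}=-1-cv$; use $-c\colon M^{0}\to M^{-1}$ as the map $\partial_{t}$ and $v\colon M^{-1}\to M^{0}$ as the map $t$; and propagate the remaining $t,\partial_{t}$ across each tower as the prescribed isomorphisms. Conditions $(\star\text{-i})$ and $(\star\text{-iii})$ ensure both well-definedness and the commutator identity $[\partial_{t},t]=1$ at the joint $M^{0}\oplus M^{-1}$.

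Next I would construct the Hodge filtration, the $\QQ$-structure, and the weight filtration. Theorem~\ref{main1} and Lemma~\ref{decomplem} dictate $F_{p}M=\bigoplus_{\beta\in\QQ}F_{p}M^{\beta}$ with $F_{p}M^{\alpha}=F_{p}M_{\alpha}$ for $\alpha\in(-1,0]$, $F_{p}M^{-1}=F_{p-1}M_{-1}$ (the shift matching Corollary~\ref{imamade}), and the remaining pieces determined by the strict-specializability isomorphisms. For the $\QQ$-perverse sheaf $K$ I would use Beilinson-type gluing: on $X\times\CS$, take the smooth $\QQ$-perverse sheaf determined by the $\QQ$-perverse sheaf underlying $\calM_{(-1,0]}$ with the quasi-unipotent monodromy $T_{s}\exp(-2\pi\sqrt{-1}N)$, then glue across $X\times\{0\}$ to the underlying $\QQ$-perverse sheaf of $\calM_{-1}$ with $c$ and $v$ as the canonical and variation maps. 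The weight filtration is defined so as to reverse Lemma~\ref{arashi}: $W_{k}M\cap M^{\alpha}=W_{k-1}M_{\alpha}$ for $\alpha\in(-1,0]$ and $W_{k}M\cap M^{-1}=W_{k}M_{-1}$. The main obstacle is checking that this 4-tuple is a graded polarizable monodromic MHM: strict $\QQ$-specializability along $t=0$ is immediate from Proposition~\ref{Vprop} and the construction; admissibility of $\calM|_{X\times\CS}$ as a VMHS follows from a twist-by-character argument from $\calM_{(-1,0]}$; the relative monodromy filtration conditions at $t=0$ hold by our choice of $W_{\bullet}$; and strict specializability along any other regular function on $X\times\CC$ reduces summand-by-summand, via Lemma~\ref{pupupu}, to the corresponding property for $\calM_{(-1,0]}$ and $\calM_{-1}$.

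The verification that $F$ and $G$ are mutually quasi-inverse is then comparatively routine. That $F\circ G\cong\iden$ follows by direct inspection from Corollary~\ref{imamade} and the $(\star)$-conditions: by construction, $\psi_{t}$ applied to $G(\cdot)$ returns $\calM_{(-1,0]}$ together with $T_{s}$ and $N$, $\phi_{t,1}$ returns $\calM_{-1}$, and the canonical and variation morphisms return $c$ and $v$. Conversely, $G\circ F\cong\iden$ is the content of Theorem~\ref{main1} and Proposition~\ref{titanic} combined with the description of the $D$-module structure in Section~1: every monodromic MHM is reconstructed functorially from the decomposition $M=\bigoplus_{\beta\in\QQ}M^{\beta}$, the decomposed Hodge filtration, the weight filtration compatibilities of Lemma~\ref{arashi}, and the morphisms $\mathrm{can}$ and $\mathrm{var}$, which together constitute precisely the data $F(\calM)$.
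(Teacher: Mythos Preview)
Your identification of the underlying $D$-module, Hodge filtration, and weight filtration is correct and matches the paper's final description. The genuine gap is in the step you flag as ``the main obstacle'': showing that the $4$-tuple you assembled is actually a graded polarizable mixed Hodge module. Two of your arguments there do not work. First, ``admissibility of $\calM|_{X\times\CS}$ as a VMHS'' is not applicable, because in general $\calM_{(-1,0]}$ is an arbitrary mixed Hodge module on $X$, not a smooth one, so the object on $X\times\CS$ is not a variation of mixed Hodge structure. Second, your ``twist-by-character argument'' only accounts for the semisimple part $T_{s}$: when $N\neq 0$ the action of $t\partial_{t}$ on $M_{\alpha}$ is $\alpha+N$ with $N$ a nontrivial nilpotent endomorphism of a possibly non-smooth $D_{X}$-module, and this does not arise from tensoring with a rank-one local system on $\CS$. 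Finally, Lemma~\ref{pupupu} concerns pushforward along a closed embedding $Y\hookrightarrow X$, not strict specializability along an arbitrary regular function, so it does not supply the reduction you claim.

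The paper avoids this verification problem entirely by never leaving the category of mixed Hodge modules. It introduces auxiliary mixed Hodge modules $\calS^{H}_{m,1}[1]$ and $\calL^{H}_{r}[1]$ on $\CS$ (the latter carrying the ``universal'' nilpotent $N$), forms the exterior product $\calM_{(-1,0]}\boxtimes(\calS^{H}_{m,1}\otimes\calL^{H}_{r})[1]$, and then takes kernels and cokernels of the morphisms $N\boxtimes 1-1\boxtimes N$ and $T_{s}\otimes 1-1\otimes T$ inside $\mathrm{MHM}^{p}(X\times\CS)$. Since exterior products, tensor products with pullbacks, kernels, and cokernels are all operations within the abelian category of mixed Hodge modules, the result $\wt{\calM_{(-1,0]}}$ is a mixed Hodge module by construction; only afterwards does one check (Lemma~\ref{nicol}) that its underlying filtered $D$-module agrees with the explicit description you wrote down. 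The extension to $X\times\CC$ is then handled by Beilinson gluing of mixed Hodge modules (Lemma~\ref{sidaruta}), which again stays inside the category. This categorical construction is the missing ingredient in your argument: you would need either to supply it or to verify Saito's inductive definition of MHM directly, which is substantially harder than the checks you list.
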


We will construct the functor $G$ and prove this theorem in Subsection~\ref{syoumei}. 

\subsection{The $D_{\CS}$-modules $S_{m,1}$, $S_{1,\lambda}$ and $L_{r}$}\label{machito}
To construct a monodromic mixed Hodge module on $X\times \CC$ from an object $((\calM_{(-1,0]}, T_{s}, N),$ $\calM_{-1}, c,v)\in \mathscr{G}(X)$ and define the functor $G$, we take the following way.
First, we define monodromic $D$-modules $S_{m,1}$, $S_{1,\lambda}$ and $L_{r}$ for $m,r\in \ZZ_{\geq 1}$ and $\lambda\in \CC$ on $\CS$ (in this subsection).
The monodromy action of the underlying local system of $S_{m,1}$ (resp. $S_{1,\lambda}$) is semisimple, and its eigenvalues are the $m$-th roots of unity (resp. $\lambda$).
The one of $L_{r}$ is unipotent, and its Jordan normal form is a single Jordan cell of eignenvalue $1$ with size $r$.
Next, we define monodromic mixed Hodge modules $\calS^{H}_{m,1}[1]$ and $\calL^{H}_{r}[1]$ whose underlying $D$-modules are $S_{m,1}$ and $L_{r}$ (in Subsection~\ref{yoruno}).
Then, 
we define the mixed Hodge module $\wt{\calM_{(-1,0]}}$ on $X\times \CS$ as a quotient of a mixed Hodge module
$\calM\boxtimes (\calS^{H}_{m,1}\otimes \calL^{H}_{r})[1]$ on $X\times \CS$ for some $m, r\in \ZZ_{\geq 1}$ (in Subsection~\ref{chako}).
Finally, by gluing the tuple $(\wt{\calM_{(-1,0]}}, \calM_{-1},c,v)$,
we will get the desired mixed Hodge module $\vardbtilde{\calM_{(-1,0]}}$ on $X\times \CC$ (in Subsection~\ref{syoumei}), and define the functor $G$ so that
$G(((\calM_{(-1,0]}, T_{s}, N),$ $\calM_{-1}, c,v))=\vardbtilde{\calM_{(-1,0]}}$.

In this subsection, we introduce a $D$-modules $S_{m,1}$, $S_{1,\lambda}$ and $L_{r}$.
For $m\in \ZZ_{\geq 1}$, $\KK=\CC$ or $\QQ$ and $\lambda\in \CC$,
let $V_{m,1}^{\semi,\KK}$ (resp. $V_{1,\lambda}^{\semi,\CC}$) be the $\KK$(resp. $\CC$)-vector space of dimension $m$ with the automorphism $T$ whose eigenvalues are $\{\mu\in \CC\ |\ \mu^m=1\}$ (resp. $\lambda$).
Note that we have 
\begin{align}\label{mira}
V_{m,1}^{\semi,\CC}\simeq \bigoplus_{\lambda^m=1}V_{1,\lambda}^{\semi,\CC}.
\end{align} 
For $r\in \ZZ_{\geq 1}$, let $V_{r}^{\nilp,\KK}$ be the $\KK$($=\CC$ or $\QQ$)-vector space of dimension $r$ with the nilpotent morphism $N$ of $V_{r}^{\nilp,\KK}$ to $V_{r}^{\nilp,\KK}(-1)$ such that $N^{r}=0$ and $N^{r-1}\neq 0$,
where we set $V_{r}^{\nilp,\QQ}(-1):=\frac{1}{2\pi\sqrt{-1}}V_{r}^{\nilp,\QQ}(\subset V_{r}^{\nilp,\CC})$ and $V_{r}^{\nilp,\CC}(-1)=V_{r}^{\nilp,\CC}$.
The Jordan normal form of $2\pi\sqrt{-1}N(\colon V_{r}^{\nilp,\KK}\to V_{r}^{\nilp,\KK})$ is the single Jordan cell $J_{0,r}$ of the eigenvalue $0$ with size $r$.
We will fix the basis $e_{1},\dots,e_{r}$ of $V_{r}^{\nilp,\KK}$ so that we have $2\pi\sqrt{-1}Ne_{i}=e_{i-1}$ ($e_{0}:=0$).
We define the automorphism $T$ of $V_{r}^{\nilp,\KK}$ as
$T:=\exp(-2\pi\sqrt{-1}N)$.

For $m,r\in \ZZ_{\geq 1}$, $\lambda\in \CC$ and $\KK=\CC$ or $\QQ$,
let $\calS_{m,1}^{\KK}$ (resp. $\calS_{1,\lambda}^{\CC}$, $\calL_{r}^{\KK}$) be the local system on $\CS$ corresponding to the vector space with automorphism $(V_{m,1}^{\semi,\KK}, T)$ (resp. $(V_{1,\lambda}^{\semi,\CC},T)$, $(V_{r}^{\nilp,\KK},T)$).
We use the same symbol $T$ for the monodromy automorphism of $\calS_{m,1}^{\KK}$, $\calS_{1,\lambda}^{\CC}$ and $\calL_{r}^{\KK}$.
Moreover, we define $N\colon \calL_{r}^{\QQ}\to \calL_{r}^{\QQ}(-1)$ as $\frac{-1}{2\pi\sqrt{-1}}$ times the logarithm of the unipotent part of the monodromy automorphism $T\colon \calL_{r}^{\QQ}\simto \calL_{r}^{\QQ}$.
By (\ref{mira}), note that we have 
\begin{align}\label{skeld}
\calS_{m,1}^{\CC}\simeq \bigoplus_{\lambda^m=1}\calS_{1,\lambda}^{\CC}
\end{align}

We define the $D$-module (an integrable connection) $S_{m,1}$ (resp. $S_{1,\lambda}$, $L_{r}$) on $\CS$ as
\begin{align*}
S_{m,1}:=\calS_{m,1}^{\CC}\otimes_{\CC}\calO_{\CS}\quad\\
\left(\mbox{resp.\ } 
\begin{array}{ll}
S_{1,\lambda}:=\calS_{1,\lambda}^{\CC}\otimes_{\CC}\calO_{\CS},\\
L_{r}:=\calL_{r}^{\CC}\otimes_{\CC}\calO_{\CS}.
\end{array}\right)
\end{align*}
whose underlying local system is $\calS_{m,1}^{\CC}$ (resp. $\calS_{1,\lambda}^{\CC}$, $\calL_{r}^{\CC}$).
By (\ref{skeld}), we have
\[S_{m,1}=\bigoplus_{\lambda^m=1}S_{1,\lambda}.\]
We can describe the structure of $S_{1,\lambda}$ and $L_{r}$ as follows.

\begin{lem}\label{luckyland}
For $\lambda=\exp(-2\pi\sqrt{-1}\alpha)\in \CC$ ($\alpha\in (-1,0]\cap \QQ$) (resp. $r\in \ZZ_{\geq 1}$),
$S_{1,\lambda}$ (resp. $L_{r}$) is a free $O_{\CS}$-module of rank $1$ (resp. $r$) and there is a basis of $S_{1,\lambda}$ (resp. $L_{r}$) whose connection matrix is
\begin{align*}
&\dfrac{\alpha}{t}\\
(\mbox{resp.\ } &\dfrac{1}{2\pi\sqrt{-1}}\dfrac{J_{0,r}}{t}),
\end{align*}
where $J_{0,r}$ is the Jordan cell of the eigenvalue $0$ with size $r$.
In particular, we can regard $S_{1,\lambda}$ and $L_{r}$ as algebraic $D$-modules (or an algebraic integrable connections), $S_{1,\lambda}$ and $L_{r}$ are monodromic and we have
\begin{align}
S_{1,\lambda}&=S_{1,\lambda}^{\alpha}\otimes \CC[t^{\pm}],\tunagi\\
L_{r}&=L_{r}^{0}\otimes\CC[t^{\pm}],\label{bien}
\end{align}
as $O$-modules, where 
$S_{1,\lambda}^{\alpha}:=\mathrm{Ker}(t\partial_{t}-\alpha \colon L_{r}\to L_{r})$
(resp. $L_{r}^{0}:=\mathrm{Ker}((t\partial_{t})^r\colon L_{r}\to L_{r})$).
Moreover, we can define natural isomorphisms
\begin{align}\label{communication}
\sigma\colon V_{1,\lambda}^{\semi,\CC}\simto S_{1,\lambda}^{\alpha}\\
(\mbox{resp.\ } \sigma\colon V_{r}^{\nilp,\CC}\simto L_{r}^{0}).\label{amongus}
\end{align}
so that for $v\in V_{1,\lambda}^{\semi, \CC}$ (resp. $v\in V_{r}^{\nilp,\CC}$), we have
\begin{align}\label{magic}
&\quad\quad\qquad   \begin{array}{l}
(t\partial_{t}-\alpha)\sigma(v)=0,\tunagi\\ 
\sigma(Tv)=\exp(-2\pi\sqrt{-1}t\partial_{t})\sigma(v)\\
\end{array}\\
&\label{gathering}\left(\mbox{resp.\ }
\begin{array}{l} 
\sigma(2\pi\sqrt{-1}Nv)=2\pi\sqrt{-1}t\partial_{t}\sigma(v),\tunagi\\
\sigma(Tv)=\exp(-2\pi\sqrt{-1}t\partial_{t})\sigma(v).
\end{array}
\right).
\end{align}
\end{lem}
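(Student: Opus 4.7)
The plan is to turn the statement around: instead of starting from the local systems $\calS_{1,\lambda}^{\CC}$ and $\calL_r^{\CC}$ and trying to tensor them with $\calO_{\CS}$, I will define algebraic integrable connections $S_{1,\lambda}^{\mathrm{alg}}$ and $L_r^{\mathrm{alg}}$ on $\CS$ directly by the desired connection matrices, and then show that their analytifications have the required local systems as horizontal sections. Concretely, I set $S_{1,\lambda}^{\mathrm{alg}}:=\CC[t^{\pm}]\cdot e_{\alpha}$ with $t\partial_{t}e_{\alpha}=\alpha e_{\alpha}$, and $L_{r}^{\mathrm{alg}}:=\bigoplus_{i=1}^{r}\CC[t^{\pm}]\cdot e_{i}$ with $2\pi\sqrt{-1}\,t\partial_{t}e_{i}=e_{i-1}$ (where $e_{0}:=0$). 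By construction these are free $\calO_{\CS}$-modules with the stated connection matrices, and they are obviously algebraic, so the first assertion is immediate.

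Next I determine $S_{1,\lambda}^{\alpha}$ and $L_{r}^{0}$. The operator $t\partial_{t}$ on $S_{1,\lambda}^{\mathrm{alg}}$ sends $t^{k}e_{\alpha}$ to $(k+\alpha)t^{k}e_{\alpha}$, so its spectrum on $\CC[t^{\pm}]\cdot e_{\alpha}$ is $\alpha+\ZZ$; consequently $S_{1,\lambda}^{\alpha}=\CC\cdot e_{\alpha}$ and the decomposition $S_{1,\lambda}^{\mathrm{alg}}=S_{1,\lambda}^{\alpha}\otimes \CC[t^{\pm}]$ is clear. Similarly, with respect to the grading of $L_{r}^{\mathrm{alg}}$ by the $t$-degree, the operator $t\partial_{t}$ preserves each summand $\bigoplus_{i}\CC t^{k}e_{i}$ and acts there as $k\cdot\mathrm{id}+\frac{1}{2\pi\sqrt{-1}}J_{0,r}$, whose only eigenvalue is $k$. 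Hence $L_{r}^{0}=\bigoplus_{i=1}^{r}\CC\cdot e_{i}$, and the equality (\ref{bien}) follows, showing in particular that $L_{r}^{\mathrm{alg}}$ (and similarly $S_{1,\lambda}^{\mathrm{alg}}$) is monodromic.

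It remains to identify $S_{1,\lambda}^{\mathrm{alg},\an}$ and $L_{r}^{\mathrm{alg},\an}$ with $\calS_{1,\lambda}^{\CC}\otimes\calO_{\CS}^{\an}$ and $\calL_{r}^{\CC}\otimes\calO_{\CS}^{\an}$ respectively, i.e.\ to check that the local systems of horizontal sections have the correct monodromy. For $S_{1,\lambda}^{\mathrm{alg}}$ the flat sections are spanned by $t^{-\alpha}e_{\alpha}$, and a loop around the origin sends $\log t\mapsto \log t+2\pi\sqrt{-1}$, hence $t^{-\alpha}\mapsto e^{-2\pi\sqrt{-1}\alpha}t^{-\alpha}=\lambda\cdot t^{-\alpha}$, matching the monodromy of $\calS_{1,\lambda}^{\CC}$. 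For $L_{r}^{\mathrm{alg}}$ I propose the flat sections
\[
g_{k}=\sum_{j\geq 0}\frac{(-\log t)^{j}}{(2\pi\sqrt{-1})^{j}\,j!}\,e_{k-j}\qquad(1\le k\le r),
\]
and a direct calculation (cancellation between the $\partial_{t}(-\log t)^{j}$ term and the $\partial_{t}e_{k-j}$ term) shows $\nabla g_{k}=0$. Under the loop monodromy $-\log t\mapsto -\log t-2\pi\sqrt{-1}$, a binomial expansion rearranges $g_{k}$ into $\sum_{l\ge 0}\frac{(-\log t)^{l}}{(2\pi\sqrt{-1})^{l}l!}\exp(-2\pi\sqrt{-1}N)e_{k-l}=\exp(-2\pi\sqrt{-1}N)g_{k}=Tg_{k}$, exactly matching the monodromy of $\calL_{r}^{\CC}$.

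Finally, I define $\sigma$ on the distinguished basis of $V_{1,\lambda}^{\semi,\CC}$ (resp.\ $V_{r}^{\nilp,\CC}$) by sending it to $e_{\alpha}$ (resp.\ $e_{1},\dots,e_{r}$). The relations (\ref{magic}) hold trivially from $t\partial_{t}e_{\alpha}=\alpha e_{\alpha}$, and the first line of (\ref{gathering}) is built into the defining relation $2\pi\sqrt{-1}\,t\partial_{t}e_{i}=e_{i-1}$ which corresponds to $2\pi\sqrt{-1}Ne_{i}=e_{i-1}$. The second line in each case then follows by exponentiation, using nilpotence in the $L_{r}$ case. I expect no real obstacle here beyond a bookkeeping check of signs and factors of $2\pi\sqrt{-1}$; the only subtle step is the monodromy computation for $L_{r}$, where one must be careful that the chosen sign convention for $N$ (set so that $T=\exp(-2\pi\sqrt{-1}N)$) is consistent with the direction of loop and with the identification of $2\pi\sqrt{-1}t\partial_{t}$ with $2\pi\sqrt{-1}N$ on $L_{r}^{0}$.
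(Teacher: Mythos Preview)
Your argument is correct. The paper states this lemma without proof, treating it as an elementary computation; your proposal supplies exactly the standard verification the author omits. The only point worth noting is that the paper \emph{defines} $S_{1,\lambda}$ and $L_{r}$ analytically as $\calS_{1,\lambda}^{\CC}\otimes_{\CC}\calO_{\CS}$ and $\calL_{r}^{\CC}\otimes_{\CC}\calO_{\CS}$, so your construction of $S_{1,\lambda}^{\mathrm{alg}}$ and $L_{r}^{\mathrm{alg}}$ together with the monodromy computation is really producing algebraic models whose analytifications are canonically identified with the given analytic connections (via the local system of flat sections); this is precisely what ``we can regard $S_{1,\lambda}$ and $L_{r}$ as algebraic $D$-modules'' means, and your proof establishes it. The remark following the lemma in the paper indicates that one may also view $\sigma$ more intrinsically as the comparison isomorphism $\mathrm{DR}(\psi_{t,\lambda}(-))\simeq {}^p\psi_{t,\lambda}(\mathrm{DR}(-))$ specialized to these rank-one and unipotent cases, but your explicit basis-by-basis definition is equivalent and perfectly adequate.
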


\begin{rem}
In general, for a regular holonomic $D$-module $M$ on $X\times \CC_{t}$,
there is a natural isomorphism
\begin{align}\label{zeldaa}
\mathrm{DR}_{X\times \{0\}}(\psi_{t,\lambda}(M))\simeq {}^p\psi_{t,\lambda}(\mathrm{DR}_{X\times \CC}(M)),
\end{align}
with some relations similar to (\ref{gathering})
between the monodromy action on ${}^p\psi_{t,\lambda}(\mathrm{DR}_{X\times \CC}(M))$ and 
$\exp(-2\pi\sqrt{-1}t\partial_{t})$.
In the setting in Lemma~\ref{luckyland},
we have 
\begin{align*}
\psi_{t,1}(L_{r})&=L_{r}^{0},\\
{}^p\psi_{t,1}(\calL_{r}[1])&=V_{r}^{\nilp, \CC},\tunagi\\
\mathrm{DR}(L_{r})&=\calL_{r}^{\CC}[1].
\end{align*}
Then, we can regard the isomorphism (\ref{amongus}) as the isomorphism (\ref{zeldaa}) in a special case.
We can say the same for $S_{1,\lambda}$ and the vanishing cycles.
\end{rem}

\subsection{The mixed Hodge modules $\calS^{H}_{m,1}[1]$ and $\calL^{H}_{r}[1]$}\label{yoruno}

We endow $\calS_{m,1}^{\QQ}$ and $\calL_{r}^{\QQ}$ with mixed Hodge module structures.
We define a Hodge filtration of $V_{m,1}^{\semi,\CC}$ as
\[F_{p}V_{m,1}^{\semi,\CC}:=
\left\{
\begin{array}{ll}
V_{m,1}^{\semi,\CC}& (p\geq  0),\\
0& (p<0).
\end{array}
\right.
\]
Recall that $e_{1},\dots,e_{r}$ is a (fixed) basis of $V_{r}^{\nilp,\KK}$ such that $2\pi\sqrt{-1}Ne_{i}=e_{i-1}$ ($e_{0}:=0$). 
Then, we define filtrations $F_{\bullet}V_{r}^{\nilp,\CC}$ and $W_{\bullet}V_{r}^{\nilp,\QQ}$ as
\begin{align*}
{F}_{i}V_{r}^{\nilp,\CC}&:=
\sum_{j=r-i}^{r}\CC e_{j},\\
{W}_{-2i}V_{r}^{\nilp,\QQ}=W_{-2i+1}V_{r}^{\nilp,\QQ}&:=
\sum_{j=1}^{r-i}\QQ e_{j}.
\end{align*}
Note that we have
\begin{align*}
N(F_{p}V_{r}^{\nilp,\CC})\subset  F_{p+1}V_{r}^{\nilp,\CC},
\end{align*}
for any $p\in \ZZ$.
It is obvious that
\begin{align*}
W_{-2i}V_{r}^{\nilp,\QQ}=W_{-2i+1}V_{r}^{\nilp,\QQ}=
\left\{\begin{array}{ll}
\mathrm{Ker}(N^{r-i}\colon V_{r}^{\nilp,\QQ}\to V_{r}^{\nilp,\QQ}(-(r-i)))& (i<r),\\
0& (i\geq r).
\end{array}\right.
\end{align*}
Note that $W_{\bullet}V_{r}^{\nilp,\QQ}$ is the monodromy weight filtration of $N$ centered at $-(r-1)$.
We set $V_{r}^{\nilp,H}:=(V_{r}^{\nilp,\CC},F_{\bullet}V_{r}^{\nilp,\CC}, V_{r}^{\nilp,\QQ}, W_{\bullet}V_{r}^{\nilp,\QQ})$.
Recall that the $l$-th Tate twist $V_{r}^{\nilp, H}(l)$ of $V_{r}^{\nilp, H}$ is the tuple:
\[(V_{r}^{\nilp, \CC},F_{\bullet-l}V_{r}^{\nilp, \CC}, (2\pi\sqrt{-1})^{l}V_{r}^{\nilp, \QQ}, W_{\bullet+2l}V_{r}^{\nilp, \QQ}).\]
The following lemma follows from the definition.
\begin{lem}
\begin{enumerate}
\item[(i)]
The triple $V_{m,1}^{\semi,H}:=(V_{m,1}^{\semi,\CC},F_{\bullet}V_{m,1}^{\semi,\CC},V_{m,1}^{\QQ})$ is polarizable pure Hodge structure of weight $1$.
\item[(ii)]
The tuple 
\begin{align}
V_{r}^{\nilp,H}=(V_{r}^{\nilp, \CC},F_{\bullet}V_{r}^{\nilp, \CC}, V_{r}^{\nilp,\QQ}, W_{\bullet}V_{r}^{\nilp, \QQ})
\end{align}
is a graded polarizable mixed Hodge structure.
\item[(iii)]
The morphism $N$ defines a nilpotent morphism between the mixed Hodge structures
\[N\colon V_{r}^{\nilp,H}\to V_{r}^{\nilp,H}(-1).\]
\item[(iv)]
The mixed Hodge structure $V_{r}^{\nilp,H}$
is a direct sum of some Tate twists of the pure Hodge structure $V_{1}^{\nilp,H}$ , i.e. we have
\[V_{r}^{\nilp,H}\simeq V_{1}^{\nilp,H}\oplus V_{1}^{\nilp,H}(1)\oplus\dots \oplus V_{1}^{\nilp,H}(r-1).\]
\end{enumerate}
\end{lem}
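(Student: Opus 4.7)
The plan is to verify all four assertions directly from the definitions, with (iv) as the main technical point.

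For (i), the filtration $F_{\bullet} V_{m,1}^{\semi,\CC}$ has a single jump at $p=0$, so the Hodge decomposition is concentrated in one bidegree and purity is immediate. For polarizability I would take any $T$-invariant positive definite $\QQ$-bilinear form on $V_{m,1}^{\semi,\QQ}$, which exists because $T$ has finite order on the rational structure; the Riemann bilinear relations reduce to positive definiteness of the form since the Hodge filtration is trivial.

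For (ii) and (iii), the key computation is to read off the graded pieces: by construction, $\GR^{W}_{-2i} V_{r}^{\nilp,\QQ} \simeq \QQ\cdot e_{r-i}$ for $0 \leq i \leq r-1$, and all other graded pieces vanish. From $F_i V_{r}^{\nilp,\CC}=\sum_{j=r-i}^{r} \CC e_j$ one reads that $e_{r-i}$ sits in Hodge bidegree $(-i,-i)$, so $\GR^{W}_{-2i}$ is pure of weight $-2i$, which gives (ii). Part (iii) reduces to checking the compatibility shifts for $N\colon V_r^{\nilp,H}\to V_r^{\nilp,H}(-1)$: the inclusion $N(F_p)\subset F_{p+1}$ follows from $2\pi\sqrt{-1}\,Ne_j=e_{j-1}$, the inclusion $N(W_k)\subset W_{k-2}$ follows from $W_{-2i}=\Ker(N^{r-i})$, and $\QQ$-compatibility is built into the definition of $N$ as $\frac{-1}{2\pi\sqrt{-1}}$ times the logarithm of the unipotent part of the monodromy.

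The main work is in (iv). A mixed Hodge structure whose graded pieces are Tate twists need not split in general, so I cannot argue abstractly and plan to exhibit the splitting by hand. I would define
\[
\phi_l\colon V_{1}^{\nilp,H}(l)\longrightarrow V_{r}^{\nilp,H}, \qquad e_1\longmapsto (2\pi\sqrt{-1})^{-l}\, e_{r-l},
\]
the normalization $(2\pi\sqrt{-1})^{-l}$ being chosen precisely so that the rational generator $(2\pi\sqrt{-1})^l e_1$ of $V_{1}^{\nilp,\QQ}(l)$ maps to $e_{r-l}\in V_{r}^{\nilp,\QQ}$. Assembling these into $\Phi := \bigoplus_{l=0}^{r-1}\phi_l$ yields a $\CC$-linear bijection (it matches bases up to nonzero scalars); compatibility with $F$ follows from $F_{l} V_r^{\nilp,\CC} = \sum_{j\geq r-l}\CC e_j$, and compatibility with $W$ from $W_{-2l}V_{r}^{\nilp,\QQ}=\sum_{j\leq r-l}\QQ e_j$. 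These together give $\Phi(F_p)=F_p$ and $\Phi(W_k)=W_k$ on the nose, completing the proof.
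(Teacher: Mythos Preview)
Your argument is correct and simply spells out what the paper leaves implicit: the paper's proof of this lemma is just the sentence ``follows from the definition,'' and your verification of (i)--(iii) together with the explicit splitting map $\Phi=\bigoplus_l \phi_l$ in (iv) is exactly the kind of direct check that sentence is pointing to. One small remark: in (i) the $T$-invariance of the polarization is not needed for the Hodge structure itself (the automorphism $T$ is extra structure), so you can drop that requirement.
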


We define a filtration $F_{\bullet}S_{m,1}$ of $S_{m,1}$ as
\begin{align*}
F_{p}S_{m,1}:=\left\{
\begin{array}{ll}
S_{m,1}& (p\geq 0),\\
0& (p<0).
\end{array}\right.
\end{align*}
Moreover, we set
\begin{align*}
F_{\bullet}L_{r}^{0}&:=\sigma(F_{\bullet}V_{r}^{\nilp,\CC}),\tunagi\\
W_{\bullet}L_{r}^{0}&:=\sigma(W_{\bullet-1}V_{r}^{\nilp,\CC}),
\end{align*}
where $L_{r}^{0}$ and $\sigma$ are the ones defined in Lemma~\ref{luckyland}.
By the expression (\ref{bien}),
we define the Hodge and weight filtration of $L_{r}$ as
\begin{align*}
F_{\bullet}L_{r}&:=F_{\bullet}L_{r}^{0}\otimes \CC[t^{\pm}],\tunagi\\
W_{\bullet}L_{r}&:=W_{\bullet}L_{r}^{0}\otimes \CC[t^{\pm}].
\end{align*}
Since we have $\sigma(Nv)=t\partial_{t}\sigma(v)$ for $v\in V_{r}^{\nilp,\CC}$ (see (\ref{gathering})),
we can express the filtration $W_{\bullet}L^{0}_{r}$ of $L^{0}_{r}$
as
\[
W_{-2i-1}L^{0}_{r}=W_{-2i}L^{0}_{r}
=\left\{
\begin{array}{ll}
\mathrm{Ker}((t\partial_{t})^{r-i-1}\colon L^{0}_{r}\to L^{0}_{r})& (i<r-1)\\
0& (i\geq r-1).
\end{array}
\right.
\]
In particular, $W_{k}L_{r}\subset L_{r}$ is a $D$-submodule of $L_{r}$ for $k\in \ZZ$.
On the other hand, we can also define the weight filtration $W_{\bullet}\calL_{r}^{\KK}$ of the local system $\calL_{r}^{\KK}$ so that $W_{k}\calL_{r}^{\KK}\otimes_{\KK}\calO_{\CS}=W_{k}L_{r}$.

\begin{prop}\label{hoikoro}
\item[(i)] The tuple $\calS_{m,1}^{H}[1]:=(S_{m,1},F_{\bullet}S_{m,1},\calS_{m,1}^{\QQ}[1])$ (resp. $\calL_{r}^{H}[1]:=(L_{r},F_{\bullet}L_{r},\calL_{r}^{\QQ}[1],W_{\bullet}\calL_{r}^{\QQ})$) is a polarizable pure Hodge module (resp. graded polarizable mixed Hodge module) on $\CS$.
\item[(ii)]
The monodromy automorphism $T\colon \calS_{m,1}^{\QQ}\simto \calS_{m,1}^{\QQ}$ induces an automorphism $T\colon \calS_{m,1}^{H}[1]\simto \calS_{m,1}^{H}[1]$ of $\calS_{m,1}^{H}[1]$ as a mixed Hodge module.
\item[(iii)]
The morphism $N\colon \calL_{r}^{\QQ}\to \calL_{r}^{\QQ}(-1)$ induces a nilpotent morphism $N\colon \calL_{r}^{H}\to \calL_{r}^{H}(-1)$.
\item[(iv)]
The non-trivial cohomology of $\calS_{m,1}^{H}\otimes \calL_{r}^{H}[1]\in \DBkai\mathrm{MHM}(\CS)$ is only $0$-th one.
In particular, we can regard $\calS_{m,1}^{H}\otimes \calL_{r}^{H}[1]$ as a mixed Hodge module on $\CS$ associated to the variation of mixed Hodge structure whose underlying local system is $\calS_{m,1}^{\QQ}\otimes_{\QQ}\calL_{r}^{\QQ}$.
\item[(v)] We have
\begin{align*}
\psi_{t}(\calL^{H}_{r}[1])=&(V_{r}^{\nilp,\CC},F_{\bullet}V_{r}^{\nilp,\CC},V_{r}^{\nilp,\QQ},W_{\bullet}V_{r}^{\nilp,\QQ}),\tunagi\\
\psi_{t}(\calS^{H}_{m,1}\otimes \calL^{H}_{r}[1])=&
(V_{m,1}^{\semi,\CC}\otimes V_{r}^{\nilp,\CC}, F_{\bullet}(V_{m,1}^{\semi,\CC}\otimes V_{r}^{\nilp,\CC}), V_{m,1}^{\semi,\QQ}\otimes V_{r}^{\nilp,\QQ}, V_{m,1}^{\semi,\QQ}\otimes W_{\bullet}V_{r}^{\nilp,\QQ}),
\end{align*}
where $F_{p}(V_{m,1}^{\semi,\CC}\otimes V_{r}^{\nilp,\CC}):=\sum_{s+t=p}F_{s}V_{m,1}^{\semi,\CC}\otimes F_{t}V_{r}^{\nilp,\CC}$.
\item[(vi)] For $a\in \ZZ_{\geq 1}$, we have a natural morphism which commutes with $T$:
\begin{align}\label{murata}
\calS^{H}_{m,1}[1]\to \calS^{H}_{am,1}[1].
\end{align}
\item[(vii)] For $l\in \ZZ_{\geq 0}$,
we have natural morphisms which commute with $N$:
\begin{align}\label{kubota}
\calL^{H}_{r+l}[1]\to& \calL^{H}_{r}[1],\tunagi\\
\calL^{H}_{r}[1]\to& \calL^{H}_{r+l}[1](-l).\label{torosa}
\end{align}
\end{prop}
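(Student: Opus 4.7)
The plan is to verify the seven assertions essentially by direct computation, using the identifications $\sigma$ of Lemma~\ref{luckyland} to reduce statements on $\CS$ to linear algebra on the fibers $V^{\semi}_{m,1}$ and $V^{\nilp}_r$. The technically serious part is (i), and in particular the admissibility of $\calL^H_r$; items (ii)--(vii) then follow fibrewise.

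For (i), I would first decompose $\calS^{H}_{m,1}[1] \simeq \bigoplus_{\lambda^m=1} \calS^{H}_{1,\lambda}[1]$ via (\ref{skeld}); each summand is a rank-one integrable connection with unitary monodromy and constant Hodge filtration, hence underlies a polarizable VHS of weight zero, and the shift $[1]$ gives a pure Hodge module of weight $1$ on $\CS$ by Remark~\ref{smHodge}. For $\calL^{H}_{r}[1]$, Griffiths transversality follows directly from (\ref{gathering}): since $t\partial_t \sigma(e_i) = \sigma(e_{i-1})/(2\pi\sqrt{-1}) \in F_{p+1} L_r^{0}$ whenever $\sigma(e_i) \in F_p L_r^{0}$, one obtains $\nabla F_p L_r \subset F_{p+1} L_r \otimes \Omega^1_{\CS}$. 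Each $\gr^W_k \calL^{H}_{r}$ is a direct sum of Tate-twisted trivial VHS, hence polarizable. Admissibility at $t=0$ holds because $W_\bullet V^{\nilp,\QQ}_r$ is (by construction) the monodromy weight filtration of $N$ centered at $-(r-1)$, which is the relative weight filtration required in the Steenbrink--Zucker/Kashiwara criterion; the check at $t=\infty$ is identical up to $t \mapsto t^{-1}$.

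Items (ii) and (iii) reduce to fibrewise compatibilities of the filtrations. For (ii), $T$ preserves the trivial Hodge filtration of $V^{\semi,\CC}_{m,1}$ and hence defines an endomorphism of the pure Hodge structure; for (iii), the relation $2\pi\sqrt{-1}N e_i = e_{i-1}$ yields $N(F_p V^{\nilp,\CC}_r) \subset F_{p+1} V^{\nilp,\CC}_r$ and $N(W_{-2i} V^{\nilp,\QQ}_r) \subset W_{-2i-2} V^{\nilp,\QQ}_r$, so $N \colon \calL^H_r[1] \to \calL^H_r[1](-1)$ is a morphism of mixed Hodge modules. For (iv), both $\calS^{H}_{m,1}$ and $\calL^{H}_{r}$ are smooth mixed Hodge modules (i.e.\ VMHS) on $\CS$, and the tensor product of two VMHS is again a VMHS on $\CS$, so $\calS^{H}_{m,1} \otimes \calL^{H}_{r}[1]$ lives in cohomological degree $0$.

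For (v), I apply Corollary~\ref{imamade} to $\calL^H_r[1]$: Lemma~\ref{luckyland} gives $L_r^{\alpha} = 0$ for $\alpha \in (-1,0]\cap\QQ\setminus\{0\}$ and $L_r^{0} \simeq V^{\nilp,\CC}_r$ via $\sigma$, and the Hodge and weight filtrations match after accounting for the shift $W_{\bullet+1}$ built into Corollary~\ref{imamade} and the convention of Remark~\ref{smmixedHodge}. The computation for $\calS^H_{m,1} \otimes \calL^H_r[1]$ is identical since $(S_{m,1} \otimes L_r)^\alpha = S^\alpha_{m,1} \otimes L_r^0$, non-zero only when $e(\alpha)^m = 1$, and the filtrations on the tensor product are the tensor filtrations. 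Finally for (vi) and (vii), the $\QQ$-linear maps $V^{\semi,\QQ}_{m,1} \hookrightarrow V^{\semi,\QQ}_{am,1}$ (the inclusion of $m$-th roots of unity into $am$-th roots), $V^{\nilp,\QQ}_{r+l} \twoheadrightarrow V^{\nilp,\QQ}_r$ ($e_i \mapsto e_{i-l}$ if $i>l$, otherwise $0$), and $V^{\nilp,\QQ}_r \hookrightarrow V^{\nilp,\QQ}_{r+l}$ ($e_i \mapsto e_i$) are readily checked to commute with $T$ respectively $N$ and to respect the Hodge and weight filtrations with the claimed Tate twists; they induce the stated morphisms of mixed Hodge modules via $\sigma$. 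The main obstacle is (i), specifically the admissibility verification; after (i) everything else is routine linear algebra.
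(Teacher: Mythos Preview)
Your argument is correct and, for items (ii)--(vii), proceeds exactly as the paper does: the paper simply says these ``follow from the definition'' and, for (vi)--(vii), invokes the same natural linear maps on $V^{\semi,\QQ}_{m,1}$ and $V^{\nilp,\QQ}_{r}$ that you write down. Your treatment of $\calL^{H}_{r}[1]$ in (i) is also the paper's: both assert that $(L_r,F_\bullet L_r,\calL^\QQ_r,W_{\bullet+1}\calL^\QQ_r)$ is an admissible graded polarizable VMHS, and you supply more of the verification (Griffiths transversality via (\ref{gathering}), and admissibility via the observation that $W_\bullet V^{\nilp,\QQ}_r$ is the monodromy weight filtration of $N$) than the paper does.

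The one genuine difference is your handling of $\calS^{H}_{m,1}[1]$ in (i). The paper does not use the decomposition (\ref{skeld}) at all here; instead it observes that $\calS^{H}_{m,1}[1]\simeq (r_m)_*\QQ^{H}_{\CS}[1]$, where $r_m\colon \CS\to\CS$ is $t\mapsto t^m$, and concludes immediately that it is a polarizable pure Hodge module of weight $1$ as a proper pushforward of one. This is cleaner: it works directly over $\QQ$ and yields polarizability for free. Your route via (\ref{skeld}) is only a splitting of $\CC$-local systems, so the phrase ``each summand underlies a polarizable VHS'' does not by itself produce a $\QQ$-polarization on $\calS^{\QQ}_{m,1}$; what actually makes your argument go through is that the Hodge filtration is the trivial one-step filtration and the monodromy has finite order, so a $T$-invariant positive-definite form on $V^{\semi,\QQ}_{m,1}$ exists by averaging. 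That is fine, but the pushforward description buys you the conclusion in one line and also makes the morphism in (vi) transparent (it is just the trace/adjunction map for $r_a$).
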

\begin{proof}
\noindent (i):\ 
Let $\QQ^{H}_{\CS}[1]\in \mathrm{HM}^{p}(\CS)$ be the constant polarizable pure Hodge module on $\CS$.
For $m\in \ZZ_{\geq 1}$, let $r_{m}\colon \CS\to \CS$ be the morphism $t\mapsto r_{m}(t):=t^{m}$.
Then, 
$\calS_{m,1}^{H}[1]$ is isomorphic to the pushforward $(r_{m})_{*}\QQ_{\CS}^{H}[1]$ of the mixed Hodge module $\QQ_{\CS}^{H}[1]$, which is a polarizable pure Hodge module of weight $1$.
On the other hand, 
since the tuple $(L_{r}, F_{\bullet}L_{r}, \calL_{r}^{\QQ},W_{\bullet+1}\calL_{r}^{\QQ})$ is an admissible graded polarizable variation of mixed Hodge structure on $\CS$,
the tuple $\calL_{r}^{H}[1]$ is a graded polarizable mixed Hodge module on $\CS$.

\noindent (ii), (iii), (iv), (v):\ These follow from the definition.

\noindent (vi) and (vii):\ 
For $m,a\in \ZZ_{\geq 1}$, 
the natural morphism 
\[V_{m,1}^{\semi,\QQ}\to V_{am,1}^{\semi,\QQ},\]
which commutes with $T$ induces the morphism (\ref{murata}).
Similarly, for $r\in \ZZ_{\geq 1}$ and $l\in \ZZ_{\geq 0}$,
the natural morphisms
\begin{align*}
V_{r+l}^{\nilp,\QQ}&\to V_{r}^{\nilp,\QQ},\tunagi\\
V_{r}^{\nilp,\QQ}&\to V_{r+l}^{\nilp,\QQ}(-l),
\end{align*}
which commute with $N$ induce the morphisms (\ref{kubota}) and (\ref{torosa}).

\end{proof}

\begin{rem}
The Hodge filtration of the variation of mixed Hodge structure $(L_{r}, F_{\bullet}L_{r}, \calL_{r}^{\QQ},W_{\bullet+1}\calL_{r}^{\QQ})$ is the nilpotent orbit in the sense of \cite{SchVar} defined from the mixed Hodge structure with a nilpotent morphism $((V_{r}^{\nilp,\CC},F_{\bullet}V_{r}^{\nilp,\CC}, V_{r}^{\nilp,\QQ}, W_{\bullet}V_{r}^{\nilp,\QQ}),N)$.
\end{rem}











\subsection{A construction of monodromic mixed Hodge modules on $X\times \CS$}
\label{chako}

In order to construct $G$ in Theorem~\ref{main2kai},
we show a weak version of the theorem, i.e.
we construct a mixed Hodge module on $X\times \CS$ (not $X\times \CC$) from a mixed Hodge module on $X$ with some operators with the aid of the Hodge modules on $\CS$ defined in the previous subsection.

We consider a triple $(\calM,T_{s},N)$, where $\calM=(M,F_{\bullet}M,K,W_{\bullet}K)$ is a graded polarizable mixed Hodge module on $X$,
$T_{s}\colon  \calM\simto \calM$ and $N\colon \calM\to \calM(-1)$ are morphisms with the conditions ($\star$-i) and ($\star$-ii) in Subsection~\ref{jyupitor}:
\begin{enumerate}
\item[($\star$-i)] $T_{s}$ commutes with $N$.
\item[($\star$-ii)] The $D$-module $M$ is decomposed as
\begin{align}\label{takarajima}
M=\bigoplus_{\alpha\in (-1,0]\cap \QQ}M_{\alpha},
\end{align}
where $M_{\alpha}:=\mathrm{Ker}(T_{s}-e(\alpha))\subset M$ ($e(\alpha):=\exp(-2\pi\sqrt{-1}\alpha)$).
\end{enumerate}
We denote by $\mathscr{G}'(X)$ the category of such triples.
By the condition ($\star$-ii),
for $p\in \ZZ$ and $k\in \ZZ$ we have decompositions
\begin{align}
F_{p}M=&\bigoplus_{\alpha\in (-1,0]\cap\QQ}F_{p}M_{\alpha},\label{cruise}\tunagi\\
W_{k}M=&\bigoplus_{\alpha\in (-1,0]\cap \QQ}W_{k}M_{\alpha},\notag
\end{align}
where 
we set $F_{p}M_{\alpha}=F_{p}M\cap M_{\alpha}\subset F_{p}M$ and
$W_{k}M_{\alpha}:=W_{k}M\cap M_{\alpha}\subset W_{k}M$.
Note that the condition ($\star$-ii) is equivalent to the condition:
\[\mbox{($\star$-ii') the perverse sheaf $K_{\CC}:=K\otimes \CC$ is decomposed as $K_{\CC}=\bigoplus_{\alpha\in (-1,0)\cap \QQ}K_{\CC,e(\alpha)}$}, \]
where $K_{\CC,\alpha}:=\mathrm{Ker}(T_{s}-e(\alpha))\subset K_{\CC}$.
Since the weight filtration of $\calM$ is a finite filtration, $N$ is a nilpotent operator.

Let $\mathrm{MHM}^{p}_{\mathrm{mon}}(X\times \CS)$ be the category of monodromic graded polarizable mixed Hodge modules on $X\times \CS$.
For an object $\calM\in \mathrm{MHM}^{p}_{\mathrm{mon}}(X\times \CS)$, 
we get a triple $(\psi_{t}\calM, T_{s},N)$, where $T_{s}$ (resp. $N$) is the semisimple (resp. $\frac{-1}{2\pi\sqrt{-1}}$ times the logarithm of the unipotent) part of the monodromy automorphism.
In this way, we obtain a functor
\[F'\colon \mathrm{MHM}^{p}_{\mathrm{mon}}(X\times \CS)\to \mathscr{G}'(X).\]
We will show the following weak version of Theorem~\ref{main2kai}.
\begin{lem}\label{daijyoubuda}
There is a quasi-inverse functor $G'\colon \mathscr{G}'(X)\to \mathrm{MHM}^{p}_{\mathrm{mon}}(X\times \CS)$ of $F'$, i.e.
$F'$ and $G'$ induce an equivalence of categories:
\[\mathrm{MHM}^{p}_{\mathrm{mon}}(X\times \CS)\simeq \mathscr{G}'(X).\]
\end{lem}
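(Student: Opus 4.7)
The plan is to construct the quasi-inverse $G'$ by using the Hodge modules $\calS^{H}_{m,1}[1]$ and $\calL^{H}_{r}[1]$ from Subsection~\ref{yoruno} as universal carriers of the prescribed semisimple and nilpotent monodromy data. Given a triple $(\calM,T_{s},N)\in\calG'(X)$, the eigenvalues of $T_{s}$ on $\calM$ form a finite subset of $\{e(\alpha)\mid \alpha\in (-1,0]\cap\QQ\}$ by the finite length of $\calM$, so one can choose $m\in\ZZ_{\geq 1}$ with $T_{s}^{m}=\iden$. Because the weight filtration of $\calM$ is finite, $N$ is nilpotent, and there exists $r\in\ZZ_{\geq 1}$ with $N^{r}=0$. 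I then form the ambient mixed Hodge module on $X\times\CS$
\[\calP := \calM\boxtimes (\calS^{H}_{m,1}\otimes \calL^{H}_{r})[1].\]

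On $\calP$ there are two pairs of monodromy actions: the automorphisms $T_{s}\boxtimes \iden$ from $\calM$ and $\iden\boxtimes T^{(\calS)}$ from Proposition~\ref{hoikoro}~(ii) (the latter acting only on the semisimple factor $\calS^{H}_{m,1}$), and the morphisms $N\boxtimes \iden,\ \iden\boxtimes N^{(\calL)}\colon \calP\to \calP(-1)$ coming from Proposition~\ref{hoikoro}~(iii). Setting $\phi_{s} := T_{s}\boxtimes \iden - \iden\boxtimes T^{(\calS)}\colon \calP\to\calP$ and $\phi_{n} := N\boxtimes\iden - \iden\boxtimes N^{(\calL)}\colon \calP\to \calP(-1)$, I define
\[G'(\calM,T_{s},N) := \mathrm{coker}\bigl((\phi_{s},\phi_{n}^{\mathrm{tw}})\colon \calP\oplus \calP(1)\to \calP\bigr),\]
where $\phi_{n}^{\mathrm{tw}}\colon \calP(1)\to \calP$ is the Tate shift of $\phi_{n}$. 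The effect of this coequalizer is to tensor $\calM$ over the commutative $\QQ$-algebra $\QQ[T,N]/(T^{m}-1,N^{r})$ generated by $T_{s}$ and $N$; the resulting local system carries monodromy $T_{s}\cdot\exp(-2\pi\sqrt{-1}N)$ on its fiber over $t=1$. That $G'(\calM,T_{s},N)$ is monodromic follows from the decomposition $\calS^{H}_{m,1}=\bigoplus_{\lambda^{m}=1}\calS^{H}_{1,\lambda}$ together with Lemma~\ref{luckyland} and Proposition~\ref{bunkailem}, and independence of the choices of $m$ and $r$ is ensured by the natural morphisms (\ref{murata}), (\ref{kubota}) and (\ref{torosa}).

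To verify $F'\circ G'\simeq \iden$, I would apply $\psi_{t}$ to the defining coequalizer and invoke Proposition~\ref{hoikoro}~(v) and the exactness of $\psi_{t}$ on MHMs, obtaining
\[\psi_{t}\bigl(G'(\calM,T_{s},N)\bigr)\simeq \calM\otimes_{\QQ[T,N]/(T^{m}-1,N^{r})}\bigl(V_{m,1}^{\semi,\QQ}\otimes V_{r}^{\nilp,\QQ}\bigr)\simeq \calM,\]
the last isomorphism reflecting that $V_{m,1}^{\semi,\QQ}\otimes V_{r}^{\nilp,\QQ}$ is free of rank one over this algebra, with the prescribed $T_{s}$ and $N$ actions recovered automatically. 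Conversely, for $G'\circ F'\simeq \iden$, given $\calN\in\mathrm{MHM}^{p}_{\mathrm{mon}}(X\times\CS)$ with $(\calM,T_{s},N):=F'(\calN)$, I would use the analogue of Theorem~\ref{main1} on $X\times \CS$ (obtained by extending $\calN$ to $X\times\CC$ via a pushforward and invoking the theorem there) to decompose the Hodge filtration of $\calN$, and then assemble canonical maps $\calM_{\alpha}\boxtimes (\calS^{H}_{1,e(\alpha)}\otimes \calL^{H}_{r})[1]\to \calN$ from the isomorphisms $\sigma$ in Lemma~\ref{luckyland}. These maps factor through $G'(F'(\calN))$ by the universal property of the coequalizer and produce the natural isomorphism $G'(F'(\calN))\simto \calN$.

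The principal obstacle is the last step: verifying that the Hodge and weight filtrations on the cokernel in $\mathrm{MHM}(X\times\CS)$ are faithfully identified with those of $\calN$. Strictness of morphisms in the MHM category handles the cokernel in the abelian sense, but recovering the full Hodge filtration of $\calN$ from the triple $(\psi_{t}\calN,T_{s},N)$ genuinely requires the decomposition of the Hodge filtration furnished by Theorem~\ref{main1} in its $X\times\CS$ version. A secondary concern is admissibility of the underlying variation of mixed Hodge structure on $X\times\CS$, which must be extracted from the coequalizer presentation but should follow from admissibility of $\calS^{H}_{m,1}\otimes \calL^{H}_{r}[1]$ on $\CS$, the exactness of $\boxtimes$, and the fact that taking cokernels in MHM preserves admissibility.
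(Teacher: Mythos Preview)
Your construction of $G'$ and the verification of $F'\circ G'\simeq \iden$ are essentially the paper's approach: the paper also forms $\calM\boxtimes \calL^{H}_{r}[1]$ and $\calM\boxtimes \calS^{H}_{m,1}[1]$ and takes (co)kernels of $N\boxtimes 1-1\boxtimes N$ and $T_{s}\otimes 1-1\otimes T$, though it does this in two separate steps and checks that all four kernel/cokernel combinations agree (Corollary~\ref{gift} and Lemma~\ref{mirai}). Your single coequalizer is a compact repackaging of the same idea, and the computation of $\psi_{t}$ matches Lemma~\ref{naminoutaha}.

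The divergence, and the real gap, is in $G'\circ F'\simeq \iden$. Your plan is to assemble maps $\calM_{\alpha}\boxtimes (\calS^{H}_{1,e(\alpha)}\otimes \calL^{H}_{r})[1]\to \calN$ and pass to the coequalizer. But $\calS^{H}_{1,\lambda}$ for $\lambda\neq 1$ has no $\QQ$-structure (the rank-one local system with monodromy $\lambda$ is not defined over $\QQ$), and likewise $\calM_{\alpha}=\kaa(T_{s}-e(\alpha))$ is only a $D$-module, not a sub-MHM of $\calM$. So the maps you propose do not exist in $\mathrm{MHM}^{p}(X\times\CS)$; at best they live on the $\CC$-perverse-sheaf or filtered-$D$-module side, and lifting their sum back to a morphism of $\QQ$-MHMs is exactly the point that needs an argument. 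The paper never attempts such a direct construction. Instead it proves $G'\circ F'\simeq \iden$ by a gluing argument: Lemma~\ref{gideon} establishes the isomorphism $\wt{\psi_{t}\calM}\simeq \calM$ when $\calM$ is \emph{smooth} on $X\times\CS$ (where one can argue at the level of local systems), and Lemma~\ref{bolas} shows that $(\,\cdot\,)\mapsto \wt{(\,\cdot\,)}$ commutes with $\psi_{g}$, $\phi_{g,1}$ and Beilinson's $\Xi_{g}$ for any regular function $g$ on $X$. One then chooses $g$ so that $\calM|_{(X\setminus g^{-1}(0))\times\CS}$ is smooth and reconstructs $\calM$ from the gluing data $(\calM|_{(X\setminus g^{-1}(0))\times\CS},\phi_{g,1}\calM,\mathrm{can},\mathrm{var})$; since $\wt{(\,\cdot\,)}$ respects each piece of this data, the isomorphism follows. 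This route sidesteps your $\QQ$-structure problem entirely, and it also makes your invocation of Theorem~\ref{main1} unnecessary for this lemma.
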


Let $(\calM,T_{s},N)$ be an object in $\mathscr{G}'(X)$.
For $r\in \ZZ_{\geq 1}$,
we define the mixed Hodge modules on $X\times \CS$ as
\begin{align*}
\wt{\calM}^{\skaa}_{r}&:=\kaa(N\boxtimes 1-1\boxtimes N\colon 
\calM\boxtimes \calL_{r}^H[1]\to \calM\boxtimes \calL_{r}^{H}[1](-1))(-(r-1)),\tunagi\\
\wt{\calM}^{\scokaa}_{r}&:=\cokaa(N\boxtimes 1-1\boxtimes N\colon 
\calM\boxtimes \calL_{r}^H[1](1)\to \calM\boxtimes \calL_{r}^{H}[1]).
\end{align*}
The automorphism $T_{s}$ induces automorphisms of $\wt{\calM}^{\skaa}_{r}$ and $\wt{\calM}^{\scokaa}_{r}$, which will be denoted by the same symbol $T_{s}$.

The morphisms defined in (vii) of Proposition~\ref{hoikoro} induce morphisms
\begin{align}\label{evans}
\left\{
\begin{array}{l}
\wt{\calM}^{\skaa}_{r+l}\to \wt{\calM}^{\skaa}_{r}(-l),\\
\wt{\calM}^{\skaa}_{r}\to \wt{\calM}^{\skaa}_{r+l},\\
\wt{\calM}^{\scokaa}_{r+l}\to \wt{\calM}^{\scokaa}_{r},\tunagi\\
\wt{\calM}^{\scokaa}_{r}\to \wt{\calM}^{\scokaa}_{r+l}(-l).
\end{array}
\right.
\end{align}
Take a sufficiently large $l_{0}\in\ZZ_{\geq 1}$ so that
\[M\subset \kaa(N^{l_{0}}\colon M\to M(-l_{0})).\]
Then, by the direct computation (in the category of $D$-modules), we obtain the following lemma.

\begin{lem}\label{baloon}
For $r,l\in \ZZ_{\geq 1}$, 
if $l\geq l_{0}$,
the morphisms
\begin{align*}
\wt{\calM}^{\skaa}_{r+l}&\to \wt{\calM}^{\skaa}_{r}(-l),\tunagi\\
\wt{\calM}^{\scokaa}_{r}&\to \wt{\calM}^{\scokaa}_{r+l}(-l)
\end{align*}
are zero.
\end{lem}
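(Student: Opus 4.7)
The plan is to reduce to a calculation on the underlying $D$-modules, using that a morphism of mixed Hodge modules is zero as soon as the induced morphism of underlying $D$-modules is, and that the Tate twist $(-l)$ does not affect the underlying $D$-module. Identifying $L_{r}=L_{r}^{0}\otimes_{\CC}\CC[t^{\pm}]$ and fixing the basis $e_{1},\dots,e_{r+l}$ of $L_{r+l}^{0}\simeq V^{\nilp,\CC}_{r+l}$ with $2\pi\sqrt{-1}\,t\partial_{t}e_{i}=e_{i-1}$ (Lemma~\ref{luckyland} and Subsection~\ref{yoruno}), the operator $N\boxtimes 1 - 1\boxtimes N$ on $M\boxtimes L_{r}$ becomes $N\otimes 1 - 1\otimes t\partial_{t}$, acting $\CC[t^{\pm}]$-linearly, so both kernel and cokernel are computed on $M\otimes_{\CC}L_{r}^{0}$ and then tensored with $\CC[t^{\pm}]$. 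Writing $\xi=\sum_{i=1}^{r}m_{i}\otimes e_{i}$, one checks directly that $\xi$ lies in the kernel iff $m_{i+1}=2\pi\sqrt{-1}\,Nm_{i}$ for $i<r$ and $Nm_{r}=0$ (so kernel elements are parametrised by $m_{1}\in M$ with $N^{r}m_{1}=0$), while $\xi$ lies in the image iff the recursion $n_{j+1}=2\pi\sqrt{-1}(Nn_{j}-m_{j})$ for $j<r$ together with $Nn_{r}=m_{r}$ admits a solution (with $n_{r+1}:=0$).

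For the kernel statement, I would note that $\wt{\calM}^{\skaa}_{r+l}\to \wt{\calM}^{\skaa}_{r}(-l)$ is induced on underlying $D$-modules by the quotient $V^{\nilp,\CC}_{r+l}\twoheadrightarrow V^{\nilp,\CC}_{r}$, $e_{i}\mapsto 0$ for $i\leq l$ and $e_{l+j}\mapsto e'_{j}$, which is the linear map underlying the first arrow in (vii) of Proposition~\ref{hoikoro}; the outer Tate twists $(-(r+l-1))$ on both sides then match. A kernel element coming from $m_{1}\in M$ with $N^{r+l}m_{1}=0$ is therefore sent to $\sum_{j=1}^{r}(2\pi\sqrt{-1})^{l+j-1}N^{l+j-1}m_{1}\otimes e'_{j}$, and for $l\geq l_{0}$ every exponent $l+j-1\geq l_{0}$, so every term vanishes by hypothesis on $l_{0}$.

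For the cokernel statement, $\wt{\calM}^{\scokaa}_{r}\to \wt{\calM}^{\scokaa}_{r+l}(-l)$ is induced by the inclusion $V^{\nilp,\CC}_{r}\hookrightarrow V^{\nilp,\CC}_{r+l}$, $e'_{j}\mapsto e_{j}$, coming from the second arrow of Proposition~\ref{hoikoro} (vii). Given a class represented by $\sum_{k=1}^{r}m_{k}\otimes e_{k}\in M\otimes V^{\nilp,\CC}_{r+l}$ (with $m_{r+1}=\dots=m_{r+l}=0$), I would solve the image recursion inside $V^{\nilp,\CC}_{r+l}$ with initial value $n_{1}=0$; iteration yields $n_{r+1}=-\sum_{k=1}^{r}(2\pi\sqrt{-1})^{r-k+1}N^{r-k}m_{k}$ and then $n_{r+1+s}=(2\pi\sqrt{-1})^{s}N^{s}n_{r+1}$ for $0\leq s\leq l-1$. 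The remaining constraint $Nn_{r+l}=0$ reduces to $N^{l}n_{r+1}=0$, and each summand of $N^{l}n_{r+1}$ carries an exponent $N^{l+r-k}m_{k}$ with $l+r-k\geq l\geq l_{0}$, hence vanishes.

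The only real obstacle is the bookkeeping: matching the Tate twists $(-(r-1))$ and $(-(r+l-1))$ to confirm that the morphisms in (\ref{evans}) really are induced by the elementary maps of Proposition~\ref{hoikoro} (vii) on underlying $D$-modules, and keeping the index shift between $V^{\nilp,\CC}_{r}$ and $V^{\nilp,\CC}_{r+l}$ consistent. Once those identifications are in place, the vanishing is a mechanical computation driven by the single input $N^{l_{0}}M=0$.
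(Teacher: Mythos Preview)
Your proposal is correct and is precisely the ``direct computation (in the category of $D$-modules)'' that the paper alludes to but does not spell out. The reduction to underlying $D$-modules, the explicit parametrisation of kernel and cokernel via the Jordan basis of $L_{r}^{0}\simeq V_{r}^{\nilp,\CC}$, and the identification of the two morphisms with the quotient $e_{i}\mapsto e'_{i-l}$ and the inclusion $e'_{j}\mapsto e_{j}$ from Proposition~\ref{hoikoro}(vii) all match the intended argument; the only computation the paper leaves to the reader you have carried out in full.
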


\begin{cor}\label{gift}
\begin{enumerate}
\item[(i)]
If $r\in \ZZ_{\geq 1}$ is sufficiently large,
$\wt{\calM}^{\skaa}_{r}$ and $\wt{\calM}^{\scokaa}_{r}$ do not depend on the choice of $r$, i.e.
for sufficiently large $r\in \ZZ_{\geq 1}$ we have isomorphisms:
\begin{align*}
\wt{\calM}^{\skaa}_{r}&\simto \wt{\calM}^{\skaa}_{r+1},\tunagi\\
\wt{\calM}^{\scokaa}_{r+1}&\simto \wt{\calM}^{\scokaa}_{r}.
\end{align*}

\item[(ii)] There is a natural morphism 
\[\wt{M}^{\skaa}_{r}\to \wt{M}^{\scokaa}_{r},\]
which commute with $T_{s}$ and if $r$ is large enough, it is isomorphic.
\end{enumerate}
\end{cor}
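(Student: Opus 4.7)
The plan is to prove both statements by an explicit computation at the level of underlying $D$-modules, and then transfer to mixed Hodge modules using the fact that the forgetful functor from $\mathrm{MHM}$ to regular holonomic $D$-modules is exact and faithful (so it reflects isomorphisms, since kernels and cokernels in $\mathrm{MHM}$ are computed at the underlying $D$-module level).

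First I would compute $\wt{M}^{\skaa}_{r}$ and $\wt{M}^{\scokaa}_{r}$ explicitly. Using Lemma~\ref{luckyland} to write $L_{r}=L_{r}^{0}\otimes \CC[t^{\pm}]$ with the identification $\sigma\colon V_{r}^{\nilp,\CC}\simto L_{r}^{0}$ intertwining $N$ and $t\partial_{t}$, the underlying $D$-module of $\calM\boxtimes \calL_{r}^{H}[1]$ is $M\otimes_{\CC}V_{r}^{\nilp,\CC}\otimes_{\CC}\CC[t^{\pm}]$, and the endomorphism $N\boxtimes 1-1\boxtimes N$ acts as $N_{M}\otimes 1\otimes 1-1\otimes N_{V_{r}^{\nilp}}\otimes 1$, which is readily analyzed in the $e_{i}$-basis. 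A direct linear-algebra computation should yield natural identifications
\[\wt{M}^{\skaa}_{r}\simeq \ker(N^{r}\colon M\to M)\otimes \CC[t^{\pm}],\qquad \wt{M}^{\scokaa}_{r}\simeq (M/N^{r}M)\otimes \CC[t^{\pm}],\]
under which the transition morphisms of (\ref{evans}) correspond to the canonical inclusions $\ker(N^{r})\hookrightarrow \ker(N^{r+l})$ and canonical projections $M/N^{r+l}M\twoheadrightarrow M/N^{r}M$, respectively. Since $N^{l_{0}}=0$ on $M$ by the choice of $l_{0}$, for $r\geq l_{0}$ both $\ker(N^{r})$ and $M/N^{r}M$ equal $M$, and all these maps reduce to the identity on $M$.

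For (i), this shows that for $r\geq l_{0}$ the transition maps are isomorphisms on the underlying $D$-modules, hence also in $\mathrm{MHM}^{p}_{\mathrm{mon}}(X\times \CS)$. For (ii), the natural morphism $\wt{M}^{\skaa}_{r}\to \wt{M}^{\scokaa}_{r}$ is defined as the composition of the canonical injection $\wt{M}^{\skaa}_{r}\hookrightarrow M\otimes L_{r}$ with the canonical surjection $M\otimes L_{r}\twoheadrightarrow \wt{M}^{\scokaa}_{r}$; commutation with $T_{s}$ is automatic from functoriality. Under the above identifications this map corresponds to the canonical $\ker(N^{r})\hookrightarrow M\twoheadrightarrow M/N^{r}M$, which for $r\geq l_{0}$ is the identity on $M$ and hence an isomorphism. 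The main obstacle will be the linear-algebra computation identifying the kernel and cokernel of $N\boxtimes 1-1\boxtimes N$, and in particular checking that the morphisms of (\ref{evans}) really do correspond to the natural inclusions and projections on $\ker(N^{r})$ and $M/N^{r}M$; the Tate twists in the definitions of $\wt{\calM}^{\skaa}_{r}$ and $\wt{\calM}^{\scokaa}_{r}$, and in Proposition~\ref{hoikoro}(vii), make this bookkeeping delicate.
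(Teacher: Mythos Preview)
Your approach to part~(i) is correct and genuinely different from the paper's: you compute directly that $\wt{M}^{\skaa}_{r}\simeq \ker(N^{r})\otimes\CC[t^{\pm}]$ and $\wt{M}^{\scokaa}_{r}\simeq (M/N^{r}M)\otimes\CC[t^{\pm}]$, then observe these stabilize once $r\geq l_{0}$. The paper instead builds the short exact sequence $0\to\calL^{H}_{r}\to\calL^{H}_{r+l}(-l)\to\calL^{H}_{l}(-l)\to 0$, applies the snake lemma in $\mathrm{MHM}$ to obtain the six–term sequence~(\ref{sazan}), and then invokes Lemma~\ref{baloon}. Your computation is more concrete and explains the mechanism, while the paper's argument stays entirely inside the abelian category $\mathrm{MHM}$ and never needs to name the isomorphism with $\ker(N^{r})$.

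Part~(ii), however, has a real gap. Your proposed natural morphism is the composite $\ker(\Phi)\hookrightarrow M\otimes L_{r}\twoheadrightarrow\mathrm{coker}(\Phi)$ where $\Phi=N\boxtimes 1-1\boxtimes N$. But $\Phi$ is nilpotent (it is a difference of commuting nilpotents), so for $r\geq 2$ one has $\ker(\Phi)\cap\mathrm{Im}(\Phi)\neq 0$, and this composite is \emph{never} injective. Concretely, under the identifications you propose, the composite sends $m_{1}\in\ker(N^{r})$ to (a scalar multiple of) $rN^{r-1}m_{1}\in M/N^{r}M$, not to $[m_{1}]$; for $r>l_{0}$ this is the zero map, not an isomorphism. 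There is also a Tate–twist mismatch: at the level of mixed Hodge modules the inclusion lands in $\calM\boxtimes\calL^{H}_{r}[1]$, which gives a morphism $\wt{\calM}^{\skaa}_{r}(r-1)\to\wt{\calM}^{\scokaa}_{r}$ rather than $\wt{\calM}^{\skaa}_{r}\to\wt{\calM}^{\scokaa}_{r}$.

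The paper avoids this by defining the morphism of~(ii) as the \emph{connecting homomorphism} in the snake sequence~(\ref{sazan}) for $l=r$; this is a genuine morphism $\wt{\calM}^{\skaa}_{r}\to\wt{\calM}^{\scokaa}_{r}$ in $\mathrm{MHM}$, and the exactness of~(\ref{sazan}) together with part~(i) forces it to be an isomorphism for $r\geq l_{0}$.
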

\begin{proof}
For $r,l\in \ZZ_{\geq 1}$,
we consider a diagram
\[
\xymatrix{
0\ar[r]& \calM\boxtimes\calL^{H}_{r}[1](1)\ar[d]
\ar[r]& \calM\boxtimes\calL^{H}_{r+l}[1](1-l)\ar[d]
\ar[r]&  \calM\boxtimes\calL^{H}_{l}[1](1-l)\ar[d]
\ar[r]& 0\\
0\ar[r]& \calM\boxtimes\calL^{H}_{r}[1]
\ar[r]& \calM\boxtimes\calL^{H}_{r+l}[1](-l)
\ar[r]&  \calM\boxtimes\calL^{H}_{l}[1](-l)
\ar[r]& 0,\\
}
\]
where the vertical arrows are $N\boxtimes 1-1\boxtimes N$.
This induces an exact sequence
\begin{align}\notag
0\to \wt{\calM}^{\skaa}_{r}(r)\to\wt{\calM}^{\skaa}_{r+l}(r)\to &\wt{\calM}^{\skaa}_{l}
\\\to \wt{\calM}^{\scokaa}_{r}&\to \wt{\calM}^{\scokaa}_{r+l}(-l)\to \wt{\calM}^{\scokaa}_{l}(-l)\to 0. \label{sazan}
\end{align}
By Lemma~\ref{baloon}, this sequence implies that 
if $r\geq l_{0}$ the morphisms $\wt{\calM}^{\skaa}_{r}\to\wt{\calM}^{\skaa}_{r+1}$ and $\wt{\calM}^{\skaa}_{r+1}\to\wt{\calM}^{\skaa}_{r}$ are isomorphic.

Moreover, if $l=r\geq l_{0}$,
the first and last morphism in (\ref{sazan}) are isomorphic by the first assertion of this Lemma.
Therefore, the middle morphism $\wt{\calM}^{\skaa}_{r}\to \wt{\calM}^{\scokaa}_{r}$ is an isomorphism.

\end{proof}

Let $q\colon X\times \CS\to \CS$ be the second projection.
Consider the complex of mixed Hodge modules $q^{*}(\calS^{H}_{m,1})\in \mathrm{D}^{\mathrm{b}}{\mathrm{MHM}^{p}(X\times \CS)}$, whose underlying constructible sheaf is a local system $q^{-1}\calS^{\QQ}_{m,1}$.
Note that this is the shifted object by $\dim X+1$ of the mixed Hodge module associated to the pull back of the variation of mixed Hodge structure $(S_{m,1},F_{\bullet}S_{m,1},\calS_{m,1}^{\QQ})$ by $q$.
Moreover, remark that 
by the definition the non-trivial cohomologies of 
$\wt{\calM}^{\skaa}_{r}\otimes q^{*}\calS^{H}_{m,1}$ and 
$\wt{\calM}^{\scokaa}_{r}\otimes q^{*}\calS^{H}_{m,1}(\in \mathrm{D}^{\mathrm{b}}{\mathrm{MHM}^{p}(X\times \CS)})$ are only $0$-th ones, where $\otimes$ means the tensor product in the derived category of mixed Hodge modules (i.e. the pullback by the diagonal embedding $X\times \CS\to (X\times \CS)\times (X\times \CS)$ of the exterior product).
Therefore, we can regard $\wt{\calM}^{\skaa}_{r}\otimes q^{*}\calS^{H}_{m,1}$ and $\wt{\calM}^{\scokaa}_{r}\otimes q^{*}\calS^{H}_{m,1}$ as mixed Hodge modules on $X\times \CS$.

For $m\in \ZZ_{\geq 1}$,
we define mixed Hodge modules on $X\times \CS$ by
\begin{align*}
\wt{\calM}^{\skaa,\skaa}_{r,m}&:=\kaa(T_{s}\otimes 1-1\otimes T\colon 
\wt{\calM}^{\skaa}_{r}\otimes q^{*}\calS^{H}_{m,1}
\to 
\wt{\calM}^{\skaa}_{r}\otimes q^{*}\calS^{H}_{m,1}),\\
\wt{\calM}^{\skaa,\scokaa}_{r,m}&:=\cokaa(T_{s}\otimes 1-1\otimes T\colon 
\wt{\calM}^{\skaa}_{r}\otimes q^{*}\calS^{H}_{m,1}
\to 
\wt{\calM}^{\skaa}_{r}\otimes q^{*}\calS^{H}_{m,1}),\\
\wt{\calM}^{\scokaa,\skaa}_{r,m}&:=\kaa(T_{s}\otimes 1-1\otimes T\colon 
\wt{\calM}^{\scokaa}_{r}\otimes q^{*}\calS^{H}_{m,1}
\to 
\wt{\calM}^{\scokaa}_{r}\otimes q^{*}\calS^{H}_{m,1}),\tunagi\\
\wt{\calM}^{\scokaa,\scokaa}_{r,m}&:=\cokaa(T_{s}\otimes 1-1\otimes T\colon 
\wt{\calM}^{\scokaa}_{r}\otimes q^{*}\calS^{H}_{m,1}
\to 
\wt{\calM}^{\scokaa}_{r}\otimes q^{*}\calS^{H}_{m,1}).
\end{align*}

\begin{lem}\label{mirai}
\begin{enumerate}
\item[(i)]
$\wt{\calM}^{\skaa,\skaa}_{r,m}$, $\wt{\calM}^{\skaa,\scokaa}_{r,m}$, 
$\wt{\calM}^{\scokaa,\skaa}_{r,m}$ and $\wt{\calM}^{\scokaa,\scokaa}_{r,m}$
do not depend on the choice of $m$.
More precisely, for $a\in \ZZ_{\geq 1}$ and $m':=am$,
there is a natural isomorphism
\begin{align*}
\wt{\calM}^{\skaa,\skaa}_{r,m}&\simto \wt{\calM}^{\skaa,\skaa}_{r,m'},\\
\wt{\calM}^{\skaa,\scokaa}_{r,m}&\simto \wt{\calM}^{\skaa,\scokaa}_{r,m'},\\
\wt{\calM}^{\scokaa,\skaa}_{r,m}&\simto \wt{\calM}^{\scokaa,\skaa}_{r,m'},\tunagi\\
\wt{\calM}^{\scokaa,\scokaa}_{r,m}&\simto \wt{\calM}^{\scokaa,\scokaa}_{r,m'}.
\end{align*}

\item[(ii)]
There is a commutative diagram:
\begin{align}\label{irodori}
\xymatrix{
\wt{\calM}^{\skaa,\skaa}_{r,m}\ar[r]\ar[d]&\wt{\calM}^{\skaa,\scokaa}_{r,m}\ar[d]\\
\wt{\calM}^{\scokaa,\skaa}_{r,m}\ar[r]&\wt{\calM}^{\scokaa,\scokaa}_{r,m}.
}
\end{align}
If $r$ is large enough, all the morphisms are isomorphism.
\end{enumerate}
\end{lem}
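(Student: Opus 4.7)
For (i), I would use the natural morphism $\calS^{H}_{m,1}\to \calS^{H}_{am,1}$ from Proposition~\ref{hoikoro}(vi), which commutes with $T$. Pulling back along $q$ and tensoring with $\wt{\calM}^{\skaa}_{r}$ (resp.~$\wt{\calM}^{\scokaa}_{r}$) yields a morphism in $\mathrm{D}^{\mathrm{b}}\mathrm{MHM}^{p}(X\times \CS)$ compatible with $T_{s}\otimes 1-1\otimes T$ on source and target; passing to kernels and cokernels produces the four claimed natural morphisms. To verify that they are isomorphisms, I would invoke the eigenvalue decomposition~(\ref{skeld}) to write $q^{*}\calS^{H}_{m,1}\simeq \bigoplus_{\lambda^{m}=1}q^{*}\calS^{H}_{1,\lambda}$, with $T$ acting by the scalar $\lambda$ on the $\lambda$-summand, and combine it with the decomposition of $\wt{\calM}^{\skaa}_{r}$ into $T_{s}$-eigenspaces inherited from~($\star$-ii). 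On each summand of the resulting tensor product $T_{s}\otimes 1 - 1\otimes T$ becomes $T_{s}-\lambda$, so its kernel and cokernel reduce to sums indexed by those $\alpha\in (-1,0]\cap\QQ$ with $M_{\alpha}\neq 0$ and $e(\alpha)^{m}=1$.

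Because $\calM$ is coherent, the set of $\alpha$ with $M_{\alpha}\neq 0$ is finite, so for $m$ sufficiently divisible the whole spectrum of $T_{s}$ is contained in the $m$-th roots of unity, and \emph{a fortiori} in the $am$-th roots of unity. The natural map of Proposition~\ref{hoikoro}(vi) restricts to the identity on each $q^{*}\calS^{H}_{1,\lambda}$ with $\lambda^{m}=1$, so matching summands are identified term by term and (i) follows.

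For (ii), the commutative square~(\ref{irodori}) is produced by tensoring the canonical morphism $\wt{\calM}^{\skaa}_{r}\to \wt{\calM}^{\scokaa}_{r}$ of Corollary~\ref{gift}(ii) with $q^{*}\calS^{H}_{m,1}$ and applying the universal properties of kernels and cokernels of $T_{s}\otimes 1-1\otimes T$; the commutativity is immediate from the functoriality of these operations. For the vertical arrows, Corollary~\ref{gift}(ii) makes $\wt{\calM}^{\skaa}_{r}\to \wt{\calM}^{\scokaa}_{r}$ an isomorphism when $r$ is large, and both sides of~(\ref{irodori}) are obtained from it by a kernel/cokernel functor, so both verticals become isomorphisms. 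For the horizontal arrows, $T_{s}$ is semisimple on $\wt{\calM}^{\skaa}_{r}$ and $\wt{\calM}^{\scokaa}_{r}$ (its eigenspace decomposition on $\calM$ propagates through the construction since $T_{s}$ commutes with $N$ by~($\star$-i)), and $T$ is semisimple on $q^{*}\calS^{H}_{m,1}$, so $T_{s}\otimes 1 - 1\otimes T$ is semisimple on the tensor product; for a semisimple operator the natural map $\ker\to \mathrm{coker}$ is an isomorphism, which gives the top and bottom horizontals.

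The main delicate point is checking that the $T_{s}$-eigenspace decomposition of $\wt{\calM}^{\skaa}_{r}$ (and $\wt{\calM}^{\scokaa}_{r}$), transparent on the underlying $D$-modules and perverse sheaves thanks to~($\star$-ii) and its equivalent form~($\star$-ii'), actually exists inside $\mathrm{MHM}^{p}(X\times \CS)$. This follows because $T_{s}$ is a morphism of mixed Hodge modules and the eigenspace projectors are polynomials in $T_{s}$, so the strictness of morphisms in $\mathrm{MHM}^{p}$ guarantees that the $T_{s}$-graded summands, together with the kernels and cokernels of $T_{s}\otimes 1-1\otimes T$, are themselves mixed Hodge modules; no genuinely new ingredient is needed beyond the preceding material.
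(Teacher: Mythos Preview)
Your argument is correct and follows essentially the same strategy as the paper. Both you and the paper construct the maps in (i) from Proposition~\ref{hoikoro}(vi) and the maps in (ii) from Corollary~\ref{gift}(ii) together with the composite $\ker\hookrightarrow(\,\cdot\,)\twoheadrightarrow\mathrm{coker}$; the difference is only in how the isomorphism is verified. The paper simply computes the induced morphism on underlying $\CC$-perverse sheaves, observes that it is the identity on $\bigoplus_{\lambda^{m}=1}K_{\CC,\lambda}\boxtimes(\calL^{\CC}_{r}\otimes\calS^{\CC}_{1,\lambda})[1]$, and uses that a morphism of mixed Hodge modules is an isomorphism iff it is so on perverse sheaves. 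You instead argue via the eigenspace decomposition directly inside $\mathrm{MHM}^{p}$, noting that the relevant projectors are polynomials in $T_{s}$ and $T$ and hence MHM morphisms; this is equivalent, just slightly more hands-on. Your remark that (i) requires $m$ divisible enough so that every eigenvalue of $T_{s}$ is an $m$-th root of unity is well taken---the paper's proof tacitly assumes this (and indeed only uses the lemma for such $m$ in the subsequent corollary).
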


\begin{proof}
The natural morphism defined in (vi) of Proposition~\ref{hoikoro}:
$\calS^{H}_{m,1}\to \calS^{H}_{m',1}$ induces a morphism $\wt{\calM}^{\skaa,\skaa}_{r,m}\simto \wt{\calM}^{\skaa,\skaa}_{r,m'}$.
Its underlying morphism between $\CC$-perverse sheaves is
the identity 
\begin{align*}
\bigoplus_{\lambda^m=1}K^{\lambda}_{\CC}\boxtimes (\calL^{\CC}_{r}\otimes \calS_{1,\lambda}^{\CC})[1]
\to 
\bigoplus_{\lambda^m=1}K^{\lambda}_{\CC}\boxtimes (\calL^{\CC}_{r}\otimes \calS_{1,\lambda}^{\CC})[1].
\end{align*}
This implies the assertion (i) for $\wt{\calM}^{\skaa,\skaa}_{r,m}$.
In the same way, we can show (i) for $\wt{\calM}^{\skaa,\scokaa}_{r,m}$, $\wt{\calM}^{\scokaa,\skaa}_{r,m}$ and $\wt{\calM}^{\scokaa,\scokaa}_{r,m}$.

By (ii) of Corollary~\ref{gift}, we have natural morphisms
\begin{align*}
\wt{\calM}^{\skaa,\skaa}_{r,m}&\to \wt{\calM}^{\scokaa,\skaa}_{r,m},\tunagi\\
\wt{\calM}^{\skaa,\scokaa}_{r,m}&\to \wt{\calM}^{\scokaa,\scokaa}_{r,m},
\end{align*}
(the vertical arrows in (\ref{irodori})) which are isomorphic for sufficiently large $r$.
We define the morphism $\wt{\calM}^{\skaa,\skaa}_{r,m}\to \wt{\calM}^{\skaa,\scokaa}_{r,m}$ as the composition of 
the natural monomorphism $\wt{\calM}^{\skaa,\skaa}_{r,m} \hookrightarrow \wt{\calM}^{\skaa}_{r}\otimes q^{*}\calS^{H}_{m,1}$
and the natural epimorphism $\wt{\calM}^{\skaa}_{r}\otimes q^{*}\calS^{H}_{m,1}\twoheadrightarrow \wt{\calM}^{\skaa,\scokaa}_{r,m}$.
Its underlying morphism between $\CC$-perverse sheaves is
the identity
\[
\bigoplus_{\lambda^m=1}K_{\CC,\lambda}\boxtimes (\calL^{\CC}_{r}\otimes \calS_{1,\lambda}^{\CC})[1]
\to 
\bigoplus_{\lambda^m=1}K_{\CC,\lambda}\boxtimes (\calL^{\CC}_{r}\otimes \calS_{1,\lambda}^{\CC})[1].
\]
Hence, $\wt{\calM}^{\skaa,\skaa}_{r,m}\to \wt{\calM}^{\skaa,\scokaa}_{r,m}$ is an isomorphism.
In the same way, we get
a natural morphism $\wt{\calM}^{\scokaa,\skaa}_{r,m}\to \wt{\calM}^{\scokaa,\scokaa}_{r,m}$ and we get the desired diagram (\ref{irodori}).
\end{proof}

Corollary~\ref{gift} and Lemma~\ref{mirai} imply the following.

\begin{cor}
The mixed Hodge modules $\wt{\calM}^{\skaa,\skaa}_{r,m}$, $\wt{\calM}^{\skaa,\scokaa}_{r,m}$, 
$\wt{\calM}^{\scokaa,\skaa}_{r,m}$ and $\wt{\calM}^{\scokaa,\scokaa}_{r,m}$
do not depend on the choice of sufficiently large $r$ and $m$
and all of them are naturally isomorphic to each other.
\end{cor}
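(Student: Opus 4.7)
The plan is to assemble this corollary as a direct bookkeeping consequence of Corollary~\ref{gift} and Lemma~\ref{mirai}, both of which already contain all the nontrivial content. No new technical ingredient is needed; I only have to combine the stability statements already proved, so I do not anticipate any serious obstacle.

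First, I would dispose of the $m$-independence. This is literally part (i) of Lemma~\ref{mirai}: for any $a\in \ZZ_{\geq 1}$, the natural morphism $\calS^{H}_{m,1}\to \calS^{H}_{am,1}$ of Proposition~\ref{hoikoro}(vi) induces isomorphisms between the four modules for $m$ and $m'=am$. In particular, any two sufficiently large choices of $m$ can be compared through a common multiple and yield canonically isomorphic objects.

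Next, I would establish $r$-independence. By Corollary~\ref{gift}(i), the canonical morphisms $\wt{\calM}^{\skaa}_{r}\simto \wt{\calM}^{\skaa}_{r+1}$ and $\wt{\calM}^{\scokaa}_{r+1}\simto \wt{\calM}^{\scokaa}_{r}$ are isomorphisms for sufficiently large $r$. Since $q^{*}\calS^{H}_{m,1}$ has cohomology concentrated in degree zero (Proposition~\ref{hoikoro}), tensoring preserves these isomorphisms, and subsequently applying $\kaa$ or $\cokaa$ of $T_{s}\otimes 1 - 1\otimes T$ does as well by functoriality. Hence the corresponding morphisms between $\wt{\calM}^{\bullet,\bullet}_{r,m}$ for consecutive large $r$ are isomorphisms, so each of the four modules is independent of large $r$.

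Finally, the mutual isomorphism of the four is exactly Lemma~\ref{mirai}(ii): for sufficiently large $r$, the natural commutative square between $\wt{\calM}^{\skaa,\skaa}_{r,m}$, $\wt{\calM}^{\skaa,\scokaa}_{r,m}$, $\wt{\calM}^{\scokaa,\skaa}_{r,m}$ and $\wt{\calM}^{\scokaa,\scokaa}_{r,m}$ consists entirely of isomorphisms, which yields the desired natural identifications. The only minor care point is to check that $q^{*}\calS^{H}_{m,1}$ interacts harmlessly with the kernel/cokernel constructions, but this is immediate from its smoothness on $\CS$ and has in fact already been used in the definition of the four modules.
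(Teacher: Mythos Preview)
Your proposal is correct and matches the paper's approach: the paper simply states that the corollary follows from Corollary~\ref{gift} and Lemma~\ref{mirai} without further proof, and you have spelled out exactly how those two results combine (Lemma~\ref{mirai}(i) for $m$-independence, Corollary~\ref{gift}(i) with functoriality for $r$-independence, and Lemma~\ref{mirai}(ii) for the mutual identifications). Your remark that two arbitrary large $m$ are compared through a common multiple is a useful clarification beyond what the paper makes explicit.
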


We define $\wt{\calM}:=\wt{\calM}^{\scokaa,\scokaa}_{r,m}$ for sufficiently large $r,m\in \ZZ_{\geq 1}$ and
set $\wt{M}:=\wt{M}^{\scokaa,\scokaa}_{r,m}$, $F_{\bullet}\wt{M}:=F_{\bullet}\wt{M}^{\scokaa,\scokaa}_{r,m}$, $\wt{K}:=\wt{K}^{\scokaa,\scokaa}_{r,m}$ and
$W_{\bullet}\wt{K}:=W_{\bullet}\wt{K}^{\scokaa,\scokaa}_{r,m}$.

\begin{lem}\label{naminoutaha}
We have
\[\psi_{t}(\wt{\calM})\simeq \calM.\]
Moreover, 
under this isomorphism,
the semisimple (resp. $\frac{-1}{2\pi\sqrt{-1}}$ times the logarithm of the unipotent) part of the monodromy automorphism
is $T_{s}$ (resp. $N$).
\end{lem}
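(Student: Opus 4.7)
The plan is to apply the nearby cycle functor $\psi_{t}$ along $t=0$ to each step of the construction of $\wt{\calM}$ and to identify the result with $\calM$ by direct linear algebra. The main inputs are the exactness of $\psi_{t}$ on mixed Hodge modules, a K\"unneth-type identification $\psi_{t}(\calM\boxtimes\calL_{r}^{H}[1])\simeq\calM\otimes V_{r}^{\nilp,H}$, and the computations $\psi_{t}(\calL_{r}^{H}[1])=V_{r}^{\nilp,H}$ and $\psi_{t}(\calS_{m,1}^{H}\otimes\calL_{r}^{H}[1])=V_{m,1}^{\semi,H}\otimes V_{r}^{\nilp,H}$ provided by Proposition~\ref{hoikoro}(v). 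Commuting $\psi_{t}$ past kernels, cokernels, Tate twists and tensor products yields, for example,
\[\psi_{t}(\wt{\calM}^{\skaa}_{r})\simeq\kaa\bigl(N\otimes 1-1\otimes N\colon\calM\otimes V_{r}^{\nilp,H}\to(\calM\otimes V_{r}^{\nilp,H})(-1)\bigr)(-(r-1)),\]
with analogous formulas for the other three variants $\wt{\calM}^{\ast,\ast}_{r,m}$.

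For the $N$-kernel, I choose $r\geq l_{0}$ so that $(2\pi\sqrt{-1}N)^{r}=0$ on $\calM$. Using the basis $e_{1},\dots,e_{r}$ of $V_{r}^{\nilp}$ with $2\pi\sqrt{-1}Ne_{i}=e_{i-1}$, the assignment
\[m\longmapsto\sum_{i=1}^{r}(2\pi\sqrt{-1}N)^{i-1}m\otimes e_{i}\]
is a $D_{X}$-module isomorphism from $\calM$ onto $\kaa(N\otimes 1-1\otimes N)$. Since $e_{i}\in F_{r-i}V_{r}^{\nilp,\CC}\cap W_{-2(r-i)}V_{r}^{\nilp,\QQ}$ and $N$ shifts the Hodge (resp.\ weight) index by $+1$ (resp.\ $-2$), the image of $F_{p}\cap W_{k}$ of $\calM$ lies in $F_{p+r-1}\cap W_{k-2(r-1)}$ of the tensor product, and the outer Tate twist $(-(r-1))$ cancels these shifts to give $\calM\simeq\psi_{t}(\wt{\calM}^{\skaa}_{r})$ as mixed Hodge modules. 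A parallel computation yields $\calM\simeq\psi_{t}(\wt{\calM}^{\scokaa}_{r})$.

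For the $T_{s}$-kernel, I choose $m$ divisible enough that $e(\alpha)^{m}=1$ whenever $M_{\alpha}\neq 0$; the decomposition $V_{m,1}^{\semi,H}\simeq\bigoplus_{\lambda^{m}=1}V_{1,\lambda}^{\semi,H}$ combined with ($\star$-ii) identifies
\[\kaa\bigl(T_{s}\otimes 1-1\otimes T\colon\calM\otimes V_{m,1}^{\semi,H}\to\calM\otimes V_{m,1}^{\semi,H}\bigr)\simeq\bigoplus_{\alpha\in(-1,0]\cap\QQ}\calM_{\alpha}\otimes V_{1,e(\alpha)}^{\semi,H}\simeq\calM,\]
since each $V_{1,\lambda}^{\semi,H}$ is one-dimensional. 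Combining this with the previous step and using Lemma~\ref{mirai}(ii) to identify the four variants $\wt{\calM}^{\ast,\ast}_{r,m}$ gives $\psi_{t}(\wt{\calM})\simeq\calM$; the defining kernel conditions $T_{s}\otimes 1=1\otimes T$ and $N\otimes 1=1\otimes N$ then identify the semisimple and unipotent-logarithm parts of the monodromy with $T_{s}$ and $N$ on $\calM$. The principal technical obstacle is the bookkeeping in the $N$-step: verifying that the Tate twist $(-(r-1))$ built into $\wt{\calM}^{\skaa}_{r}$ exactly matches the cumulative Hodge and weight shifts, so that the map $\calM\simto\psi_{t}(\wt{\calM}^{\skaa}_{r})$ is strict and hence an isomorphism of mixed Hodge modules rather than merely of $D_{X}$-modules; this rests ultimately on strictness of morphisms in the category of mixed Hodge modules.
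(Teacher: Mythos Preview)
Your proof is correct and follows essentially the same strategy as the paper: apply $\psi_{t}$ to the defining construction, invoke the K\"unneth identification $\psi_{t}(\calM\boxtimes\calL_{r}^{H}[1])\simeq\calM\otimes V_{r}^{\nilp,H}$ (and its variant with $\calS_{m,1}^{H}$ from Proposition~\ref{hoikoro}(v)), and then compute the relevant (co)kernels. The paper works directly with the cokernel--cokernel variant $\wt{\calM}^{\scokaa,\scokaa}_{r,m}$ and simply asserts that the successive cokernels are $\calM\boxtimes V_{m,1}^{\semi,H}$ and then $\calM$; since these cokernels are taken in the abelian category of mixed Hodge modules, no separate filtration check is needed. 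You instead compute the kernel variant with the explicit formula $m\mapsto\sum_{i}(2\pi\sqrt{-1}N)^{i-1}m\otimes e_{i}$, verify the Hodge and weight compatibility by hand, and then appeal to Lemma~\ref{mirai}(ii) to pass to the other variants. Both routes are valid; yours is more explicit but carries the extra bookkeeping you flag at the end, while the paper's is terser because the filtration compatibility is automatic for (co)kernels computed inside $\mathrm{MHM}$.
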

\begin{proof}
By the definition of $\wt{\calM}$, we have the following diagram:
\[
\xymatrix{
\calM\boxtimes \psi_{t}(\calL^{H}_{r}\otimes \calS^{H}_{m,1}[1](1))\ar[d]^{v_{1}}\ar[r]^{h_{1}}&
\calM\boxtimes \psi_{t}(\calL^{H}_{r}\otimes \calS^{H}_{m,1}[1])\ar[d]^{v_{2}}\ar[r]^{h_{2}}&
\psi_{t}(\wt{\calM}^{\scokaa}_{r}\otimes q^{*} \calS^{H}_{m,1})\ar[d]^{v_{3}}\ar[r]&0\\
\calM\boxtimes \psi_{t}(\calL^{H}_{r}\otimes \calS^{H}_{m,1}[1](1))\ar[r]^{h_{3}}&
\calM\boxtimes \psi_{t}(\calL^{H}_{r}\otimes \calS^{H}_{m,1}[1])\ar[r]^{h_{4}}&
\psi_{t}(\wt{\calM}^{\scokaa}_{r}\otimes q^{*} \calS^{H}_{m,1})\ar[d]^{v_{4}}\ar[r]&0\\
&&\psi_{t}(\wt{\calM})\ar[d]&\\
&&0&
},
\]
where 
$h_{1}$, $h_{3}$ are $N\boxtimes 1-1\boxtimes \psi_{t}(N)$,
$v_{1}$, $v_{2}$ are $T\boxtimes 1-1\boxtimes T_{s}$,
$v_{3}$ is the morphism induced by $v_{2}$ and
$h_{2}$, $h_{4}$ and $v_{4}$ are the quotient morphism.
By (v) of Proposition~\ref{hoikoro},
$\psi_{t}(\calL^{H}_{r}\otimes \calS^{H}_{m,1}[1])=V_{m,1}^{\semi,H}\otimes V_{r}^{\nilp, H}=(V_{m,1}^{\semi,\CC}\otimes V_{r}^{\nilp,\CC}, F_{\bullet}(V_{m,1}^{\semi,\CC}\otimes V_{r}^{\nilp,\CC}), V_{m,1}^{\semi,\QQ}\otimes V_{r}^{\nilp,\QQ}, V_{m,1}^{\semi,\QQ}\otimes W_{\bullet}V_{r}^{\nilp,\QQ})$.
Therefore,
the cokernel of $h_{1}(=h_{3})$ is isomorphic to $M\boxtimes V_{m,1}^{\semi,H}$.
Therefore, $v_{3}$ is the morphism
\[T\boxtimes T_{s}\colon \calM\boxtimes V_{m,1}^{\semi,H}\to \calM\boxtimes V_{m,1}^{\semi,H}.\]
The cokernel of it is isomorphic to $\calM$.
This completes the proof.
\end{proof}

Let us describe the mixed Hodge module $\wt{\calM}$ more concretely.
Recall that $M$ and its Hodge filtration have the decomposition (\ref{takarajima}) and (\ref{cruise}). 
We define an $O$-module $\wt{M}'$ on $X\times \CS$ as
\[\wt{M}':=\bigoplus_{\alpha\in (-1,0]\cap\QQ}M_{\alpha}\otimes \CC[t^{\pm}].\]
We can endow it with $D$-module structure so that
$t\partial_{t}-\alpha$ acts as $N$ on $M_{\alpha}$.
Then, for any $\alpha\in (-1,0]$, we have
\[(\wt{M}')^{\alpha}=M_{\alpha},\]
where $(\wt{M}')^{\alpha}=\bigcup_{l\geq 0}\mathrm{Ker}((t\partial_{t}-\alpha)^l\colon \wt{M}'\to \wt{M}')$.
We define the Hodge (resp. weight) filtration of $\wt{M}'$ as
\begin{align*}
F_{\bullet}\wt{M}'&:=\bigoplus_{\alpha\in (-1,0]\cap\QQ}F_{\bullet}M_{\alpha}\otimes \CC[t^{\pm}]\\
(\mbox{resp.\ }
W_{\bullet}\wt{M}'&:=\bigoplus_{\alpha\in (-1,0]\cap\QQ}W_{\bullet-1}M_{\alpha}\otimes \CC[t^{\pm}]
).
\end{align*}
Then, we can easily see the following.
\begin{lem}\label{nicol}
We have a natural isomorphism (between $D$-modules)
\[\wt{M}\simto {\wt{M}}'.\]
Moreover, this morphism induces isomorphisms
\begin{align*}
F_{p}\wt{M}&\simto F_{p}\wt{M}',\tunagi\\
W_{k}\wt{M}&\simto W_{k}\wt{M}',
\end{align*}
for $p,k\in \ZZ$.
\end{lem}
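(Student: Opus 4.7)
The plan is to obtain $\wt{M}$ explicitly by carrying out the iterated cokernel construction concretely, using the explicit bases for $L_r$ and $S_{1,\lambda}$ furnished by Lemma~\ref{luckyland}. First I would use the $\CC$-Hodge module decompositions $M=\bigoplus_{\alpha\in(-1,0]\cap\QQ}M_{\alpha}$ (with the compatible Hodge/weight splittings coming from ($\star$-ii) and (\ref{cruise})) together with $\calS^{H}_{m,1}=\bigoplus_{\lambda^{m}=1}\calS^{H}_{1,\lambda}$. Choosing $m$ divisible by all denominators of the $\alpha$'s occurring in $M$, the operator $T_{s}\otimes 1-1\otimes T$ acts as the scalar $e(\alpha)-\lambda$ on the summand indexed by $(\alpha,\lambda)$, hence is an isomorphism except on the diagonal $\lambda=e(\alpha)$. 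This identifies $\wt{\calM}^{\scokaa,\scokaa}_{r,m}$ with $\bigoplus_{\alpha}\cokaa\!\bigl(N\boxtimes 1-1\boxtimes N:M_{\alpha}\boxtimes L_{r}\otimes q^{*}S_{1,e(\alpha)}\to M_{\alpha}\boxtimes L_{r}\otimes q^{*}S_{1,e(\alpha)}\bigr)$.

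Next I would transport everything through $\sigma$ to write $L_{r}\otimes q^{*}S_{1,e(\alpha)}\simeq V_{r}^{\nilp,\CC}\otimes V_{1,e(\alpha)}^{\semi,\CC}\otimes\CC[t^{\pm}]$, on which $N$ from $\calL^{H}_{r}$ is just $N_{V}\otimes 1\otimes 1$. The cokernel of $N_{M}\otimes 1-1\otimes N_{V}$ on $M_{\alpha}\otimes V_{r}^{\nilp,\CC}$ is computed directly from the Jordan structure $(2\pi\sqrt{-1})Ne_{i}=e_{i-1}$: it equals $M_{\alpha}/N_{M}^{r}M_{\alpha}$, generated by the class of $m\otimes e_{r}$, with the relation $m\otimes e_{i-1}\equiv 2\pi\sqrt{-1}\,N_{M}m\otimes e_{i}$. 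For $r\geq l_{0}$ we have $N_{M}^{r}=0$ on $M$, so the cokernel is just $M_{\alpha}$. Assembling and extending $\CC[t^{\pm}]$-linearly yields an $O$-module isomorphism $\Phi:\wt{M}\simto\wt{M}'$ sending the class of $m\otimes e_{r}\otimes v_{\alpha}\otimes t^{l}$ to $m\otimes t^{l}\in M_{\alpha}\otimes\CC[t^{\pm}]$.

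To verify the $D$-module structure on $\wt{M}$ agrees with that of $\wt{M}'$, I would use $t\partial_{t}\sigma(v)=\sigma(Nv)$ on $L_{r}^{0}$ and $t\partial_{t}\sigma(v_{\alpha})=\alpha\sigma(v_{\alpha})$ on $S_{1,e(\alpha)}^{\alpha}$ (Lemma~\ref{luckyland}) together with Leibniz for $t^{l}$ to get
\[
t\partial_{t}(m\otimes e_{r}\otimes v_{\alpha}\otimes t^{l})=(l+\alpha)\,m\otimes e_{r}\otimes v_{\alpha}\otimes t^{l}+(2\pi\sqrt{-1})^{-1}\,m\otimes e_{r-1}\otimes v_{\alpha}\otimes t^{l},
\]
and then apply the cokernel relation $m\otimes e_{r-1}\equiv 2\pi\sqrt{-1}\,N_{M}m\otimes e_{r}$ to reduce this to $(N_{M}+\alpha+l)(m\otimes e_{r}\otimes v_{\alpha}\otimes t^{l})$. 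This matches the rule $(t\partial_{t}-\alpha)=N$ on $M_{\alpha}$ used to define $\wt{M}'$. Naturality is clear from the construction.

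The main obstacle is the filtration compatibility. For the Hodge filtration I would use that $e_{r}\in F_{0}V_{r}^{\nilp,\CC}$ is the deepest basis vector (so $e_{r}\in F_{0}L_{r}$) and that $F_{0}q^{*}S_{m,1}=q^{*}S_{m,1}$, which makes $m\otimes e_{r}\otimes v_{\alpha}\otimes t^{l}$ a representative lying in $F_{p}$ of the ambient precisely when $m\in F_{p}M_{\alpha}$; strictness of the morphisms $N\boxtimes 1-1\boxtimes N$ and $T_{s}\otimes 1-1\otimes T$ of mixed Hodge modules guarantees that every class in $F_{p}\wt{M}$ admits such a representative, and the cokernel relation is compatible since $N_{M}:F_{p}M_{\alpha}\to F_{p+1}M_{\alpha}$ exactly matches the Hodge jump going from $e_{r-1}$ to $e_{r}$. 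For the weight filtration the same argument applies with the shift $W_{\bullet}L_{r}=\sigma(W_{\bullet-1}V_{r}^{\nilp,\CC})\otimes\CC[t^{\pm}]$ accounting precisely for the $-1$ shift in the definition $W_{\bullet}\wt{M}'=\bigoplus_{\alpha}W_{\bullet-1}M_{\alpha}\otimes\CC[t^{\pm}]$, since $e_{r}\in W_{0}V_{r}^{\nilp,\CC}\setminus W_{-1}$ contributes weight $1$ to $L_{r}$. The bookkeeping for both filtrations, particularly the use of strictness to push representatives of the ambient filtration to the cokernel, is where the argument requires the most care.
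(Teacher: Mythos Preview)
Your proof is correct and carries out precisely the explicit computation that the paper omits (the paper merely says ``we can easily see the following''). Your use of the bases from Lemma~\ref{luckyland}, the reduction to the diagonal summands via the scalar action of $T_{s}\otimes 1-1\otimes T$, the Jordan--cell computation of the cokernel of $N_{M}\otimes 1-1\otimes N_{V}$, and the verification of $(t\partial_{t}-\alpha)=N$ on the quotient are all exactly what is needed.

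Two small remarks. First, you implicitly interchange the order of the two cokernel constructions (and tensor with $q^{*}\calS^{H}_{m,1}$ at a different stage than the paper). This is harmless because the operators commute and tensoring with the locally free $q^{*}S_{m,1}$ is exact, but it is worth saying explicitly. Second, for the weight filtration you should note that the pure variation $\calS^{H}_{m,1}$ (hence each $S_{1,e(\alpha)}$) has weight $0$ as a VHS, so it contributes no weight shift; the entire $-1$ shift in $W_{\bullet}\wt{M}'=\bigoplus_{\alpha}W_{\bullet-1}M_{\alpha}\otimes\CC[t^{\pm}]$ comes, as you say, from $e_{r}$ having weight $1$ in $L_{r}$. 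Your bookkeeping is correct, but making that point explicit removes any ambiguity.
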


As a corollary, we get the following.

\begin{cor}\label{iwanna}
$\wt{\calM}$ is monodromic mixed Hodge module on $X\times \CS$.
\end{cor}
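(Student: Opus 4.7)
The plan is to reduce this immediately to Lemma~\ref{nicol}, which gives an explicit model for $\wt{M}$ as the $D$-module
\[\wt{M}'=\bigoplus_{\alpha\in (-1,0]\cap\QQ}M_{\alpha}\otimes \CC[t^{\pm}]\]
on $X\times \CS$, with $D$-module structure built so that $t\partial_{t}-\alpha$ acts on each $M_{\alpha}$ (viewed as $M_{\alpha}\otimes 1\subset \wt{M}'$) as the nilpotent operator $N$ coming from the input triple. Since $\wt{\calM}=\wt{\calM}^{\scokaa,\scokaa}_{r,m}$ is by construction obtained as a cokernel of a morphism between mixed Hodge modules on $X\times\CS$, it is already a mixed Hodge module. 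So the only thing remaining is to verify that the underlying $D$-module $\wt{M}$ is monodromic on $X\times\CS$ in the sense of Definition~\ref{mondef}.

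Concretely, I would argue as follows. Because the weight filtration of $\calM$ is finite, the morphism $N$ is nilpotent, so for each $\alpha\in (-1,0]\cap\QQ$ there is some $k_{\alpha}\geq 0$ with $N^{k_{\alpha}}=0$ on $M_{\alpha}$. Under the identification of Lemma~\ref{nicol} this becomes $(t\partial_{t}-\alpha)^{k_{\alpha}}=0$ on $M_{\alpha}\otimes 1$, so each section of $M_{\alpha}$ is annihilated by $(t\partial_{t}-\alpha)^{k_{\alpha}}$, a polynomial in $t\partial_{t}$ with rational root. Applying the invertible operator $t^{l}$ on $\wt{M}'$ (for $l\in \ZZ$) transports this into the statement that every section of $M_{\alpha}\otimes t^{l}$ is killed by $(t\partial_{t}-(\alpha+l))^{k_{\alpha}}$, again with rational root.

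Finally, an arbitrary section of $\wt{M}'$ (say of its algebraic pushforward to $X$) is a finite $\CC$-linear combination of sections from finitely many summands $M_{\alpha}\otimes t^{l}$, so it is annihilated by the corresponding finite product $\prod_{i}(t\partial_{t}-(\alpha_{i}+l_{i}))^{k_{\alpha_{i}}}\in \CC[s]$, whose roots all lie in $\QQ$. This shows $\wt{M}$ is $\QQ$-monodromic, which by our convention in Remark~\ref{qmon} is precisely the required "monodromic" condition. In particular, there is no serious obstacle in this step: all the genuine work has already been done in constructing $\wt{\calM}$ and proving the explicit identification in Lemma~\ref{nicol}; the present corollary is just reading off the monodromic property from that model.
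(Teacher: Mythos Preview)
Your proposal is correct and follows exactly the intended route: the paper derives Corollary~\ref{iwanna} directly from Lemma~\ref{nicol}, since the explicit model $\wt{M}'=\bigoplus_{\alpha\in(-1,0]\cap\QQ}M_{\alpha}\otimes\CC[t^{\pm}]$ visibly has $(\wt{M}')^{\alpha+l}=M_{\alpha}\otimes t^{l}$ annihilated by $(t\partial_{t}-\alpha-l)^{k_{\alpha}}$, while $\wt{\calM}$ is already a mixed Hodge module as a cokernel in $\mathrm{MHM}^{p}(X\times\CS)$. Your write-up simply makes explicit the verification that the paper leaves implicit.
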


Therefore, we can define the functor $G'\colon \mathscr{G}'(X)\to \mathrm{MHM}^{p}_{\mathrm{mon}}(X\times \CS)$ so that $G'(\calM)=\wt{\calM}$.
Lemma~\ref{naminoutaha} implies the following.

\begin{cor}\label{light}
$F'\circ G'$ and $1$ is naturally isomorphic.
\end{cor}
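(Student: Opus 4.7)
The plan is to upgrade the object-level identification of Lemma~\ref{naminoutaha} into a natural isomorphism of functors on $\calG'(X)$. Two pieces must be verified: functoriality of $G'$ on morphisms, and naturality in $\calM$ of the isomorphism $\psi_{t}(\wt{\calM})\simto \calM$ produced by Lemma~\ref{naminoutaha}.

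First I would spell out functoriality of $G'$. Given a morphism $\varphi\colon (\calM,T_{s},N)\to (\calM',T_{s}',N')$ in $\calG'(X)$, the exterior product $\varphi\boxtimes \mathrm{id}$ on $\calM\boxtimes \calL_{r}^{H}[1]$ commutes with $N\boxtimes 1 - 1\boxtimes N$ because $\varphi$ intertwines $N$ and $N'$; hence it descends to a morphism $\wt{\calM}_{r}^{\scokaa}\to \wt{\calM'}_{r}^{\scokaa}$, and similarly for the kernel version. Tensoring with $q^{*}\calS^{H}_{m,1}$ and using that $\varphi$ also intertwines $T_{s}$ and $T_{s}'$, we obtain a morphism $\wt{\calM}^{\scokaa,\scokaa}_{r,m}\to \wt{{\calM'}}^{\scokaa,\scokaa}_{r,m}$. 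By Corollary~\ref{gift} and Lemma~\ref{mirai}, the stabilization in $r$ and $m$ is canonical and compatible with any morphism (since the stabilization maps come from the fixed maps in Proposition~\ref{hoikoro}~(vi), (vii), which do not depend on $\calM$), so passing to sufficiently large $r,m$ yields a well-defined $G'(\varphi)\colon \wt{\calM}\to \wt{\calM'}$ and the assignment $\varphi\mapsto G'(\varphi)$ respects composition and identities.

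Next I would check naturality of the isomorphism $\psi_{t}(\wt{\calM})\simto \calM$. In the proof of Lemma~\ref{naminoutaha}, this isomorphism is obtained by taking the cokernel of $N\boxtimes 1 - 1\boxtimes \psi_{t}(N)$ on $\calM\boxtimes \psi_{t}(\calL^{H}_{r}\otimes \calS^{H}_{m,1}[1])$, which by Proposition~\ref{hoikoro}~(v) produces $\calM\boxtimes V^{\semi,H}_{m,1}$, followed by the cokernel of $T\boxtimes T_{s}-\mathrm{id}$, which produces $\calM$. Both constructions are exact functors in the $\calM$-variable (since $\boxtimes$ with a fixed (mixed) Hodge structure is exact), so the composite isomorphism is natural in $\calM$. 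By the second part of Lemma~\ref{naminoutaha}, this isomorphism intertwines $T_{s}$ with the semisimple part and $N$ with $\tfrac{-1}{2\pi\sqrt{-1}}$ times the log of the unipotent part of the monodromy of $\psi_{t}(\wt{\calM})$, so it is actually an isomorphism in $\calG'(X)$, not merely in $\mathrm{MHM}^{p}(X)$.

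The only real point to watch is the bookkeeping around the stabilization: a single pair $(r,m)$ must be chosen that works simultaneously for both source and target, and the inductive limit in $r,m$ built from the maps in~(\ref{evans}) and~(\ref{murata}) must be shown to commute with $G'(\varphi)$. This follows because those transition maps are induced from canonical morphisms on $\calL^{H}_{r}$ and $\calS^{H}_{m,1}$ only, so they form a natural transformation in the $\calM$-slot; no further computation is required beyond observing this compatibility. Combining the two pieces then gives the desired natural isomorphism $F'\circ G'\simeq 1_{\calG'(X)}$.
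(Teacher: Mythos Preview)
Your proposal is correct and follows the same route as the paper: the paper's proof is the single sentence ``Lemma~\ref{naminoutaha} implies the following,'' and you have simply unpacked the naturality and functoriality that this sentence leaves implicit. The only minor slip is the description of the second cokernel map in the proof of Lemma~\ref{naminoutaha} (it is induced by $T_{s}\otimes 1-1\otimes T$, not ``$T\boxtimes T_{s}-\mathrm{id}$''), but this does not affect your naturality argument.
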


Finally, we will see that $G'\circ F'$ and $1$ are naturally isomorphic and complete the proof of Lemma~\ref{daijyoubuda}.
Let $\calM=(M,F_{\bullet}M,K,W_{\bullet}K)$ be a graded polarizable mixed Hodge module on $X\times \CS$ (not $X$)
whose underlying perverse sheaf is a local system shifted by $d+1$,
i.e. there is a local system $\calP$ on $X\times \CS$ such that $K=\calP[d+1]$.
We consider a triple $(\psi_{t}\calM,T_{s},N)$,
where $T_{s}$ (resp. $N$) is the semisimple part ($\frac{-1}{2\pi\sqrt{-1}}$ times of the logarithm of the unipotent part) of the monodromy automorphism $T$ of the nearby cycle.
Then, the triple satisfies the conditions ($\star$-i) and ($\star$-ii) in the
first part of this section.
Therefore, we have a mixed Hodge module $\wt{\psi_{t}\calM}$ on $X\times \CS$.
Recall that $\pi\colon \CC\to \CS$ ($z\mapsto \exp(2\pi\sqrt{-1}z)$) is the universal covering of $\CS$.
Let $\pi_{1}\colon X\times \CC\to X$ be the first projection.
Then, $(\pi_{1})_{*}(1\times \pi)^{-1}\calP$ is a local system on $X$ and
we have
\[{}^p\psi_{t}K=(\pi_{1})_{*}(1\times \pi)^{-1}\calP[d].\]
Then, one can see the following two lemmas by the definition of $\wt{\calM}$.

\begin{lem}\label{gideon}
In this situation,
there is a natural isomorphism
\[\wt{{}^p\psi_{t}K}\simeq K.\]
Moreover, this induces a natural isomorphism
\[\wt{\psi_{t}\calM}\simeq \calM,\]
between mixed Hodge modules.
\end{lem}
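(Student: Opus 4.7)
The plan is to exploit Lemma~\ref{nicol}, which gives the explicit description
\[
\wt{\psi_t M} \;\simeq\; \bigoplus_{\alpha \in (-1,0]\cap\QQ} (\psi_t M)_\alpha \otimes_\CC \CC[t^\pm]
\]
with Hodge filtration $\bigoplus F_\bullet (\psi_t M)_\alpha \otimes \CC[t^\pm]$ and weight filtration $\bigoplus W_{\bullet-1}(\psi_t M)_\alpha \otimes \CC[t^\pm]$, and to match this formula against a parallel description of $\calM$ itself. Because $K=\calP[d+1]$ is a shifted local system, $M$ is locally free as an $\calO_{X\times\CS}$-module and hence monodromic. Corollary~\ref{mouikkai} (applied after extending $\calM$ across $t=0$, or directly via Proposition~\ref{bunkailem} on $X\times\CS$) then yields $M \simeq \bigoplus_{\alpha} M_\alpha \otimes \CC[t^\pm]$ as a $D$-module, with the natural identification $M_\alpha \simeq (\psi_t M)_\alpha$ given by Corollary~\ref{nevancor}.

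On the perverse-sheaf side, a local system on $X\times\CS$ corresponds via $(\pi_1)_*(1\times\pi)^{-1}$ to a local system on $X$ equipped with a commuting monodromy automorphism $T=T_s\cdot\exp(-2\pi\sqrt{-1}N)$. Decomposing $\calP$ by generalized $T_s$-eigenspaces and then by Jordan blocks of $N$ realizes $\calP$ as a direct sum of pieces of the form $\calQ_{\alpha,r}\boxtimes(\calS_{1,e(\alpha)}^\QQ \otimes \calL_r^\QQ)$; the construction of $\wt{{}^p\psi_t K}$ as an iterated cokernel involving $T_s\otimes 1-1\otimes T$ and $N\boxtimes 1-1\boxtimes N$ is engineered to reassemble exactly this decomposition, so a natural isomorphism $\wt{{}^p\psi_t K}\simeq K$ can be written down summand by summand.

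To promote this to mixed Hodge modules, apply Theorem~\ref{main1} to the monodromic structure on $M$: the Hodge filtration decomposes as $F_p M = \bigoplus_\alpha (F_p M \cap M_\alpha) \otimes \CC[t^\pm]$, and under $(\psi_t M)_\alpha \simeq M_\alpha$ this matches the Hodge filtration of $\wt{\psi_t M}$ given in Lemma~\ref{nicol} term by term. For the weight filtration, Lemma~\ref{arashi} identifies $W_k\psi_t M$ with $\psi_t W_{k+1}M$, so the shift $W_\bullet\mapsto W_{\bullet-1}$ appearing in Lemma~\ref{nicol} recovers $W_k M$ on the nose. Functoriality in $\calM$ is then automatic from the universal property of the defining cokernels.

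The main obstacle is the naturality of the perverse-sheaf identification compatibly with the $\QQ$-structure. The Jordan decomposition of a monodromy is canonical only over $\CC$, so one must verify that the decomposition of $\calP$ above descends to $\QQ$ and is intrinsic. This is handled by formulating the decomposition purely in terms of $T_s$ (a morphism of mixed Hodge modules by hypothesis, hence rational) and $N=-\tfrac{1}{2\pi\sqrt{-1}}\log T_u$ (rational because the logarithm of a unipotent $\QQ$-endomorphism is $\QQ$-linear), so that both $\wt{\psi_t\calM}$ and $\calM$ arise from the same universal cokernel recipe applied to isomorphic input data.
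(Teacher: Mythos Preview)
Your approach is correct and is essentially an explicit unpacking of what the paper leaves implicit: the paper gives no detailed proof here, merely stating that Lemmas~\ref{gideon} and~\ref{bolas} ``can be seen by the definition of $\wt{\calM}$''. Your strategy of matching the concrete description of $\wt{\psi_t M}$ from Lemma~\ref{nicol} against the decomposition of $M$ itself, and then checking the Hodge and weight filtrations via Theorem~\ref{main1} and Lemma~\ref{arashi}, is a valid way to fill this in.

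Two minor points. First, the citation of Corollary~\ref{mouikkai} is slightly off: that result is for \emph{pure} Hodge modules, whereas here $\calM$ is mixed. For the $D$-module decomposition you only need Propositions~\ref{bunkailem} and~\ref{isomprop}; for the Hodge filtration in the mixed smooth case the relevant input is Lemma~\ref{kabutomusi} (or Theorem~\ref{main1} after extension). Second, both Theorem~\ref{main1} and Lemma~\ref{arashi} are stated for $\calM$ on $X\times\CC$, not $X\times\CS$. You implicitly handle this by ``extending $\calM$ across $t=0$'', which is legitimate (any mixed Hodge module on $X\times\CS$ admits such an extension, as the paper itself uses in the proof of Lemma~\ref{sidaruta}), but it is worth saying explicitly that the restriction back to $X\times\CS$ recovers what you need. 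Your treatment of the $\QQ$-structure in the final paragraph is adequate: since $\wt{\psi_t\calM}$ is built from $\psi_t\calM$ via kernels and cokernels of morphisms of mixed Hodge modules ($T_s$, $N$, and the structure maps of $\calS_{m,1}^H$, $\calL_r^H$), the resulting isomorphism with $\calM$ is automatically defined over $\QQ$.
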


\begin{lem}\label{bolas}
Let $(\calM_{(-1,0]},T_{s},N)$ be a triple satisfying the conditions ($\star$-i), ($\star$-ii).
For a (local) regular function $g$ on $X$,
we have natural isomorphism
\begin{align*}
\psi_{g}(\wt{\calM_{(-1,0]}})&\simeq \wt{(\psi_{g}\calM_{(-1,0]})},\\
\phi_{g,1}(\wt{\calM_{(-1,0]}})&\simeq \wt{(\phi_{g,1}\calM_{(-1,0]})},\tunagi\\
\Xi_{g}(\wt{\calM_{(-1,0]}})&\simeq \wt{(\Xi_{g}\calM_{(-1,0]})},
\end{align*}
where we regard $g$ as a function on $X\times \CS$ in the left hand side and
$\Xi_{g}$ is Beilinson's maximal extension functor.
\end{lem}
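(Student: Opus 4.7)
Since $g$ is pulled back to $X\times\CS_t$ via the first projection $p\colon X\times \CS_t\to X$, the specialization functors $\psi_g$, $\phi_{g,1}$ and $\Xi_g$ act only on the $X$-factor. The plan is to show that each step of the construction of $\wt{(-)}$ in Subsection~\ref{chako} commutes with these functors, which then gives the desired isomorphisms.

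The first step is the following compatibility: for a mixed Hodge module $\calA$ on $X$ and a mixed Hodge module $\calB$ on $\CS_t$, one has natural isomorphisms $\psi_g(\calA\boxtimes\calB)\simeq (\psi_g\calA)\boxtimes\calB$, $\phi_{g,1}(\calA\boxtimes\calB)\simeq (\phi_{g,1}\calA)\boxtimes\calB$, and $\Xi_g(\calA\boxtimes\calB)\simeq (\Xi_g\calA)\boxtimes\calB$. At the level of underlying perverse sheaves this is standard, since specialization is local in the base and $g\circ p$ is constant along $\CS_t$. The enhancement to mixed Hodge modules follows from the observation that the Kashiwara-Malgrange $V$-filtration of $A\boxtimes B$ along $g\circ p$ is simply $V^{\bullet}_{g}A\boxtimes B$, and this description is compatible with the Hodge and weight filtrations. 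A similar compatibility holds for tensor product with $q^*\calS^{H}_{m,1}$, where $q\colon X\times\CS_t\to\CS_t$ is the second projection, since $q^*\calS^{H}_{m,1}\simeq \calO_X^{H}\boxtimes \calS^{H}_{m,1}$.

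With these in hand, recall that $\wt{\calM_{(-1,0]}}$ is built in three steps: form the external product $\calM_{(-1,0]}\boxtimes \calL^{H}_{r}[1]$; take the cokernel $\wt{\calM}^{\scokaa}_{r}$ of $N\boxtimes 1-1\boxtimes N$; tensor with $q^*\calS^{H}_{m,1}$ and take the cokernel of $T_{s}\otimes 1-1\otimes T$. Since $\psi_g$ and $\phi_{g,1}$ are exact on graded polarizable mixed Hodge modules, they commute with these cokernels. By the compatibility above, they commute with $\boxtimes\calL^{H}_{r}[1]$ and with tensoring by $q^*\calS^{H}_{m,1}$; and they carry the pair $(T_{s},N)$ on $\calM_{(-1,0]}$ to the corresponding pair on $\psi_g\calM_{(-1,0]}$ (respectively on $\phi_{g,1}\calM_{(-1,0]}$). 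Combining these observations yields the first two isomorphisms, and the $\Xi_g$ case then follows from the gluing exact sequence characterizing $\Xi_g$ in terms of $\psi_g$, $\phi_{g,1}$, $\mathrm{can}$ and $\mathrm{var}$, all of which are functorial.

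The main technical obstacle is verifying the key compatibility at the level of Hodge and weight filtrations, not merely for the underlying $D$-modules. A concrete way to handle the bookkeeping is via the explicit description in Lemma~\ref{nicol}: $\wt{M}\simeq \bigoplus_{\alpha\in (-1,0]\cap\QQ}M_{\alpha}\otimes\CC[t^{\pm}]$, with the Hodge and weight filtrations inherited termwise, and with $t\partial_{t}-\alpha$ acting as $N$ on the $\alpha$-summand. Since $\psi_g$ commutes with direct sums, with tensoring by the constant module $\CC[t^{\pm}]$, and respects the $N$-action on each summand $M_{\alpha}$, this description makes the isomorphism $\psi_g(\wt{\calM_{(-1,0]}})\simeq \wt{(\psi_g\calM_{(-1,0]})}$ transparent at the level of all the filtrations, and the same works for $\phi_{g,1}$; the $\Xi_g$ statement is then formal.
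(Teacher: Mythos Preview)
Your argument is correct and is precisely the unpacking of what the paper means by ``one can see [this] by the definition of $\wt{\calM}$''; the paper gives no further proof. One small imprecision: the sentence ``the $\Xi_g$ case then follows from the gluing exact sequence characterizing $\Xi_g$ in terms of $\psi_g$, $\phi_{g,1}$, $\mathrm{can}$ and $\mathrm{var}$'' is not quite right, since it is the original module, not $\Xi_g$, that is recovered from the gluing data $(\Xi_g,\phi_{g,1},\mathrm{can},\mathrm{var})$. The cleaner justification is the one you already gave: $\Xi_g$ itself is built (in Beilinson's construction) as a limit of nearby-cycle type functors twisted by the same $\calL^{H}_{r}$-objects, or equivalently sits in short exact sequences with $j_{!}j^{*}$, $j_{*}j^{*}$ and $\psi_{g,1}$, all of which commute with $\boxtimes\,\calB$ and with $\otimes\, q^{*}\calS^{H}_{m,1}$ and hence with the construction of $\wt{(-)}$; your alternative route via the explicit description of Lemma~\ref{nicol} also suffices.
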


\begin{proof}[Proof of Lemma~\ref{daijyoubuda}]
Let $\calM$ be a monodromic mixed Hodge module on $X\times \CS$ (not necessary smooth) and 
consider the tuple $(\psi_{t}\calM, T_{s},N)$.
To show the isomorphism $\wt{\psi_{t}\calM}\simeq \calM$,
we use induction.
Let $d_{0}(\geq 0)$ be the dimension of the support of $\psi_{t}\calM$. 
Assume that we have a natural isomorphism $\wt{\psi_{t}\calN}\simeq \calN$ for any monodromic mixed Hodge module on $X\times \CS$ of which the dimension of the support of the nearby cycle $\psi_{t}\calN$ is less than $d_{0}$.
Locally, there is a regular function $g$ on $X$ such that $\calM|_{X\setminus g^{-1}(0)\times \CS}$ is a (non-zero) smooth mixed Hodge module, i.e. the underlying perverse sheaf is (the zero extension of) a shifted local system.
In this case, $\calM$ is reconstructed from the gluing data: $\calM|_{X\setminus g^{-1}(0)\times \CS}$, $\phi_{g,1}(\calM)$, the can morphism $\psi_{g,1}(\calM)\to \phi_{g,1}(\calM)$ and the var morphism $\phi_{g,1}(\calM)\to \psi_{g,1}(\calM)(-1)$.
More precisely,
by Beilinson's gluing formula for mixed Hodge modules (see \cite{Beilinson} for perverse sheaves), $\calM$ is isomorphic to the $0$-th cohomology of the complex 
\begin{align}\label{syuriken}
    \psi_{g,1}\calM\to \Xi_{g}\calM\oplus \phi_{g,1}\calM\to \psi_{g,1}\calM(-1),
\end{align}
where $\Xi_{g}\calM\oplus \phi_{g,1}\calM$ is in degree $0$.
By Lemma~\ref{gideon},
we have
\begin{align}\label{kikyou}{(\psi_{t}(\calM|_{X\setminus g^{-1}(0)\times \CS}))}^{\sim}\simeq \calM|_{X\setminus g^{-1}(0)\times \CS},
\end{align}
where we wrote $(\calN)^{\sim}$ instead of $\wt{\calN}$ for $\calN=\psi_{t}(\calM|_{X\setminus g^{-1}(0)\times \CS})$.
Since $\psi_{g,1}\calM$ and $\Xi_{g}\calM$ are determined by $\calM|_{X\setminus  g^{-1}(0)\times \CS}$,
the isomorphism (\ref{kikyou}) induces isomorphisms:
\begin{align}\label{masara2}
    \psi_{g,1}\calM\simeq& \psi_{g,1}(\wt{\psi_{t}\calM}),\quad \mbox{and}\\\label{siba2}
\Xi_{g}\calM\simeq& \Xi_{g}(\wt{\psi_{t}\calM}).
\end{align}
Moreover, by the inductive assumption, we have
\begin{align*}
    \phi_{g,1}\calM\simeq \wt{\psi_{t}\phi_{g,1}\calM} (\simeq \wt{\phi_{g,1}\psi_{t}\calM}).
\end{align*}
Therefore, by Lemma~\ref{bolas}
we have
\begin{align}\label{katana2}
    \phi_{g,1}\calM\simeq \phi_{g,1}\wt{\psi_{t}\calM}.
\end{align}
Substituting (\ref{masara2}), (\ref{siba2}) and (\ref{katana2}) into (\ref{syuriken}),
we obtain a complex
\[\psi_{g,1}\wt{\psi_{t}\calM}\to \Xi_{g}\wt{\psi_{t}\calM}\oplus \phi_{g,1}\wt{\psi_{t}\calM}\to \psi_{g,1}\wt{\psi_{t}\calM}(-1).\]
By Beilinson's gluing formula again, the $0$-th cohomology of this complex is isomorphic to $\wt{\psi_{t}\calM}$.
In this way, 
we obtain a natural isomorphism:
\begin{align}\label{jacekai}
\calM\simeq \wt{\psi_{t}\calM}.
\end{align}
This implies that $G'\circ F'$ and $1$ are naturally isomorphic.
Together with Lemma~\ref{light}, the proof is now complete. 
\end{proof}

\subsection{The proof of Theorem~\ref{main2kai}}\label{syoumei}

In this subsection,
we construct the functor $G$ in Theorem~\ref{main2kai} and prove the theorem.
Let $((\calM_{(-1,0]},T_{s},N),\calM_{-1},c,v)$ be an object in $\mathscr{G}(X)$.
Since $(\calM_{(-1,0]},T_{s},N)$ is an object in $\mathscr{G}'(X)$,
we have a monodromic mixed Hodge module $\wt{\calM_{(-1,0]}}$ on $X\times \CS$ defined in the previous subsection.
We set $\calM_{0}:=\ker(T_{s}-1)$, which is a mixed Hodge module on $X$.
Then, we have
\begin{align*}
\psi_{t,1}(\wt{\calM_{(-1,0]}})\simeq \calM_{0}.
\end{align*}

\begin{lem}\label{sidaruta}
Let $((\calM_{(-1,0]},T_{s},N),\calM_{-1},c,v)$ be an object in $\mathscr{G}(X)$.
Then, there exists an unique mixed Hodge module $\vardbtilde{\calM_{(-1,0]}}$ on
$X\times \CC$
with the following properties:
\begin{enumerate}
\item[(i)] 
we have 
\[\vardbtilde{\calM_{(-1,0]}}|_{X\times \CS}=\wt{\calM_{(-1,0]}}.\]
In particular, $\psi_{t,1}(\vardbtilde{\calM_{(-1,0]}})\simeq \psi_{t,1}(\wt{\calM_{(-1,0]}})\simeq \calM_{0}$.
\item[(ii)] 
we have 
\[\phi_{t,1}(\vardbtilde{\calM_{(-1,0]}})\simeq \calM_{-1}.\]
\item[(iii)] 
Under the above identifications, the can (resp. var) morphism $\mathrm{can}\colon \psi_{t,1}(\vardbtilde{\calM_{(-1,0]}})\to \phi_{t,1}(\vardbtilde{\calM_{(-1,0]}})$ 
(resp. $\mathrm{var}\colon \phi_{t,1}(\vardbtilde{\calM_{(-1,0]}})\to \psi_{t,1}(\vardbtilde{\calM_{(-1,0]}})(-1)$)
is isomorphic to $c\colon \calM_{0}\to \calM_{-1}$ (resp. $v\colon \calM_{-1}\to \calM_{0}(-1)$).
\end{enumerate}
\end{lem}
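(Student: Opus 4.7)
The plan is to reduce this to the standard gluing equivalence for mixed Hodge modules along the smooth divisor $X\times\{0\}\subset X\times \CC$. Recall (\cite{MHM}, after Beilinson and Verdier) that this equivalence identifies $\mathrm{MHM}^{p}(X\times \CC)$ with the category of quadruples $(\calN,\calN',c',v')$ where $\calN\in \mathrm{MHM}^{p}(X\times \CS)$, $\calN'\in \mathrm{MHM}^{p}(X)$, and $c'\colon \psi_{t,1}\calN\to \calN'$, $v'\colon \calN'\to \psi_{t,1}\calN(-1)$ satisfy $v'\circ c'=-N_{\psi}$, where $N_{\psi}$ denotes $\frac{-1}{2\pi\sqrt{-1}}$ times the logarithm of the unipotent part of the monodromy on $\psi_{t,1}\calN$. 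Under this equivalence, the quadruple attached to a mixed Hodge module $\calM$ on $X\times\CC$ is $(\calM|_{X\times\CS},\phi_{t,1}\calM,\mathrm{can},\mathrm{var})$.

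I would take $\calN:=\wt{\calM_{(-1,0]}}$ and $\calN':=\calM_{-1}$. By Lemma~\ref{naminoutaha}, there is a natural identification $\psi_{t,1}(\wt{\calM_{(-1,0]}})\simeq \calM_{0}$ under which the nilpotent operator $N_{\psi}$ on the left corresponds to the restriction of $N$ to $\calM_{0}=\mathrm{Ker}(T_{s}-1)$. Using this identification, the morphisms supplied with the object of $\calG(X)$,
\[
c\colon \calM_{0}\to \calM_{-1},\qquad v\colon \calM_{-1}\to \calM_{0}(-1),
\]
become morphisms $c\colon \psi_{t,1}\calN\to \calN'$ and $v\colon \calN'\to \psi_{t,1}\calN(-1)$, and the compatibility condition $vc=-N$ on $\calM_{0}$ required by ($\star$-iii) is exactly the gluing condition $v\circ c=-N_{\psi}$.

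I then define $\vardbtilde{\calM_{(-1,0]}}$ to be the mixed Hodge module on $X\times\CC$ associated to the quadruple $(\wt{\calM_{(-1,0]}},\calM_{-1},c,v)$ by the gluing equivalence. Properties (i)--(iii) are built into the equivalence: the restriction to $X\times\CS$ is $\wt{\calM_{(-1,0]}}$, the unipotent vanishing cycle is $\calM_{-1}$, and the can, var morphisms are recovered as $c$ and $v$. Uniqueness is immediate from the equivalence, since any mixed Hodge module on $X\times\CC$ satisfying (i)--(iii) gives rise to the same quadruple.

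The hard part of the whole construction is therefore not this lemma but the production of $\wt{\calM_{(-1,0]}}$ and Lemma~\ref{naminoutaha}, which were done in the previous subsection; given those, this statement is essentially a formal consequence of Beilinson's gluing formalism applied in the Hodge-theoretic setting. The only bookkeeping point is to check that the sign conventions for var (the Tate twist by $-1$) and for $N_{\psi}$ match between the formulation of $\calG(X)$ used here and the gluing equivalence of \cite{MHM}, but both conventions agree with those set up in Corollary~\ref{nevancor} and Remark~\ref{nao}.
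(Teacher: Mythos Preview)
Your proof is correct and follows essentially the same route as the paper: both reduce to the Beilinson-type gluing equivalence for mixed Hodge modules along $X\times\{0\}$, using Lemma~\ref{naminoutaha} to identify $\psi_{t,1}(\wt{\calM_{(-1,0]}})$ with $\calM_{0}$ and verifying that condition ($\star$-iii) supplies the required relation $v\circ c=-N_{\psi}$. The one point the paper makes explicit that you pass over silently is the \emph{extendability} of $\wt{\calM_{(-1,0]}}$ across $t=0$: before invoking the gluing correspondence of \cite{MHM}, the paper exhibits a concrete extension to $X\times\CC$ by replacing $\calL_{r}^{H}[1]$ with any extension (e.g.\ the minimal one) in the defining cokernel. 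In the algebraic setting of this paper the extendability is in fact automatic, so your omission is harmless, but it is the one substantive step the paper records that your write-up does not.
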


\begin{proof}
Let $\wt{\calL_{r}^{H}[1]}\in \mathrm{MHM}^{p}(\CC)$ be an extension (e.g. the minimal extension) of a mixed Hodge module $\calL_{r}^{H}[1]\in \mathrm{MHM}^{p}(\CS)$ on $\CS$ to $\CC$.
Then, the mixed Hodge module on $X\times \CC$
\[\cokaa(N\boxtimes 1-1\boxtimes N\colon 
\calM_{(-1,0]}\boxtimes\wt{\calL_{r}^H[1]}(1)\to \calM_{(-1,0]}\boxtimes \wt{\calL_{r}^{H}[1]})
\]
is an extension of $\wt{(\calM_{(-1,0]})}^{\scokaa}_{r}\in \mathrm{MHM}^{p}(X\times \CS)$.
In the same way,
we can see that 
there is a mixed Hodge module on $X\times \CC$ whose restriction to $X\times \CS$ is $\wt{(\calM_{(-1,0]})}^{\scokaa,\scokaa}_{r,m}(=\wt{\calM_{(-1,0]}})$.
We can say that $\wt{\calM_{(-1,0]}}$ is an extendable mixed Hodge module on $X\times \CS$.
Therefore, 
by ``Beilinson's gluing" for mixed Hodge modules,
we can construct a mixed Hodge module $\vardbtilde{{\calM_{(-1,0]}}}$ from the gluing data
$(\wt{\calM_{(-1,0]}}, \calM_{-1},c,v)$ which has the desired properties.
\end{proof}

\begin{proof}[Proof of Theorem~\ref{main2kai}]
We define the functor $G\colon \mathscr{G}(X)\to \mathrm{MHM}^{p}_{\mathrm{mon}}(X\times \CC)$ so that 
\[G(((\calM_{(-1,0]},T_{s},N),\calM_{-1},c,v))=\vardbtilde{{M_{(-1,0]}}}.\]

For $((\calM_{(-1,0]},T_{s},N),\calM_{-1},c,v)\in \mathscr{G}(X)$,
by Lemma~\ref{daijyoubuda} and the definition of $G$,
$F\circ G(((\calM_{(-1,0]},T_{s},N),\calM_{-1},c,v))$ is isomorphic to $((\calM_{(-1,0]},T_{s},N),\calM_{-1},c,v)$.
Hence, we have $F\circ G$ and $1$ are isomorphic.

For $\calM\in \mathrm{MHM}^{p}_{\mathrm{mon}}(X\times \CC)$,
we have $F(\calM)=((\psi_{t}\calM,T_{s},N),\phi_{t,1}\calM,\mathrm{can},\mathrm{var})$.
Note that we have 
\begin{align}\label{conan}
\psi_{t}(\calM|_{X\times \CS})=\psi_{t}\calM.
\end{align}
Then, we have
\begin{align*}
\vardbtilde{\psi_{t}\calM}|_{X\times \CS}
&=\wt{\psi_{t}\calM}\qquad (\mbox{by the definition})\\
&=\wt{(\psi_{t}(\calM|_{X\times \CS}))}\qquad(\mbox{by (\ref{conan})})\\
&=\calM|_{X\times \CS} \qquad (\mbox{by Lemma~\ref{daijyoubuda} or (\ref{jacekai})}).
\end{align*}
Moreover, by the definition of $\vardbtilde{\psi_{t}\calM}$,
we have $\psi_{t,1}(\vardbtilde{\psi_{t}\calM})=\psi_{t,1}\calM$ and
$\phi_{t,1}(\vardbtilde{\psi_{t}\calM})=\phi_{t,1}\calM$.
Under these identifications,
the can morphism 
$\psi_{t,1}(\vardbtilde{\psi_{t}\calM})\to \phi_{t,1}(\vardbtilde{\psi_{t}\calM})$ is
$\mathrm{can}\colon \psi_{t,1}\calM\to \phi_{t,1}\calM$ and
the var morphism $\phi_{t,1}(\vardbtilde{\psi_{t}\calM})\to \psi_{t,1}(\vardbtilde{\psi_{t}\calM})(-1)$ is 
$\mathrm{var}\colon  \phi_{t,1}\calM\to \psi_{t,1}\calM(-1)$.
Therefore, $\calM$ also satisfies the gluing condition (i)-(iii) in Lemma~\ref{sidaruta} for the object $F(\calM)\in \mathscr{G}(X)$.
Hence, $\calM$ is isomorphic to $\vardbtilde{\psi_{t}\calM}$.
This implies that we have $G\circ F$ and $1$ are isomorphic.

This completes the proof
\end{proof}

Finally, 
for $((\calM_{(-1,0]},T_{s},N),\calM_{-1},c,v)\in \mathscr{G}(X)$,
we describe the mixed Hodge module $\vardbtilde{{\calM_{(-1,0]}}}$ more concretely.
Let $\vardbtilde{{M_{(-1,0]}}}$ (resp. $\vardbtilde{{K_{(-1,0]}}}$) be the underlying $D$-module (perverse sheaf) of $\vardbtilde{{\calM_{(-1,0]}}}$.
Since $\wt{\calM_{(-1,0]}}$ is monodromic on $X\times \CS$, $\vardbtilde{{\calM_{(-1,0]}}}$ is also monodromic on $X\times \CC$.
Recall that $(\vardbtilde{{M_{(-1,0]}}})^{\alpha}=\bigcup_{l\geq 0}\mathrm{Ker}((t\partial_{t}-\alpha)^l\colon \vardbtilde{{M_{(-1,0]}}}\to \vardbtilde{{M_{(-1,0]}}})$.

\begin{prop}
For $\alpha\in [-1,0]$,
we have
\[(\vardbtilde{{M_{(-1,0]}}})^{\alpha}
\simeq \left\{
\begin{array}{ll}
M_{\alpha}&(\alpha\in (-1,0])\\
M_{-1}&(\alpha=-1).
\end{array}
\right.
\]
Moreover, 
under these identifications,
$t\partial_{t}-\alpha$ on $(\vardbtilde{{M_{(-1,0]}}})^{\alpha}$ is $N\colon M\to M$,
$-\partial_{t} \colon (\vardbtilde{{M_{(-1,0]}}})^{0}\to (\vardbtilde{{M_{(-1,0]}}})^{-1}$ is $c\colon M_{0}\to M_{-1}$ and
$t\colon (\vardbtilde{{M_{(-1,0]}}})^{-1}\to (\vardbtilde{{M_{(-1,0]}}})^{0}$ is $v\colon M_{-1}\to M_{0}$.
In particular,
$\vardbtilde{{M_{(-1,0]}}}$ is isomorphic to 
\[(\bigoplus_{\alpha\in [-1,0)\cap\QQ}M_{\alpha}\otimes \CC[\partial_{t}]\partial_{t})\oplus M_{-1}\oplus (\bigoplus_{\alpha\in (-1,0]\cap \QQ}M_{\alpha}\otimes \CC[t]),\]
where the $D$-module structure of it is defined so that for any $\alpha\in (-1,0]\cap \QQ$, $t\partial_{t}-\alpha$ acts on $M_{\alpha}$ as $N\colon M_{(-1,0]}\to M_{(-1,0]}$, 
$-\partial_{t}\colon M_{0}\to M_{-1}$ is $c\colon M_{0}\to M_{-1}$ and
$t\colon M_{-1}\to M_{0}$ is $v\colon M_{-1}\to M_{0}$.
Moreover, the Hodge filtration $F_{\bullet}\vardbtilde{{M_{(-1,0]}}}$ (resp. the weight filtration $W_{\bullet}\vardbtilde{{M_{(-1,0]}}}$) is isomorphic to
\begin{align*}
(\bigoplus_{i\geq 1}\bigoplus_{\alpha\in [-1,0)\cap \QQ}\partial_{t}^{i}F_{\bullet-i}M_{\alpha})\oplus F_{\bullet}M_{-1}\oplus (\bigoplus_{\alpha\in (-1,0]\cap \QQ}F_{\bullet}M_{\alpha}\otimes \CC[t])\\
(\mbox{resp.\ }
(\bigoplus_{\alpha\in [-1,0)\cap \QQ}W_{\bullet}M_{\alpha}\otimes \CC[\partial_{t}]\partial_{t})\oplus W_{\bullet}M_{-1}\oplus (\bigoplus_{\alpha\in (-1,0]\cap \QQ}W_{\bullet}M_{\alpha}\otimes \CC[t])).
\end{align*} 
\end{prop}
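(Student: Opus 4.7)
The plan is to derive the stated description of $\vardbtilde{\calM_{(-1,0]}}$ directly from its three characterizing properties in Lemma~\ref{sidaruta}, combining the structure theory of monodromic $D$-modules (Proposition~\ref{isomprop}, Corollary~\ref{nevancor}), the explicit description of $\wt{\calM_{(-1,0]}}$ on $X\times\CS$ (Lemma~\ref{nicol}), and the strict specializability of the Hodge filtration (Lemma~\ref{decomplem}).

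First I would identify the $D_X$-modules $(\vardbtilde{M_{(-1,0]}})^{\alpha}$ for $\alpha\in[-1,0]$. For $\alpha\in(-1,0]$, by Proposition~\ref{isomprop} this piece is already determined by the restriction $\vardbtilde{M_{(-1,0]}}|_{X\times\CS}=\wt{M_{(-1,0]}}$ (using that $t$ and, for $\alpha\in(-1,0)$, also $\partial_t$ are isomorphisms between consecutive $\beta$-pieces there). Combining property~(i) of Lemma~\ref{sidaruta} with Lemma~\ref{nicol} then yields $(\vardbtilde{M_{(-1,0]}})^{\alpha}\simeq M_{\alpha}$ with $t\partial_t-\alpha$ acting as $N$. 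For $\alpha=-1$, property~(ii) of Lemma~\ref{sidaruta} combined with Corollary~\ref{nevancor} (which identifies $\phi_{t,1}(\vardbtilde{\calM_{(-1,0]}})$ with $(\vardbtilde{M_{(-1,0]}})^{-1}$) gives $(\vardbtilde{M_{(-1,0]}})^{-1}\simeq M_{-1}$, and the nilpotent endomorphism $t\partial_t+1$ on this piece corresponds to $N$ on $\calM_{-1}$ by the same corollary.

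Next, the identifications $-\partial_t=c$ and $t=v$ between $(\vardbtilde{M_{(-1,0]}})^0$ and $(\vardbtilde{M_{(-1,0]}})^{-1}$ follow at once from Corollary~\ref{nevancor}, which identifies these maps with $\mathrm{can}$ and $\mathrm{var}$, together with property~(iii) of Lemma~\ref{sidaruta}. With these anchor data in place, enumerating the pieces $(\vardbtilde{M_{(-1,0]}})^{\beta}$ for every $\beta\in\QQ$ via the $t$- and $\partial_t$-isomorphisms of Proposition~\ref{isomprop} (starting from $\alpha\in[-1,0]$ and propagating in both directions) produces the stated direct sum decomposition, with the prescribed actions of $t$ and $\partial_t$. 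For the Hodge and weight filtrations I would invoke Theorem~\ref{main1} to see that both $F_\bullet$ and $W_\bullet$ are compatible with the $\beta$-decomposition, and then apply Lemma~\ref{decomplem}, which pins down $F_p M^{\alpha+l}=t^l F_p M^\alpha$ and $F_p M^{\alpha-l}=\partial_t^l F_{p-l}M^\alpha$; a parallel statement for $W_\bullet$ is obtained by applying the same argument to the pure Hodge modules $\GR^W_k\calM$ via Proposition~\ref{titanic}, noting that $t$ and $\partial_t$ preserve $W_\bullet$ without shift.

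The main obstacle I expect is careful bookkeeping at the index $\beta=-1$. There, in contrast to the generic case $\alpha\in(-1,0)$, neither $t\colon M^{-1}\to M^0$ nor $\partial_t\colon M^0\to M^{-1}$ is automatically an isomorphism, so the $M_{-1}$ summand is genuinely independent from the others and only connected through the gluing data $(c,v)$. In addition, the shift convention $F_\bullet\phi_{t,1}M=F_{\bullet+1}M^{-1}$ from Corollary~\ref{imamade} must be reconciled with the final form of the Hodge filtration; propagating this shift through the $\partial_t^i$-iterations appearing in the formula needs to be checked carefully for internal consistency.
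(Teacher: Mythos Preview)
Your proposal is correct and follows essentially the same approach as the paper. The only cosmetic difference is that for $\alpha\in(-1,0]$ the paper goes through Corollary~\ref{nevancor} and then Lemma~\ref{naminoutaha} (the nearby-cycle identification $\psi_{t,e(\alpha)}\wt{\calM_{(-1,0]}}\simeq\calM_\alpha$), whereas you use Lemma~\ref{nicol} (the explicit model $\wt{M}'$); these are equivalent, and your treatment of the ``In particular'' clauses via Proposition~\ref{isomprop}, Theorem~\ref{main1}, and Lemma~\ref{decomplem} is more detailed than what the paper spells out.
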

\begin{proof}
For $\alpha\in (-1,0]\cap \QQ$,
we have
\begin{align*}
(\vardbtilde{{M_{(-1,0]}}})^{\alpha}&\simeq \psi_{t,e(\alpha)}(\vardbtilde{{M_{(-1,0]}}})\quad (\mbox{by Corollary~\ref{nevancor}})\\
&\simeq \psi_{t,e(\alpha)}({\wt{M_{(-1,0]}}})\\
&\simeq M_{\alpha}\quad (\mbox{by Lemma~\ref{naminoutaha}}).
\end{align*}
Moreover,
$t\partial_{t}-\alpha$ on $(\vardbtilde{{M_{(-1,0]}}})^{\alpha}$ is $N$ under this identification.
Similarly to this,
we have
\begin{align*}
(\vardbtilde{{M_{(-1,0]}}})^{-1}&\simeq \phi_{t,1}(\vardbtilde{{M_{(-1,0]}}})\quad (\mbox{by Corollary~\ref{nevancor}})\\
&\simeq M_{-1}\quad (\mbox{by the definition of $\vardbtilde{{M_{(-1,0]}}}$}).
\end{align*}
By Corollary~\ref{nevancor} again, 
$-\partial_{t}\colon (\vardbtilde{{M_{(-1,0]}}})^{0}\to (\vardbtilde{{M_{(-1,0]}}})^{-1}$ is 
$\mathrm{can}\colon \psi_{t,1}(\vardbtilde{{M_{(-1,0]}}})\to \phi_{t,1}(\vardbtilde{{M_{(-1,0]}}})$
and 
$t\colon (\vardbtilde{{M_{(-1,0]}}})^{-1}\to (\vardbtilde{{M_{(-1,0]}}})^{0}$ is 
$\mathrm{var}\colon \phi_{t,1}(\vardbtilde{{M_{(-1,0]}}})\to \psi_{t,1}(\vardbtilde{{M_{(-1,0]}}})$.
By the definition of $\vardbtilde{{M_{(-1,0]}}}$, these are $c$ and $v$ respectively.
\end{proof}

\subsection{The Fourier-Laplace transformations of mixed Hodge modules}
\label{bell}

As an application of Theorem~\ref{main2kai},
we endow the Fourier-Laplace transformation of the underlying $D$-module of a monodromic mixed Hodge module with a mixed Hodge module structure.

Remark again that we identify a $D$-module $A$ on a smooth affine algebraic variety $Z$ with the $\Gamma(Z;D_{Z})$-module $\Gamma(Z;A)$.  
Let $M$ be a $D$-module on $X\times \CC_{t}$.
Let $\CC_{\tau}$ be the dual vector space of $\CC_{t}$ with a coordinate $\tau$ such that the pairing $\langle t,\tau\rangle$ is given by $t\tau$.
We consider an isomorphism $\Gamma(\CC_{t};D_{\CC_{t}})\simto \Gamma(\CC_{\tau};D_{\CC_{\tau}})$ given by $\tau\mapsto -\partial_{t}$ and $\partial_{\tau}\mapsto t$.
The Fourier-Laplace transformation of $M$ is equal to $M$ as $D_{X}$-module and the $\CC[\tau]\langle\partial_{\tau}\rangle$-structure is induced through the isomorphism $\Gamma(\CC_{t};D_{\CC_{t}})\simto \Gamma(\CC_{\tau};D_{\CC_{\tau}})$.
We denote it by $\FOU{M}$ and regard it as a $D$-module on $X\times \CC_{\tau}$.

Assume that $M$ is monodromic.
For simplicity, we assume that $X$ is affine in this subsection.
For a section $m\in \FOU{M}$,
we have
\begin{align*}
\tau\partial_{\tau}m&=-\partial_{t}tm\\
&=(-t\partial_{t}-1)m.
\end{align*}
Therefore, we have (as $\CC_{X}$-modules)
\[(\FOU{M})^{\beta}=M^{-\beta-1}.\]
In particular, $\FOU{M}$ is also monodromic.
Therefore, we have
\begin{align*}
\FOU{M}(=\bigoplus_{\beta\in \QQ}(\FOU{M})^{\beta})&=\bigoplus_{\beta\in \QQ}M^{-\beta-1},\\
\psi_{\tau}(\FOU{M})&= \bigoplus_{\alpha\in [-1,0)\cap \QQ}M^{\alpha},\\
\psi_{\tau,\alpha}(\FOU{M})&=M^{-\alpha-1}\quad (\alpha\in (-1,0]\cap \QQ),\tunagi\\
\phi_{\tau,1}(\FOU{M})&=M^{0}.
\end{align*}

Let $\calM=(M,F_{\bullet}M,K,W_{\bullet}K)$ be a monodromic mixed Hodge module on $X\times \CC_{t}$.
Recall that by Theorem~\ref{main1} we have
\[F_{\bullet}M=\bigoplus_{\beta\in \QQ}F_{\bullet}M^{\beta}.\]
Note that we have a decomposition 
\[\psi_{t}\calM=\psi_{t,1}\calM\oplus \psi_{t,\neq 1}\calM,\]
where $\psi_{t,\neq 1}\calM$ is the mixed Hodge module whose underlying $D$-module is $\bigoplus_{\alpha\in (-1,0)}M^{\alpha}$.
Then,
the underlying $D$-module of the mixed Hodge module $\phi_{t,1}M\oplus \psi_{t\neq 1}\calM$ (resp. $\psi_{t,1}\calM(-1)$) is the $D$-module $\psi_{\tau}(\FOU{M})=\bigoplus_{\alpha\in [-1,0)}M^{\alpha}$ (resp. $\phi_{\tau}(\FOU{M})=M^{0}$).
Consider a tuple 
\[((\phi_{t,1}\calM\oplus \psi_{t,\neq 1}\calM, 1\oplus T_{s}^{-1}, (\mathrm{can}\circ \mathrm{var})\oplus N), \psi_{t,1}M(-1),-\mathrm{var},\mathrm{can}).\]
One can see that this is in $\mathscr{G}(X)$.
By Theorem~\ref{main2kai} (or the functor $G\colon \mathscr{G}(X)\to \mathrm{MHM}^{p}_{\mathrm{mon}}(X\times \CC)$), we obtain a mixed Hodge module $\FOU{\calM}$ on $X\times \CC_{\tau}$, whose underlying $D$-module is $\FOU{M}$.
Since $\FOU{\calM}$ is monodromic,
we have a decomposition
\begin{align}\label{yoursing}
F_{\bullet}\FOU{M}=\bigoplus_{\beta\in \QQ}F_{\bullet}\FOU{M}^{\beta}.
\end{align}

\begin{rem}
There are other possible definitions of the mixed Hodge modules whose underlying $D$-modules are $\FOU{M}$.
For example,
the underlying $D$-module of the mixed Hodge module corresponding to a tuple
$((\phi_{t,1}(1)\calM\oplus \psi_{t,\neq 1}\calM, 1\oplus T_{s}^{-1}, (\mathrm{can}\circ \mathrm{var})\oplus N), \psi_{t,1}M,-\mathrm{var},\mathrm{can})$ is also $\FOU{M}$.
In this sense, the above definition of $\FOU{\calM}$ is just a candidate for the definition of ``the Fourier-Laplace transformation of a mixed Hodge module''.
\end{rem}

\begin{prop}\label{gegege}
We have
\begin{align*}
F_{p}{\FOU{M}^{\alpha}}=
\left\{
\begin{array}{l}
F_{p+1}M^{-1}\ (\alpha=0)\\
F_{p}M^{-\alpha-1}\ (\alpha\in [-1,0)).
\end{array}
\right.
\end{align*}
\end{prop}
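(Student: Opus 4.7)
The plan is to identify each summand of the Hodge-filtration decomposition $F_{p}\FOU{M}=\bigoplus_{\beta\in\QQ}F_{p}\FOU{M}^{\beta}$ (guaranteed by Theorem~\ref{main1} since $\FOU{\calM}$ is a monodromic mixed Hodge module) with the Hodge filtration of a nearby or vanishing cycle of $\FOU{\calM}$ along $\tau=0$, and then to read off each such cycle from the explicit tuple used to construct $\FOU{\calM}$ via the functor $G$ of Theorem~\ref{main2kai}.

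Since $\FOU{\calM}$ is by definition $G$ applied to the tuple $((\phi_{t,1}\calM\oplus\psi_{t,\neq 1}\calM,\,1\oplus T_{s}^{-1},\,(\mathrm{can}\circ\mathrm{var})\oplus N),\,\psi_{t,1}\calM(-1),\,-\mathrm{var},\,\mathrm{can})$, the relation $F\circ G\simeq\mathrm{id}$ furnishes identifications of mixed Hodge modules: $\psi_{\tau,1}\FOU{\calM}\simeq\phi_{t,1}\calM$ (the $T_{s}=1$ eigenspace of $1\oplus T_{s}^{-1}$ comes entirely from the $\phi_{t,1}\calM$ summand), $\psi_{\tau,e(\alpha)}\FOU{\calM}\simeq\psi_{t,e(-\alpha-1)}\calM$ for $\alpha\in(-1,0)$ (matching the $T_{s}^{-1}$-eigenvalue $e(\alpha)$ on $\psi_{t,\neq 1}\calM$ with the $T_{s}$-eigenvalue $e(-\alpha-1)$), and $\phi_{\tau,1}\FOU{\calM}\simeq\psi_{t,1}\calM(-1)$ (tautological from the $\calM_{-1}$ component of the tuple).

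I then combine these with the definitional relations $F_{p}\psi_{\tau,e(\alpha)}\FOU{M}=F_{p}\FOU{M}^{\alpha}$ for $\alpha\in(-1,0]$ and $F_{p}\phi_{\tau,1}\FOU{M}=F_{p+1}\FOU{M}^{-1}$ (Remark~\ref{nao}), together with the analogous $F_{p}\phi_{t,1}\calM=F_{p+1}M^{-1}$ and $F_{p}\psi_{t,e(\beta)}\calM=F_{p}M^{\beta}$ for $\beta\in(-1,0]$. The case $\alpha=0$ yields $F_{p}\FOU{M}^{0}=F_{p}\phi_{t,1}\calM=F_{p+1}M^{-1}$; the case $\alpha\in(-1,0)$ yields $F_{p}\FOU{M}^{\alpha}=F_{p}\psi_{t,e(-\alpha-1)}\calM=F_{p}M^{-\alpha-1}$; and for $\alpha=-1$, $F_{p+1}\FOU{M}^{-1}=F_{p}\phi_{\tau,1}\FOU{\calM}=F_{p}\psi_{t,1}\calM(-1)=F_{p+1}\psi_{t,1}M=F_{p+1}M^{0}$, whence $F_{p}\FOU{M}^{-1}=F_{p}M^{0}$.

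The only delicate point is the last case: the $+1$ shift coming from the vanishing-cycle convention $F_{p}\phi_{\tau,1}\FOU{M}=F_{p+1}\FOU{M}^{-1}$ and the $+1$ shift produced by the Tate twist $(-1)$ applied to $\psi_{t,1}\calM$ exactly cancel, which is precisely why the final formula carries a $p+1$ shift at $\alpha=0$ but no shift at $\alpha=-1$. I do not expect any genuine obstacle beyond keeping these index shifts carefully aligned; everything else is dictated by the construction of $G$ and by the definitions of the Hodge filtrations on nearby and vanishing cycles.
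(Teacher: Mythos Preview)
Your proof is correct and follows essentially the same approach as the paper: identify $\psi_{\tau,e(\alpha)}\FOU{\calM}$ and $\phi_{\tau,1}\FOU{\calM}$ with the components of the defining tuple via $F\circ G\simeq\mathrm{id}$, then apply the Hodge-filtration conventions of Remark~\ref{nao} and the Tate-twist shift. The paper's proof is simply a terser version of the same computation, writing out the three cases $\alpha=-1$, $\alpha\in(-1,0)$, $\alpha=0$ directly.
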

\begin{proof}
By the definition of the Hodge filtration of $\psi_{t}M$ and $\phi_{t,1}M$, we have
\begin{align*}
F_{p}(\FOU{M})^{-1}=&F_{p-1}\phi_{\tau,1}(\FOU{M})\\
=&F_{p-1}\psi_{t,1}M(-1)\\
=&F_{p}M^{0},\\
F_{p}(\FOU{M})^{\alpha}=&F_{p}\psi_{\tau,\alpha}(\FOU{M})\\
=&F_{p}\psi_{t,-\alpha-1}M\\
=&F_{p}M^{-\alpha-1},\tunagi\\
F_{p}(\FOU{M})^{0}=&F_{p}\psi_{\tau,1}(\FOU{M})\\
=&F_{p}\phi_{t,1}(M)\\
=&F_{p+1}M^{-1},
\end{align*}
where $\alpha\in (-1,0)$.
\end{proof}

By this proposition, the decomposition~(\ref{yoursing}) and the strict specializability,
we can describe $F_{p}\FOU{M}$ concretely.

\footnotesize
\bibliographystyle{alpha}
\bibliography{reference}
\Addresses
\end{document}